\documentclass[10pt]{article}

\usepackage{amsmath,amsthm} 
\usepackage{amssymb,mathrsfs} 
\usepackage{amssymb}
\usepackage{amsfonts}
\usepackage{upgreek}
\usepackage{nicefrac}
\usepackage{geometry}
\usepackage{authblk}
\usepackage{hyperref}
\hypersetup{
  colorlinks = true,
  citecolor = blue,
}
 \usepackage{color}

\usepackage{enumitem}
\usepackage{url}
\usepackage{lmodern} 
\usepackage{xcolor}
\usepackage{bbm}
\usepackage{xargs}
\usepackage{aliascnt}
\usepackage{cleveref}

\theoremstyle{plain}

\newtheorem{theorem}{Theorem}
\crefname{theorem}{theorem}{Theorems}
\Crefname{Theorem}{Theorem}{Theorems}

\newaliascnt{proposition}{theorem}
\newtheorem{proposition}[proposition]{Proposition}
\aliascntresetthe{proposition}
\crefname{proposition}{proposition}{propositions}
\Crefname{Proposition}{Proposition}{Propositions}

\newaliascnt{lemma}{theorem}
\newtheorem{lemma}[lemma]{Lemma}
\aliascntresetthe{lemma}
\crefname{lemma}{lemma}{lemmas}
\Crefname{Lemma}{Lemma}{Lemmas}

\theoremstyle{remark}
\newaliascnt{remark}{theorem}
\newtheorem{remark}[remark]{Remark}
\aliascntresetthe{remark}
\crefname{remark}{remark}{remarks}
\Crefname{Remark}{Remark}{Remarks}

\newtheorem{example}[theorem]{Example}
\crefname{example}{example}{examples}
\Crefname{Example}{Example}{Examples}

\crefformat{assumption}{{\textbf{A}}#2#1#3}

\newtheorem{assumptionH}{\textbf{H}\hspace{-3pt}}
\Crefname{assumptionH}{\textbf{H}\hspace{-3pt}}{\textbf{H}\hspace{-3pt}}
\crefname{assumptionH}{\textbf{H}}{\textbf{H}}

\newtheorem{assumptionB}{\textbf{B}\hspace{-3pt}}
\Crefname{assumptionB}{\textbf{B}\hspace{-3pt}}{\textbf{B}\hspace{-3pt}}
\crefname{assumptionB}{\textbf{B}}{\textbf{B}}



\newcommand{\keywords}[1]{\textbf{\textit{Key words:}} #1}
\newcommand{\mathsubjclass}[1]{\textbf{\textit{Mathematics Subject Classification:}} #1}

\newcommand{\Var}{\mathrm{Var}}
\def\msh{\mathsf{H}}
\def\rset{\mathbb{R}}

\def\rmd{\mathrm{d}}

\newcommandx{\abs}[2][1=]{\ifthenelse{\equal{#1}{}}{\left\vert #2 \right\vert}{\left\vert #2 \right\vert^{#1}}}
\newcommandx{\absLigne}[2][1=]{\ifthenelse{\equal{#1}{}}{\vert #2 \vert}{\vert #2\vert^{#1}}}

\newcommand{\parenthese}[1]{\left(#1 \right)}

\newcommand{\parentheseDeux}[1]{\left[ #1 \right]}
\newcommandx{\norm}[2][1=]{\ifthenelse{\equal{#1}{}}{\left\Vert #2 \right\Vert}{\left\Vert #2 \right\Vert^{#1}}}

\def\iid{i.i.d.}
\def\eg{e.g.}
\def\Id{\operatorname{Id}}
\newcommand{\PP}{\mathbb{P}}
\def\barmu{\bar{\mu}}
\def\Lip{\mathrm{Lip}}
\def\rme{\mathrm{e}}
\def\eqsp{\;}
\newcommand{\Hess}{\mathrm{Hess}}
\newcommand{\Law}{\mathrm{Law}}
\newcommand{\1}{\mathbbm{1}}
\def\rml{\mathrm{L}}
\def\nset{\mathbb{N}}
\newcommand{\plusinfty}{+\infty}
\def\barf{\bar{f}}
\def\torus{\mathbb{T}}
\renewcommand{\iint}[2]{\{ #1,\ldots,#2\}}

\title{Sticky nonlinear SDEs and convergence of McKean-Vlasov equations without confinement}

\author[1]{Alain Durmus}
\author[2]{Andreas Eberle}
\author[3]{Arnaud Guillin}
\author[2]{Katharina Schuh}
\affil[1]{Universit\'e Paris-Saclay, ENS Paris-Saclay, CNRS,\\ Centre Borelli, F-91190 Gif-sur-Yvette, France. \textit{E-mail:} \href{mailto:alain.durmus@cmla.ens-cachan.fr}{alain.durmus@cmla.ens-cachan.fr}}
\affil[2]{Universit\"at Bonn, Institut f\"ur Angewandte Mathematik, Endenicher Allee 60, 53115 Bonn, Germany.  \textit{E-mail:} \href{mailto:eberle@uni.bonn.de}{eberle@uni-bonn.de}, \linebreak \href{mailto:katharina.schuh@uni-bonn.de}{katharina.schuh@uni-bonn.de}}
\affil[3]{Laboratoire de Math\'ematiques Blaise Pascal, CNRS- UMR 6620, Unviversit\'e Clermont-Auvergne, Avenue de Landais, 63177 Aubiere cedex, France. \linebreak \textit{E-mail:} \href{mailto:guillin@math.univ-bpclermont.fr}{guillin@math.univ-bpclermont.fr} }

\date{}

\begin{document}
\maketitle
\pagenumbering{arabic}

\begin{abstract}
We develop a new approach to study the long time behaviour 
of solutions to nonlinear stochastic differential equations
in the sense of McKean, as well as propagation of chaos for the corresponding mean-field 
particle system approximations. Our approach is based
on a sticky coupling between two solutions to the equation.
We show that the distance process between the two copies is
dominated by a solution to a one-dimensional nonlinear stochastic differential equation with a sticky boundary at
zero. This new class of equations is then analyzed carefully.
In particular, we show that the dominating equation has a phase transition. In the regime where the Dirac measure at zero 
is the only invariant probability measure, we prove exponential
convergence to equilibrium both for the one-dimensional equation, and for the original nonlinear SDE. Similarly,
propagation of chaos is shown by a componentwise sticky
coupling and comparison with a system of one dimensional
nonlinear SDEs with sticky boundaries at zero. The approach 
applies to equations without confinement potential and 
to interaction terms that are not of gradient type. \\ \ \\
\keywords{sticky coupling, McKean-Vlasov equation, unconfined dynamics, convergence to equilibrium, sticky nonlinear SDE} \\
\mathsubjclass{60H10, 60J60, 82C31}
\end{abstract}

\section{Introduction}

The main objective of this paper is to study and quantify
convergence to equilibrium for McKean-Vlasov type nonlinear
stochastic differential equations of the form
 \begin{equation} \label{eq:nonlinearSDE}
\rmd \bar{X}_t= \parentheseDeux{\int_{\rset^d} b(\bar{X}_t-x) \rmd \bar{\mu}_t(x) }\rmd t+\rmd B_t \eqsp, \qquad \bar{\mu}_t=\Law(\bar{X}_t) \eqsp,
\end{equation}
where $(B_t)_{t\geq 0}$ is a $d$-dimensional standard Brownian motion and $b:\mathbb{R}^d\to\mathbb{R}^d$ is a Lipschitz continuous function. This nonlinear SDE is the probabilistic counterpart of the {\em Fokker-Planck equation} 
\begin{align} \label{eq:fokker_planck}
\frac{\partial}{\partial t} u_t= \nabla\cdot\Big[(1/2)\nabla u_t -(b\ast u_t)u_t\Big] \eqsp,
\end{align}
which describes the time evolution of the density 
$u_t$ of $ \bar{\mu}_t$ with respect to the Lebesgue measure on $\mathbb{R}^d$. 
Moreover, we also study uniform in time propagation of chaos for the approximating mean-field interacting particle systems 
\begin{align}
\rmd X_t^{i,N}& = \frac{1}{N}\sum_{j=1}^N b (X_t^{i,N}-X_t^{j,N})\rmd t+\rmd B_t^i \eqsp, && i\in \iint{1}{N} \eqsp, \label{eq:meanfield}
\end{align}
with i.i.d.\ initial values $X_0^{1,N},\ldots,X_0^{N,N}$, and
driven by independent $d$-dimensional Brownian motions
$\{(B_t^i)_{t\geq 0} \}_{i=1}^N$. Our results are based on
a new probabilistic approach relying on sticky couplings and
comparison with solutions to a class of nonlinear stochastic differential equations on the real interval $[0,\infty )$ with a sticky boundary at $0$.
The study of this type of equations carried out below might
also be of independent interest.

The equations \eqref{eq:nonlinearSDE} and \eqref{eq:fokker_planck} have been studied in many works.
Often a slightly different setup is considered, 
where the interaction $b$ is assumed to be of gradient type, i.e., $b=-\nabla W$ for an \emph{interaction potential} function $W:\mathbb R^d\to\mathbb R$, 
and an additional \emph{confinement potential} function $V:\mathbb R^d\to\mathbb R$ satisfying $\lim_{|x|\to\infty}V(x)=\infty$
is included in the equations. The corresponding Fokker-Planck equation
\begin{eqnarray} \label{eq:fokker_planckwithV}
\frac{\partial}{\partial t} u_t&=& \nabla\cdot\Big[(1/2)\nabla u_t +(\nabla V+\nabla W\ast u_t)u_t\Big] \eqsp,
\end{eqnarray}
occurs for example in the modelling of
granular media, see \cite{Vi03,BeCaCaPu98} and the references therein.  Existence and uniqueness of solutions to
\eqref{eq:nonlinearSDE}, \eqref{eq:fokker_planck} and \eqref{eq:fokker_planckwithV} have been studied intensively. Introductions to this topic can be found for example in \cite{Fu84, McKean66,Me96,Sz91}, while recent results have been established in \cite{MiVe16,HaSiSz18}. Under appropriate conditions, it can be shown that the solutions converge to a unique stationary distribution at some given rate, see \eg~\cite{CaMcVi03,CaMcVi06,BoGeGu13,EbGuZi16,DuEbGuZi20,GuLiWuZh19}. 
In the case without confinement considered here,
convergence to equilibrium of $(\bar{\mu}_t)_{t \geq 0}$ defined by \eqref{eq:nonlinearSDE} 
can only be expected 
for centered solutions, or after
recentering around the center of mass of $ \bar{\mu}_t$. It has first been analyzed in \cite{CaMcVi03,CaMcVi06} by an analytic approach 
and under the assumption that $b=-\nabla W$ for a convex function $W$. In particular, exponential convergence to equilibrium has
been established under the strong convexity assumption $\Hess(W)\geq \rho \Id$ for some $\rho>0$, and polynomial convergence in the case where $W$ is only degenerately strictly convex.
Similar results and some extensions have been derived in \cite{Ma03,CaGuMa08} using a probabilistic approach.

Our first contribution aims at complementing these results, and extending them to non-convex interaction potentials and interaction functions that are not of gradient type. More precisely, suppose that
\begin{equation}
  \label{eq:form_nabla_W_intro}
  b(x) = - Lx + \gamma(x) \eqsp, \qquad x \in \rset^d \eqsp, 
\end{equation}
where $L\in (0,\infty) $ is a positive real constant, and $\gamma: \rset^d \to \rset^d$ is a bounded function.
Then we give conditions on $\gamma$ ensuring
exponential convergence of centered solutions to \eqref{eq:nonlinearSDE} to a unique stationary distribution in the standard $\rml^1$ Wasserstein metric.
More generally, we show in Theorem \ref{thm:nonlinearSDE} that under these conditions there exist constants
$M,c\in (0,\infty )$ that depend only on $L$ and $\gamma$ such that
if $(\bar{\mu}_t)_{t \geq 0}$ and $(\bar{\nu}_t)_{t \geq 0}$ are the marginal distributions of two solutions of \eqref{eq:nonlinearSDE}, then for all $t\geq 0$,
\begin{align*}
\mathcal{W}_1(\bar{\mu}_t,\bar{\nu}_t)&\leq M \rme^{-ct}\mathcal{W}_1(\bar{\mu}_0,\bar{\nu}_0)  \eqsp .
\end{align*}
Using a coupling approach, related results have been derived in the previous works \cite{EbGuZi16,DuEbGuZi20} for the case where an additional confinement term is included in the equations. 
However, the arguments in these works rely on treating 
the equation with confinement and interaction term as a perturbation of the corresponding equation without interaction term, which has good ergodic properties.
In the unconfined case this approach does not work, since the equation without interaction is transient and
hence does not admit an invariant probability measure.
Moreover, we are not aware of results for this framework with non-convex interaction potentials and non-gradient interaction functions that rely on classical analytical methods.
Therefore, we have to develop a new approach
for analyzing the equation without confinement.

Our approach is based on sticky couplings, an
idea first developed in \cite{EbZi19} to control the total variation distance between the marginal
distributions of two non degenerate diffusion processes with identical noise but different drift coefficients.
Since two solutions of \eqref{eq:nonlinearSDE} differ
only in their drifts, we can indeed couple them 
using a sticky coupling in the sense of \cite{EbZi19}.
It can then be shown that the coupling distance process
can be controlled by the solution
$(r_t)_{t\geq 0}$ of a nonlinear SDE on
$[0,\infty )$ with a sticky boundary at $0$ of the form
\begin{align} \label{eq:one-dim_stickydiff}
\rmd r_t=[\tilde{b}(r_t)+a \PP(r_t >0)]\rmd t+2 \1_{(0,\infty)}(r_t)  \rmd W_t \eqsp,
\end{align}
Here $\tilde{b}$ is a real-valued function on $[0,\infty )$ satisfying $\tilde b (0)=0 $, $a$ is a positive constant, and $(W_t)_{t\geq 0}$ is a one-dimensional standard Brownian motion. Solutions to SDEs with diffusion coefficient $r \mapsto \1_{(0,\infty)}(r)$, as in \eqref{eq:one-dim_stickydiff}, 
have a sticky boundary at $0$, i.e., if the drift at $0$ is strictly positive, then the set of all time points $t\in [0,\infty )$ such that $r_t=0$ is
a fractal set with strictly positive Lebesgue measure
that does not contain any open interval. 
{Sticky }SDEs have attracted wide interest, starting from \cite{Fe54a,Fe54b} in the one-dimensional case. Multivariate extensions have been considered in \cite{Ik60,Wa71a,Wa71b} building upon results obtained in \cite{McKean63,Sk61,Sk62}, while corresponding martingale problems have been investigated in \cite{StVa71}. 
Versions of sticky processes occur among others in natural sciences \cite{CaFa12,GaGeMa85} and finance \cite{KaKiRi07}.
Note that in general no strong solution for this class of SDEs exists as illustrated in \cite{Ch97}. 
We refer to  \cite{EnPe2014,Ba14} and the references therein for recent contributions on this topic.
Note, however, that in contrast to standard sticky SDEs, the equation \eqref{eq:one-dim_stickydiff} is nonlinear in the sense of McKean. We are not aware of previous studies
of such nonlinear sticky equations, which seems to be a very interesting topic on its own. 

Intuitively, one would hope that as time evolves, more mass gets stuck at $0$, 
i.e., $\PP(r_t >0)$ decreases. As a consequence, the drift 
at $0$ in Equation \eqref{eq:one-dim_stickydiff} decreases, which again forces even more mass to get stuck at $0$.
Therefore, under appropriate conditions one could hope that $\PP(r_t =0)$ converges
to $1$ as $t\to\infty$. On the other hand, if $a$ is too large then the drift at $0$ might be too strong so that not all of the mass gets stuck at $0$ eventually.
This indicates that there might be a \emph{phase transition} 
for the nonlinear sticky SDE depending on the size of the constant $a$
compared to $\tilde b$.
In Section \ref{sec:non-linear_sticky_diff}, we prove rigorously that this intuition is correct.
Under appropriate conditions on $\tilde{b}$, we show at first that existence and uniqueness in law holds for solutions 
of \eqref{eq:one-dim_stickydiff}.
Then we prove
that for $a$ sufficiently small,
the Dirac measure at $0$ is the unique invariant probability measure, and geometric ergodicity holds. 
As a consequence, under corresponding assumptions, the
sticky coupling approach yields
exponential convergence to equilibrium for the 
original nonlinear SDE \eqref{eq:nonlinearSDE}.
On the other hand, we prove the existence of multiple invariant probability measures for \eqref{eq:one-dim_stickydiff} if the smallness condition on $a$ is not satisfied. In this case, we cannot make a statement on the behaviour of the distance function corresponding to the sticky coupling approach since based on this approach we only get upper bounds and  the existence of multiple invariant measure for the dominating sticky nonlinear SDE does not imply that the underlying distance function does not converge. If the unconfined SDE \eqref{eq:nonlinearSDE} has multiple invariant measures and if the two copies of the unconfined SDE in the sticky coupling start in two different equilibria, then the law of the distance function does not converge to the Dirac measure at zero.
Our results for \eqref{eq:nonlinearSDE} can also be adapted to deal with nonlinear SDEs over the torus $\mathbb{T} = \mathbb{R}/(2\pi\mathbb{Z})$, as considered in \cite{De20}.
As an example, we discuss the application to the Kuramoto model for which a more explicit analysis is available
\cite{AcBoPeFaSp05,BeGiPa10,BeGiPo14,CaGvPaSc20}.\medskip

Finally, in addition to studying the long-time behaviour of the nonlinear SDE \eqref{eq:nonlinearSDE},  we are also interested in establishing  {\em propagation of chaos} for the mean-field particle system approximation \eqref{eq:meanfield}. 
The propagation of chaos phenomenon first introduced by Kac \cite{kac56} describes the convergence of the empirical measure of the mean-field particle system \eqref{eq:meanfield} to the solution \eqref{eq:nonlinearSDE}. More precisely, in  \cite{Sz91,Me96} it has been shown under weak assumptions on $W$ that for i.i.d.\ initial laws, the random variables $X_t^{i,N} $, $i\in\{ 1,\ldots ,N\}$, become asymptotically independent as $N\to\infty$,
and the common law
$\mu_t^{N}$ of each of these random variables converges to $\bar{\mu}_t$. However, the original results are only valid uniformly over a finite time horizon.
 Quantifying the convergence uniformly for all times 
 $t\in\mathbb R_+$ is an important issue.  The case with a confinement potential has been studied for example in \cite{DuEbGuZi20}, see also the references therein. Again, the case when there is only interaction is more difficult. Malrieu \cite{Ma03} seems the first to consider the case without confinement. By applying a
  synchronous coupling, he proved uniform in time 
  propagation of chaos for strongly convex interaction potentials. Later on, assuming that the interaction potential  is loosing strict convexity only in a finite number of points (e.g., $W(x)= |x|^3$), Cattiaux, Guillin and Malrieu \cite{CaGuMa08} have shown uniform in time propagation of chaos with a rate getting worse with the degeneracy in convexity. 
  In a very recent work, Delarue and Tse \cite{DeTs21} prove uniform in time weak propagation of chaos (i.e., observable by observable) on the torus
  via Lions derivative methods. Remarkably, their results are  not limited to the unique invariant measure case.
 
Our contribution is in the same vein using probabilistic tools in place of analytic ones. 
We endow the space $\mathbb R^{Nd}$ consisting of $N$
particle configurations $x=(x^i)_{i=1}^N$ with the
semi-metric $l^1\circ\pi$, where
\begin{equation}
  \label{eq:def_l1}
          l^1(x,y)\ =\ \frac{1}{N}\sum\nolimits_{i=1}^N \abs{x^i-y^i } 
\end{equation}
is a normalized $l^1$-distance between configurations
$x,y\in\mathbb R^{Nd}$, and
\begin{equation}
\label{eq:proj_pi}
\pi(x,y)\ =\  \left(\left(x^i-\frac{1}{N}\sum\nolimits_{j=1}^Nx^{j}\right)_{i=1}^N,\left(y^{i}-\frac{1}{N}\sum\nolimits_{j=1}^Ny^{j}\right)_{i=1}^N\right)\eqsp, 
\end{equation}
is a projection from $\mathbb{R}^{Nd}\times \mathbb R^{Nd}$ to the subspace $\msh_N\times \msh_N$, where
\begin{equation}
  \label{eq:def_H_N}
 \msh_N\ =\ \{x\in\mathbb{R}^{Nd} :\sum\nolimits_{i=1}^N x^{i} =0\}  \eqsp.
\end{equation}
Let $\mathcal{W}_{l^1\circ \pi}$ denote the $L^1$ Wasserstein semimetric on probability measures on
$\mathbb{R}^{Nd}$ corresponding to the cost function
$l^1\circ \pi$. Then under assumptions stated below,
we prove uniform in time propagation of chaos for the mean-field particle system in the following sense:
Suppose that $(X_t^{1,N},\ldots ,X_t^{N,N})_{t\ge 0}$
is a solution of \eqref{eq:meanfield} such that
$X_0^{1,N},\ldots,X_0^{N,N}$ are i.i.d.\ with  distribution $\bar{\mu}_0$ having finite second moment. Let $\nu_t^N$ denote
the joint law of the random variables $X_t^{i,N}$, $i\in\{ 1,\ldots N\}$, and let $\bar\mu_t$ denote the law of the solution of \eqref{eq:nonlinearSDE} with initial law $\bar\mu_0$. Then
there exists a constant $C \in [0,\infty )$ such that for any $N \in \nset$,
\begin{equation}
  \label{eq:intro_prop_chaos}
\sup_{t \geq 0} \, \mathcal{W}_{l^1\circ \pi}(\bar{\mu}_t^{\otimes N},\nu_t^N)\leq CN^{-1/2} \eqsp .
\end{equation} 
The proof is based on a componentwise sticky coupling,
and a comparison of the coupling difference process
with a system of one-dimensional sticky nonlinear SDEs.\bigskip

The paper is organised as follows. In \Cref{sec:main_contraction_results}, we state our main results  regarding the long-time behaviour of \eqref{eq:nonlinearSDE}.  The main results on one-dimensional nonlinear SDEs with a sticky boundary at zero are stated in Section \ref{sec:non-linear_sticky_diff}. Sections \ref{sec:propachaos} and \ref{sec:system_non-linear_sticky_diff} contain the corresponding results on uniform (in time) propagation of chaos and mean-field systems of sticky SDEs. All the proofs are given in \Cref{sec:proofs}.
In \Cref{appendix}, we carry the results over to nonlinear sticky SDEs over $\mathbb{T}$ and consider the application to the Kuramoto model.

\paragraph*{Notation}
The Euclidean norm on $\mathbb{R}^{d}$ is denoted by $|\cdot|$. For $x\in\mathbb{R}$, we write $x_+=\max(0,x)$. 
For some space $\mathbb{X}$, which here is either $\mathbb{R}^{d}$, $\mathbb{R}^{Nd}$ or $\mathbb{R}_+$, we denote its Borel $\sigma$-algebra by $\mathcal{B}(\mathbb{X})$. The space of all probability measures on $(\mathbb{X},\mathcal{B}(\mathbb{X}))$ is denoted by $\mathcal{P}(\mathbb{X})$. 
Let $\mu,\nu\in\mathcal{P}(\mathbb{X})$. A coupling $\xi$ of $\mu$ and $\nu$ is a probability measure on $(\mathbb{X}\times \mathbb{X},\mathcal{B}(\mathbb{X})\otimes\mathcal{B}(\mathbb{X}))$ with marginals $\mu$ and $\nu$. 
$\Gamma(\mu,\nu)$ denotes the set of all couplings of $\mu$ and $\nu$. The  $\rml^1$  Wasserstein distance with respect to a distance function $d:\mathbb{X}\times\mathbb{X}\to\mathbb{R}_+$ is defined by
\begin{align*}
\mathcal{W}_d(\mu,\nu)=\inf_{\xi \in \Gamma(\mu,\nu)}\int_{\mathbb{X}\times\mathbb{X}}d(x,y)\xi(\rmd x \rmd y)\eqsp.
\end{align*}
We write $\mathcal{W}_1$ if the underlying distance function is the Euclidean distance.

We denote by $\mathcal{C}(\mathbb{R}_+,\mathbb{X})$ the set of continuous functions from $\mathbb{R}_+$ to $\mathbb{X}$, and by $\mathcal{C}^2(\mathbb{R}_+,\mathbb{X})$ the set of twice continuously differentiable functions.

Consider a probability space $(\Omega, \mathcal{A},P)$ and a measurable function $r:\Omega\to \mathcal{C}(\mathbb{R}_+,\mathbb{X})$. Then $\PP=P\circ r^{-1}$ denotes the law on $\mathcal{C}(\mathbb{R}_+,\mathbb{X})$, and $P_t=P\circ {r_t}^{-1}$ the marginal law on $\mathbb{X}$ at time $t$.

\section{Long-time behaviour of McKean-Vlasov diffusions} \label{sec:main_contraction_results}

We establish our results regarding \eqref{eq:nonlinearSDE} and \eqref{eq:meanfield} under the following assumption on $b$. 
\begin{assumptionB} 
  \label{ass:decomp_W}
  The function $b:\mathbb{R}^d\to\mathbb{R}^d$ is Lipschitz continuous and anti-symmetric, i.e., $b(z)=-b(-z)$, and there exist $L\in(0,\infty)$, a function $\gamma:\mathbb{R}^d\to\mathbb{R}^d$ and a Lipschitz continuous function $\kappa:[0,\infty)\to\mathbb{R}$ such that
  \begin{align} \label{eq:nabla_W}
        b(z)=-Lz+ \gamma(z) \qquad \text{for all } z\in\mathbb{R}^d \eqsp,
    \end{align}
and the following conditions are satisfied for all $x,y\in\mathbb{R}^d$:
\begin{align} \label{eq:kappa}
\langle x-y,\gamma(x)-\gamma(y)\rangle\leq \kappa(|x-y|)|x-y|^2\eqsp,
\end{align}
and 
\begin{align} \label{eq:nonexpl_cond}
    \limsup_{r\to\infty}(\kappa(r)-L)<0 \eqsp.
\end{align}
\end{assumptionB}

Let $\bar{b}(r)=(\kappa(r)-L)r$. If \eqref{eq:nonexpl_cond} holds, then there exist $R_0,R_1 \geq 0$ such that for 
\begin{align} 
 \bar{b}(r)&<0 \eqsp, \qquad \qquad &\text{ for any  $r>
 R_0 $} \eqsp, \label{eq:definition_R0} \\
 \bar{b}(r)/r&\leq -4/[R_1(R_1-R_{0})] \eqsp, \qquad \qquad &\text{ for any  $ r\geq R_1  $ } \eqsp. \label{eq:definition_R1}                                          
\end{align}
In addition, we assume
\begin{assumptionB} \label{ass:gamma_bound}
\begin{align*}
\|\gamma\|_\infty \leq \Big(4\int_0^{R_1}\exp\Big(\frac{1}{2}\int_0^s \bar{b}(r)_+ \rmd r\Big)\rmd s\Big)^{-1} \eqsp.
\end{align*}
\end{assumptionB}

Often drifts of gradient type are considered, i.e., $b\equiv \nabla U$ for some potential $U\in\mathcal{C}^2$.
Then, \Cref{ass:decomp_W} is satisfied for instance for 
$L$-strongly convex potentials and condition \eqref{eq:kappa} holds for $\kappa\equiv 0$. In this case, \Cref{ass:gamma_bound} reduces to $\|\gamma\|_{\infty}\le \sqrt{L}/8$. But, the assumptions include also asymptotically $L$-strongly convex potentials as double-well potentials and more general drifts provided the deviation represented by the function $\gamma$ to the linear term $-Lz$ is sufficiently small in terms of the generalized one-sided Lipschitz bound and the bound in the supremum norm. In particular, this can always be obtained by considering a sufficiently small multiple of $\gamma$.

Additionally, we consider the following condition on the initial distribution. 
\begin{assumptionB} \label{ass:init_distr}
The initial distribution $\mu_0$ satisfies $\int_{\rset^d} \norm{x}^4 \mu_0(\rmd x) < \plusinfty$ and $\int_{\rset^d} x \, \mu_0(\rmd x) = 0$. 
\end{assumptionB}

Note that under conditions \Cref{ass:decomp_W} and \Cref{ass:init_distr}, unique strong solutions $(\bar{X}_t)_{t\geq 0}$ and $(\{X_t^{i,N}\}_{i=1}^N)_{t\geq 0}$
exist for \eqref{eq:nonlinearSDE} and \eqref{eq:meanfield},  see \eg~\cite[Theorem 2.6]{CaGuMa08}.  
In addition, note that since $b$ is assumed to be anti-symmetric, by an easy localisation argument, we get that $
\rmd \mathbb{E}[\bar{X_t}]/ \rmd t =\mathbb{E}[b\ast\mu_t(\bar{X}_t)]=0$
 and $\rmd\mathbb{E}[ N^{-1}\sum_{i=1}^N X_t^{i,N}]/\rmd t=0$. 
Thus, if $\bar{X}_0$ and $\{X_0^{i,N}\}_{i=1}^N$ have distribution $\mu_0$ and $\mu_0^{\otimes N}$, respectively, with $\mu_0$ satisfying \Cref{ass:init_distr}, then 
it holds $\mathbb{E}[\bar{X}_t]=0$ and $\mathbb{E}[ N^{-1}\sum_{i=1}^N X_t^{i,N}]=0$ for all $t\geq 0 $.

Suppose $f:\mathbb{R}_+\to\mathbb{R}_+$ is an increasing, concave function vanishing at zero. Then $d(x,y)=f(|x-y|)$ defines a distance. The corresponding $L^1$ Wasserstein distance is denoted by $\mathcal{W}_f$.
Note that in the case $f(t) = t$ for any $t \geq 0$, $\mathcal{W}_f$ is simply $\mathcal{W}_1$.

\begin{theorem}[Contraction for nonlinear SDE]\label{thm:nonlinearSDE}
Assume \Cref{ass:decomp_W} and \Cref{ass:gamma_bound}. Let $\barmu_0, \bar{\nu}_0$ be probability measures on $(\mathbb{R}^d,\mathcal{B}(\mathbb{R}^d))$ satisfying \Cref{ass:init_distr}. For any $t\geq 0$, let $\barmu_t$ and $\bar{\nu}_t$ denote the laws of $\bar{X}_t$ and $\bar{Y}_t$ where $(\bar{X}_s)_{s\geq 0}$ and $(\bar{Y}_s)_{s\geq 0}$ are solutions of \eqref{eq:nonlinearSDE} with initial distribution $\barmu_0$ and $\bar{\nu}_0$, respectively.
 Then, for all $t\geq 0$,
\begin{equation} \label{eq:thm_nonlinearSDE}
\mathcal{W}_f(\barmu_t, \bar{\nu}_t) \leq \rme^{-\tilde{c} t} \mathcal{W}_f(\barmu_0,\bar{\nu}_0) \qquad  \text{ and }\qquad 
 \mathcal{W}_1(\barmu_t, \bar{\nu}_t)  \leq M_1 \rme^{-\tilde{c} t} \mathcal{W}_1(\barmu_0,\bar{\nu}_0) \eqsp,
\end{equation}
where the function $f$ is defined by \eqref{eq:definition_f}
and the constants $\tilde{c}$ and $M_1$ are given by
\begin{align} 
\tilde{c}^{-1}&=2\int_0^{R_{1}}\int_0^s \exp\Big(\frac{1}{2}\int_r^s\bar{b}(u)_+ \ \rmd u\Big)\rmd r\rmd s \eqsp, \label{eq:definition_tildec}
\\ M_1 & = 2\exp\Big(\frac{1}{2}\int_0^{R_0} \bar{b}(s)_+\rmd s\Big) \eqsp. \label{eq:def_M_1}
\end{align}
\end{theorem}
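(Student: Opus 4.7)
The plan is to use a pathwise \emph{sticky coupling} of two solutions of \eqref{eq:nonlinearSDE} in the spirit of \cite{EbZi19} and to stochastically dominate the resulting radial distance process by a solution of the one-dimensional nonlinear sticky SDE \eqref{eq:one-dim_stickydiff}; the conclusion then reduces to the contraction results for such SDEs established in \Cref{sec:non-linear_sticky_diff}. Concretely, I would couple $(\bar X_t, \bar Y_t)$ with marginals $\bar\mu_t$ and $\bar\nu_t$ by using the same noise on $\{\bar X_t = \bar Y_t\}$ and noise reflected along the line joining the two copies on $\{\bar X_t \neq \bar Y_t\}$. Anti-symmetry of $b$ together with \Cref{ass:init_distr} forces $\PE[\bar X_t] = \PE[\bar Y_t] = 0$ for all $t \geq 0$, so the drift of $\bar X_t$ simplifies to $-L\bar X_t + \int \gamma(\bar X_t - x)\rmd\bar\mu_t(x)$.

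Setting $Z_t = \bar X_t - \bar Y_t$ and $r_t = \abs{Z_t}$, It\^o's formula on $\{r_t > 0\}$ gives $\rmd r_t = A_t\,\rmd t + 2\,\rmd W_t$ for a one-dimensional Brownian motion $W$. After adding and subtracting $\int \gamma(\bar Y_t - x)\rmd\bar\mu_t(x)$, the drift splits as $A_t = -Lr_t + \langle Z_t/r_t,\int[\gamma(\bar X_t - x) - \gamma(\bar Y_t - x)]\rmd\bar\mu_t(x)\rangle + \langle Z_t/r_t, D_t\rangle$ with measure discrepancy $D_t = \int\gamma(\bar Y_t - x)\rmd\bar\mu_t(x) - \int\gamma(\bar Y_t - y)\rmd\bar\nu_t(y)$. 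The first (pathwise) term is bounded by $-Lr_t + \kappa(r_t)r_t = \bar b(r_t)$ via \eqref{eq:kappa}. For $D_t$ I would use an independent copy $(\bar X'_t, \bar Y'_t)$ of the sticky coupling to rewrite it as $\PE[\gamma(\bar Y_t - \bar X'_t) - \gamma(\bar Y_t - \bar Y'_t) \mid \bar Y_t]$, which is bounded in absolute value by $2\norm{\gamma}_\infty\, \PP(r'_t > 0) = 2\norm{\gamma}_\infty\, \PP(r_t > 0)$. Extending the diffusion coefficient to $2\1_{\{r_t > 0\}}$ through the sticky construction then yields the differential inequality
\[
\rmd r_t \leq \bigl(\bar b(r_t) + 2\norm{\gamma}_\infty\, \PP(r_t > 0)\bigr)\,\rmd t + 2\,\1_{\{r_t > 0\}}\,\rmd W_t,
\]
which is precisely \eqref{eq:one-dim_stickydiff} with $\tilde b = \bar b$ and $a = 2\norm{\gamma}_\infty$.

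A comparison result for nonlinear sticky SDEs, to be developed in \Cref{sec:non-linear_sticky_diff}, then produces a solution $R_t$ of \eqref{eq:one-dim_stickydiff} with $R_t \geq r_t$ almost surely and the same initial law. \Cref{ass:gamma_bound} is exactly the smallness condition on $a$ that puts \eqref{eq:one-dim_stickydiff} in its subcritical regime, so the contraction $\mathcal W_f(\Law(R_t), \delta_0) \leq e^{-\tilde c t}\mathcal W_f(\Law(R_0), \delta_0)$ with rate \eqref{eq:definition_tildec} is available from Section 3. Combined with $\mathcal W_f(\bar\mu_t,\bar\nu_t) \leq \PE[f(r_t)] \leq \PE[f(R_t)]$ (monotonicity of $f$) and optimisation over couplings of the initial distributions, this yields the first bound of \eqref{eq:thm_nonlinearSDE}. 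The second bound follows by comparing $f$ with the identity on $[0,\infty)$: since $f$ will be concave, increasing, vanishing at $0$, and satisfying $r \leq M_1 f(r)$ with $M_1$ as in \eqref{eq:def_M_1}, one has $\mathcal W_1 \leq M_1 \mathcal W_f \leq M_1 e^{-\tilde c t}\mathcal W_1(\bar\mu_0,\bar\nu_0)$.

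The main obstacle is making the sticky coupling rigorous and closing the nonlinear self-consistency. Strong solutions to sticky 1D SDEs typically do not exist (cf.\ \cite{Ch97}), so the coupling has to be constructed by smoothing $\1_{\{r>0\}}$ and passing to a weak limit, all while preserving the stochastic domination $r_t \leq R_t$ through the limit. A second subtle point is that the drift of the dominating SDE depends on the marginal law of $R_t$ via $\PP(R_t > 0)$ — precisely the McKean-type nonlinearity of \eqref{eq:one-dim_stickydiff} — so the comparison lemma must be formulated for this nonlinear class rather than for a standard sticky SDE with prescribed coefficients.
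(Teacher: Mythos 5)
Your proposal follows the paper's strategy essentially step for step: construct the sticky coupling as a weak limit of $\delta$-smoothed reflection/synchronous couplings, dominate the coupling distance by a solution of the one-dimensional sticky nonlinear SDE \eqref{eq:one-dim_stickydiff} with $\tilde b=\bar b$, $a=2\|\gamma\|_\infty$, and conclude from the properties of the metric $f$. That is precisely \Cref{theo:1} combined with the It\^o--Tanaka computation in the proof of \Cref{thm:nonlinearSDE}; routing the last step through \Cref{thm:one-dim_stickydiff} is equivalent, since the rate $c$ from \eqref{eq:definition_c} coincides with $\tilde c$ from \eqref{eq:definition_tildec} under that substitution.

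The one substantive variation is how you bound the measure-discrepancy term $D_t$: you integrate against an independent copy of the sticky coupling itself, giving $|D_t|\le 2\|\gamma\|_\infty\PP(r_t>0)$, whereas \Cref{lemma:SDEr_t^delta} integrates against the product $\bar\mu_t^\delta\otimes\bar\nu_t^\delta$. Your choice has the appealing feature that the drift bound for $|\bar X_t-\bar Y_t|$ and the drift of the dominating process then carry the same type of nonlinearity (an expectation of a monotone function of the respective radial process), which is exactly the structure the adapted Ikeda--Watanabe comparison in \Cref{lemma:modification_watanabe_nonlinear} exploits. Two points to watch, both of which you hint at: (i) $\PP(r_t>0)<1$ only after the $\delta\to0$ limit, since the $\delta$-coupling is non-degenerate and $\PP(\bar r_t^\delta>0)=1$; at the approximation level the useful bound from the independent copy is $|D_t^\delta|\le 2\|\gamma\|_\infty\PE[\mathrm{rc}^\epsilon(\bar r_t^\delta)]$, with $\mathrm{rc}^\epsilon\to\1_{(0,\infty)}$ taken jointly with the other limits; (ii) the comparison you invoke is not literally \Cref{thm:existence_comparison}, which compares two genuine solutions of the nonlinear sticky SDE, but rather the domination in \Cref{lem:comparision_r_t^delta} of the coupling's radial part, whose drift is a random quantity merely bounded by a law-dependent expression, with the inequality preserved through the weak limit by Portmanteau as in \Cref{theo:1}. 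Since you explicitly flag both obstacles, the outline is sound.
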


\begin{proof}
  The proof is postponed to \Cref{sec:proof_thm_nonlinear}.
\end{proof}

The construction and definition of the underlying distance function $f(|x-y|)$ mentioned in \Cref{thm:nonlinearSDE} is based on the one introduced by \cite{Eb16}. 

To prove \Cref{thm:nonlinearSDE} we use a coupling $(\bar{X}_t,\bar{Y}_t)_{t\geq 0}$ of two copies of solutions to the nonlinear stochastic differential equation \eqref{eq:nonlinearSDE} with different initial conditions.
The coupling $(\bar{X}_t,\bar{Y}_t)_{t\geq 0}$ will be defined as the weak limit of a family of couplings $(\bar{X}_t^\delta,\bar{Y}_t^\delta)_{t\geq 0}$, parametrized by $\delta>0$. Roughly, this family is mixture of synchronous and reflection couplings and can be described as follows. For $\delta>0$, $(\bar{X}_t^\delta,\bar{Y}_t^\delta)_{t\geq 0}$ behaves like a reflection coupling if $|\bar{X}_t^\delta-\bar{Y}_t^\delta|\geq \delta$, and like a  synchronous coupling if $|\bar{X}_t^\delta-\bar{Y}_t^\delta|=0$. For $|\bar{X}_t^\delta-\bar{Y}_t^\delta|\in(0,\delta)$ we take an interpolation of synchronous and reflection coupling. We argue that the family of couplings $\{(\bar{X}_t^\delta,\bar{Y}_t^\delta)_{t \geq 0}:\delta>0\}$ is tight and that a subsequence $\{(\bar{X}_t^{\delta_n},\bar{Y}_t^{\delta_n})_{t\geq 0}:n\in\mathbb{N}\}$ converges to a limit $(\bar{X}_t,\bar{Y}_t)_{t\geq 0}$. This limit is a coupling which we call the sticky coupling associated to \eqref{eq:nonlinearSDE}.

To carry out the construction rigorously, we take two Lipschitz continuous functions $\mathrm{rc}^\delta, \mathrm{sc}^\delta:\mathbb{R}_+\to [0,1]$ for $\delta>0$ such that
\begin{equation} \label{eq:condition_rc_sc}
\begin{aligned}
& \mathrm{rc}^\delta(0)=0\eqsp, \ \mathrm{rc}^\delta(r)=1 \text{ for } r\geq \delta\eqsp,  \mathrm{rc}^\delta(r)>0 \text{ for }r>0 \text{ and } 
 \mathrm{rc}^\delta(r)^2+\mathrm{sc}^\delta(r)^2=1 \text{  for } r\ge 0 \eqsp.
\end{aligned}
\end{equation} 
Further, we assume that there exists $\epsilon_0>0$ such that for any $\delta\leq \epsilon_0$, $\mathrm{rc}^\delta$ satisfies
\begin{equation} \label{eq:condition_rc_sc2}
\begin{aligned}
\mathrm{rc}^\delta(r)\geq \frac{\|\gamma\|_{\Lip}}{2\|\gamma\|_\infty}r &&\text{ for any }r\in(0,\delta)\eqsp,
\end{aligned}
\end{equation}
where $\|\gamma \|_{\Lip}<\infty$ denotes the Lipschitz norm of $\gamma$. 
This assumption is satisfied for example if $\mathrm{rc}^\delta(r)=\sin((\pi/2\delta)r)\1_{r < \delta}+\1_{r\geq \delta}$ and $\mathrm{sc}^\delta(r)=\cos((\pi/2\delta)r)\1_{r < \delta}$ with $\delta\leq \epsilon_0=2\|\gamma\|_\infty/\|\gamma\|_{\Lip}$. 

Let $(B_t^1)_{t\geq 0}$ and $(B_t^2)_{t\geq 0}$ be two $d$-dimensional Brownian motions. We define the coupling $(\bar{X}_t^\delta,\bar{Y}_t^\delta)_{t\geq 0}$ as a process in $\mathbb{R}^{2d}$ satisfying the following nonlinear stochastic differential equation
\begin{equation}\label{eq:coupling_non_linearSDE_approx}
\begin{aligned} 
\rmd \bar{X}_t^\delta&=b*\barmu^\delta_t(\bar{X}_t^\delta)\rmd t+ \mathrm{rc}^\delta(\bar{r}_t^\delta)\rmd B_t^1+\mathrm{sc}^\delta(\bar{r}_t^\delta)\rmd B_t^2\eqsp, && \barmu_t^\delta =\Law(\bar{X}_t^\delta)\eqsp ,
\\ \rmd \bar{Y}_t^\delta&=b*\bar{\nu}^\delta_t(\bar{Y}_t^\delta)\rmd t+ \mathrm{rc}^\delta(\bar{r}_t^\delta)(\Id-2\bar{e}_t^\delta  (\bar{e}_t^\delta)^T ) \rmd B_t^1+\mathrm{sc}^\delta(\bar{r}_t^\delta)\rmd B_t^2\eqsp, && \bar{\nu}_t^\delta =\Law(\bar{Y}_t^\delta)  
\end{aligned}
\end{equation}
with initial condition $(\bar{X}_0^\delta,\bar{Y}_0^\delta)=(x_0,y_0)$. Here we set $\bar{Z}_t^\delta=\bar{X}_t^\delta-\bar{Y}_t^\delta$, $\bar{r}_t^\delta=|\bar{Z}_t^\delta|$ and $\bar{e}_t^\delta=\bar{Z}_t^\delta/\bar{r}_t^\delta$ if $\bar{r}_t^\delta\neq 0$. For $\bar{r}_t^\delta=0$, $\bar{e}_t^\delta$ is some arbitrary unit vector, whose exact choice is irrelevant since $\mathrm{rc}^\delta(0)=0$. 
We note that a refection coupling is obtained if $\mathrm{rc}^{\delta}=1$, whereas a synchronous coupling is obtained if $\mathrm{sc}^{\delta}=0$. This indicates the name of the functions $\mathrm{rc}$ and $\mathrm{sc}$, respectively.

\begin{theorem}
  \label{theo:1}
  Assume \Cref{ass:decomp_W}. Let $\bar{\mu}_0$ and $\bar{\nu}_0$ be probability measures on $(\mathbb{R}^d,\mathcal{B}(\mathbb{R}^d))$ satisfying \Cref{ass:init_distr}.
  Then, $(\bar{X}_t,\bar{Y}_t)_{t \geq 0}$ is a subsequential limit in distribution as $\delta\to 0$ of $\{(\bar{X}_t^{\delta},\bar{Y}_t^{\delta})_{t \geq 0} \, : \, \delta >0\}$ where $(\bar{X}_t)_{t \geq 0}$ and $(\bar{Y}_t)_{t \geq 0}$ are solutions of \eqref{eq:nonlinearSDE} with  initial distribution $\bar{\mu}_0$ and $\bar{\nu}_0$. 
  Further, there exists a process $(r_t)_{t \geq 0}$ defined on the same probability space as $(\bar{X}_t,\bar{Y}_t)_{t\geq 0}$ satisfying 
 for any $t \geq 0$,  $  \absLigne{\bar{X}_t - \bar{Y}_t} \leq r_t$ almost surely and 
  which is a weak solution of
  \begin{equation}
    \label{eq:non_linear_sticky_sde}
\rmd r_t=(\bar{b}(r_t)+2 \|\gamma\|_\infty \PP(r_t >0))\rmd t+2 \1_{(0,\infty)}(r_t) \rmd \tilde{W}_t \eqsp,
\end{equation}
where $(\tilde{W}_t)_{t \geq 0}$ is a one-dimensional Brownian motion. 
\end{theorem}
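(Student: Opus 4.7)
The plan is to construct $(\bar X_t, \bar Y_t)_{t\geq 0}$ as a weak subsequential limit of the approximating couplings $(\bar X_t^\delta, \bar Y_t^\delta)_{t\geq 0}$ defined by \eqref{eq:coupling_non_linearSDE_approx}, and to build the dominating process $(r_t)_{t\geq 0}$ on the same probability space via a one-dimensional SDE comparison. I would proceed in four steps: existence of the approximate coupling with the correct marginals; tightness of $\{(\bar X^\delta,\bar Y^\delta):\delta>0\}$ and extraction of a limit $(\bar X,\bar Y)$; an Itô computation yielding a drift/diffusion inequality for $\bar r_t^\delta=|\bar X_t^\delta-\bar Y_t^\delta|$; and finally construction of $(r_t)$ as a weak limit of auxiliary processes $r_t^\delta\ge \bar r_t^\delta$.

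For the first two steps, condition \eqref{eq:condition_rc_sc2} guarantees that $z\mapsto\mathrm{rc}^\delta(|z|)\,z/|z|$ extends Lipschitz continuously across the origin, so the full diffusion matrix in \eqref{eq:coupling_non_linearSDE_approx} is globally Lipschitz. Combined with the Lipschitz drift $b$, a Banach fixed-point argument on path-space laws in an $L^2$-Wasserstein metric yields existence and uniqueness of a strong solution for each $\delta>0$. Since $(\mathrm{rc}^\delta)^2+(\mathrm{sc}^\delta)^2\equiv 1$, Lévy's characterization shows that the aggregated noise in each marginal is again a $d$-dimensional Brownian motion, so $\bar X_t^\delta$ and $\bar Y_t^\delta$ individually solve \eqref{eq:nonlinearSDE} with initial laws $\bar\mu_0$ and $\bar\nu_0$. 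Standard second-moment estimates via Itô and Gronwall give uniform-in-$\delta$ bounds; Kolmogorov's criterion then produces tightness in $C(\mathbb{R}_+,\mathbb{R}^{2d})$, and any subsequential limit $(\bar X,\bar Y)$ has marginals solving \eqref{eq:nonlinearSDE} by a standard stability argument using Wasserstein-continuity of $\mu\mapsto b*\mu(\cdot)$.

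For the Itô comparison, apply Itô's formula to $\bar r_t^\delta$. The reflection structure $\Id-2\bar e_t^\delta(\bar e_t^\delta)^T$ together with independence of $B^1,B^2$ annihilates the transverse Hessian contribution of $|\cdot|$, giving for $\bar r_t^\delta>0$
\[
\rmd \bar r_t^\delta = \bar e_t^\delta\cdot[b*\bar\mu_t^\delta(\bar X_t^\delta)-b*\bar\nu_t^\delta(\bar Y_t^\delta)]\,\rmd t + 2\mathrm{rc}^\delta(\bar r_t^\delta)\,\rmd W_t^\delta,
\]
with $(W_t^\delta)$ a one-dimensional Brownian motion. Expressing the drift difference as $\mathbb{E}[b(\bar X_t^\delta-\bar X)-b(\bar Y_t^\delta-\bar Y)]$ over an independent copy $(\bar X,\bar Y)$ of $(\bar X_t^\delta,\bar Y_t^\delta)$ and splitting it as $[b(\bar X_t^\delta-\bar X)-b(\bar Y_t^\delta-\bar X)]+[b(\bar Y_t^\delta-\bar X)-b(\bar Y_t^\delta-\bar Y)]$, the decomposition $b(z)=-Lz+\gamma(z)$ together with the one-sided Lipschitz bound \eqref{eq:kappa} dominates the first bracket by $\bar b(\bar r_t^\delta)$. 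In the second bracket the linear contribution has zero expectation because $\mathbb{E}[\bar X]=\mathbb{E}[\bar Y]=0$ by \Cref{ass:init_distr} and the conservation of mean noted after it, while boundedness of $\gamma$ combined with $\gamma(\bar Y_t^\delta-\bar X)=\gamma(\bar Y_t^\delta-\bar Y)$ on $\{\bar X=\bar Y\}$ yields the off-diagonal bound $2\|\gamma\|_\infty\,\PP(\bar r_t^\delta>0)$.

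To upgrade this inequality to an equation for a dominating process, I introduce on the same probability space the auxiliary process $(r_t^\delta)$ satisfying
\[
\rmd r_t^\delta = [\bar b(r_t^\delta)+2\|\gamma\|_\infty\,\PP(r_t^\delta>0)]\,\rmd t + 2\mathrm{rc}^\delta(r_t^\delta)\,\rmd W_t^\delta,\qquad r_0^\delta=\bar r_0^\delta,
\]
whose existence follows from a Schauder-type fixed-point argument on the deterministic function $t\mapsto \PP(r_t^\delta>0)\in[0,1]$ (for fixed $p(\cdot)$ the SDE has Lipschitz coefficients since $\mathrm{rc}^\delta$ is non-degenerate away from $0$). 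The nonlinearity is monotone in the sense that $r_t^\delta\ge \bar r_t^\delta$ implies $\PP(r_t^\delta>0)\ge \PP(\bar r_t^\delta>0)$, so a Yamada--Watanabe-type one-dimensional comparison gives $r_t^\delta\ge \bar r_t^\delta$ almost surely; extracting a joint subsequential weak limit of $(\bar X^\delta,\bar Y^\delta,r^\delta)$ lets $|\bar X_t-\bar Y_t|\le r_t$ pass to the limit. I expect the main obstacle to be identifying $r_t$ as a weak solution of \eqref{eq:non_linear_sticky_sde}: passing $2\mathrm{rc}^\delta(r)\to 2\mathbbm{1}_{(0,\infty)}(r)$ is delicate because $\mathrm{rc}^\delta$ is smooth and strictly positive on $(0,\delta)$, so genuine stickiness at the origin is not automatic. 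One must control the occupation time of $r^\delta$ in the shrinking interval $(0,\delta)$ so that positive mass accumulates on $\{0\}$ in the limit, and verify the associated martingale problem against test functions constant on a neighbourhood of the origin. Once marginal weak convergence is in hand, the nonlinear coefficient $\PP(r_t>0)$ passes to the limit, for instance by working with the time-averaged quantity $t\mapsto\int_0^t\PP(r_s>0)\,\rmd s$, which is continuous in $t$ uniformly in $\delta$.
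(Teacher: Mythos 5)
Your overall strategy (approximate coupling, tightness, Itô bound on the radial process, comparison with a dominating one-dimensional nonlinear process) matches the paper's structure, and several ingredients are correct: the drift split into $[b(\bar X^\delta_t-\bar X)-b(\bar Y^\delta_t-\bar X)]+[b(\bar Y^\delta_t-\bar X)-b(\bar Y^\delta_t-\bar Y)]$, the use of antisymmetry and centred means to kill the linear part of the second bracket, and the observation that the diffusion coefficient $\1_{(0,\infty)}$ has to be recovered from $\mathrm{rc}^\delta$ by controlling occupation times. But there is one genuine and fatal gap, and it is exactly at the step you flag as "delicate."

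You regularize \emph{only} the diffusion coefficient (through $\mathrm{rc}^\delta$) and keep the raw indicator $\1_{(0,\infty)}$ in the drift nonlinearity. Your auxiliary process $r^\delta$ solves $\rmd r^\delta_t=[\bar b(r^\delta_t)+2\|\gamma\|_\infty\,\PP(r^\delta_t>0)]\rmd t+2\mathrm{rc}^\delta(r^\delta_t)\rmd W_t$. For fixed $\delta>0$ this is a regular one-dimensional diffusion: $\mathrm{rc}^\delta$ is Lipschitz, vanishes only at $0$, and the drift at $0$ is $2\|\gamma\|_\infty\,p>0$ whenever $p=\PP(r^\delta_t>0)>0$. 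By Feller's boundary classification, $0$ is then inaccessible from $(0,\infty)$ and instantaneously reflecting from $0$, so $\PP(r^\delta_t=0)=0$ for every $t>0$ and every $\delta>0$. Hence $\PP(r^\delta_t>0)\equiv1$, your Schauder fixed point has the unique trivial solution $p\equiv1$, and the pre-limit drift is simply the constant $\bar b(r)+2\|\gamma\|_\infty$. When you now take $\delta\to0$, the deterministic drift function $\bar b(\cdot)+2\|\gamma\|_\infty$ is preserved by the martingale-problem identification, so the limit $r$ satisfies $\rmd r_t=(\bar b(r_t)+2\|\gamma\|_\infty)\rmd t+2\1_{(0,\infty)}(r_t)\rmd W_t$. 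This is not \eqref{eq:non_linear_sticky_sde}: the limit is sticky at $0$, so $\PP(r_t>0)<1$, and the required drift $\bar b(r_t)+2\|\gamma\|_\infty\PP(r_t>0)$ is strictly smaller. Your closing sentence, that "the nonlinear coefficient $\PP(r_t>0)$ passes to the limit" via the time-averaged quantity $\int_0^t\PP(r^\delta_s>0)\rmd s$, is where the argument breaks: that integral equals $t$ for every $\delta>0$, and its limit is $t$, not $\int_0^t\PP(r_s>0)\rmd s$. Weak convergence of $r^\delta$ to $r$ does not transport $\PP(r^\delta_t>0)$ to $\PP(r_t>0)$ because $\1_{(0,\infty)}$ is not continuous, and here the discrepancy is maximal.

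The paper avoids this by introducing a \emph{second}, independent regularization parameter $\epsilon$, replacing the indicator $\1_{(0,\infty)}$ in the drift nonlinearity by a Lipschitz increasing cutoff $\mathrm{rc}^\epsilon$, and defining the auxiliary process $r^{\delta,\epsilon}$ of \eqref{eq:nonlinearonedimSDE_approx} with the drift $\bar b+2\|\gamma\|_\infty\int\mathrm{rc}^\epsilon\,\rmd P_t^{\delta,\epsilon}$. With $\epsilon$ fixed, $\int\mathrm{rc}^\epsilon\,\rmd P_t^{\delta,\epsilon}$ can genuinely be strictly less than $1$ because $\mathrm{rc}^\epsilon$ vanishes near $0$, where $r^{\delta,\epsilon}$ accumulates mass. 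The limits are then taken in the order $\delta\to0$ (Lemma~\ref{lem:existence_stickySDE_step1}, producing the sticky diffusion with drift $\bar b+2\|\gamma\|_\infty\,P_t(\mathrm{rc}^\epsilon)$) and only afterward $\epsilon\to0$ (Lemma~\ref{lem:existence_stickySDE_step2}, monotone convergence $\mathrm{rc}^\epsilon\uparrow\1_{(0,\infty)}$, which gives $P_t(\mathrm{rc}^\epsilon)\uparrow\PP(r_t>0)$). Your proposal has no analogue of the $\epsilon$-parameter, so it cannot produce the nonlinear drift coefficient. A minor additional slip: you invoke \eqref{eq:condition_rc_sc2} to justify Lipschitz-continuity of the diffusion matrix in \eqref{eq:coupling_non_linearSDE_approx}, but \eqref{eq:condition_rc_sc2} is a \emph{lower} bound on $\mathrm{rc}^\delta$ used only for the drift estimate (to trade $\|\gamma\|_{\Lip}|x-y|$ against $2\|\gamma\|_\infty\mathrm{rc}^\epsilon(|x-y|)$); the Lipschitz property of $z\mapsto\mathrm{rc}^\delta(|z|)z/|z|$ follows instead from $\mathrm{rc}^\delta$ being Lipschitz with $\mathrm{rc}^\delta(0)=0$.
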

\begin{proof}
The proof is postponed to \Cref{sec:proof_theo1}.
\end{proof}

Therefore, next we study sticky nonlinear SDEs given by \eqref{eq:one-dim_stickydiff}. 

\section{Nonlinear SDEs with sticky boundaries} \label{sec:non-linear_sticky_diff}
Consider nonlinear SDEs with a sticky boundary at $0$ of the form 
\begin{align} \label{eq:one-dimSDE_general}
\rmd r_t=(\tilde{b}(r_t)+P_t(g))\rmd t+2\1_{(0,\infty)}(r_t)\rmd W_t\eqsp, && P_t=\Law(r_t) \eqsp ,
\end{align}
where $\tilde{b}:[0,\infty)\to\mathbb{R}$ is some continuous function and $P_t(g)=\int_{\mathbb{R}_+}g(r)P_t(\rmd r)$ for some measurable function $g:[0,\infty)\to \mathbb{R}$.

In this section we establish existence, uniqueness in law and comparison results for solutions of \eqref{eq:one-dim_stickydiff}.
Consider a filtered probability space $(\Omega, \mathcal{A}, (\mathcal{F}_t)_{t\ge 0},P)$ and a probability measure $\mu$ on $\mathbb{R}_+$. 
We call an $(\mathcal{F}_t)_{t\ge 0}$ adapted process $(r_t,W_t)_{t\geq 0}$ a \textit{weak solution} of \eqref{eq:one-dimSDE_general} 
with initial distribution $\mu$ if the following holds: $\mu=P\circ r_0^{-1}$, the process $(W_t)_{t\geq 0}$ is a one-dimensional $(\mathcal{F}_t)_{t\ge 0}$ Brownian motion w.r.t. $P$, the process $(r_t)_{t\geq 0}$ is non-negative and continuous, and satisfies almost-surely
\begin{align*}
r_t-r_0=\int_0^t \Big(\tilde{b}(r_s)+P_s(g)\Big)\rmd s+\int_0^t 2\cdot \1_{(0,\infty)}(r_s)\rmd W_s\eqsp, \qquad \text{ for $t\in\mathbb{R}_+$}\eqsp.
\end{align*}

Note that the sticky nonlinear SDE given in \eqref{eq:one-dim_stickydiff} is a special case of \eqref{eq:one-dimSDE_general} with $g(r)=a\1_{(0,\infty)}(r)$
since $\PP(r_t >0)=\int_{\mathbb{R}_+}\1_{(0,\infty)}(y) P_t(\rmd y)$ with $P_t=P\circ r_t^{-1}$.

\subsection{Existence, uniqueness in law, and a comparison result} \label{sec:existence}

Let $\mathbb{W}=\mathcal{C}(\mathbb{R}_+,\mathbb{R})$ be the space of continuous functions endowed with the topology of uniform convergence on compact sets, and let $\mathcal{B}(\mathbb{W})$ be the corresponding Borel $\sigma$-algebra.
Suppose $(r_t, W_t)_{t\geq 0}$ is a solution of \eqref{eq:one-dimSDE_general} on $(\Omega,\mathcal{A},P)$, then we denote by $\PP=P\circ r^{-1}$ its law 
on $(\mathbb{W},\mathcal{B}(\mathbb{W}))$. 
We say that \textit{uniqueness in law} holds for \eqref{eq:one-dimSDE_general} if for any two solutions $(r_t^1)_{t\geq 0}$ and $(r_t^2)_{t\geq 0}$ of \eqref{eq:one-dimSDE_general} with the same initial law, the distributions of $(r_t^1)_{t\geq 0}$ and $(r_t^2)_{t\geq 0}$ on $(\mathbb{W},\mathcal{B}(\mathbb{W}))$ are equal. 

We impose the following assumptions on  $\tilde{b}$, $g$ and the initial condition $\mu$:
\begin{assumptionH} 
\label{H1b}  $\tilde{b}$ is a Lipschitz continuous function with Lipschitz constant $\tilde{L}$ and $\tilde{b}(0)=0$.
\end{assumptionH}
\begin{assumptionH}
\label{H1g}   $g$ is a left-continuous, non-negative, non-decreasing and bounded function.
\end{assumptionH}
\begin{assumptionH}
\label{H2}
There exists $p>2$ such that the $p$-th order moment of the law $\mu$ is finite.
\end{assumptionH}
Note that for \eqref{eq:one-dim_stickydiff}, the condition \Cref{H1g} is satisfied if $a$ is a positive constant.
It follows from \Cref{H1b} and \Cref{H1g} that there is a constant $C<\infty$ such that for all $r\in\mathbb{R}_+$, the following linear growth condition  holds,
\begin{align} \label{eq:nonlinear_lineargrowthcondition}
 \tilde{b}(r)+\sup_{ p\in\mathcal{P}(\mathbb{R}_+)}p(g)\leq C(1+|r|)\eqsp.
\end{align}

In order to get a solution to \eqref{eq:one-dimSDE_general} on $\mathbb{R}_+$ we extend the function $\tilde{b}$ to $\mathbb{R}$ by setting $\tilde{b}(r)=0$ for $r<0$.
Note that any solution $(r_t)_{t\geq 0}$ with initial distribution supported on $\mathbb{R}_+$ satisfies almost surely $r_t\geq 0$ for all $t\geq 0$. 
This follows from the It\={o}-Tanaka formula applied to $F(r)=\1_{(-\infty,0)}(r) r$, cf. \cite[Chapter 6, Theorem 1.2 and Theorem 1.7]{ReYo99}. Indeed
\begin{align*}
\1_{(-\infty,0)}(r_t)r_t&=\1_{(-\infty,0)}(r_0)r_0+\int_0^t \1_{(-\infty,0)}(r_s)\rmd r_s - \frac{1}{2}\ell_t^{0-}(r)
\\ &=\int_0^t \1_{(-\infty,0)}(r_s)(\tilde{b}(r_s)+P_s(g))\rmd s+\int_0^t \1_{(-\infty,0)}2 \1_{(0,\infty)}(r_s)\rmd W_s- \frac{1}{2}\ell_t^{0-}(r)
\\ & =\int_0^t\1_{(-\infty,0)}(r_s)P_s(g)\rmd s \ge 0\eqsp, 
\end{align*}
where $\ell_t^{0-}(r)$ is the left local time at $0$, which is given by $\ell_t^{0-}(r)=\lim_{\epsilon \downarrow 0} \epsilon^{-1}\int_0^t\1_{\{-\epsilon\le r_s\le 0\}}\rmd [r]_s$ and which vanishes, since $\rmd [r]_s=\1_{(0,\infty)}(r_s)\rmd s$. 

Existence and uniqueness in law of \eqref{eq:one-dimSDE_general} is a direct consequence of a stronger result that we now introduce. To study existence and uniqueness and to compare two solutions of \eqref{eq:one-dimSDE_general} with different drifts, we establish existence of a synchronous coupling of two copies of \eqref{eq:one-dimSDE_general},
\begin{equation} \label{eq:two_onedim_stickydiff}
\begin{aligned}
\rmd r_t&=(\tilde{b}(r_t)+P_t(g))\rmd t+2\1_{(0,\infty)}(r_t)\rmd W_t\eqsp, 
\\ \rmd s_t&=(\hat{b}(s_t)+\hat{P_t}(h))\rmd t+2\1_{(0,\infty)}(s_t)\rmd W_t\eqsp, \qquad \text{Law}(r_0,s_0)=\eta\eqsp,
\end{aligned}
\end{equation}
where $P_t=P\circ r_t^{-1}$, $\hat{P}_t=P\circ s_t^{-1}$,  $(W_t)_{t\geq 0}$ is a Brownian motion and where $\eta\in\Gamma(\mu, \nu)$ for $\mu, \nu\in\mathcal{P}(\mathbb{R}_+)$.

\begin{theorem} \label{thm:existence_comparison}
Suppose that $(\tilde{b},g)$ and $(\hat{b},h)$ satisfy \Cref{H1b} and \Cref{H1g}. Let $\eta\in\Gamma(\mu,\nu)$ where the probability measures $\mu$ and $\nu$ on $\mathbb{R}_+$ satisfy \Cref{H2}.
Then there exists a weak solution $(r_t,s_t)_{t\geq 0}$ of the sticky stochastic differential equation \eqref{eq:two_onedim_stickydiff} with initial distribution $\eta$ defined on a probability space $(\Omega, \mathcal{A},P)$ with values in $(\mathbb{W}\times\mathbb{W},\mathcal{B}(\mathbb{W})\otimes\mathcal{B}(\mathbb{W}))$.
If additionally,
\begin{align*}
&\tilde{b}(r)\leq \hat{b}(r)\quad and \quad g(r)\leq h(r) && \text{ for any } r\in\mathbb{R}_+, 
 \text{ and }
\\ & P[r_0\leq s_0]=1, 
\end{align*}
then $P[r_t\leq s_t \text{ for all } t\geq 0]=1$. 
\end{theorem}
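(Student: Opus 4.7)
The plan is to reduce to standard McKean-Vlasov SDEs by regularizing the discontinuous diffusion coefficient, and then to pass to the limit. Pick smooth non-decreasing Lipschitz functions $\sigma_n:\mathbb{R}\to[0,1]$ satisfying $\sigma_n\equiv 0$ on $(-\infty,0]$, $\sigma_n\equiv 1$ on $[1/n,\infty)$, and $\sigma_n^2\uparrow \1_{(0,\infty)}$. For each $n$, consider the coupled McKean-Vlasov system driven by a common Brownian motion $W$,
\begin{align*}
\rmd r^n_t &= \bigl(\tilde b(r^n_t) + P^n_t(g)\bigr)\,\rmd t + 2\sigma_n(r^n_t)\,\rmd W_t, \\
\rmd s^n_t &= \bigl(\hat b(s^n_t) + \hat P^n_t(h)\bigr)\,\rmd t + 2\sigma_n(s^n_t)\,\rmd W_t,
\end{align*}
with $(r^n_0,s^n_0)\sim\eta$. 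Since all state coefficients are globally Lipschitz, existence and pathwise uniqueness of a strong solution follow from the classical Sznitman fixed-point contraction on the Wasserstein space of paths over each finite horizon, with a priori $L^p$-bounds furnished by \Cref{H2}, the linear growth bound \eqref{eq:nonlinear_lineargrowthcondition} and the Burkholder-Davis-Gundy inequality.

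To produce a weak solution of \eqref{eq:two_onedim_stickydiff} I would establish tightness of $\{(r^n,s^n)\}$ on $\mathcal{C}(\mathbb{R}_+,\mathbb{R}^2)$ via uniform fourth-moment bounds on paths and increments, extract a weakly convergent subsequence, and invoke Skorokhod's representation to identify the limit $(r,s)$ as a solution of the martingale problem for \eqref{eq:two_onedim_stickydiff} on a possibly enlarged probability space carrying a limiting Brownian motion. The convergence $P^n_s(g)\to P_s(g)$ in the drift is secured by the left-continuity, monotonicity and boundedness of $g$ in \Cref{H1g}. For the comparison claim, I would argue at the regularized level via a monotone Picard iteration: with iterates produced by fixing drift corrections $c^{n,k}_t = \PE[g(r^{n,k-1}_t)]$, $\hat c^{n,k}_t = \PE[h(s^{n,k-1}_t)]$ and solving the resulting linear SDEs driven by the common noise $W$, the induction hypothesis $r^{n,k-1}_t\leq s^{n,k-1}_t$ a.s.\ combined with the monotonicity of $g$ and the bound $g\leq h$ gives $c^{n,k}_t \leq \hat c^{n,k}_t$, so $\tilde b(x)+c^{n,k}_t \leq \hat b(x)+\hat c^{n,k}_t$ for every $x$, and the Ikeda-Watanabe pathwise comparison theorem for one-dimensional SDEs with Lipschitz coefficients and identical driving noise yields $r^{n,k}_t\leq s^{n,k}_t$ a.s. Passing first to the Picard fixed point and then to the weak limit $n\to\infty$ preserves the inequality, since $\{r_t\leq s_t\text{ for all }t\geq 0\}$ is closed in $\mathcal{C}(\mathbb{R}_+,\mathbb{R}^2)$.

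The main obstacle I anticipate is the identification of the sticky diffusion coefficient $\1_{(0,\infty)}$ in the limiting martingale problem: the limit $(r,s)$ spends a positive Lebesgue amount of time at zero, whereas $\sigma_n^2$ disagrees with $\1_{(0,\infty)}$ precisely in a shrinking neighbourhood of the origin that the approximating processes visit with non-negligible occupation density. The cleanest resolution is to test the martingale problem first against $\mathcal{C}^2$ functions vanishing near the origin, where the convergence $\sigma_n^2(r^n_\cdot)\to\1_{(0,\infty)}(r_\cdot)$ is unproblematic, and then to extend to general test functions by an It\^o-Tanaka computation that pins down the boundary occupation time of $r$ at zero. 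Once the limit is correctly identified as a weak solution of \eqref{eq:two_onedim_stickydiff}, the ordering passes through by closedness.
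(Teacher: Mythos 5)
The central gap is at the very first step. You regularize only the diffusion coefficient with $\sigma_n$ and leave the law-dependent drift $P_t(g)$ untouched, then claim existence and pathwise uniqueness of the $n$-th regularized McKean--Vlasov system ``follow from the classical Sznitman fixed-point contraction'' since the state coefficients are Lipschitz. But the Sznitman argument needs Lipschitz continuity of the \emph{law}-dependence too, and under \Cref{H1g} the function $g$ is only left-continuous, non-negative, non-decreasing and bounded. The functional $\mu\mapsto\mu(g)$ is therefore not Lipschitz with respect to any Wasserstein metric, the contraction estimate fails, and neither existence nor uniqueness of your regularized nonlinear SDE is secured. The same defect propagates into the comparison argument: your Picard scheme $c^{n,k}_t=\PE[g(r^{n,k-1}_t)]$ is not a contraction, so the iterates need not converge to a fixed point, and ``passing first to the Picard fixed point'' is an unjustified step.

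The paper circumvents both problems by introducing a second, monotone Lipschitz approximation $g^m\uparrow g$, $h^m\uparrow h$ (condition \Cref{H5}): at each level $(n,m)$ \emph{all} coefficients are Lipschitz in state and measure, so existence and uniqueness are available, and the Ikeda--Watanabe-type comparison of Lemma \ref{lemma:modification_watanabe_nonlinear} goes through because the Gr\"onwall bound \eqref{eq:modificationwatanabe_nonlinear} explicitly uses the Lipschitz constant $K_m$ of $g^m$. The ordering is then passed through both limits $n\to\infty$ (Lemma \ref{lem:existence_stickySDE_step1}) and $m\to\infty$ (Lemma \ref{lem:existence_stickySDE_step2}) by the Portmanteau theorem. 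Without the $g$-regularization your proposal cannot get off the ground. The remainder of your outline --- tightness via moment bounds, Skorokhod representation, identification of the sticky martingale problem, closedness of the ordering set --- does mirror the paper's structure, and the obstacle you flag in identifying $\1_{(0,\infty)}$ at the boundary is genuine; but the paper settles it via the Doob--Meyer decomposition combined with a local-time argument rather than the restricted-test-function scheme you sketch, which as stated does not explain how to recover the occupation density at zero. Still, that is secondary to the missing approximation of $g$.
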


\begin{proof}
  The proof is postponed to \Cref{sec:proof_nonlinear_existence}.
\end{proof}

\begin{remark} 
 We note that by the comparison result we can deduce uniqueness in law for the solution of \eqref{eq:one-dimSDE_general}.
\end{remark}

\subsection{Invariant measures and phase transition for \texorpdfstring{\eqref{eq:one-dim_stickydiff}}{}}
\label{subsection_stationarydistr}
Under the following conditions on the drift function $\tilde{b}$ we exhibit a phase transition phenomenon for the model \eqref{eq:one-dim_stickydiff}, where as compared to \eqref{eq:one-dimSDE_general} we focus on the case $P_t(g)=a \PP[r_t>0]$.

\begin{theorem} \label{thm:one_dim_SDE_statdistr_general}
Suppose \Cref{H1b} holds and $\limsup_{r\to \infty}(r^{-1}\tilde{b}(r))<0$.
Then, the Dirac measure at $0$, $\delta_0$, is an invariant probability measure for \eqref{eq:one-dim_stickydiff}. If there exists $p\in(0,1)$ solving 
\begin{align} \label{eq:defintion_p}
(2/a)=(1-p)I(a,p)
\end{align}
with
\begin{align} \label{eq:definition_I(a,p)}
I(a,p)=\int_0^\infty \exp\Big(\frac{1}{2}apx+\frac{1}{2}\int_0^x \tilde{b}(r)\rmd r\Big)\rmd x\eqsp,
\end{align}
then the probability measure 
$\pi$ on $[0,\infty)$  given by 
\begin{align} \label{eq:definition_stationarydistr}
\pi(\rmd x)\propto\Big(\frac{2}{ap}\delta_0(\rmd x)+\exp\Big(\frac{1}{2}apx+\frac{1}{2}\int_0^x \tilde{b}(r)\rmd r\Big) \lambda_{(0,\infty)}(\rmd x)\Big)
\end{align}
is another invariant probability measure for \eqref{eq:one-dim_stickydiff}.
\end{theorem}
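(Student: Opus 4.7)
My plan is to exploit a nonlinear-to-linear reduction: once $p := \PP(r_t > 0)$ is constant in $t$, the drift $\tilde b(r_t) + a\PP(r_t > 0) = \tilde b(r_t) + ap$ becomes deterministic in $r_t$, so invariance of a nonlinear stationary law reduces to invariance of a linear sticky SDE together with a self-consistency relation fixing $p$. For the $\delta_0$ claim, I would simply check that $r_t \equiv 0$ is a weak solution of \eqref{eq:one-dim_stickydiff} with $r_0 = 0$, since both $\tilde b(0) + a \PP(r_t>0)=0$ and $2\1_{(0,\infty)}(0)=0$, and then invoke uniqueness in law from \Cref{thm:existence_comparison} to conclude $P_t = \delta_0$ for every $t \geq 0$.

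For the second invariant measure, fix $p \in (0,1)$ satisfying \eqref{eq:defintion_p} and consider the linear sticky SDE
\begin{equation*}
\rmd s_t = (\tilde b(s_t) + ap)\rmd t + 2\1_{(0,\infty)}(s_t)\rmd W_t.
\end{equation*}
The first step is to verify that $\pi$ from \eqref{eq:definition_stationarydistr} is invariant for this linear equation via the generator characterization of sticky diffusions: the extended generator acts as $Lf(r) = 2f''(r) + (\tilde b(r) + ap)f'(r)$ on $(0,\infty)$ and as $Lf(0) = ap\, f'(0)$ at the sticky point. The density $\rho(x) = \exp(\tfrac12 apx + \tfrac12\int_0^x \tilde b)$ appearing in \eqref{eq:definition_stationarydistr} solves $2\rho' = (\tilde b + ap)\rho$ with $\rho(0) = 1$, so integration by parts against any $f \in C_c^2([0,\infty))$ yields $\int_0^\infty Lf \, \rho\, \rmd x = -2f'(0)$, which exactly cancels the atomic contribution $(2/(ap)) \cdot ap\, f'(0) = 2f'(0)$. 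Hence $\int Lf\, \rmd\pi = 0$ and $\pi$ is stationary for the linear equation.

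The second step is to transfer this to the nonlinear equation: the relation \eqref{eq:defintion_p} is precisely the normalization forcing $\pi((0,\infty)) = p$, so any weak solution of \eqref{eq:one-dim_stickydiff} with $r_0 \sim \pi$ initially satisfies $\PP(r_0 > 0) = p$ and hence solves the linear equation at time $0$; invariance of $\pi$ under the linear semigroup propagates $\PP(r_t > 0) = p$ to all $t \geq 0$, so the nonlinear and linear drifts coincide pathwise and $\pi$ is invariant for \eqref{eq:one-dim_stickydiff} as well. The main technical obstacle is making the generator computation rigorous at the sticky boundary: the atomic mass $2/(ap)$ and the density value $\rho(0)=1$ must be matched through an It\^o--Tanaka argument involving the local time at $0$ (equivalently, the positive Lebesgue occupation time of $\{r_t = 0\}$), which can be handled by smooth approximation of $\1_{(0,\infty)}$ and passing to the limit, as is standard in the sticky SDE literature; alternatively, one can formulate everything in terms of the martingale problem that underlies \Cref{thm:existence_comparison}.
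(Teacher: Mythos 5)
Your proposal is correct, and it follows essentially the same reduction as the paper: for an invariant law $\pi$ the quantity $p = \pi((0,\infty))$ is constant in time, so the nonlinear drift collapses to the deterministic drift $\tilde b(r) + ap$, and the problem becomes one of identifying the stationary law of a \emph{linear} sticky SDE together with the self-consistency relation $\pi((0,\infty)) = p$, which is exactly \eqref{eq:defintion_p} once you clear denominators. Where you diverge is in how the invariance of $\pi$ for the linear sticky equation is established: the paper simply cites \cite[Lemma~24]{EbZi19} for the form of the stationary measure of a linear sticky SDE, whereas you reconstruct it from scratch via the extended-generator identity $L f(r) = 2 f''(r)\1_{(0,\infty)}(r) + (\tilde b(r) + ap) f'(r)$, the first-order ODE $2\rho' = (\tilde b + ap)\rho$ for the Lebesgue part, and an integration by parts that matches the boundary term $-2f'(0)\rho(0)$ against the atomic contribution $(2/(ap))\cdot ap\, f'(0)$. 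That calculation is correct and makes the origin of the atom's weight $2/(ap)$ transparent, which the paper leaves implicit. Your sufficiency argument (construct the stationary solution of the linear equation with initial law $\pi$, observe that $\PP(r_t>0)\equiv p$ so it is simultaneously a stationary solution of the nonlinear equation) is also cleaner than the paper's exposition, which is phrased more in the ``necessary'' direction. The one place your write-up is a bit soft is where you defer to ``an It\^o--Tanaka argument'' or ``the martingale problem'' to promote infinitesimal invariance ($\int Lf\,\rmd\pi = 0$ on $\mathcal C_c^2$) to genuine invariance of the semigroup; this does require either uniqueness of the martingale problem or a direct verification, and the paper sidesteps exactly this by appealing to the ready-made result in \cite{EbZi19}. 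You flag the gap honestly, and invoking \Cref{thm:existence_comparison} (uniqueness in law) as you suggest is indeed the right way to close it, so the plan is sound even if not fully executed at that step.
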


\begin{proof}
  The proof is postponed to \Cref{sec:proof_nonlinear_invmeas}.
\end{proof}

In our next result we specify a necessary and sufficient condition for the existence of a solution of \eqref{eq:defintion_p}.
\begin{proposition} \label{thm:one_dim_SDE_stationarydistr}
Suppose that $\tilde{b}(r)$ in \eqref{eq:one-dim_stickydiff} is of the form $\tilde{b}(r)=-\tilde{L}r$ with constant a $\tilde{L}>0$. 
If $a/\sqrt{\tilde{L}} > 2/\sqrt{\pi}$, then there exists a unique $\hat{p}$ solving \eqref{eq:definition_I(a,p)}. In particular, the Dirac measure $\delta_0$ and the measure $\pi$ given in \eqref{eq:definition_stationarydistr} with $\hat{p}$ are invariant measures for \eqref{eq:one-dim_stickydiff}.
On the other hand, if $a/\sqrt{\tilde{L}}\leq 2/\sqrt{\pi}$, then there exists no $\hat{p}$ solving \eqref{eq:definition_I(a,p)}. 
\end{proposition}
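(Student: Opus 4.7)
The plan is to reduce \eqref{eq:defintion_p} to a scalar equation of the form $G(\alpha)=\beta$ for an explicit function $G$, and then verify that $G$ is strictly increasing with $G(0)=2/\sqrt{\pi}$. Substituting $\tilde b(r)=-\tilde L r$ into \eqref{eq:definition_I(a,p)} and completing the square with $\alpha := ap/(2\sqrt{\tilde L})$ gives
\[
 I(a,p) \;=\; \int_0^\infty \exp\!\Big(\tfrac{ap}{2}x - \tfrac{\tilde L}{4}x^2\Big)\,\rmd x \;=\; \tfrac{\sqrt{\pi}}{\sqrt{\tilde L}}\, e^{\alpha^2}\bigl(1+\mathrm{erf}(\alpha)\bigr) \;=\; \tfrac{2}{\sqrt{\tilde L}}\, M(\alpha),
\]
where $M(\alpha):= (\sqrt{\pi}/2)\,e^{\alpha^2}(1+\mathrm{erf}(\alpha))$. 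A direct differentiation shows $M(0)=\sqrt{\pi}/2$ and $M'(\alpha)=2\alpha M(\alpha)+1$; this Riccati-type identity will drive the remainder of the argument.

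Setting $\beta := a/\sqrt{\tilde L}$ and $\alpha = \beta p/2$, a short manipulation of \eqref{eq:defintion_p} shows that it is equivalent to
\[
 G(\alpha) \;:=\; 2\alpha + \frac{1}{M(\alpha)} \;=\; \beta,
\]
with the side constraint $\alpha\in(0,\beta/2)$, which corresponds exactly to $p\in(0,1)$. From $M(0)=\sqrt{\pi}/2$ one reads off $G(0)=2/\sqrt{\pi}$, and since $M(\alpha)\to\infty$ as $\alpha\to\infty$ we have $G(\alpha)\to\infty$. The proposition will follow once $G$ is shown to be strictly increasing on $[0,\infty)$.

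The crux is therefore the monotonicity of $G$, and this is where I expect the main difficulty. Using $M'=2\alpha M+1$ and the relation $1/M = G-2\alpha$, I would derive the compact form
\[
 G'(\alpha) \;=\; 2 - \frac{M'(\alpha)}{M(\alpha)^2} \;=\; 2 - G(\alpha)\bigl(G(\alpha)-2\alpha\bigr),
\]
so that $G'(0)=2-4/\pi>0$. Assume for contradiction that $\alpha_0>0$ is the smallest zero of $G'$; then $G'>0$ on $[0,\alpha_0)$, forcing $G''(\alpha_0)\le 0$. Differentiating the identity above and using $G'(\alpha_0)=0$ yields
\[
 G''(\alpha_0) \;=\; -G'(\alpha_0)\bigl(2G(\alpha_0)-2\alpha_0\bigr) + 2G(\alpha_0) \;=\; 2G(\alpha_0) \;>\; 0,
\]
a contradiction. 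Hence $G'>0$ throughout $[0,\infty)$.

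With strict monotonicity in hand, the rest is routine. If $\beta>2/\sqrt{\pi}=G(0)$, the intermediate value theorem produces a unique $\alpha_*>0$ with $G(\alpha_*)=\beta$; positivity of $1/M$ gives $G(\alpha)>2\alpha$, so automatically $\alpha_*<\beta/2$ and $\hat p:=2\alpha_*/\beta\in(0,1)$ is the unique solution of \eqref{eq:defintion_p}, and \Cref{thm:one_dim_SDE_statdistr_general} then yields the claimed invariant measure $\pi$ together with $\delta_0$. Conversely, if $\beta\leq 2/\sqrt{\pi}$ then $G(\alpha)>G(0)\geq\beta$ for every $\alpha>0$, so no $\hat p\in(0,1)$ can exist. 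The only genuinely delicate ingredient is the monotonicity of $G$: neither $M$ nor $1/M$ is monotone in a way yielding it from elementary inequalities, and the ODE-based contradiction above is what makes the argument go through cleanly.
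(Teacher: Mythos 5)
Your proof is correct, and it takes a genuinely different route from the paper's. Both arguments hinge on the same Riccati-type identity for the Gaussian integral---the paper derives $I'(p)=\frac{a^2p}{2\tilde L}I(p)+\frac{a}{\tilde L}$ while you use the equivalent $M'(\alpha)=2\alpha M(\alpha)+1$---but they exploit it differently. The paper works directly with $\hat I(a,p)=(1-p)I(a,p)$ and handles the two regimes with separate arguments: in the subcritical case it shows $\hat I(a,p)<\hat I(a,0)\le 2/a$ via an explicit chain of elementary inequalities ($1+x\le e^x$ etc.), and in the supercritical case it gets existence from the intermediate value theorem and uniqueness by evaluating $\hat I'$ at solutions (which simplifies to $-2/(a(1-p))+a/\tilde L$), then deriving a contradiction from the two smallest roots, with a separate second-derivative case when $\hat I'(p_1)=0$. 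Your approach rescales the variable to $\alpha=\beta p/2$ so that \eqref{eq:defintion_p} collapses to the single scalar equation $G(\alpha)=\beta$ with $G(\alpha)=2\alpha+1/M(\alpha)$, computes $G(0)=2/\sqrt{\pi}$, and proves $G$ is globally strictly increasing via the autonomous relation $G'=2-G(G-2\alpha)$ and the first-zero contradiction $G''(\alpha_0)=2G(\alpha_0)>0$. This single monotonicity statement then delivers existence, uniqueness, and non-existence in one stroke, and the constraint $p\in(0,1)$ falls out automatically from $G(\alpha)>2\alpha$. What your argument buys is unification and transparency---the critical threshold $2/\sqrt{\pi}$ appears simply as $G(0)$; what the paper's buys is that it never leaves the original parametrization and each half of the dichotomy is handled by short, self-contained computations.
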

\begin{proof}
  The proof is postponed to \Cref{sec:proof_nonlinear_invmeas}.
\end{proof}

\subsection{Convergence for sticky nonlinear SDEs of the form \texorpdfstring{\eqref{eq:one-dim_stickydiff}}{}}
Under \Cref{H1b} and the following additional assumption we establish geometric convergence in Wasserstein distance for the marginal law of the solution $r_t$ of \eqref{eq:one-dim_stickydiff} to the Dirac measure at $0$:
\begin{assumptionH} \label{A2}
It holds $\limsup_{r\to \infty}(r^{-1}\tilde{b}(r))<0$ and
$a\leq (2\int_0^{\tilde{R}_1}\exp\big(\frac{1}{2}\int_0^s \tilde{b}(u)_+\rmd u\big)ds)^{-1}$
with $\tilde{R}_0, \tilde{R}_1 $ defined by
\begin{align}
\tilde{R}_0&=\inf\{s\in\mathbb{R}_+: \tilde{b}(r)\leq0 \ \forall r\geq s\} \eqsp \qquad \text{and} \label{eq:definition_R_0}
\\ \tilde{R}_1&=\inf\{s\geq \tilde{R}_0: -\frac{s}{r}(s-\tilde{R}_0) \tilde{b}(r)\geq 4 \ \forall r\geq s\}\eqsp. \label{eq:definition_R_1}
\end{align}
\end{assumptionH}

\begin{theorem} \label{thm:one-dim_stickydiff}
Suppose \Cref{H1b} and \Cref{A2} holds. Then, the Dirac measure at $0$, $\delta_0$, is the unique invariant probability measure of \eqref{eq:one-dim_stickydiff}. Moreover if $(r_s)_{s\geq 0}$ is a solution of \eqref{eq:one-dim_stickydiff} with $r_0$ distributed with respect to an arbitrary probability measure $\mu$ on $(\mathbb{R}_+,\mathcal{B}(\mathbb{R}_+))$, it holds for all $t\geq 0 $, 
\begin{align} \label{eq:stickydiff_estimateinthm}
\mathbb{E}[f(r_t)]\leq \rme^{-ct}\mathbb{E}[f(r_0)]\eqsp,
\end{align} 
where $f$ and $c$ are given by \eqref{eq:definition_f} and \eqref{eq:definition_c} with $a$ and $\tilde{b}$ given in \eqref{eq:one-dim_stickydiff} and $\tilde{R}_0$ and $\tilde{R}_1$ given in \eqref{eq:definition_R_0} and \eqref{eq:definition_R_1}. 
\end{theorem}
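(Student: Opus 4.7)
The plan is to construct a concave Lyapunov function $f$ and identify a rate $c$ such that the extended generator of \eqref{eq:one-dim_stickydiff} satisfies a pointwise contraction inequality, and then apply the It\^o-Tanaka formula together with a Gronwall argument. The function $f$ in \eqref{eq:definition_f} is, following \cite{Eb16}, a $C^1$ concave strictly increasing function on $[0,\infty)$ with $f(0)=0$ and piecewise $C^2$, built from the profile $\exp\bigl(\tfrac{1}{2}\int_0^{\cdot} \tilde{b}(u)_+\rmd u\bigr)$ on $[0,\tilde{R}_0]$, a suitable interpolation on $[\tilde{R}_0,\tilde{R}_1]$, and affine extension beyond $\tilde{R}_1$. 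Together with $c$ from \eqref{eq:definition_c}, it is tailored so that the key ODE inequality
\begin{equation}\label{eq:proposal_keyineq}
f'(r)\tilde{b}(r) + 2f''(r) + a\, f'(0) \leq -c\, f(r)
\end{equation}
holds for almost every $r>0$. The smallness of $a$ in \Cref{A2} is precisely what makes \eqref{eq:proposal_keyineq} feasible: the term $af'(0)$ captures the worst-case contribution of the nonlinear drift $a\PP(r_t>0)$, which never exceeds $af'(0)$ by concavity of $f$.

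With $f$ and $c$ in hand, apply the It\^o-Tanaka formula to $f(r_t)$. Since $f$ is $C^1$ with absolutely continuous derivative and $\rmd[r]_s = 4\,\1_{(0,\infty)}(r_s)\rmd s$ generates no local time at $0$ nor at the interior break points of $f''$, one obtains
\begin{equation*}
f(r_t) - f(r_0) = \int_0^t f'(r_s)\bigl(\tilde{b}(r_s)+a\PP(r_s>0)\bigr)\rmd s + 2\int_0^t f''(r_s)\1_{(0,\infty)}(r_s)\rmd s + M_t,
\end{equation*}
where $M_t$ is a true martingale because $f'$ is bounded. Taking expectations, using $\tilde{b}(0)=0$ and $f(0)=0$, and substituting \eqref{eq:proposal_keyineq} on $\{r_s>0\}$ yields
\begin{equation*}
\frac{\rmd}{\rmd s}\PE[f(r_s)] \leq -c\,\PE[f(r_s)] + a\PP(r_s>0)\bigl(\PE[f'(r_s)]-f'(0)\bigr) \leq -c\,\PE[f(r_s)],
\end{equation*}
where the final step invokes $f'(r)\leq f'(0)$ from concavity. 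Gronwall then delivers \eqref{eq:stickydiff_estimateinthm}. Uniqueness of $\delta_0$ as invariant measure follows at once: any invariant probability measure $\pi$ with $\int f\rmd\pi<\infty$ satisfies $\int f\rmd\pi = \PE_\pi[f(r_t)] \leq \rme^{-ct}\int f\rmd\pi$, forcing $\int f\rmd\pi=0$ and hence $\pi=\delta_0$. Finiteness is guaranteed by a standard Lyapunov moment bound coming from $\limsup_{r\to\infty}\tilde{b}(r)/r<0$ together with the at-most-linear growth of $f$.

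The main obstacle is establishing \eqref{eq:proposal_keyineq} for the explicit $f$, which proceeds by case analysis. On $(0,\tilde{R}_0]$, where $\tilde{b}$ may be positive, the exponential profile of $f'$ is chosen so that $2f''(r) + f'(r)\tilde{b}(r)_+ = 0$, leaving $-f'(r)\tilde{b}(r)_- + af'(0)$ to be absorbed by $-cf(r)$; the integral bound on $a$ in \Cref{A2} is precisely what makes this absorption possible. On $(\tilde{R}_0,\tilde{R}_1]$, the interpolation of $f'$ is calibrated so that $2f''(r)$ furnishes the $-cf(r)$ term, aided by $\tilde{b}(r)\leq 0$. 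On $(\tilde{R}_1,\infty)$, where $f'$ is constant and $f''=0$, the quantitative bound $-(r/\tilde{R}_1)(\tilde{R}_1-\tilde{R}_0)\tilde{b}(r)\geq 4$ from \eqref{eq:definition_R_1} ensures that $f'(r)\tilde{b}(r)$ alone dominates $af'(0)$ and $-cf(r)$. The interplay between these three regions and the precise formula for $c$ in \eqref{eq:definition_c} is the technical heart, and mirrors the construction underlying \Cref{thm:nonlinearSDE}.
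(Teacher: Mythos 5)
Your proposal follows essentially the same route as the paper: the It\=o--Tanaka formula applied to the carefully designed concave $f$, whose defining differential inequalities (your key pointwise inequality is, since $f'(0)=1$, precisely the paper's combination of $2f''(0)=-a$ with $2f''(r)\leq 2f''(0)-f'(r)\tilde b(r)-cf(r)$) absorb both the drift and the nonlinear term $a\,\PP(r_t>0)$ via $f'\leq f'(0)$, followed by Gronwall. You additionally make explicit---which the paper leaves implicit---that uniqueness of $\delta_0$ requires a moment bound guaranteeing $\int f\,\rmd\pi<\infty$ for any candidate invariant $\pi$.
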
 

\begin{proof}
  The proof is postponed to \Cref{sec:proof_nonlinear_convergence}.
\end{proof}

\section{Uniform in time propagation of chaos}\label{sec:propachaos}

To prove uniform in time propagation of chaos,  we consider the $L^1$ Wasserstein distance with respect to the cost function $\bar{f}_N\circ \pi:\mathbb{R}^{Nd}\times\mathbb{R}^{Nd}\to\mathbb{R}_+$ 
with $\pi$ given in \eqref{eq:proj_pi}, 
and $\barf_N$ given by
\begin{equation}
            \label{eq:def_l1_f_N}
       \barf_N((x^{i,N})_{i=1}^N,(y^{i,N})_{i=1}^N)=\frac{1}{N}\sum_{i=1}^N f\parenthese{\abs{x^i-y^i }} \eqsp, 
     \end{equation}
with $f: \rset_+ \to \rset_+$ defined in \eqref{eq:definition_f}. This distance is denoted by $\mathcal{W}_{f,N}$. Note that $\bar{f}_N$ is equivalent to $l^1$ defined in \eqref{eq:def_l1}.

We note that since $\pi$ defines a projection from $\mathbb{R}^{Nd}$ to the hyperplane $\msh_N\subset \mathbb{R}^{Nd}$ given in \eqref{eq:def_H_N}, for $\hat{\mu}$ and $\hat{\nu}$ on $\msh_N$, $\mathcal{W}_{f,N}(\hat{\mu},\hat{\nu})$ coincides with the Wasserstein distance given by 
\begin{align} \label{eq:projectedWassersteindist}
\hat{\mathcal{W}}_{f,N}(\hat{\mu},\hat{\nu})=\inf_{\xi\in\Gamma(\hat{\mu},\hat{\nu})}\int_{\msh_N \times \msh_N} \barf_N (x,y) \xi(\rmd x \rmd y)
\end{align}
and $\mathcal{W}_{l^1\circ \pi}(\hat{\mu},\hat{\nu})=\hat{\mathcal{W}}_{l^1}(\hat{\mu},\hat{\nu})$, where $\bar{f}_N$ and $l^1$ are given in \eqref{eq:def_l1_f_N} and \eqref{eq:def_l1}, respectively, and where $\hat{\mathcal{W}}_{l^1}(\hat{\mu},\hat{\nu})$ is defined as in \eqref{eq:projectedWassersteindist} with respect to the distance $l^1$.

\begin{theorem}[Uniform in time propagation of chaos]\label{thm:timepropagation_of_chaos} Let $N\in\mathbb{N}$ and assume \Cref{ass:decomp_W} and \Cref{ass:gamma_bound}. Let $\barmu_0$ and $\nu_0$ be probability measures on $(\mathbb{R}^d,\mathcal{B}(\mathbb{R}^d))$ satisfying \Cref{ass:init_distr}. For $t \geq 0$, denote by $\barmu_t$ and $\nu_t^N$ the law of $\bar{X}_t$ and  $\{X_t^{i,N}\}_{i=1}^N$ where $(\bar{X}_s)_{s \geq 0}$ and $(\{X_s^{i,N}\}_{i=1}^N)_{s \geq 0}$ are solutions of  \eqref{eq:nonlinearSDE} and \eqref{eq:meanfield}, respectively, with initial distributions $\barmu_0$ and $\nu_0^{\otimes N}$. 
Then for all $t\geq 0$,
\begin{align*}
\mathcal{W}_{f,N}(\barmu_t^{\otimes N},\nu_t^N)&\leq \rme^{-\tilde{c} t}\mathcal{W}_{f,N}(\barmu_0^{\otimes N},\nu_0^{\otimes N})+\tilde{C}\tilde{c}^{-1}N^{-1/2}\eqsp,
\\ \mathcal{W}_{l^1\circ \pi}(\barmu_t^{\otimes N},\nu_t^N)&\leq M_1 \rme^{-\tilde{c} t}\mathcal{W}_{l^1\circ \pi}(\barmu_0^{\otimes N},\nu_0^{\otimes N})+M_1\tilde{C}\tilde{c}^{-1}N^{-1/2}\eqsp,
\end{align*}
where $f$ is defined by \eqref{eq:definition_f}, $M_1$ by \eqref{eq:def_M_1}, $\tilde{c}$ by \eqref{eq:definition_tildec}
and $\tilde{C}$ is a finite constant depending on $\|\gamma\|_\infty$, $L$ and the second moment of $\barmu_0$ and given in \eqref{eq:tildeC}. 
\end{theorem}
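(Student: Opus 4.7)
The plan is to extend the sticky coupling of \Cref{theo:1} to a componentwise construction on the full particle configuration. First, on an enlarged probability space carrying the Brownian motions $\{(B_t^i)_{t\geq 0}\}_{i=1}^N$ that drive \eqref{eq:meanfield}, I introduce $N$ independent copies $(\bar{X}_t^i)_{i=1}^N$ of the nonlinear SDE \eqref{eq:nonlinearSDE} with initial law $\barmu_0$, each coupled to the corresponding particle $(X_t^{i,N})_{t\geq 0}$ via the $\delta$-approximate reflection/synchronous coupling of \eqref{eq:coupling_non_linearSDE_approx}, using the same Brownian motion $B_t^i$ as noise source for the $i$-th pair and an additional auxiliary noise $(B_t^{i,2})_{t\geq 0}$, with all $N$ couplings built from independent auxiliary Brownians.

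Next, for each $i$ and $\delta>0$, apply the Itô--Tanaka formula to $r_t^{i,\delta}:=\abs{\bar{X}_t^{i,\delta}-X_t^{i,N,\delta}}$. By the componentwise reflection structure, the martingale part is $2\int_0^t \mathrm{rc}^\delta(r_s^{i,\delta})\,\rmd\tilde{W}_s^i$ for a one-dimensional Brownian motion $\tilde W^i$, which converges to $2\int_0^t \1_{(0,\infty)}(r_s^i)\,\rmd\tilde W_s^i$ as $\delta\to 0$. Using \Cref{ass:decomp_W}, the drift of $r_t^{i,\delta}$ is bounded above by $\bar{b}(r_t^{i,\delta}) + |E_t^i|$, where
\[
E_t^i \;=\; \frac{1}{N}\sum_{j=1}^N b(X_t^{i,N}-X_t^{j,N}) \;-\; \int_{\rset^d} b(\bar{X}_t^i-x)\,\rmd\barmu_t(x).
\]
Split $E_t^i = E_t^{i,\mathrm{cpl}} + E_t^{i,\mathrm{MC}}$, where $E_t^{i,\mathrm{cpl}} = N^{-1}\sum_j\bigl[b(X_t^{i,N}-X_t^{j,N})-b(\bar X_t^i-\bar X_t^j)\bigr]$ is bounded by $(L+\norm{\gamma}[]_{\Lip})\,\barf_N$-type terms, and $E_t^{i,\mathrm{MC}} = N^{-1}\sum_j b(\bar X_t^i-\bar X_t^j)-\int b(\bar X_t^i-x)\,\rmd\barmu_t(x)$ is the Monte--Carlo error for $N$ i.i.d.\ samples. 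Because the linear part $-Lx$ in $b$ combined with antisymmetry gives uniform in time bounds on the fourth moment of $\bar X_t^i$ (this is where \Cref{ass:init_distr} is used), a direct $L^2$ calculation yields $\PE[|E_t^{i,\mathrm{MC}}|]\leq C\,N^{-1/2}$ with $C$ depending only on $L$, $\norm{\gamma}[]_\infty$ and $\int\norm{x}[2]\barmu_0(\rmd x)$; this estimate is uniform in $t\geq 0$.

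Having this, the comparison \Cref{thm:existence_comparison} dominates each $r_t^{i,\delta}$ (after passing to the subsequential limit $\delta\to 0$ as in \Cref{theo:1}) by a one-dimensional sticky process whose drift is that of \eqref{eq:non_linear_sticky_sde} plus the additive perturbation $|E_t^i|$. Now take the average $\barf_N(\bar X_t,X_t^{N}) = N^{-1}\sum_i f(r_t^i)$, apply the concavity of $f$ together with the contractivity estimate \eqref{eq:stickydiff_estimateinthm} from \Cref{thm:one-dim_stickydiff}, and handle the perturbation via a Duhamel argument: the deterministic inequality
\[
\frac{\rmd}{\rmd t}\PE[\barf_N(\bar X_t,X_t^{N})] \;\leq\; -\tilde{c}\,\PE[\barf_N(\bar X_t,X_t^{N})] \;+\; \tilde C\,N^{-1/2}
\]
(with $\tilde C$ absorbing $\norm{f'}[]_\infty$ and the Monte--Carlo constant) integrates to the first displayed bound. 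The second bound, in the semimetric $\mathcal{W}_{l^1\circ\pi}$, follows from the equivalence $r\leq f(r)\leq M_1 r$ on $[0,R_0]$ combined with the observation that centering is preserved by both dynamics, so the projection $\pi$ does not interfere.

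The main obstacle will be the rigorous passage $\delta\to 0$ for the full coupled system of $N$ sticky processes (tightness and identification of the limit as a weak solution to a perturbed sticky nonlinear SDE), combined with establishing the $t$-uniform $L^2$ control on $E_t^{i,\mathrm{MC}}$. The uniformity is delicate since naive Gr\"onwall arguments would lose the $t$-uniformity; it relies crucially on the fact that the linear contractive part $-Lx$ gives exponential control of $\PE[\absolute{\bar X_t^i}^4]$ via \Cref{ass:init_distr}, and on the antisymmetry of $b$ so that the diagonal terms $j=i$ in $E_t^{i,\mathrm{MC}}$ vanish and the off-diagonal ones have zero mean conditionally on $\bar X_t^i$.
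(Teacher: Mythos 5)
There is a genuine gap in the approach: you work with the uncentered distance $r_t^{i,\delta}=\abs{\bar{X}_t^{i,\delta}-X_t^{i,N,\delta}}$, but this is not the right quantity, and the argument cannot close as written. The paper instead works with the centered differences $\tilde Z_t^{i,\delta}=\tilde X_t^{i,\delta}-\tilde Y_t^{i,\delta}$, where $\tilde X_t^{i,\delta}=\bar X_t^{i,\delta}-\frac{1}{N}\sum_j\bar X_t^{j,\delta}$ and $\tilde Y_t^{i,\delta}=X_t^{i,N,\delta}-\frac{1}{N}\sum_j X_t^{j,N,\delta}$, and for good reason: the linear part of the drift difference for the uncentered quantity reads, after using $b(z)=-Lz+\gamma(z)$ and antisymmetry,
\begin{align*}
\frac{1}{N}\sum_j b(\bar X_t^{i,\delta}-\bar X_t^{j,\delta})-\frac{1}{N}\sum_j b(X_t^{i,N,\delta}-X_t^{j,N,\delta})
= -L\big(\bar X_t^{i,\delta}-X_t^{i,N,\delta}\big)+L\cdot\frac{1}{N}\sum_j\big(\bar X_t^{j,\delta}-X_t^{j,N,\delta}\big)+\gamma\text{-terms}.
\end{align*}
The second term is what you have absorbed into $E_t^{i,\mathrm{cpl}}$ and described as "$(L+\norm{\gamma}[]_{\Lip})\barf_N$-type." But that is precisely the problem: when you average $f(r_t^i)$ over $i$ and use $f'\le 1$, this term contributes $L\cdot\frac{1}{N}\sum_j r_t^j$ to the Gr\"onwall inequality, which is of the same order as (and in the linear case $f(r)=r$ exactly cancels) the contracting $-L\,r_t^i$ coming from $\bar b$. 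You do not get the differential inequality $\frac{\rmd}{\rmd t}\PE[\barf_N]\le -\tilde c\,\PE[\barf_N]+\tilde C N^{-1/2}$; the $-\tilde c$ is destroyed.

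Moreover, treating $\frac{1}{N}\sum_j(\bar X_t^{j}-X_t^{j,N})$ as a small $O(N^{-1/2})$ perturbation is not valid uniformly in time. Because $b$ is antisymmetric, the empirical mean $\frac{1}{N}\sum_j X_t^{j,N}$ satisfies $\rmd\big(\frac{1}{N}\sum_j X_t^{j,N}\big)=\frac{1}{N}\sum_j\rmd B_t^j$, so it performs a Brownian motion with $\Var\sim t/N$. Its magnitude grows without bound in $t$. Your remark that "centering is preserved by both dynamics" is true only in expectation; pathwise the center of mass wanders. The projection $\pi$ in the semimetric $\mathcal{W}_{l^1\circ\pi}$ does not come for free at the end --- it has to be built into the coupling from the start. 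Working with $\tilde r_t^{i,\delta}$ is exactly what kills the center-of-mass term: $\sum_i\tilde Z_t^{i,\delta}=0$ identically, the problematic $L\cdot\frac{1}{N}\sum_j(\cdots)$ contribution disappears, and the drift reduces cleanly to $\bar b(\tilde r_t^{i,\delta})$ plus the coupling/Monte-Carlo corrections $\frac{2\norm{\gamma}[]_\infty}{N}\sum_j \1_{(0,\infty)}$ and $A_t^{i,\delta}+\frac{1}{N}\sum_k A_t^{k,\delta}$. A secondary consequence of the centering, which you also miss, is that the Brownian driving terms for $\tilde X_t^{i,\delta}$ and $\tilde Y_t^{i,\delta}$ become correlated across $i$, so the diffusion coefficient of $\tilde r_t^{i,\delta}$ acquires the factor $\sqrt{1+1/N}$. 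Your Monte-Carlo estimate on $E_t^{i,\mathrm{MC}}$ and the uniform second-moment bound via \Cref{lem:moment_bound}, as well as the final Duhamel/Gr\"onwall step, are essentially the right ingredients; the decisive missing idea is to do the entire coupling analysis on the centered configuration.
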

\begin{proof}
  The proof is postponed to \Cref{sec:proof_thm_propag}.
\end{proof}

\begin{remark}
  Denote by $\mu_t^N$ and $\nu_t^N$ the distribution of $\{X_t^{i,N}\}_{i=1}^N$ and $\{Y_t^{i,N}\}_{i=1}^N$ where the two processes $(\{X_s^{i,N}\}_{i=1}^N)_{s \geq 0}$ and $(\{Y_s^{i,N}\}_{i=1}^N)_{s \geq 0}$ are solutions of \eqref{eq:meanfield} with initial probability distributions $\mu_0^{N},\nu_0^{N}\in\mathcal{P}(\mathbb{R}^{Nd})$, respectively, with finite forth moment. 
An easy inspection and adaptation of the proof of \Cref{thm:timepropagation_of_chaos} show that if \Cref{ass:decomp_W} holds, then 
\begin{equation*}
\begin{aligned}
  \mathcal{W}_{f,N}(\mu_t^N,\nu_t^N )\leq \rme^{-\tilde{c} t}\mathcal{W}_{f,N}(\mu_0^{\otimes N},\nu_0^{\otimes N})\eqsp, \qquad\mathcal{W}_{l^1\circ \pi}(\mu_t^N,\nu_t^N)\leq 2 M_1 \rme^{-\tilde{c} t}\mathcal{W}_{l^1\circ \pi}(\mu_0^{\otimes N},\nu_0^{\otimes N}) \eqsp,
  \end{aligned}
\end{equation*}
where $f$, $\tilde{c}$ and $M_1$ are defined as in \Cref{thm:timepropagation_of_chaos}.

\end{remark}

\section{System of N sticky SDEs} \label{sec:system_non-linear_sticky_diff}

Consider a systerm of $N$ one-dimensional SDEs with sticky boundaries at $0$ given by 
\begin{equation}\label{eq:N_onedimSDE}
\rmd r_t^i=\Big(\tilde{b}(r_t^i)+\frac{1}{N}\sum_{j=1}^N g(r_t^j)\Big)\rmd t+2 \1_{(0,\infty)}(r_t^i)\rmd W_t^i\eqsp, \qquad i=1,\ldots,N.
\end{equation} 
The results on existence, uniqueness and the comparison theorem for solutions of sticky nonlinear SDEs mostly carry directly over to a solution of \eqref{eq:N_onedimSDE} and are applied to prove propagation of chaos in \Cref{thm:timepropagation_of_chaos}.

Let $\mu$ be a probability distribution on $\mathbb{R}_+$.
For $N\in\mathbb{N}$, $(\{r_t^i,W_t^i\}_{i=1}^N)_{t\geq 0}$ is a weak solution on the filtered probability space $(\Omega, \mathcal{A},(\mathcal{F}_t)_{t\ge 0},P)$ of \eqref{eq:N_onedimSDE}
with initial distribution $\mu^{\otimes N}$
if the following hold: $\mu^{\otimes N}=P\circ (\{r_0\}_{i=1}^N)^{-1}$, $(\{W_t\}_{i=1}^N)_{t\geq 0}$ is a $N$-dimensional $(\mathcal{F}_t)_{t\ge 0}$ Brownian motion w.r.t. $P$, 
the process $(r_t^i)_{t\geq 0}$ is non-negative, continuous and satisfies almost surely for any $i\in\{1,\ldots,N\}$ and $t\in\mathbb{R}_+$,
\begin{align*}
r_t^i-r_0^i&=\int_0^t\Big(\tilde{b}(r_s^i)+\frac{1}{N}\sum_{j=1}^N g(r_s^j)\Big)\rmd s+\int_0^t 2\1_{(0,\infty)}(r_s^i) \rmd W_s^i\eqsp.
\end{align*}

To show existence and uniqueness in law of a weak solution $(\{r_t^i,W_t^i\}_{i=1}^N)_{t\geq 0}$, we suppose \Cref{H1b} and \Cref{H1g} for $\tilde{b}$ and $g$. 

It follows that there exists a constant $C<\infty$ such that for all $\{r^i\}_{i=1}^N\in\mathbb{R}_+^N$, it holds $\sum_{i=1}^N |\tilde{b}(r^i)|+|g(r^i)|\leq C(1+\sum_{i=1}^N|r^i|)$, and a possible solution $(\{r_t^i\}_{i=1}^N)_{t\geq 0}$ is non-explosive.
If the initial distribution is supported on $\mathbb{R}_+^N$, then in the same line as for the nonlinear SDE in \Cref{sec:existence}, the solution $(\{r_t^i\}_{i=1}^N)_{t\geq 0}$ satisfies $r^i_t>0$ almost surely for any $i=1,\ldots,N$ and $t\geq 0$ by \Cref{H1b} and \Cref{H1g}.

Existence and uniqueness in law of \eqref{eq:N_onedimSDE} is a direct consequence of a stronger result that we now introduce. To study existence and uniqueness and to compare two solutions of \eqref{eq:N_onedimSDE} with different drifts, we establish existence of a synchronous coupling of two copies of \eqref{eq:N_onedimSDE},
\begin{equation} \label{eq:N_onedimSDE_coupling}
\begin{aligned}
&\rmd r_t^i=\Big(\tilde{b}(r_t^i)+\frac{1}{N}\sum_{j=1}^Ng(r_t^j)\Big)\rmd t+2 \1_{(0,\infty)}(r_t^i) \rmd W_t^i \eqsp,
\\ &\rmd s_t^i=\Big(\hat{b}(s_t^i)+\frac{1}{N}\sum_{j=1}^N h(s_t^j)\Big)\rmd t+2 \1_{(0,\infty)}(s_t^i) \rmd W_t^i\eqsp, 
\\ &  \Law(r_0^i,s_0^i)=\eta\eqsp,
\end{aligned}\qquad \text{for $i\in\{1,\ldots,N\}$}
\end{equation}
where $(\{W_t^i\}_{i=1}^N)_{t\geq 0}$ are $N$ \iid $1$-dimensional Brownian motions and where $\eta\in\Gamma(\mu, \nu)$ for $\mu, \nu\in\mathcal{P}(\mathbb{R}_+)$. 

Let $\mathbb{W}^{N}=\mathcal{C}(\mathbb{R}_+,\mathbb{R}^{N})$ be the space of continuous functions from $\mathbb{R}_+$ to $\mathbb{R}^{N}$ endowed with the topology of uniform convergence on compact sets, and let $\mathcal{B}(\mathbb{W}^{N})$ denote its Borel $\sigma$-Algebra. 

\begin{theorem} \label{thm:existence_comparison_Nparticles}
 Assume that $(\tilde{b},g)$ and $(\hat{b},h)$ satisfy \Cref{H1b} and \Cref{H1g}. Let $\eta\in\Gamma(\mu,\nu)$ where $\mu$ and $\nu$ are the probability measure on $\mathbb{R}_+$ satisfying \Cref{H2}.
Then there exists a weak solution $(\{r^i_t,s^i_t\}_{i=1}^N)_{t\geq 0}$ of the sticky stochastic differential equation \eqref{eq:N_onedimSDE_coupling} with initial distribution $\eta^{\otimes N}$ defined on a probability space $(\Omega, \mathcal{A},P)$ with values in $\mathbb{W}^N\times\mathbb{W}^N$.
If additionally, 
\begin{align*}
&\tilde{b}(r)\leq \hat{b}(r) \quad and \quad g(r)\leq h(r)\eqsp,  && \text{for any } r\in\mathbb{R}_+ \eqsp,
\\ & 
P[r_0^{i}\leq s_0^{i} \text{ for all } i=1,\ldots,N]=1\eqsp,
\end{align*}
then $P[r_t^i\leq s_t^i \text{ for all } t\geq 0\text{ and } i=1,\ldots,N]=1$. 
\end{theorem}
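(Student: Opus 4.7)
The system \eqref{eq:N_onedimSDE_coupling} is structurally the $N$-fold analogue of the one-dimensional equation \eqref{eq:two_onedim_stickydiff} treated in \Cref{thm:existence_comparison}: each pair $(r^i,s^i)$ is driven by a common Brownian motion $W^i$ (a synchronous coupling within that pair), the $N$ components interact only through the bounded adapted mean-field drifts $(1/N)\sum_j g(r_t^j)$ and $(1/N)\sum_j h(s_t^j)$, and the diffusion is the same sticky indicator $\1_{(0,\infty)}$. My strategy is to run the proof of \Cref{thm:existence_comparison} componentwise, treating the mean-field sums as auxiliary bounded adapted perturbations that do not affect the per-component local-time analysis near zero.

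For \emph{existence}, I would proceed by regularization and tightness. Replace $\1_{(0,\infty)}$ by a Lipschitz approximation $\sigma_\varepsilon:\mathbb{R}_+\to[0,1]$ with $\sigma_\varepsilon(0)=0$ and $\sigma_\varepsilon(r)=1$ for $r\ge\varepsilon$. The resulting $2N$-dimensional SDE has locally Lipschitz coefficients satisfying the linear growth bound \eqref{eq:nonlinear_lineargrowthcondition}, hence admits a unique strong solution $(r^{i,\varepsilon},s^{i,\varepsilon})_{i=1}^N$ on any filtered space carrying independent Brownian motions $(W^i)_{i=1}^N$ and initial data with law $\eta^{\otimes N}$. \Cref{H2} combined with Gronwall yields $\varepsilon$-uniform moment and modulus-of-continuity estimates, hence tightness of the family $\{\PP\circ(r^{\cdot,\varepsilon},s^{\cdot,\varepsilon},W^{\cdot})^{-1}\}_{\varepsilon>0}$ on $\mathbb{W}^{3N}$. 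I would extract a weakly convergent subsequence, move to a common space via Skorohod, and identify the almost-sure limit $(r^i,s^i,W^i)_{i=1}^N$ as a weak solution of \eqref{eq:N_onedimSDE_coupling} via the martingale problem. The delicate step, as in the one-dimensional case, is the occupation-time convergence
\begin{equation*}
\int_0^t\sigma_\varepsilon(r_s^{i,\varepsilon})^2\,\rmd s\;\longrightarrow\;\int_0^t\1_{(0,\infty)}(r_s^i)\,\rmd s,
\end{equation*}
which is intrinsic to each component and so reduces to the analysis already carried out in \Cref{sec:existence}, the bounded mean-field drifts not interfering with the local-time behaviour of $r^i$ at zero.

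For \emph{comparison}, assume additionally $\tilde b\le\hat b$, $g\le h$ and $P[r_0^i\le s_0^i\text{ for all }i]=1$. I would establish the order at the regularized level and pass to the limit. The drift of the $i$-th component of $r^{\varepsilon}$ depends on the other components only through $(1/N)g(r^{j,\varepsilon})$, $j\ne i$, which is non-decreasing by \Cref{H1g}; the same holds for $s^{\varepsilon}$. The system is therefore quasi-monotone, each per-component diffusion is Lipschitz and depends only on that component, and the pointwise drift inequality
\begin{equation*}
\tilde b(r)+\frac{1}{N}\sum_{j=1}^N g(x^j)\;\le\;\hat b(r)+\frac{1}{N}\sum_{j=1}^N h(x^j),\qquad r\in\mathbb{R}_+,\;x\in\mathbb{R}_+^N,
\end{equation*}
follows from the hypotheses. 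A classical multi-dimensional SDE comparison argument --- for instance, It\^o applied to $\sum_i(r_t^{i,\varepsilon}-s_t^{i,\varepsilon})_+^2$ followed by Gronwall, using that on $\{r^{i,\varepsilon}>s^{i,\varepsilon}\}$ the diffusion-coefficient difference $|\sigma_\varepsilon(r^{i,\varepsilon})-\sigma_\varepsilon(s^{i,\varepsilon})|$ is Lipschitz-controlled by $(r^{i,\varepsilon}-s^{i,\varepsilon})_+$ --- then yields $r_t^{i,\varepsilon}\le s_t^{i,\varepsilon}$ almost surely for every $i$ and $t$. The ordering is preserved under the weak limit, which gives the claim.

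The main obstacle is the limit identification in the presence of the singular diffusion $\1_{(0,\infty)}$; everything else is a componentwise extension of \Cref{thm:existence_comparison}, made available by the fact that the components interact only through bounded adapted mean-field drifts, and uniqueness in law for \eqref{eq:N_onedimSDE} follows from the comparison result by specialising to $(\tilde b,g)=(\hat b,h)$.
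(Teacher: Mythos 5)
Your overall plan---regularize the singular diffusion, establish tightness, identify the limit via a martingale problem, and transfer the comparison from the regularized to the limiting system---mirrors the structure of the paper's proof (a two-layer approximation, Lemmas \ref{lem:existence_NstickySDE_step1}--\ref{lem:existence_NstickySDE_step2}, and the Ikeda--Watanabe-type comparison in Lemma \ref{lemma:modification_watanabe_meanfield}). However, there is a concrete gap: you only regularize the diffusion coefficient $\1_{(0,\infty)}$, while leaving the interaction functions $g,h$ untouched. Under \Cref{H1g}, $g$ and $h$ are merely left-continuous, non-decreasing and bounded; the canonical example is $g(r)=a\1_{(0,\infty)}(r)$, which is discontinuous at $0$. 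Your $\varepsilon$-regularized system therefore has a Lipschitz diffusion $\sigma_\varepsilon$ but a \emph{discontinuous} drift, so the standard Lipschitz well-posedness theory you appeal to (M\'el\'eard, ``unique strong solution'') does not apply directly, and the uniform moment estimates you plan to derive via Gr\"onwall require the linear growth bound but not a Lipschitz one, which means your subsequent comparison step is where the real trouble lies.

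More importantly, the comparison argument breaks without Lipschitz interaction. Applying It\^o to $\sum_i (r_t^{i,\varepsilon}-s_t^{i,\varepsilon})_+^2$, the drift contribution from the mean-field sums on the event $\{r^{j,\varepsilon}>s^{j,\varepsilon}\}$ is $g(r^{j,\varepsilon})-h(s^{j,\varepsilon})\le g(r^{j,\varepsilon})-g(s^{j,\varepsilon})$, which is non-negative but cannot be controlled by $(r^{j,\varepsilon}-s^{j,\varepsilon})_+$ when $g$ jumps; the Gr\"onwall loop does not close. (The It\^o correction term $|\sigma_\varepsilon(r)-\sigma_\varepsilon(s)|^2$ is harmless here, since an $\varepsilon$-dependent Gr\"onwall constant still yields $r^{i,\varepsilon}\le s^{i,\varepsilon}$ at each fixed $\varepsilon$; the obstruction is entirely in the drift.) The paper resolves both difficulties with a second approximation layer: Lipschitz $g^m\uparrow g$, $h^m\uparrow h$ with constants $K_m$ (\Cref{H5}), alongside the diffusion approximation $\theta^n$ (\Cref{H4}), so that \eqref{eq:Nd_stickydiff_coupling} genuinely has Lipschitz coefficients, Lemma \ref{lemma:modification_watanabe_meanfield} closes the Gr\"onwall bound using $K_m$, and the ordering is then propagated through the nested weak limits $n\to\infty$ (Lemma \ref{lem:existence_NstickySDE_step1}) and $m\to\infty$ (Lemma \ref{lem:existence_NstickySDE_step2}) via Portmanteau. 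You need to incorporate this second approximation of the interaction functions; once you do, your componentwise strategy aligns with the paper's argument.
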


\begin{proof}
  The proof is postponed to \Cref{sec:proof_meanfield_existence}.
\end{proof}

\begin{remark}
 We note that by the comparison result we can deduce uniqueness in law for the solution of \eqref{eq:N_onedimSDE}.
\end{remark}

\section{Proofs} \label{sec:proofs}

Before proving the statements of Section~\ref{sec:main_contraction_results}-\ref{sec:system_non-linear_sticky_diff}, let us give an overview of the proofs.
The first subsection gives the definition of the underlying distance function $f$ used in \Cref{thm:nonlinearSDE}, \Cref{thm:one-dim_stickydiff} and \Cref{thm:timepropagation_of_chaos}.
\Cref{sec:proof_nonlinearSDE} and \Cref{sec:proofs_nonlinearSDE} provide proofs for the convergence result for the nonlinear SDE (\Cref{thm:nonlinearSDE}) using the sticky coupling approach and the results for the sticky nonlinear SDE (\Cref{thm:one-dim_stickydiff}). 
Note that both \Cref{thm:nonlinearSDE} and \Cref{thm:one-dim_stickydiff} use the auxiliary Lemmata \ref{lemma:modification_watanabe_nonlinear}-\ref{lem:existence_stickySDE_step2}, where a comparison result and an approximation in two steps of the sticky nonlinear SDE are given. The existence of a solution to the sticky nonlinear SDE and a comparison result are essential to show contraction in this approach.

In \Cref{sec:proof_thm_propag} and \Cref{sec:proof_meanfield_existence} the proofs for the propagation of chaos for the mean-field particle system and for the system of sticky SDEs are given.
Note that the techniques to prove the result for the particle systems and the system of $N$ sticky SDEs are partially similar to the nonlinear case. In particular, the proofs of \Cref{thm:timepropagation_of_chaos} and \Cref{thm:existence_comparison_Nparticles} and its auxiliary Lemmata \ref{lem:r_t^idelta}, \ref{lemma:comparison_propofchaos}-\ref{lem:existence_NstickySDE_step2} have a similar structure as the ones of \Cref{theo:1} and \Cref{thm:existence_comparison} and its auxiliary Lemmata \ref{lemma:SDEr_t^delta}-\ref{lem:existence_stickySDE_step2}, respectively.

\subsection{Definition of the metrics}

In \Cref{thm:nonlinearSDE}, \Cref{thm:one-dim_stickydiff} and \Cref{thm:timepropagation_of_chaos} we consider Wasserstein distances based on a carefully designed concave function $f:\mathbb{R}_+\to\mathbb{R}_+$ that we now define. In addition we derive useful properties of this function that will be used in our proofs of \Cref{thm:nonlinearSDE}, \Cref{thm:timepropagation_of_chaos} and \Cref{thm:one-dim_stickydiff}.
Let $a\in\mathbb{R}_+$ and $\tilde{b}:\mathbb{R}_+\to\mathbb{R}$ be such that \Cref{A2} is satisfied with $\tilde{R}_0$ and $\tilde{R}_1$ defined in \eqref{eq:definition_R_0}. 
We define
\begin{align*}
\varphi(r)&=\exp\parenthese{-\int_0^r \{\tilde{b}(s)_+/2\}\rmd s} \eqsp, \qquad \Phi(r)=\int_0^r \varphi(s)\rmd s\eqsp, && \text{ and }
\\ g(r)&=1-\frac{c}{2}\int_0^{r\wedge \tilde{R}_1}\{\Phi(s)/\varphi(s)\}\rmd s-\frac{a}{2}\int_0^{r \wedge \tilde{R}_1} \{1/\varphi(s)\}\rmd s \eqsp,
\end{align*}
where
\begin{align} \label{eq:definition_c}
c=\parenthese{2\int_0^{\tilde{R}_1}  \{\Phi(s)/\varphi(s)\}\rmd s}^{-1},
\end{align}
and $\tilde{R}_1$ is given in \eqref{eq:definition_R_1}.
It holds $\varphi(r)=\varphi(\tilde{R}_0)$ for $ r\geq \tilde{R}_0$ with $\tilde{R}_0$ given in \eqref{eq:definition_R_0}, $g(r)=g(\tilde{R}_1)\in[1/2,3/4]$ for $r\geq \tilde{R}_1$ and  $g(r)\in[1/2,1]$ for all $r\in \mathbb{R}_+$ by \eqref{eq:definition_c} and \Cref{A2}.  
We define the increasing function $f:[0,\infty)\to [0,\infty)$ by
\begin{align} \label{eq:definition_f}
f(t)=\int_0^t \varphi(r)g(r) \rmd r \eqsp.
\end{align}
The construction is adapted from the function $f$ given in \cite{Eb16}.
Here, the function $g$ has an extra term. As we see later in the proof of \Cref{thm:nonlinearSDE} and \Cref{thm:one-dim_stickydiff}, this term has the purpose to control the term $a\PP[r_t>0]$.
We observe that $f$ is concave, since $\varphi$ and $g$ are decreasing.
Since for all $r\in \mathbb{R}_+$
\begin{align} \label{eq:norm_equivalence}
\varphi(\tilde{R}_0) r/2\leq \Phi(r)/2\leq f(r)\leq \Phi(r)\leq r\eqsp,
\end{align}
 $(x,y)\mapsto f(|x-y|)$ defines a distance on $\mathbb{R}^d$ equivalent to the Euclidean distance on $\mathbb{R}^d$. 
 
Moreover, $f$ satisfies
\begin{align}
2f''(0)=-\tilde{b}(0)_+-a = -a\eqsp, \label{eq:condition_f_1}
\end{align}
and 
\begin{align}
 2f''(r)\leq 2f''(0)-f'(r)\tilde{b}(r)-cf(r)\eqsp,\qquad \text{for all $r\in\mathbb{R}_+ \backslash \{\tilde{R}_1\}$}\eqsp. \label{eq:condition_f_2}
\end{align}
Indeed by construction of $f$, $f''(r)=-\tilde{b}(r)_+f'(r)/2-c\Phi(r)/2-a/2$ for $0\leq r< \tilde{R}_1$ and so \eqref{eq:condition_f_2} holds for $0\leq r< \tilde{R}_1$ by \eqref{eq:norm_equivalence}.
To show \eqref{eq:condition_f_2} for $r>\tilde{R}_1$ note that $f''(r)=0$ and $f'(r)\geq \varphi(\tilde{R}_0)/2$ hold for $r>\tilde{R}_1$. Hence, by the definition \eqref{eq:definition_R_1} of $\tilde{R}_1$, for $r>\tilde{R}_1$,
\begin{align} \label{eq:proof_thm1_estimate}
f''(r)+f'(r)\tilde{b}(r)/2\leq\varphi(\tilde{R}_0)\tilde{b}(r)/4\leq -(\tilde{R}_1(\tilde{R}_1-\tilde{R}_0))^{-1}\varphi(\tilde{R}_0)r\eqsp.
\end{align}
Since $\varphi(r)=\varphi(\tilde{R}_0)$ for $r\geq \tilde{R}_0$, it holds $\Phi(r)=\Phi(\tilde{R}_0)+(r-\tilde{R}_0)\varphi(\tilde{R}_0)$
for $r\geq \tilde{R}_0$. Further, it holds $\Phi(R_0)\geq \tilde{R}_0\varphi(\tilde{R}_0)$ since $\varphi$ is decreasing for $r\leq \tilde{R}_0$.
Hence,
\begin{align}
\frac{r}{\tilde{R}_1}&=\frac{(r-\tilde{R}_1)(\Phi(\tilde{R}_0)+(\tilde{R}_1-\tilde{R}_0)\varphi(\tilde{R}_0))}{\tilde{R}_1\Phi(\tilde{R}_1)}+1\geq \frac{(r-\tilde{R}_1)\tilde{R}_1\varphi(\tilde{R}_0)}{\tilde{R}_1\Phi(\tilde{R}_1)}+1 
=\frac{\Phi(r)}{\Phi(\tilde{R}_1)}\eqsp. \label{eq:thm1_estimate1}
\end{align}
Furthermore, we have
\begin{align}
\int_{\tilde{R}_0}^{\tilde{R}_1} \{\Phi(s)/\varphi(s)\}\rmd s&=\int_{\tilde{R}_0}^{\tilde{R}_1} \frac{\Phi(\tilde{R}_0)+(s-\tilde{R}_0)\varphi(\tilde{R}_0)}{\varphi(\tilde{R}_0)}\rmd s \nonumber
\\ & = (\tilde{R}_1-\tilde{R}_0)\frac{\Phi(\tilde{R}_0)}{\varphi(\tilde{R}_0)}+\frac{1}{2}(\tilde{R}_1-\tilde{R}_0)^2 \geq  \frac{1}{2}(\tilde{R}_1-\tilde{R}_0)\frac{\Phi(\tilde{R}_1)}{\varphi(\tilde{R}_0)}\eqsp.  \label{eq:thm1_estimate2}
\end{align}
We insert \eqref{eq:thm1_estimate1} and \eqref{eq:thm1_estimate2} in \eqref{eq:proof_thm1_estimate} and use \eqref{eq:definition_c} to obtain
\begin{align} \label{eq:f_computation}
f''(r)+f'(r)\tilde{b}(r)/2&\leq -\Phi(r)\Phi(\tilde{R}_1)^{-1}(\tilde{R}_1-\tilde{R}_0)^{-1}\varphi(\tilde{R}_0)
\\ & \leq -\frac{\Phi(r)}{2\int_{\tilde{R}_0}^{\tilde{R}_1} \{\Phi(s)/\varphi(s)\}\rmd s}
\leq -\frac{cf(r)}{2}-\frac{c\Phi(r)}{2}\eqsp.
\end{align}
By \Cref{A2} and \eqref{eq:definition_c}, we get
\begin{align*}
-\frac{c\Phi(r)}{2}\leq -\frac{\Phi(\tilde{R}_1)}{4\int_0^{\tilde{R}_1} \{\Phi(s)/\varphi(s)\}\rmd s}\leq -\frac{1}{4\int_0^{\tilde{R}_1}\{1/\varphi(s)\}\rmd s}\leq -\frac{a}{2}=f''(0)\eqsp.
\end{align*}
Combining this estimate with \eqref{eq:f_computation} gives \eqref{eq:condition_f_2} for $r>\tilde{R}_1$. 
Hence, the choice of the underlying function $f$ for the Wasserstein distance ensures \eqref{eq:condition_f_1} and \eqref{eq:condition_f_2}. These properties guarantee that the term $a\PP[r_t>0]$ is controlled in \eqref{eq:one-dim_stickydiff} 
and contraction with rate $c$ is obtained in \Cref{thm:nonlinearSDE}, \Cref{thm:one-dim_stickydiff} and \Cref{thm:timepropagation_of_chaos}.

\subsection{Proof of  \texorpdfstring{\Cref{sec:main_contraction_results}}{}}\label{sec:proof_nonlinearSDE}

First, we prove \Cref{thm:nonlinearSDE} by using \Cref{theo:1} and properties of the carefully constructed function $f$ before we show \Cref{theo:1}. To prove that the dominating process $r_t$ exists we make use of the result of the sticky nonlinear SDE which are proven in \Cref{sec:proof_nonlinear_existence}. 

\subsubsection{Proof of \texorpdfstring{\Cref{thm:nonlinearSDE}}{}} \label{sec:proof_thm_nonlinear}

\begin{proof}[Proof of \Cref{thm:nonlinearSDE}]
  We consider the process $(\bar{X}_t,\bar{Y}_t,r_t)_{t\geq 0}$ defined in \Cref{theo:1} and  satisfying $|\bar{X}_t-\bar{Y}_t|\leq r_t$ for any
  $ t\geq 0$, and  $(r_t)_{t\geq 0}$ is a weak solution of   \eqref{eq:non_linear_sticky_sde}.
Set $a=2\|\gamma\|_\infty$ and $\tilde{b}(r)=\bar{b}(r)$. 
With this notation, \Cref{ass:decomp_W} and \Cref{ass:gamma_bound} imply \Cref{A2} and $\tilde{R}_0=R_0$ and $\tilde{R}_1=R_1$ by \eqref{eq:definition_R0}, \eqref{eq:definition_R1}, \eqref{eq:definition_R_0} and \eqref{eq:definition_R_1}.
By It\=o-Tanaka formula, cf. \cite[Chapter 6, Theorem 1.1]{ReYo99}, using that $f'$ is absolutely continuous, we have,
\begin{align*}
\rmd f(r_t)&\leq f'(r_t)(\bar{b}(r_t)+2\|\gamma\|_\infty \PP(r_t>0))\rmd t +2f''(r_t)\1_{(0,\infty)}(r_t)\rmd t 
\\ & +f'(r_t)2\1_{(0,\infty)}(r_t)\rmd W_t\eqsp.
\end{align*}
Taking expectation we obtain by \eqref{eq:condition_f_1} and \eqref{eq:condition_f_2}
\begin{align*}
\frac{\rmd }{\rmd t}\mathbb{E}[f(r_t)]\leq \mathbb{E}[f'(r_t)\tilde{b}(r_t)_++ 2(f''(r_t)-f''(0))]+\mathbb{E}[(a+2f''(0))\1_{r_t>0}]\leq -\tilde{c}\mathbb{E}[f(r_t)]\eqsp,
\end{align*}
where $\tilde{c}$ is given by \eqref{eq:definition_tildec}.
Therefore by Gr\"onwall's lemma, 
\begin{align*}
\mathbb{E}[f(|\bar{X}_t-\bar{Y}_t|)]\leq \mathbb{E}[f(r_t)]\leq \rme^{-\tilde{c}t} \mathbb{E}[f(r_0)]=\rme^{-\tilde{c}t}\mathbb{E}[f(|\bar{X}_0-\bar{Y}_0|)]\eqsp.
\end{align*}
Hence, it holds 
\begin{align*}
\mathcal{W}_f(\barmu_t,\bar{\nu}_t)\leq \mathbb{E}[f(|\bar{X}_t-\bar{Y}_t|)]\leq \rme^{-\tilde{c}t}\int_{\mathbb{R}^d\times\mathbb{R}^d} f(|x-y|) \xi(\rmd x \rmd y)
\end{align*}
for an arbitrary coupling $\xi\in\Gamma(\mu_0,\nu_0)$. Taking the infimum over all couplings $\xi\in\Gamma(\mu_0,\nu_0)$, we obtain the first inequality of \eqref{eq:thm_nonlinearSDE}.
By \eqref{eq:norm_equivalence}, we get the second inequality of \eqref{eq:thm_nonlinearSDE}.
\end{proof}

\subsubsection{Proof of \texorpdfstring{\Cref{theo:1}}{}} \label{sec:proof_theo1}
Note that the nonlinear SDE \eqref{eq:coupling_non_linearSDE_approx} has Lipschitz continuous coefficients. The existence and the uniqueness of the coupling $(\bar{X}_t^\delta,\bar{Y}_t^\delta)_{t\geq 0}$ follows from \cite[Theorem 2.2]{Me96}. By Levy's characterization, $(\bar{X}_t^\delta,\bar{Y}_t^\delta)_{t\geq 0}$ is indeed a coupling of two copies of solutions of \eqref{eq:nonlinearSDE}. Further, we remark that $W_t^\delta=\int_0^t (\bar{e}_s^\delta)^T \rmd B_s^1$ is a one-dimensional Brownian motion.
In the next step, we analyse $|\bar{X}_t^\delta-\bar{Y}_t^\delta|$. 

\begin{lemma} \label{lemma:SDEr_t^delta}
Suppose that the conditions \Cref{ass:decomp_W} and \Cref{ass:init_distr} are satisfied. Then, it holds 
for any $\epsilon<\epsilon_0$, where $\epsilon_0$ is given by \eqref{eq:condition_rc_sc2}, setting   $\bar{r}_t^\delta=|\bar{X}_t^\delta-\bar{Y}_t^\delta|$
\begin{align}
\rmd \bar{r}_t^\delta &= \Big(-L \bar{r}_t^\delta+\Big\langle{\bar{e}_t^\delta},\int_{\mathbb{R^d}}\int_{\mathbb{R}^d}\gamma(\bar{X}_t^\delta-x)-\gamma(\bar{Y}_t^\delta-y)\mu_t^\delta(\rmd x)\nu_t^\delta(\rmd y)\Big\rangle \Big) \rmd t+2 \mathrm{rc}^\delta(\bar{r}_t^\delta)\rmd W_t^\delta \label{eq:one-dimSDE_approx_estimate1}
\\ & \leq \Big(\bar{b}(\bar{r}_t^\delta)+2\|\gamma\|_\infty \int_{\mathbb{R}^{d}} \int_{\mathbb{R}^{d}} \mathrm{rc}^\epsilon(|x-y|)  \barmu_t^\delta(\rmd x) \bar{\nu}_t^\delta(\rmd y)\Big) \rmd t+2 \mathrm{rc}^\delta(\bar{r}_t^\delta)\rmd W_t^\delta\eqsp, \label{eq:one-dimSDE_approx_estimate2}
\end{align}
almost surely for all $t\geq 0$,
where $\barmu_t^\delta$ and $\bar{\nu}_t^\delta$ are the laws of $\bar{X}_t^\delta$ and $\bar{Y}_t^\delta$, respectively.
\end{lemma}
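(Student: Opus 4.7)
The plan is to apply It\^o's formula to $\bar{r}_t^\delta=|\bar{Z}_t^\delta|$ with $\bar{Z}_t^\delta=\bar{X}_t^\delta-\bar{Y}_t^\delta$, exploit the centering of $\bar{\mu}_t^\delta$ and $\bar{\nu}_t^\delta$ to decompose the drift, and then use \Cref{ass:decomp_W} together with the interpolation properties \eqref{eq:condition_rc_sc}--\eqref{eq:condition_rc_sc2} of $\mathrm{rc}^\epsilon$ to derive the comparison bound.

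First I would compute $d\bar{Z}_t^\delta$ from \eqref{eq:coupling_non_linearSDE_approx}: the $B^2$ parts cancel and the $B^1$ parts combine to $2\mathrm{rc}^\delta(\bar{r}_t^\delta)\,\bar{e}_t^\delta(\bar{e}_t^\delta)^T dB_t^1$. Since this diffusion matrix is rank one along the direction $\bar{e}_t^\delta$, and since $(\mathrm{Id}-\bar{e}_t^\delta(\bar{e}_t^\delta)^T)\bar{e}_t^\delta(\bar{e}_t^\delta)^T=0$, the It\^o correction in the expansion of $|\bar{Z}_t^\delta|$ vanishes off $\{\bar{r}_t^\delta=0\}$, while on $\{\bar{r}_t^\delta=0\}$ the diffusion itself vanishes thanks to $\mathrm{rc}^\delta(0)=0$; a standard localisation (or the It\^o--Tanaka formula as invoked later in the paper) handles the boundary. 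Recognising $W_t^\delta=\int_0^t(\bar{e}_s^\delta)^T dB_s^1$ as a one-dimensional Brownian motion by L\'evy's characterisation yields
\[
d\bar{r}_t^\delta \;=\; \bigl\langle\bar{e}_t^\delta,\; b\ast\bar{\mu}_t^\delta(\bar{X}_t^\delta)-b\ast\bar{\nu}_t^\delta(\bar{Y}_t^\delta)\bigr\rangle\,dt + 2\mathrm{rc}^\delta(\bar{r}_t^\delta)\,dW_t^\delta.
\]
Now I insert the decomposition $b(z)=-Lz+\gamma(z)$ from \Cref{ass:decomp_W}. The linear part contributes $-L\int(\bar{X}_t^\delta-x)\bar{\mu}_t^\delta(dx)+L\int(\bar{Y}_t^\delta-y)\bar{\nu}_t^\delta(dy)=-L\bar{Z}_t^\delta$, because by anti-symmetry of $b$ together with \Cref{ass:init_distr} the means $\int x\,\bar{\mu}_t^\delta(dx)$ and $\int y\,\bar{\nu}_t^\delta(dy)$ both vanish (as noted just after \Cref{ass:init_distr}). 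Combining with the $\gamma$-part and pairing with $\bar{e}_t^\delta$ gives exactly \eqref{eq:one-dimSDE_approx_estimate1}.

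For \eqref{eq:one-dimSDE_approx_estimate2} I would split
\[
\gamma(\bar{X}_t^\delta-x)-\gamma(\bar{Y}_t^\delta-y) \;=\; \bigl[\gamma(\bar{X}_t^\delta-x)-\gamma(\bar{Y}_t^\delta-x)\bigr] + \bigl[\gamma(\bar{Y}_t^\delta-x)-\gamma(\bar{Y}_t^\delta-y)\bigr].
\]
The first bracket, paired with $\bar{e}_t^\delta=\bar{Z}_t^\delta/\bar{r}_t^\delta$, is handled by the one-sided Lipschitz condition \eqref{eq:kappa} applied to $u=\bar{X}_t^\delta-x$ and $v=\bar{Y}_t^\delta-x$ (note $u-v=\bar{Z}_t^\delta$), giving $\kappa(\bar{r}_t^\delta)\bar{r}_t^\delta$; adding the $-L\bar{r}_t^\delta$ from the linear part reconstitutes $\bar{b}(\bar{r}_t^\delta)=(\kappa(\bar{r}_t^\delta)-L)\bar{r}_t^\delta$.

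The main obstacle is the second bracket, where the argument difference $(\bar{Y}_t^\delta-x)-(\bar{Y}_t^\delta-y)=y-x$ is not directly related to $\bar{r}_t^\delta$; here I cannot use \eqref{eq:kappa}. The key observation is the pointwise inequality
\[
\bigl|\gamma(u)-\gamma(v)\bigr| \;\le\; 2\|\gamma\|_\infty\,\mathrm{rc}^\epsilon(|u-v|) \qquad \text{for all } u,v\in\rset^d \text{ and }\epsilon\le\epsilon_0,
\]
which follows by case analysis: if $|u-v|\ge\epsilon$ then $\mathrm{rc}^\epsilon(|u-v|)=1$ by \eqref{eq:condition_rc_sc} and the trivial bound $|\gamma(u)-\gamma(v)|\le 2\|\gamma\|_\infty$ suffices, while if $|u-v|<\epsilon$ the Lipschitz bound combined with \eqref{eq:condition_rc_sc2} gives $|\gamma(u)-\gamma(v)|\le \|\gamma\|_{\Lip}|u-v|\le 2\|\gamma\|_\infty\mathrm{rc}^\epsilon(|u-v|)$. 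Applying $|\langle\bar{e}_t^\delta,\cdot\rangle|\le|\cdot|$, integrating against $\bar{\mu}_t^\delta\otimes\bar{\nu}_t^\delta$, and summing with the first bracket yields \eqref{eq:one-dimSDE_approx_estimate2}.
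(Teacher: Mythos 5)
Your proposal is correct and reaches the same conclusion via the same essential decomposition: cancellation of the linear part via centering (zero means from \Cref{ass:init_distr} and anti-symmetry), the split of $\gamma(\bar{X}_t^\delta-x)-\gamma(\bar{Y}_t^\delta-y)$ into a term controlled by \eqref{eq:kappa} and a term controlled by $2\|\gamma\|_\infty\mathrm{rc}^\epsilon(|x-y|)$, with the case analysis on $|u-v|\gtrless\epsilon$ using \eqref{eq:condition_rc_sc} and \eqref{eq:condition_rc_sc2} exactly as the paper intends. The one place where you take a slightly different technical route is the passage from $\bar{Z}_t^\delta$ to $|\bar{Z}_t^\delta|$: you apply It\^o directly to $z\mapsto|z|$ and argue the Hessian correction vanishes off the origin because $\sigma\sigma^T=4\mathrm{rc}^\delta(\bar{r}_t^\delta)^2\bar{e}_t^\delta(\bar{e}_t^\delta)^T$ is annihilated by $\mathrm{Id}-\bar{e}_t^\delta(\bar{e}_t^\delta)^T$, whereas the paper first derives the SDE for $(\bar{r}_t^\delta)^2$ (which has a genuine $4\mathrm{rc}^\delta(\bar{r}_t^\delta)^2\,\rmd t$ It\^o term) and then regularises the square root by the $\mathcal{C}^2$ family $S_\varepsilon$, letting $\varepsilon\downarrow 0$ so that the $S_\varepsilon'$ and $S_\varepsilon''$ contributions cancel in the limit. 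Your invocation of ``It\^o--Tanaka'' at the origin is not quite the right reference in dimension $d>1$ (that formula is for one-dimensional semimartingales), and the localisation you wave at is precisely what the $S_\varepsilon$-argument makes rigorous; so this step is correct in spirit but less airtight than the paper's. Everything else is the same proof.
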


\begin{proof}
Using \eqref{eq:coupling_non_linearSDE_approx}, \Cref{ass:decomp_W} and \Cref{ass:init_distr}, the stochastic differential equation of the process $((\bar{r}_t^\delta)^2)_{t\geq 0}$ is given by
\begin{align*}
\rmd ((\bar{r}_t^\delta)^2)&=2 \Big\langle Z_t^\delta,-LZ_t^\delta +  \int_{\mathbb{R}^d}\int_{\mathbb{R}^d} \gamma(\bar{X}_t^\delta-x)-\gamma(\bar{Y}_t^\delta-y)  \bar{\mu}_t^\delta(\rmd x) \bar{\nu}_t^\delta(\rmd y) \Big\rangle \rmd t 
\\ & + 4 \mathrm{rc}^\delta(\bar{r}_t^\delta)^2\rmd t+ 4\mathrm{rc}^\delta (\bar{r}_t^\delta)\langle Z_t^\delta, e_t^\delta\rangle \rmd W_t^\delta\eqsp.
\end{align*}
For ${{\varepsilon}}>0$ we define as in \cite[Lemma 8]{EbZi19} a $\mathcal{C}^2$ approximation of the square root by
\begin{align*}
S_{{\varepsilon}}(r)=\begin{cases}(-1/8){{\varepsilon}}^{-3/2}r^2+(3/4){{\varepsilon}}^{-1/2}r+(3/8){{\varepsilon}}^{1/2} & \text{for } r<{{\varepsilon}} \\ \sqrt{r} & \text{otherwise}\eqsp. \end{cases}
\end{align*}
Then, by It\=o's formula, 
\begin{align*}
&\rmd S_{{\varepsilon}}((\bar{r}_t^\delta)^2)=S_{{\varepsilon}}'((\bar{r}_t^\delta)^2)\rmd (\bar{r}_t^\delta)^2+\frac{1}{2}S_{{\varepsilon}}''((\bar{r}_t^\delta)^2)\rmd [(\bar{r}^\delta)^2]_t
\\ & = 2 S_{{\varepsilon}}'((\bar{r}_t^\delta)^2) \Big\langle Z_t^\delta,-LZ_t^\delta + \int_{\mathbb{R}^d}\int_{\mathbb{R}^d} \gamma(\bar{X}_t^\delta-x)-\gamma(\bar{Y}_t^\delta-y) \barmu_t^\delta(\rmd x) \bar{\nu}_t^\delta(\rmd y) \Big\rangle \rmd t
\\ &  + S_{{\varepsilon}}'((\bar{r}_t^\delta)^2) 4 \mathrm{rc}^\delta(\bar{r}_t^\delta)^2\rmd t+ S_{{\varepsilon}}'((\bar{r}_t^\delta)^2)4 \mathrm{rc}^\delta (\bar{r}_t^\delta)\langle Z_t^\delta, e_t^\delta\rangle \rmd W_t^\delta
+ 8 S_{{\varepsilon}}''((\bar{r}_t^\delta)^2)(\mathrm{rc}^\delta(\bar{r}_t^\delta))^2 (\bar{r}_t^\delta)^2 \rmd t\eqsp.
\end{align*} 
We take the limit ${{\varepsilon}} \to 0 $. 
Then 
$\lim_{{\varepsilon}\to 0} S_{{\varepsilon}}'(r)=(1/2)r^{-1/2}$ and $\lim_{{\varepsilon}\to 0} S_{{\varepsilon}}''(r)=-(1/4)r^{-3/2}$ for $r > 0$. 
Since $\sup_{0\leq r\leq \varepsilon}|S_{{\varepsilon}}'(r)|\lesssim {{\varepsilon}}^{-1/2}$, $\sup_{0\leq r\leq \epsilon}|S_{\bar{\varepsilon}}''(r)|\lesssim {\bar{\varepsilon}}^{-3/2}$ and $\mathrm{rc}^\delta$ is Lipschitz continuous with $\mathrm{rc}^\delta(0)=0$, we apply Lebesgue's dominated convergence theorem to show convergence for the integrals with respect to time $t$. More precisely, we note that the integrand $(4S_{\varepsilon}'((\bar{r}_t^\delta)^2)+8S_{\varepsilon}''((\bar{r}_t^\delta)^2))\mathrm{rc}^\delta(\bar{r}_t^\delta))^2(\bar{r}_t^\delta)^2$ is dominated by $3\varepsilon^{1/2}\|\mathrm{rc}^\delta\|_{\Lip}$. 
For any $\varepsilon<\varepsilon_0$ for fixed $\varepsilon_0>0$, the integrand $2 S_{{\varepsilon}}'((\bar{r}_t^\delta)^2) \langle Z_t^\delta,-LZ_t^\delta + \int_{\mathbb{R}^d}\int_{\mathbb{R}^d} (\gamma(\bar{X}_t^\delta-x)-\gamma(\bar{Y}_t^\delta-y))  \barmu_t^\delta(\rmd x) \bar{\nu}_t^\delta(\rmd y)\rangle$ is dominated by $(3/2)(L\max(\varepsilon_0^{(1/2)},\bar{r}_t^\delta)+2\|\gamma\|_\infty)$.

For the stochastic integral it holds $|S_{\varepsilon}'((\bar{r}_t^\delta)^2)4\mathrm{rc}^\delta(\bar{r}_t^\delta) \bar{r}_t^\delta|\leq 3$. Hence, the stochastic integral converges along a subsequence almost surely, to $\int_0^t2 \mathrm{rc}^\delta(\bar{r}_s^\delta) \rmd W_s^\delta$, see \cite[Chapter 4, Theorem 2.12]{ReYo99}. 
Hence, we obtain \eqref{eq:one-dimSDE_approx_estimate1}. 
Since \eqref{eq:kappa} implies $\langle x-y, \gamma(x-\tilde{x})-\gamma(y-\tilde{x})\rangle \le \kappa(|x-y|)|x-y|^2$ for all $x,y,\tilde{x}\in\mathbb{R}^d$, we obtain by \Cref{ass:decomp_W} and \eqref{eq:condition_rc_sc2} for $\epsilon<\epsilon_0$ 
\begin{align*}
\Big\langle{\bar{e}_t^\delta}&,\int_{\mathbb{R}^d}\int_{\mathbb{R}^d}(\gamma(\bar{X}_t^\delta-x)-\gamma(\bar{Y}_t^\delta-y))\mu_t^\delta(\rmd x)\nu_t^\delta(\rmd y)\Big\rangle 
\\ & \leq \Big\langle{\bar{e}_t^\delta},\int_{\mathbb{R}^d}\int_{\mathbb{R}^d}(\gamma(\bar{X}_t^\delta-x)-\gamma(\bar{Y}_t^\delta-x)+\gamma(\bar{Y}_t^\delta-x)-\gamma(\bar{Y}_t^\delta-y))\mu_t^\delta(\rmd x)\nu_t^\delta(\rmd y)\Big\rangle
\\ & \leq \kappa(\bar{r}_t^\delta)\bar{r}_t^\delta+\int_{\mathbb{R}^d}\int_{\mathbb{R}^d} 2\|\gamma\|_\infty \mathrm{rc}^\epsilon(|x-y|) \mu_t^\delta(\rmd x)\nu_t^\delta(\rmd y)\eqsp,
\end{align*}
and hence \eqref{eq:one-dimSDE_approx_estimate2} holds.
\end{proof}
We define a one-dimensional process $(r_t^{\delta,\epsilon})_{t\geq 0}$ by
\begin{align} \label{eq:nonlinearonedimSDE_approx}
\rmd r_t^{\delta,\epsilon} =\Big(\bar{b}(r_t^{\delta,\epsilon})+2 \|\gamma\|_\infty \int_{\mathbb{R}_+} \mathrm{rc}^{\epsilon}(u) P_t^{\delta,\epsilon}(\rmd u) \Big)\rmd t+2 \mathrm{rc}^\delta( r_t^{\delta,\epsilon}) \rmd W_t^\delta
\end{align}
with initial condition $r_0^{\delta,\epsilon} =\bar{r}_0^\delta$, $P_t^{\delta,\epsilon}=\Law(r_t^{\delta,\epsilon})$ and $W_t^\delta=\int_0^t (\bar{e}_s^\delta)^T \rmd B_s^1$. This process will allow us to control the distance of $\bar{X}_t^\delta$ and $\bar{Y}_t^\delta$. 

By \cite[Theorem 2.2]{Me96}, under \Cref{ass:decomp_W} and \Cref{ass:init_distr}, $(U_t^{\delta,\epsilon})_{t\geq 0}=(\bar{X}_t^\delta,\bar{Y}_t^\delta,r_t^{\delta,\epsilon})_{t\geq 0}$ exists and is unique, where $(\bar{X}_t^\delta,\bar{Y}_t^\delta)_{t\geq 0}$ solves uniquely \eqref{eq:coupling_non_linearSDE_approx}, $(\bar{r}_t^\delta)_{t\geq 0}$ and $(r_t^{\delta,\epsilon})_{t\geq 0}$ solve uniquely \eqref{eq:one-dimSDE_approx_estimate1} and \eqref{eq:nonlinearonedimSDE_approx}, respectively, with $W_t^\delta=\int_0^t (\bar{e}_s^\delta)^T \rmd B_s^1$.

\begin{lemma} \label{lem:comparision_r_t^delta} 
Assume \Cref{ass:decomp_W} and \Cref{ass:init_distr}.
Then, $|\bar{X}_t^\delta-\bar{Y}_t^\delta|=\bar{r}_t^\delta\leq r_t^{\delta,\epsilon}$, almost surely for all $t$ and $\epsilon<\epsilon_0$.
\end{lemma}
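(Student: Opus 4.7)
The plan is to establish $\bar{r}_t^\delta\le r_t^{\delta,\epsilon}$ by a pathwise comparison of two one-dimensional processes driven by the common Brownian motion $W_t^\delta$. From \eqref{eq:one-dimSDE_approx_estimate2} in \Cref{lemma:SDEr_t^delta} and the definition \eqref{eq:nonlinearonedimSDE_approx}, $\bar{r}_t^\delta$ and $r_t^{\delta,\epsilon}$ coincide at $t=0$, share the Lipschitz diffusion coefficient $2\,\mathrm{rc}^\delta(\cdot)$, and have drifts of the form $\bar{b}(\cdot)+2\norm{\gamma}_\infty q(t)$ and $\bar{b}(\cdot)+2\norm{\gamma}_\infty p(t)$, respectively, where the deterministic functions
\[
q(t):=\int_{\rset^d}\int_{\rset^d} \mathrm{rc}^\epsilon(\abs{x-y})\,\barmu_t^\delta(\rmd x)\bar{\nu}_t^\delta(\rmd y),\qquad p(t):=\int_{\mathbb{R}_+}\mathrm{rc}^\epsilon(u)\,P_t^{\delta,\epsilon}(\rmd u)
\]
govern the law-dependent part of each drift. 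The classical pathwise comparison theorem for one-dimensional SDEs driven by the same Brownian motion with Lipschitz coefficients (of the type used in the proof of \Cref{thm:existence_comparison}) reduces the claim to establishing the pointwise inequality $q(t)\le p(t)$ for every $t\ge 0$.

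To obtain $q\le p$, I would construct $r_t^{\delta,\epsilon}$ as the monotone limit of a Picard iteration tailored to exploit the monotonicity of $\mathrm{rc}^\epsilon$. For any measurable $\alpha\colon\mathbb{R}_+\to[0,1]$, let $R_t^\alpha$ be the unique strong solution driven by $W_t^\delta$ of the linear-in-law SDE
\[
\rmd R_t^\alpha=\left(\bar{b}(R_t^\alpha)+2\norm{\gamma}_\infty\alpha(t)\right)\rmd t+2\,\mathrm{rc}^\delta(R_t^\alpha)\rmd W_t^\delta,\qquad R_0^\alpha=\bar{r}_0^\delta.
\]
The pathwise comparison theorem then provides two monotonicities in tandem: $\alpha_1\le\alpha_2$ pointwise implies $R_t^{\alpha_1}\le R_t^{\alpha_2}$ almost surely, while \Cref{lemma:SDEr_t^delta} yields $\bar{r}_t^\delta\le R_t^q$ almost surely. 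Starting from $\alpha_0\equiv 0$ and iterating $\alpha_{n+1}(t):=\mathbb{E}[\mathrm{rc}^\epsilon(R_t^{\alpha_n})]$ produces a non-decreasing sequence in $[0,1]$ that converges pointwise to $p$, by Lipschitz stability of the flow $\alpha\mapsto R^\alpha$ and by uniqueness in law for \eqref{eq:nonlinearonedimSDE_approx}.

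The hard part is to bootstrap the pathwise inequality $\bar{r}_t^\delta\le R_t^q$ into the deterministic bound $q\le p$. This is delicate because $q$ integrates $\mathrm{rc}^\epsilon$ against the product of marginals of the coupled pair $(\bar{X}_t^\delta,\bar{Y}_t^\delta)$ whereas $p$ integrates it against the law of the scalar process $r_t^{\delta,\epsilon}$. My strategy is to work on the joint probability space supporting $(\bar{X}_t^\delta,\bar{Y}_t^\delta,r_t^{\delta,\epsilon})$, and to combine the monotonicity of $\mathrm{rc}^\epsilon$ with the pointwise bound $\abs{\gamma(\bar{Y}_t^\delta-x)-\gamma(\bar{Y}_t^\delta-y)}\le 2\norm{\gamma}_\infty\mathrm{rc}^\epsilon(\abs{x-y})$ underlying \eqref{eq:one-dimSDE_approx_estimate2} in order to show by induction on $n$ along the Picard iteration that $q(t)\le\alpha_{n+1}(t)$ for all $t\ge 0$. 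Passing to the limit then delivers $q\le p$, and a final application of the pathwise comparison principle to the pair $(\bar{r}_t^\delta,r_t^{\delta,\epsilon})$ yields $\bar{r}_t^\delta\le r_t^{\delta,\epsilon}$ almost surely.
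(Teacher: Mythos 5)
Your proposal has the right frame --- compare the law-dependent drift constants and then invoke a pathwise one-dimensional comparison theorem --- but the inequality you set out to prove, $q(t)\le p(t)$ with $q(t)=\int\int\mathrm{rc}^\epsilon(|x-y|)\,\barmu_t^\delta(\rmd x)\bar{\nu}_t^\delta(\rmd y)$, is \emph{false} in general, and the Picard-iteration bootstrap you propose cannot establish it. To see that it is false, take $\barmu_0=\bar{\nu}_0$ and the diagonal initial coupling, so that $\bar{X}_0^\delta=\bar{Y}_0^\delta$ and $\bar r_0^\delta=r_0^{\delta,\epsilon}=0$. Then the synchronous branch of \eqref{eq:coupling_non_linearSDE_approx} propagates $\bar{X}_t^\delta=\bar{Y}_t^\delta$ for all $t$, and $r^{\delta,\epsilon}\equiv 0$ is the unique solution of \eqref{eq:nonlinearonedimSDE_approx}, so $p(t)\equiv 0$; but $q(t)>0$ as soon as $\barmu_t^\delta$ is non-degenerate. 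The lemma is nevertheless true in this case ($\bar r^\delta\equiv r^{\delta,\epsilon}\equiv 0$), which shows that $q\le p$ is simply not the right intermediate claim. Your Picard iteration fares no better: the iterates $\alpha_{n+1}(t)=\mathbb{E}[\mathrm{rc}^\epsilon(R_t^{\alpha_n})]$ are functionals of the scalar flow driven by $W^\delta$, whereas $q(t)$ is determined by the marginals of the $\mathbb{R}^d$-valued coupling; nothing ties the two together, and the base step $q\le\alpha_1$ has no foundation.

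The missing idea is that the identity $\gamma*\barmu_t^\delta(\bar{X}_t^\delta)-\gamma*\bar{\nu}_t^\delta(\bar{Y}_t^\delta)=\int\int\bigl(\gamma(\bar{X}_t^\delta-x)-\gamma(\bar{Y}_t^\delta-y)\bigr)\,\Pi(\rmd x\,\rmd y)$ holds for \emph{any} coupling $\Pi\in\Gamma(\barmu_t^\delta,\bar{\nu}_t^\delta)$, not only for the product measure displayed in \Cref{lemma:SDEr_t^delta}. Choosing $\Pi$ to be the joint law of $(\bar{X}_t^\delta,\bar{Y}_t^\delta)$, the same two estimates that yield \eqref{eq:one-dimSDE_approx_estimate2} --- the one-sided Lipschitz bound \eqref{eq:kappa} and $|\gamma(a)-\gamma(b)|\le 2\|\gamma\|_\infty\mathrm{rc}^\epsilon(|a-b|)$ for $\epsilon<\epsilon_0$ --- give instead
\[
\rmd\bar r_t^\delta\ \le\ \bigl(\bar b(\bar r_t^\delta)+2\|\gamma\|_\infty\,\mathbb{E}[\mathrm{rc}^\epsilon(\bar r_t^\delta)]\bigr)\rmd t+2\,\mathrm{rc}^\delta(\bar r_t^\delta)\,\rmd W_t^\delta\,.
\]
This drift bound is now of the self-consistent nonlinear form of \eqref{eq:1d_stickydiff_coupling} with $g^m=h^m=2\|\gamma\|_\infty\mathrm{rc}^\epsilon$ and $\tilde b=\hat b=\bar b$: the law entering the drift is that of $\bar r_t^\delta$ itself, matching the law $P_t^{\delta,\epsilon}$ entering the drift of $r_t^{\delta,\epsilon}$. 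Since $\bar r^\delta$ and $r^{\delta,\epsilon}$ share the initial value, the diffusion coefficient $2\,\mathrm{rc}^\delta$ and the driving Brownian motion $W^\delta$, \Cref{lemma:modification_watanabe_nonlinear} applies directly (the inequality rather than equality in the drift of $\bar r^\delta$ only improves the upper bound on $I_2(k)$), giving $\bar r_t^\delta\le r_t^{\delta,\epsilon}$ almost surely. The ordering of the two law-dependent constants is then a \emph{byproduct}, bootstrapped inside the proof of \Cref{lemma:modification_watanabe_nonlinear} via the estimate $\mathbb{E}[g^m(\bar r_u^\delta)]-\mathbb{E}[g^m(r_u^{\delta,\epsilon})]\le K_m\,\mathbb{E}[(\bar r_u^\delta-r_u^{\delta,\epsilon})_+]$ and Gr\"onwall --- precisely the mechanism your proposal tries, and fails, to reconstruct from scratch with an a priori deterministic comparison.
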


\begin{proof} 
Note that $(\bar{r}_t^\delta)_{t\geq 0}$ and $(r_t^{\delta,\epsilon})_{t\geq 0}$ have the same initial distribution and are driven by the same noise. Since the drift of $(\bar{r}_t^\delta)_{t\geq 0}$ is smaller than the drift of $(r_t^{\delta,\epsilon})_{t\geq 0}$ for $\epsilon<\epsilon_0$, the result follows by \Cref{lemma:modification_watanabe_nonlinear}.

\end{proof}

\begin{proof}[Proof of \Cref{theo:1}]
We consider the nonlinear process $(U_t^{\delta,\epsilon})_{t\geq 0}=(\bar{X}_t^\delta,\bar{Y}_t^\delta,r_t^{\delta,\epsilon})_{t\geq 0}$ on $\mathbb{R}^{2d+1}$ for each $\epsilon,\delta>0$. We denote by $\PP^{\delta,\epsilon}$ the law of $U^{\delta,\epsilon}$ on the space $\mathcal{C}(\mathbb{R}_+,\mathbb{R}^{2d+1})$. We define by $\mathbf{X},\mathbf{Y}:\mathcal{C}(\mathbb{R}_+,\mathbb{R}^{2d+1})\to \mathcal{C}(\mathbb{R}_+,\mathbb{R}^{d})$ and $\mathbf{r}:\mathcal{C}(\mathbb{R}_+,\mathbb{R}^{2d+1})\to \mathcal{C}(\mathbb{R}_+,\mathbb{R})$ the canonical projections onto the first $d$ components, onto the second $d$ components and onto the last component, respectively. 
By \Cref{ass:decomp_W} and \Cref{ass:init_distr} following the same line as the proof of \Cref{lem:existence_stickySDE_step1}, see  \eqref{eq:pthmoment_difference1}, it holds for each $T>0$ 
\begin{align} \label{eq:boundU}
E[|U_{t_2}^{\delta,\epsilon}-U_{t_1}^{\delta,\epsilon}|^4]\leq C|t_2-t_1|^2 \qquad \text{for $t_1,t_2\in[0,T]$}\eqsp,
\end{align}
 for some constant $C$ depending on $T$, $L$, $\|\gamma\|_{\Lip}$, $\|\gamma\|_\infty$ and on the fourth moment of $\mu_0$ and $\nu_0$.
As in \Cref{lem:existence_stickySDE_step1} the law $\PP_T^{\delta,\epsilon}$ of  $(U_t^{\delta,\epsilon})_{0\leq t\leq T}$ on $\mathcal{C}([0,T],\mathbb{R}^{2d+1})$ is tight for each $T>0$ by \cite[Corollary 14.9]{Ka02} and for each $\epsilon>0$ there exists a subsequence $\delta_n\to 0$ such that $(\PP^{\delta_n,\epsilon}_T)_{n\in\mathbb{N}}$ on $\mathcal{C}([0,T],\mathbb{R}^{2d+1})$ converge to a measure $\PP^\epsilon_T$ on $\mathcal{C}([0,T],\mathbb{R}^{2d+1})$.
By a diagonalization argument and since $\{\PP^\epsilon_T: T\geq 0\}$ is a consistent family, cf. \cite[Theorem 5.16]{Ka02}, there exists a probability measure $\PP^\epsilon$ on $\mathcal{C}(\mathbb{R}_+,\mathbb{R}^{2d+1})$ such that for all $\epsilon$ there exists a subsequence $\delta_n$ such that $(\PP^{\delta_n,\epsilon})_{n\in\mathbb{N}}$ converges along this subsequence to $\PP^\epsilon$. 
As in the proof of \Cref{lem:existence_stickySDE_step2} we repeat this argument for the family of measures $(\PP^\epsilon)_{\epsilon>0}$. Hence, there exists a subsequence $\epsilon_m\to 0$ such that $(\PP^{\epsilon_m})_{m\in\mathbb{N}}$ converges to a measure $\PP$.
Let $(\bar{X}_t,\bar{Y}_t,r_t)_{t\geq 0}$ be some process on $\mathbb{R}^{2d+1}$ with distribution $\PP$ on $(\bar{\Omega},\bar{\mathcal{F}}, \bar{P})$.

Since $(\bar{X}_t^\delta)_{t\geq 0}$ and $(\bar{Y}_t^\delta)_{t\geq 0}$  are solutions of \eqref{eq:nonlinearSDE} which are unique in law, we have that for any $\epsilon, \delta>0$, $\PP^{\delta,\epsilon}\circ \mathbf{X}^{-1}=\PP\circ\mathbf{X}^{-1}$ and $\PP^{\delta,\epsilon}\circ \mathbf{Y}^{-1}=\PP\circ\mathbf{Y}^{-1}$.
And therefore $(\bar{X}_t)_{t\geq 0}$ and $(\bar{Y}_t)_{t\geq 0}$ are solutions of \eqref{eq:nonlinearSDE} as well with the same initial condition. 
Hence $\PP\circ(\mathbf{X},\mathbf{Y})^{-1}$ is a coupling of two copies of \eqref{eq:nonlinearSDE}.

Similarly to the proof of \Cref{lem:existence_stickySDE_step1} and \Cref{lem:existence_stickySDE_step2} 
there exist an extended probability space and a one-dimensional Brownian motion
$(W_t)_{t\geq 0}$ such that $(r_t,W_t)_{t\geq 0}$ is a solution to
\begin{align*}
\rmd r_t=(\bar{b}(r_t)+2\|\gamma\|_\infty \PP(r_t >0))\rmd t+2 \1_{(0,\infty)}(r_t) \rmd W_t\eqsp. 
\end{align*} 

In addition, the statement of \Cref{lem:comparision_r_t^delta} carries over to the limiting process $(r_t)_{t\geq 0}$, i.e., $|\bar{X}_t-\bar{Y}_t|\leq r_t$ for all $t\geq 0$, 
since by the weak convergence along the subsequences $(\delta_n)_{n\in\mathbb{N}}$ and $(\epsilon_m)_{m\in\mathbb{N}}$ and the Portmanteau theorem,
$P(|\bar{X}_t-\bar{Y}_t|\leq r_t)\geq\limsup_{m\to \infty}\limsup_{n\to \infty} P(|\bar{X}_t^{\delta_n}-\bar{Y}_t^{\delta_n}|\leq r_t^{\delta_n,\epsilon_m})=1$.

\end{proof}

\subsection{Proof of \texorpdfstring{\Cref{sec:non-linear_sticky_diff}}{}} \label{sec:proofs_nonlinearSDE}

First, we introduce a family of nonlinear SDE whose drift and diffusion coefficient are Lipschitz continuous approximations of the drift and diffusion coefficient of \eqref{eq:two_onedim_stickydiff}. \Cref{thm:existence_comparison} is shown by proving a comparison result for nonlinear SDEs, taking in two steps the limit of the approximations and identifying the limit with the solution of \eqref{eq:two_onedim_stickydiff}. 
Then, \Cref{thm:one_dim_SDE_statdistr_general} and \Cref{thm:one-dim_stickydiff} are shown where we make use of the careful construction of the function $f$.

\subsubsection{Proof of \texorpdfstring{\Cref{thm:existence_comparison}}{}} \label{sec:proof_nonlinear_existence}

We show \Cref{thm:existence_comparison} 
via a family of stochastic differential equations, indexed by $n,m\in\mathbb{N}$, with Lipschitz continuous coefficients,
\begin{equation}\label{eq:1d_stickydiff_coupling}
\begin{aligned} 
\rmd r_t^{n,m}&=(\tilde{b}(r_t^{n,m})+P_t^{n,m}(g^m))\rmd t+2 \theta^n(r_t^{n,m}) \rmd W_t  
\\ \rmd s_t^{n,m}&=(\hat{b}(s_t^{n,m})+\hat{P_t}^{n,m}(h^m))\rmd t+2\theta^n(s_t^{n,m}) \rmd W_t\eqsp, &&\text{Law}(r_0^{n,m},s_0^{n,m})=\eta_{n,m}\eqsp,
\end{aligned}
\end{equation} 
where $P_t^{n,m}=\Law (r_t^{n,m})$, $\hat{P}^{n,m}_t=\Law (s_t^{n,m})$, $P_t^{n,m}(g^m)=\int_{\mathbb{R}_+} g^m(x)P_t^{n,m}(\rmd x)$ and $\hat{P}^{n,m}_t(h^m)=\int_{\mathbb{R}_+} h^m(x)\hat{P}^{n,m}_t(\rmd x)$ for some measurable functions $(g^m)_{m\in\mathbb{N}}$ and $(h^m)_{m\in\mathbb{N}}$, and where $\eta_{n,m}\in\Gamma(\mu_{n,m}, \nu_{n,m})$ for $\mu_{n,m}, \nu_{n,m}\in\mathcal{P}(\mathbb{R}_+)$.
We identify the weak limit for $n\to \infty$ as solution of a family of stochastic differential equations, indexed by $m\in\mathbb{N}$, given by 
\begin{equation}\label{eq:1d_stickydiff_coupling2}
\begin{aligned} 
\rmd r_t^{m}&=(\tilde{b}(r_t^{m})+P_t^{m}(g^m))\rmd t+2 \1_{(0,\infty)}(r_t^{m}) \rmd W_t  
\\ \rmd s_t^{m}&=(\hat{b}(s_t^{m})+\hat{P_t}^{m}(h^m))\rmd t+2\1_{(0,\infty)}(s_t^{m}) \rmd W_t\eqsp, &&\text{Law}(r_0^{m},s_0^{m})=\eta_m \eqsp.
\end{aligned}
\end{equation}
with $P_t^{m}=\Law (r_t^{m})$ and  $\hat{P}^{m}_t=\Law (s_t^{m})$, and where $\eta_{m}\in\Gamma(\mu_{m}, \nu_{m})$ for $\mu_{m}, \nu_{m}\in\mathcal{P}(\mathbb{R}_+)$.
Taking the limit $m\to\infty$, we show in the next step that the solution of \eqref{eq:1d_stickydiff_coupling2} converges to a solution of \eqref{eq:two_onedim_stickydiff}.

We assume for $(g^m)_{m\in\mathbb{N}}$, $(h^m)_{m\in\mathbb{N}}$, $(\theta^n)_{n\in\mathbb{N}}$ and the initial distributions:  
\begin{assumptionH} \label{H5}
$(g^m)_{m\in\mathbb{N}}$ and $(h^m)_{m\in\mathbb{N}}$ are sequences of non-decreasing non-negative uniformly bounded Lipschitz continuous functions such that for all $r\geq 0$, $g^m(r)\leq g^{m+1}(r)$ and $h^m(r)\leq h^{m+1}(r)$ and 
$\lim_{m \to \plusinfty} g^m(r) = g(r)$ and $\lim_{m \to \plusinfty} h^m(r) = h(r)$ where $g$, $h$ are left-continuous non-negative non-decreasing bounded functions. 
In addition, there exists $K_m<\infty$ for any $m$ such that for all $r,s\in\mathbb{R}$ 
\begin{align*}
|g^m(r)-g^m(s)|\leq K_m |r-s| \qquad \text{and} \qquad |h^m(r)-h^m(s)|\leq K_m |r-s|\eqsp.
\end{align*}

\end{assumptionH}
\begin{assumptionH} \label{H4} 
 $(\theta^n)_{n\in\mathbb{N}}$ is a sequence of Lipschitz continuous functions from $\mathbb{R}_+$ to $[0,1]$ with $\theta^n(0)=0$, $\theta^n(r)=1$ for all $r\geq 1/n$ and $\theta^n(r)>0$ for all $r> 0$.
\end{assumptionH}
\begin{assumptionH} \label{H3} $(\mu_{n,m})_{m,n\in\mathbb{N}}$, $(\nu_{n,m})_{m,n\in\mathbb{N}}$, $(\mu_{m})_{m\in\mathbb{N}}$, $(\nu_{m})_{m\in\mathbb{N}}$ are families of probability distributions on $\mathbb{R}_+$ and $(\eta_{n,m})_{n,m\in\mathbb{N}}$,  $(\eta_{m})_{m\in\mathbb{N}}$ families of probability distributions on $\mathbb{R}_+^2$ such that for any $n,m\in\mathbb{N}$ $\eta_{n,m}\in\Gamma(\mu_{n,m},\nu_{n,m})$ and $\eta_{m}\in\Gamma(\mu_{m},\nu_{m})$ and for any $m\in\mathbb{N}$, $(\eta_{n,m})_{n\in\mathbb{N}}$  converges weakly to $\eta_m$ and $(\eta_m)_{m\in\mathbb{N}}$ converges weakly to $\eta$.
Further, the $p$-th order moments of $(\mu_{n,m})_{n,m\in\mathbb{N}}$, $(\nu_{n,m})_{n,m\in\mathbb{N}}$, $(\mu_{m})_{m\in\mathbb{N}}$ and $(\nu_{m})_{m\in\mathbb{N}}$ are uniformly bounded for $p>2$ given in \Cref{H2}.
\end{assumptionH}
Note that by \Cref{H5} for any non-decreasing sequence $(u_m)_{m\in\mathbb{N}}$, which converges to $u\in\mathbb{R}_+$, $g^m(u_m)$ and $h^m(u_m)$ converge to $g(u)$ and $h(u)$, respectively. More precisely, it holds for for all $m\in\mathbb{N}$, $g^m(u_m)- g(u)\leq 0$ and for $m\geq n$, $g^m(u_m)\geq g^m(u_n)$ and therefore, $\lim_{m\to\infty} g^m(u_n)-g(u)\geq \lim_{n\to\infty}\lim_{m\to\infty} =\lim_{n\to\infty}g(u_n)-g(u)=0$ by left-continuity of $g$. Hence, $\lim_{m\to\infty}g^m(u_m)-g(u)=0$ and analogously  $\lim_{m\to\infty}h^m(u_m)-h(u)=0$.
By \Cref{H5}, $\Gamma=\max(\|h\|_\infty,\|g\|_\infty)$ is a uniform upper bound of $(g^m)_{m\in\mathbb{N}}$ and $(h^m)_{m\in\mathbb{N}}$.  

Consider a probability space $(\Omega_0, \mathcal{A}_0, Q)$ and a one-dimensional Brownian motion $(W_t)_{t\geq 0}$. Under \Cref{H5}, \Cref{H4} and \Cref{H3}, for all $m,n\in\mathbb{N}$, there exists random variables $r^{n,m}, s^{n,m}:\Omega_0\to \mathbb{W}$ for each $n,m$ such that $(r^{n,m}_t,s^{n,m}_t)_{t\geq 0}$ is a unique strong solution to \eqref{eq:1d_stickydiff_coupling} associated to $(W_t)_{t\geq 0}$ by \cite[Theorem 2.2]{Me96}. We denote by $\PP^{n,m}=Q\circ (r^{n,m},s^{n,m})^{-1}$ the corresponding distribution on $\mathbb{W}\times \mathbb{W}$.

Before studying the two limits $n,m\to\infty$ and proving \Cref{thm:existence_comparison}, we state a modification of the comparison theorem by Ikeda and Watanabe to compare two solutions of \eqref{eq:1d_stickydiff_coupling}, cf. \cite[Section VI, Theorem 1.1]{IkWa89}.

\begin{lemma}\label{lemma:modification_watanabe_nonlinear}
Let $(r_t^{n,m},s_t^{n,m})_{t\geq 0}$ be a solution of \eqref{eq:1d_stickydiff_coupling} for fixed $n,m\in \mathbb{N}$. Assume \Cref{H1b}, \Cref{H5} and \Cref{H4}. If $Q[r_0^{n,m}\leq s_0^{n,m}]=1$, $\tilde{b}(r)\leq \hat{b}(r)$ and $g^m(r)\leq h^m(r)$ for any $r\in\mathbb{R}_+$, then
\begin{align} \label{eq:comparison}
Q[r_t^{n,m}\leq s_t^{n,m} \text{ for all } t\geq 0]=1\eqsp. 
\end{align}
\end{lemma}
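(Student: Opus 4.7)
The plan is to adapt the classical Yamada--Watanabe comparison argument (as in \cite[Section VI, Theorem 1.1]{IkWa89}) to the present nonlinear setting. The main new ingredient is that the drifts contain the law-dependent terms $P_t^{n,m}(g^m)$ and $\hat P_t^{n,m}(h^m)$, but these are deterministic functions of $t$ (given the law) and can be handled using the monotonicity and Lipschitz continuity assumed in \Cref{H5}.

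Set $U_t = r_t^{n,m} - s_t^{n,m}$. First I would introduce the standard Yamada--Watanabe approximations $(\phi_k)_{k\in\nset}$ of $x\mapsto x_+$: a sequence of $\mathcal{C}^2$ non-negative convex functions with $0\leq \phi_k'\leq 1$, $\phi_k(x)\uparrow x_+$ as $k\to\infty$, $\phi_k''$ supported in $(a_k,a_{k-1})$ with $a_k \downarrow 0$, and $\int_{a_k}^{a_{k-1}} x\,\phi_k''(x)\,\rmd x \leq 1/k$. Applying It\^o's formula to $\phi_k(U_t)$ yields a drift contribution
\[
\phi_k'(U_t)\bigl[\tilde b(r_t^{n,m})-\hat b(s_t^{n,m}) + P_t^{n,m}(g^m)-\hat P_t^{n,m}(h^m)\bigr],
\]
a quadratic variation contribution $2\phi_k''(U_t)\bigl[\theta^n(r_t^{n,m})-\theta^n(s_t^{n,m})\bigr]^2$, and a local martingale term that vanishes in expectation after localization (using \Cref{H2} and the linear growth of the coefficients to ensure integrability).

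Next I would bound each piece. For the diffusion term, $|\theta^n(r_t^{n,m})-\theta^n(s_t^{n,m})|\leq \|\theta^n\|_{\Lip}|U_t|$, so the standard Yamada--Watanabe estimate gives
\[
\PE\bigl[2\phi_k''(U_t)(\theta^n(r_t^{n,m})-\theta^n(s_t^{n,m}))^2\bigr] \;\leq\; 2\|\theta^n\|_{\Lip}^2 /k,
\]
which vanishes as $k\to\infty$. For the drift, I split
\[
\tilde b(r_t^{n,m})-\hat b(s_t^{n,m}) = \bigl(\tilde b(r_t^{n,m})-\tilde b(s_t^{n,m})\bigr) + \bigl(\tilde b(s_t^{n,m}) - \hat b(s_t^{n,m})\bigr),
\]
using the Lipschitz bound from \Cref{H1b} on the first piece (bounded by $\tilde L\, U_t^+$ on $\{\phi_k'(U_t)>0\}\subset\{U_t>0\}$) and the assumption $\tilde b\leq \hat b$ to kill the second. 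The key novelty is the law term: because $g^m\leq h^m$ pointwise,
\[
P_t^{n,m}(g^m) - \hat P_t^{n,m}(h^m) \;\leq\; P_t^{n,m}(h^m) - \hat P_t^{n,m}(h^m) \;=\; \PE\bigl[h^m(r_t^{n,m}) - h^m(s_t^{n,m})\bigr] \;\leq\; K_m\,\PE[U_t^+],
\]
where the last step uses that $h^m$ is non-decreasing and $K_m$-Lipschitz, so $h^m(r)-h^m(s)\leq K_m(r-s)_+$. Since this quantity is deterministic and $0\leq \PE[\phi_k'(U_t)]\leq 1$, its contribution to $\tfrac{\rmd}{\rmd t}\PE[\phi_k(U_t)]$ is at most $K_m\PE[U_t^+]$ (non-positive values contribute at most $0$).

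Collecting the bounds and letting $k\to\infty$ via monotone/dominated convergence, I obtain
\[
\PE[U_t^+] \;\leq\; \PE[U_0^+] + (\tilde L + K_m)\int_0^t \PE[U_s^+]\,\rmd s,
\]
and since $\PE[U_0^+]=0$ by hypothesis, Gr\"onwall's lemma yields $\PE[U_t^+]=0$ for all $t\geq 0$. Hence $U_t\leq 0$ $Q$-a.s.\ for each fixed $t$, and continuity of sample paths upgrades this to $Q[U_t\leq 0 \text{ for all } t\geq 0]=1$, which is \eqref{eq:comparison}. The main obstacle I expect is the nonlinear law-dependent term, but as shown above it is tamed by combining the comparison $g^m\leq h^m$ with the global Lipschitz continuity of $h^m$, which turns it into a Gr\"onwall-compatible quantity.
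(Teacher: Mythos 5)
Your proof is correct and follows essentially the same approach as the paper: Yamada--Watanabe approximation of $x\mapsto x_+$, It\^o's formula, killing the quadratic-variation term via the $2/k$ estimate, and the crucial treatment of the law-dependent drift by combining $g^m\leq h^m$ with Lipschitz continuity and monotonicity to obtain a Gr\"onwall-compatible bound $K_m\,\PE[U_t^+]$. The only cosmetic differences are that the paper splits the drift as $\tilde b(r)-\hat b(r)+\hat b(r)-\hat b(s)$ and uses the Lipschitz bound on $g^m$ rather than $h^m$, and the paper closes with a $t^*$-contradiction argument in place of your cleaner Gr\"onwall step; neither changes the substance.
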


\begin{proof}
For simplicity, we drop the dependence on $n,m$ in $(r_t^{n,m})$ and $(s_t^{n,m})$. Denote by $\rho$ the Lipschitz constant of $\theta^n$. 
Let $(a_k)_{k\in\mathbb{N}}$ be a decreasing sequence, $1>a_1>a_2>\ldots>a_k>\ldots >0$, such that $\int_{a_1}^1\rho^{-2}x^{-1}\rmd x=1$, $\int_{a_2}^{a_1}\rho^{-2}x^{-1}\rmd x=2$,$\ldots$, $\int_{a_k}^{a_{k-1}}\rho^{-2}x^{-1}\rmd x=k$.
We choose a sequence $\Psi_k(u)$, $k=1,2,\ldots$, of continuous functions such that its support is contained in $(a_k,a_{k-1})$, $\int_{a_k}^{a_{k-1}}\Psi_k(u)\rmd u=1$ and $0\leq \Psi_k(u)\leq 2/k\cdot\rho^{-2} u^{-2}$. Such a function exists. We set 
\begin{align*}
\varphi_k(x)=\begin{cases} \int_0^x \rmd y \int_0^y \Psi_k(u)\rmd u & \text{ if } x\geq 0, \\ 0\eqsp & \text{ if } x<0\eqsp. \end{cases}
\end{align*} 
Note that for any $k\in\mathbb{N}$, $\varphi_k\in\mathcal{C}^2(\mathbb{R}_+)$, $|\varphi'_k(x)|\leq 1$, $\varphi_k(x)\to x_+$ as $k\uparrow\infty$ and $\varphi'_k(x)\uparrow \1_{(0,\infty)}(x)$. Applying It\=o's formula to $\varphi_k(r_t-s_t)$, we obtain
\begin{align*}
\varphi_k(r_t-s_t)=\varphi_k(r_0-s_0)+I_1(k)+I_2(k)+I_3(k)\eqsp,
\end{align*}
where
\begin{align*}
I_1(k)&=\int_0^t\varphi_k'(r_u-s_u)[\theta^n(r_u)-\theta^n(s_u)]\rmd B_u\eqsp,
\\ I_2(k)&=\int_0^t \varphi_k'(r_u-s_u)[\tilde{b}(r_u)-\hat{b}(s_u)+P_u(g^m)-\hat{P}_u(h^m)] \rmd u \eqsp,
\\ I_3(k)&=\frac{1}{2}\int_0^t \varphi_k''(r_u-s_u)[\theta^n(r_u)-\theta^n(s_u)]^2\rmd u \eqsp,
\end{align*}
with $P_u=Q\circ r_u^{-1}$ and $\hat{P}_u=Q\circ s_u^{-1}$.
It holds by boundedness and Lipschitz continuity of $\theta^n$
\begin{align*}
\mathbb{E}[I_1(k)]=0\eqsp, \quad \text{ and }\quad \mathbb{E}[I_3(k)]\leq \frac{1}{2}\mathbb{E}\Big[\int_0^t \varphi_k''(r_u-s_u)\rho^2|r_u-s_u|^2\rmd u\Big]\leq \frac{t}{k} \eqsp.
\end{align*}
We note that by \Cref{H5} $\mathbb{E}[(g^m(r_u)-h^m(s_u))\1_{r_u-s_u<0}]\leq 0$ and
\begin{align} \label{eq:comparison_lemma_nonlineardriftpart}
\mathbb{E}[(g^m(r_u)-h^m(s_u))\1_{r_u-s_u\geq 0}]&\leq \mathbb{E}[(g^m(r_u)-g^m(s_u)+g^m(s_u)-h^m(s_u))\1_{r_u-s_u\geq 0}] \nonumber
\\ & \leq \mathbb{E}[(g^m(r_u)-g^m(s_u))\1_{r_u-s_u\geq 0}] \nonumber
\\ & \leq K_m\mathbb{E}[|r_u-s_u|\1_{r_u-s_u\geq 0}]
\end{align}
by Lipschitz continuity of $g^m$, by $g^m(r)\leq h^m(r)$ and since $g^m$ and $h^m$ are non-decreasing. Hence for $I_2$, we obtain
\begin{align*}
I_2(k&)= \int_0^t \varphi_k'(r_u-s_u)[\tilde{b}(r_u)-\hat{b}(r_u)+\hat{b}(r_u)-\hat{b}(s_u)]\rmd u
\\ & +\int_0^t \varphi_k'(r_u-s_u)\Big(\mathbb{E}[(g^m(r_u)-h^m(s_u))\1_{r_u-s_u\geq 0}]+\mathbb{E}[(g^m(r_u)-h^m(s_u))\1_{r_u-s_u<0}]\Big) \rmd u
\\ & \leq \int_0^t \varphi_k'(r_u-s_u)\tilde{L}|r_u-s_u|\rmd u+ \int_0^t \varphi_k'(r_u-s_u) K_m\mathbb{E}[|r_u-s_u|\1_{r_u-s_u\geq 0}] \rmd u\eqsp.
\end{align*} 
Taking the limit $k\to \infty$ and using that $\mathbb{E}[r_0-s_0]=0$, we obtain
\begin{align} \label{eq:modificationwatanabe_nonlinear}
\mathbb{E}[(r_t-s_t)_+]\leq \tilde{L}\mathbb{E}\Big[\int_0^t(r_u-s_u)_+ \rmd u\Big]+K_m\mathbb{E}\Big[\int_0^t  \1_{(0,\infty)}(r_u-s_u)\mathbb{E}[(r_u-s_u)_+] \rmd u\Big] \eqsp,
\end{align}
 by the monotone convergence theorem and since $(\varphi_k')_{k\in\mathbb{N}}$ is a monotone increasing sequence which converges pointwise to $\1_{(0,\infty)}(x)$.
Assume there exists $t^*=\inf\{t\geq 0 : \mathbb{E}[(r_t-s_t)_+]>0\}<\infty$. Then, $\int_0^{t^*}\mathbb{E}[(r_u-s_u)_+ ]\rmd u>0$ or $\int_0^{t^*}\mathbb{E}[ \1_{(0,\infty)}(r_u-s_u)]\mathbb{E}[(r_u-s_u)_+] \rmd u>0$. By definition of $t^*$, $\mathbb{E}[(r_u-s_u)_+ ]=0$ for all $u<t^*$ and hence both terms are zero.  This contradicts the definition of $t^*$. Hence, \eqref{eq:comparison} holds. 
\end{proof}

Next, we show that the distribution of the solution of \eqref{eq:1d_stickydiff_coupling} converges as $n\to\infty$.

\begin{lemma} \label{lem:existence_stickySDE_step1}
Assume that $\tilde{b}$, $\hat{b}$, $g$ and $h$ satisfy \Cref{H1b} and \Cref{H1g}.
Let $\eta\in\Gamma(\mu,\nu)$ where the probability measures $\mu$ and $\nu$ on $\mathbb{R}_+$ satisfy \Cref{H2}.
Assume that $(g^m)_{m\in\mathbb{N}}$, $(h^m)_{m\in\mathbb{N}}$, $(\theta^n)_{n\in\mathbb{N}}$, $(\mu_{n,m})_{m,n\in\mathbb{N}}$, $(\nu_{n,m})_{m,n\in\mathbb{N}}$ and $(\eta_{n,m})_{m,n\in\mathbb{N}}$ satisfy condition \Cref{H5}, \Cref{H3} and \Cref{H4}. Then for any $m \in \nset$, there exists a random variable $(r^m,s^m)$ defined on some probability space $(\Omega^m,\mathcal{A}^m,P^m)$ with values in $\mathbb{W}\times\mathbb{W}$, such that $(r_t^m,s_t^m)_{t\geq 0}$ is a weak solution of the stochastic differential equation \eqref{eq:1d_stickydiff_coupling2}.
More precisely, for all $m\in\mathbb{N}$ the sequence of laws $Q\circ (r^{n,m},s^{n,m})^{-1}$ converges weakly to the distribution $P^m\circ (r^m,s^m)^{-1}$. If additionally,
\begin{align*} 
& \tilde{b}(r)\leq \hat{b}(r) \quad \text{and} \quad g^m(r)\leq h^m(r) \eqsp,  &&\text{for any } r\in\mathbb{R}_+ \text{ and }
\\ & Q[r_0^{n,m}\leq s_0^{n,m}]=1 &&\text{for any } n,m\in\mathbb{N},
\end{align*}
then $P^m[r_t^{m}\leq s_t^{m} \text{ for all } t\geq 0]=1$.
\end{lemma}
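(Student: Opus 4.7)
The plan is a tightness-and-identification argument at fixed $m$, with the comparison transferred to the limit by Portmanteau and whole-sequence convergence derived from uniqueness in law.

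First I establish tightness of $\{Q\circ(r^{n,m},s^{n,m})^{-1}\}_{n\in \mathbb{N}}$ on $\mathbb{W}\times \mathbb{W}$. Using the linear growth bound \eqref{eq:nonlinear_lineargrowthcondition}, It\=o's formula on $(r^{n,m})^{p}$ and $(s^{n,m})^{p}$, Burkholder--Davis--Gundy, Gronwall, and the uniform $p$-th order moment bound from \Cref{H3}, I obtain $n$-uniform estimates $\sup_{n}\mathbb{E}[\sup_{s\leq T}((r^{n,m}_{s})^{p}+(s^{n,m}_{s})^{p})]<\infty$ on every horizon $[0,T]$. The incremental bound $\mathbb{E}[|r^{n,m}_{t_{2}}-r^{n,m}_{t_{1}}|^{4}+|s^{n,m}_{t_{2}}-s^{n,m}_{t_{1}}|^{4}]\leq C_{T}|t_{2}-t_{1}|^{2}$ then follows from the $n$-uniform boundedness of $\theta^{n}$, $g^{m}$, $h^{m}$ (and crucially not of their Lipschitz constants, which blow up in $n$). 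Kolmogorov's criterion \cite[Corollary 14.9]{Ka02} and a diagonal argument yield a subsequence $n_{k}\to \infty$ along which the laws converge weakly to some $P^{m}\circ(r^{m},s^{m})^{-1}$, whose initial marginal equals $\eta_{m}$ by \Cref{H3}.

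Next I identify the subsequential limit as a weak solution of \eqref{eq:1d_stickydiff_coupling2}. Skorokhod's representation theorem provides versions on a common probability space along which $(r^{n_{k},m},s^{n_{k},m})\to (r^{m},s^{m})$ almost surely, uniformly on compacts. Continuity of $\tilde{b},\hat{b}$ and the Lipschitz continuity of $g^{m},h^{m}$ granted by \Cref{H5} (which yields $|P^{n_{k},m}_{s}(g^{m})-P^{m}_{s}(g^{m})|\to 0$ from weak convergence of marginals) let me pass the drift terms to the limit by dominated convergence. For the noise I proceed via the martingale problem: the continuous martingales $M^{n_{k},m}_{t}=r^{n_{k},m}_{t}-r^{n_{k},m}_{0}-\int_{0}^{t}(\tilde{b}(r^{n_{k},m}_{s})+P^{n_{k},m}_{s}(g^{m}))\rmd s$ and their $s$-counterparts converge jointly with their quadratic and cross variations to a pair of continuous martingales whose joint bracket is $4\int_{0}^{\cdot}\1_{(0,\infty)}(r^{m}_{s})\1_{(0,\infty)}(s^{m}_{s})\rmd s$. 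Martingale representation on an enlarged probability space then produces a common Brownian motion $W^{m}$ making $(r^{m},s^{m},W^{m})$ a weak solution of \eqref{eq:1d_stickydiff_coupling2}. The comparison passes to the limit by Portmanteau applied to the closed subset $\{(r,s)\in \mathbb{W}\times \mathbb{W}: r_{t}\leq s_{t}\ \forall t\geq 0\}$, which has full $Q\circ(r^{n,m},s^{n,m})^{-1}$-measure by \Cref{lemma:modification_watanabe_nonlinear}. Finally, uniqueness in law for \eqref{eq:1d_stickydiff_coupling2}, obtained by applying the comparison in both directions on a synchronous coupling, upgrades the subsequential convergence to convergence of the full sequence.

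The main obstacle is the passage to the limit in the quadratic variation, since $\theta^{n_{k}}(r)^{2}\to \1_{(0,\infty)}(r)$ fails exactly at $r=0$, the point where the limit process is expected to spend positive Lebesgue time. On $\{s:r^{m}_{s}>0\}$, uniform-on-compact convergence combined with \Cref{H4} forces $\theta^{n_{k}}(r^{n_{k},m}_{s})^{2}=1$ for all $k$ large; on $\{s:r^{m}_{s}=0\}$ the limiting integrand vanishes while the pre-limit integrands need not, and one must show that the time $r^{n_{k},m}$ spends in the thin layer $(0,1/n_{k}]$ contributes negligibly in the limit. This is the technical heart of the argument, relying on the strict inequality $p>2$ in \Cref{H2} (to feed Kolmogorov's criterion and control the occupation near zero) and on the fine structure of the approximating coefficients in \Cref{H4}.
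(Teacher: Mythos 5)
Your overall architecture matches the paper's: tightness via moment bounds and Kolmogorov's criterion, identification through the martingale problem, comparison transferred by Portmanteau, and whole-sequence convergence from subsequential uniqueness. You have also correctly isolated the crux — passing from $[M^{n,m}]_t=4\int_0^t\theta^n(\mathbf{r}_u)^2\,\rmd u$ to $[M^m]_t=4\int_0^t\1_{(0,\infty)}(\mathbf{r}_u)\,\rmd u$ when the limit process spends positive Lebesgue time at zero. However, your proposed resolution of that crux is a gap. You suggest showing that the time $r^{n_k,m}$ spends in the thin layer $(0,1/n_k]$ contributes negligibly, but it is precisely near zero that the occupation of the approximating process is delicate: the sticky limit arises because the pre-limit processes oscillate for nontrivial amounts of time close to zero, and neither \Cref{H2} nor \Cref{H4} gives direct control of this occupation (the $p>2$ of \Cref{H2} is consumed by Kolmogorov's criterion, not by any occupation-time estimate near the boundary). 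As written, the "thin layer" step is an assertion, not an argument, and it is not clear it can be made to work head-on.

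The paper circumvents this entirely with a two-sided sub/supermartingale argument, and this is the idea your proof is missing. For the lower bound on $[M^m]$, one uses lower semicontinuity of $\omega\mapsto\int_0^{\cdot}\1_{(\epsilon,\infty)}(\omega_s)\,\rmd s$, Fatou, Portmanteau and the inequality $\theta^n\geq\1_{(\epsilon,\infty)}$ for $n>1/\epsilon$ to show $(M^m_t)^2-4\int_0^t\1_{(0,\infty)}(\mathbf{r}_u)\,\rmd u$ is a $\PP^m$-submartingale. For the upper bound, one first observes $(M^m_t)^2-4t$ is a supermartingale (since $\theta^n\leq1$), so by uniqueness of the Doob--Meyer decomposition $t\mapsto[M^m]_t-4t$ is decreasing, i.e.\ $\rmd[M^m]_t\leq4\,\rmd t$; then the occupation-time identity $\int_0^t\1_{\{0\}}(\mathbf{r}_u)\,\rmd[M^m]_u=\int\1_{\{0\}}(y)\,\ell_t^y(\mathbf{r})\,\rmd y=0$ shows that $[M^m]$ does not charge $\{\mathbf{r}=0\}$, whence $[M^m]_t-[M^m]_s=\int_s^t\1_{(0,\infty)}(\mathbf{r}_u)\,\rmd[M^m]_u\leq4\int_s^t\1_{(0,\infty)}(\mathbf{r}_u)\,\rmd u$. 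Combining these yields the required quadratic variation. You should replace the thin-layer heuristic with this argument. One further remark: you invoke Skorokhod representation at this stage, whereas the paper argues directly with weak convergence and a Lipschitz test-function estimate for the drift functional, deferring Skorokhod to the $m\to\infty$ step where $g^m,h^m$ lose their uniform Lipschitz structure and almost-sure pointwise convergence is genuinely required; your choice is harmless here but the distinction will matter in the next lemma.
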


\begin{proof}
Fix $m\in\mathbb{N}$. The proof is divided in three parts. First we show tightness of the sequences of probability measures. Then we identify the limit of the sequence of stochastic processes. Finally, we compare the two limiting processes. \\
\textbf{Tightness:}
We show that  the sequence of probability measures  $(\mathbb{P}^{n,m})_{n\in\mathbb{N}}$ on $(\mathbb{W}\times \mathbb{W},\mathcal{B}(\mathbb{W})\otimes \mathcal{B}(\mathbb{W}))$ is tight by applying Kolmogorov's continuity theorem. 
Consider $p>2$ such that the $p$-th moment in \Cref{H2} and \Cref{H3} are uniformly bounded.
Fix $T>0$. Then the $p$-th moment of $r_t^{n,m}$ for $t<T$ can be bounded using It\=o's formula,
\begin{align*}
\rmd |r_t^{n,m}|^p &\leq p|r_t^{n,m}|^{p-2}\langle r_t^{n,m},(\tilde{b}(r_t^{n,m})+P_t^{n,m}(g^m))\rangle \rmd t + 2\theta^n(r_t^{n,m}) p|r_t^{n,m}|^{p-2} r_t^{n,m}\rmd W_t
\\ & +p(p-1)|r_t^{n,m}|^{p-2}2 \theta^n(r_t^{n})^2\rmd t
\\ & \leq p\Big(|r_t^{n,m}|^p \tilde{L}+\Gamma |r_t^{n,m}|^{p-1}+2(p-1)|r_t^{n,m}|^{p-2}\Big) \rmd t  + 2\theta^n(r_t^{n,m}) p(r_t^{n,m})^{p-1}\rmd W_t
\\ & \leq p\Big(\tilde{L}+\Gamma+2(p-1)\Big)|r_t^{n,m}|^p\rmd t + p(\Gamma+2(p-1))\rmd t + 2\theta^n(r_t^{n,m}) p(r_t^{n,m})^{p-1}\rmd W_t\eqsp,
\end{align*}
where $\Gamma=\max(\|g\|_\infty,\|h\|_{\infty})$.
Taking expectation yields
\begin{align*}
\frac{\rmd}{\rmd t}\mathbb{E}[ |r_t^{n,m}|^p]\leq p\Big(\tilde{L}+\Gamma+2(p-1)\Big)\mathbb{E}|r_t^{n,m}|^p+p(\Gamma+2(p-1))\eqsp.
\end{align*}
Then by Gronwall's lemma
\begin{align} \label{eq:pthmomentbound}
\sup_{t\in[0,T]}\mathbb{E}[|r_t^{n,m}|^p]\leq \rme^{p(\tilde{L}+\Gamma+2(p-1))T} (\mathbb{E}[|r_0^{n,m}|^p]+Tp(\Gamma+2(p-1)))<C_p<\infty \eqsp,
\end{align}
where $C_p$ depends on $T$ and the $p$-th moment of the initial distribution, which is finite by \Cref{H4}.
Similarly, it holds $\sup_{t\in[0,T]} \mathbb{E}[|s_t^{n,m}|^p]<C_p$ for $t\leq T$.
Using this moment bound, it holds for all $t_1,t_2\in[0,T]$ by \Cref{H1b}, \Cref{H5} and \Cref{H4},
\begin{align*}
\mathbb{E}[|r_{t_2}^{n,m}&-r_{t_1}^{n,m}|^p]\leq C_1(p)\Big(\mathbb{E}[|\int_{t_1}^{t_2} \tilde{b}(r_u^{n,m}) + P^{n,m}_u(g^m)\rmd u |^p]+\mathbb{E}[|\int_{t_1}^{t_2}2 \theta^n(r_u^{n,m})\rmd W_u|^p]\Big)
\\ & \leq C_2(p)\Big(\Big(\mathbb{E}\Big[\frac{\tilde{L}^p}{|t_2-t_1|}\int_{t_1}^{t_2}|r_u^{n,m}|^p\rmd u\Big]+\Gamma^p\Big)|t_2-t_1|^p+\mathbb{E}[|\int_{t_1}^{t_2}2\theta^n(r_u^{n,m})\rmd u|^{p/2}]\Big)
\\ & \leq C_2(p)\Big(\Big(\frac{\tilde{L}^p}{|t_2-t_1|}\int_{t_1}^{t_2}\mathbb{E}[|r_u^{n,m}|^p]\rmd u +\Gamma^p\Big)|t_2-t_1|^p+2^{p/2}|t_2-t_1|^{p/2}\Big)
\\ & \leq C_3(p,T,\tilde{L},\Gamma,C_p)|t_2-t_1|^{p/2}\eqsp,
\end{align*}
where $C_i(\cdot)$ are constants depending on the stated argument and which are independent of $n,m$. Note that in the second step, we used Burkholder-Davis-Gundy inequality, see \cite[Chapter IV, Theorem 48]{Pr04}.
It holds similarly, $\mathbb{E}[|s_{t_2}^{n,m}-s_{t_1}^{n,m}|^p]\leq C_3(p,T,\tilde{L},\Gamma,C_p)|t_2-t_1|^{p/2}$. Hence, 
\begin{align} \label{eq:pthmoment_difference1}
\mathbb{E}[|(r_{t_2}^{n,m},s_{t_2}^{n,m})-(r_{t_1}^{n,m},s_{t_1}^{n,m})|^p]\leq C_4(p,T,\tilde{L},\Gamma,C_p)|t_2-t_1|^{p/2}
\end{align}
for all $t_1,t_2\in[0,T]$. 
Hence, by Kolmogorov's continuity criterion, cf. \cite[Corollary 14.9]{Ka02},  
there exists a constant $\tilde{C}$ depending on $p$ and $\gamma$ such that
\begin{align} \label{eq:pthmoment_difference}
\mathbb{E}\Big[ [(r^{n,m},s^{n,m})]_\gamma^p\Big]\leq \tilde{C}\cdot C_4(p,T,\tilde{L},\Gamma,C_p) \eqsp,
\end{align}
where $[\cdot]_\gamma^p$ is given by $[x]_\gamma=\sup_{t_1,t_2\in[0,T]}\frac{|x(t_1)-x(t_2)|}{|t_1-t_2|^\gamma}$
and $(r_t^{n,m},s_t^{n,m})_{n\in\mathbb{N},t\geq 0}$ is tight in $\mathcal{C}([0,T],\mathbb{R}^2)$.
Hence, for each $T>0$ there exists a subsequence $n_k\to \infty$ and a probability measure $\mathbb{P}^m_T$ on $\mathcal{C}([0,T],\mathbb{R}^2)$. Since $\{\PP^m_T\}_T$ is a consistent family, there exists by \cite[Theorem 5.16]{Ka02} a probability measure $\PP^m$ on
$(\mathbb{W}\times\mathbb{W}, \mathcal{B}(\mathbb{W})\otimes\mathcal{B}(\mathbb{W}))$ such that there is a subsequence $(n_k)_{k\in\mathbb{N}}$ such that $\PP^{n_k,m}$ converges along this subsequence to $\PP^m$. Note that here we can take by a diagonalization argument the same subsequence $(n_k)_{k\in\mathbb{N}}$ for all $m$.

\textbf{Characterization of the limit measure:} In the following we drop for simplicity the index $k$ in the subsequence. 
Denote by $(\mathbf{r}_t,\mathbf{s}_t) (\omega)=\omega(t)$ the canonical process on $\mathbb{W}\times\mathbb{W}$.
Since $\PP^{n,m}\circ (\mathbf{r}_0,\mathbf{s}_0)^{-1}=\eta_{n,m}$ converges weakly to $\eta_m$ by \Cref{H3}, it holds $\PP^m\circ (\mathbf{r}_0,\mathbf{s}_0)^{-1}=\eta_m$. We define the maps $M^{n,m},N^{n,m}:\mathbb{W}\times\mathbb{W}\to \mathbb{W}$ by 
\begin{align*} 
M_t^{n,m}=\mathbf{r}_t-\mathbf{r}_0-\int_0^t (\tilde{b}(\mathbf{r}_u)+P_u^n(g^m))\rmd u \text{ and } N_t^{n,m}=\mathbf{s}_t-\mathbf{s}_0-\int_0^t (\hat{b}(\mathbf{s}_u)+\hat{P}_u^n(h^m))\rmd u \eqsp, 
\end{align*}
where $P_u^n=\PP^{n,m}\circ(\mathbf{r}_u)^{-1}$ and $\hat{P}_u^n=\PP^{n,m}\circ(\mathbf{s}_u)^{-1}$.
For each $m,n\in\mathbb{N}$, $(M_t^{n,m},\mathcal{F}_t,\PP^{n,m})$ and $(N_t^{n,m},\mathcal{F}_t,\PP^{n,m})$ are martingales with respect to the canonical filtration $\mathcal{F}_t=\sigma((\mathbf{r}_u,\mathbf{s}_u)_{0\leq u\leq t})$ by It\=o's formula and the moment estimate \eqref{eq:pthmomentbound}. Further the family $(M_t^{n,m},\PP^{n,m})_{n\in\mathbb{N},t\geq 0}$ and $(N_t^{n,m},\PP^{n,m})_{n\in\mathbb{N},t\geq 0}$ are uniformly integrable by Lipschitz continuity of $\tilde{b}$ and $\hat{b}$ and by boundedness of $g^m$ and $h^m$. 
Further, the mappings $M^{n,m}$ and $N^{n,m}$ are continuous in $\mathbb{W}$. We show that $\PP^{n,m}\circ(\mathbf{r},\mathbf{s},M^{n,m},N^{n,m})^{-1}$ converges weakly to $\PP^m\circ(\mathbf{r},\mathbf{s},M^m,N^m)^{-1}$ as $n\to\infty$, where
\begin{align} \label{eq:martingale}
M_t^{m}=\mathbf{r}_t-\mathbf{r}_0-\int_0^t (\tilde{b}(\mathbf{r}_u)+P_u(g^m))\rmd u \quad \text{and} \quad N_t^{m}=\mathbf{s}_t-\mathbf{s}_0-\int_0^t (\hat{b}(\mathbf{s}_u)+\hat{P}_u(h^m))\rmd u \eqsp, 
\end{align}
with $P_u=\PP^{m}\circ\mathbf{r}_u^{-1}$ and $\hat{P}_u=\PP^{m}\circ\mathbf{s}_u^{-1}$.
To show weak convergence to $\PP^m\circ(\mathbf{r},\mathbf{s},M^m,N^m)^{-1}$, we note that $(M^m,N^m)$ is continuous in $\mathbb{W}$ and we consider for a Lipschitz continuous and bounded function $G:\mathbb{W}\to\mathbb{R}$,
\begin{align*}
&\abs{\int_\mathbb{W}G(\omega)\rmd \PP^{n,m}\circ(M^{n,m})^{-1}(\omega)-\int_\mathbb{W}G(\omega)\rmd \PP^{m}\circ(M^{m})^{-1}(\omega)}
\\ &\leq \abs{\int_\mathbb{W}G(\omega)\rmd \PP^{n,m}\circ(M^{n,m})^{-1}(\omega)-\int_\mathbb{W}G(\omega)\rmd \PP^{n,m}\circ(M^{m})^{-1}(\omega)}
\\ & +\abs{\int_\mathbb{W}G(\omega)\rmd \PP^{n,m}\circ(M^{m})^{-1}(\omega)-\int_\mathbb{W}G(\omega)\rmd \PP^{m}\circ(M^{m})^{-1}(\omega)}\eqsp.
\end{align*}
The second term converges to $0$ as $n\to\infty$, since $(M^m)$ is continuous. For the first term it holds 
\begin{align*}
&\abs{\int_\mathbb{W}G(\omega)\rmd \PP^{n,m}\circ(M^{n,m})^{-1}(\omega)-\int_\mathbb{W}G(\omega)\rmd \PP^{n,m}\circ(M^{m})^{-1}(\omega)}
\\ & =\abs{\int_\mathbb{W}(G\circ M^{n,m})(\omega)\rmd \PP^{n,m}(\omega)-\int_\mathbb{W}(G\circ M^{m})(\omega)\rmd \PP^{n,m}(\omega)}
\\ & \leq \|G\|_{\Lip}\sup_{\omega\in\mathbb{W}} d_\mathbb{W}(M^{n,m}(\omega),M^m(\omega)) \eqsp, 
\end{align*}
where $d_\mathbb{W}(f,g)=\sum_{k=1}^\infty\sup_{t\in[0,k]}2^{-k}|f(t)-g(t)|$.
This term converges to $0$ for $n\to\infty$, since for all $T>0$ and $\omega\in\mathbb{W}$, for $n\to\infty$
\begin{align*}
\sup_{t\in [0,T]}|M^{n,m}_t(\omega)-M^{m}_t(\omega)|\leq \int_0^T\abs{(\mathbb{P}^{n,m}\circ \mathbf{r}_s^{-1})(g^m)-(\mathbb{P}^{m}\circ \mathbf{r}_s^{-1})(g^m)}\rmd s\to 0 \eqsp,
\end{align*}
by Lebesgue dominated convergence theorem, since $g$ is bounded.
Hence,
\begin{align*}
&\abs{\int_\mathbb{W}G(\omega)\rmd \PP^{n,m}\circ(M^{n,m})^{-1}(\omega)-\int_\mathbb{W}G(\omega)\rmd \PP^{m}\circ(M^{m})^{-1}(\omega)}\to 0 \quad \text{for } n\to\infty,
\end{align*} 
and similarly for  $(N^{n,m})$, and therefore by the Portmanteau theorem \cite[Theorem 13.16]{klenke2013probability}, weak convergence of $\PP^{n,m}\circ(\mathbf{r},\mathbf{s},M^{n,m},N^{n,m})^{-1}$ to $\PP^m\circ(\mathbf{r},\mathbf{s},M^m,N^m)^{-1}$ holds.

Let $G:\mathbb{W}\to\mathbb{R}_+$ be a $\mathcal{F}_s$-measurable, bounded, non-negative function. By uniformly integrability of $(M_t^{n,m},\PP^{n,m})_{n\in\mathbb{N}, t\geq 0}$, for any $s\leq t$,
\begin{equation}\label{eq:martingale_property}
\begin{aligned} 
\mathbb{E}^m[G(M_t^m-M_s^m)]&=\mathbb{E}^m[G( \int_s^t( \tilde{b}(\mathbf{r}_u)+P_u(g^m))\rmd u)]
\\ & =\lim_{n\to\infty}\mathbb{E}^{n,m}[G( \int_s^t (\tilde{b}(\mathbf{r}_u)+P_u^{n}(g^m))\rmd u)]
\\ & =\lim_{n\to\infty}\mathbb{E}^{n,m}[G(M_t^{n,m}-M_s^{n,m})]=0 \eqsp,
\end{aligned}
\end{equation}
and analogously for $(N_t^{n,m})_{t\geq 0}$ and hence, 
 $(M_t^m,\mathcal{F}_t,\PP^m)$ and $(N_t^m,\mathcal{F}_t,\PP^m)$ are continuous martingales. 
The quadratic variation $([(M^m,N^m)]_t)$ exists $\PP^m$-almost surely. To complete the identification of the limit, it suffices to note that the quadratic variation is given by
\begin{equation} \label{eq:quadraticvariation}
\begin{aligned}
&[M^m]=4\int_0^\cdot \1_{(0,\infty)}(\mathbf{r}_u)\rmd u && \PP^m\text{-almost surely,}
\\  & [N^m]=4\int_0^\cdot \1_{(0,\infty)}(\mathbf{s}_u)\rmd u && \PP^m\text{-almost surely, and}
\\  & [M^m,N^m]=4\int_0^\cdot \1_{(0,\infty)}(\mathbf{r}_u)\1_{(0,\infty)}(\mathbf{s}_u)\rmd u && \PP^m\text{-almost surely,}
\end{aligned}
\end{equation}
which holds following the computations in the proof of \cite[Theorem 22]{EbZi19}. We show that $((M_t^m)^2-4\int_0^t\1_{(0,\infty)}\mathbf{r}_u\rmd u)$ is a sub- and a supermartingale and hence a martingale using a monotone class argument by noting first that for any bounded continuous and non-negative function $G:\mathbb{W}\to\mathbb{R}_+$, 
\begin{align} \label{eq:expGM_t^m}
\mathbb{E}^m[G(M_t^m)^2]=\lim_{n\to\infty}\mathbb{E}^{n,m}[G(M_t^{n,m})^2]
\end{align}
holds using uniform integrability of $((M_t^{n,m})^2,\PP^{n,m})_{n\in\mathbb{N},t\geq 0}$ which holds similarly as above. Note that 
\begin{align} \label{eq:submart1}
\mathbb{E}^m\parentheseDeux{G\int_s^t \1_{(0,\infty)}(\mathbf{r}_u)\rmd u}&\leq \lim_{\epsilon\downarrow 0}\liminf_{n\to\infty}\mathbb{E}^{n,m}\parentheseDeux{G\int_s^t \1_{(\epsilon,\infty)}(\mathbf{r}_u)\rmd u}
\end{align} 
holds by lower semicontinuity of $\omega\to\int_0^{\cdot}\1_{(\epsilon,\infty)}(\omega_s)\rmd s$ for each $\epsilon>0$, Fatou's lemma and the Portmanteau theorem.
For any fixed $\epsilon>0$,
\begin{align} \label{eq:submart2}
\liminf_{n\to\infty}\mathbb{E}^{n,m}\parentheseDeux{G\parenthese{\int_s^t\theta^n(\mathbf{r}_u)^2\rmd u-\int_s^t \1_{(\epsilon,\infty)}(\mathbf{r}_u)\rmd u}}.
\end{align} 
Then by \eqref{eq:expGM_t^m}, \eqref{eq:submart1} and \eqref{eq:submart2}
\begin{align*}
\mathbb{E}^m&\parentheseDeux{G\parenthese{(M_t^m)^2-(M_s^m)^2-4\int_s^t \1_{(0,\infty)}(\mathbf{r}_u)\rmd u}}
\\&\geq \lim_{\epsilon\downarrow 0}\liminf_{n\to\infty}\mathbb{E}^{n,m}\parentheseDeux{G\parenthese{(M_t^{n,m})^2-(M_s^{n,m})^2-4\int_s^t\theta^n(\mathbf{r}_u)^2\rmd u}}=0
\end{align*}
and by a monotone class argument, cf. \cite[Chapter 1, Theorem 8]{Pr04}, $((M_t^m)^2-4\int_0^t\1_{(0,\infty)}(\mathbf{r}_u)\rmd u,\PP^m)$ is a submartingale. To show that it is also a supermartingale we note that $((M_t^m)^2-4t,\PP^m)$ is a supermartingale by \eqref{eq:expGM_t^m}. By the uniqueness of the Doob-Meyer decomposition,  cf. \cite[Chapter 3, Theorem 8]{Pr04}, $t\to[M^m]_t-4t$ is $\PP^m$-almost surely decreasing. Note further, that $(\mathbf{r}_t,\mathcal{F}_t,\PP^m)$ is a continuous semimartingale with $[\mathbf{r}]=[M^m]$. Then by It\=o-Tanaka formula, cf. \cite[Chapter 6, Theorem 1.1]{ReYo99}, 
\begin{align*}
\int_0^t\1_{\{0\}}(\mathbf{r}_u)\rmd [M^m]_u=\int_0^t\1_{\{0\}}(\mathbf{r}_u)\rmd [\mathbf{r}]_u=\int_0^t\1_{\{0\}}(y)\ell_t^y(\mathbf{r})\rmd y=0 \eqsp,
\end{align*}
where $\ell_t^y(\mathbf{r})$ is the local time of $\mathbf{r}$ in $y$. Therefore, for any $0\leq s< t$,
\begin{align*}
[M^m]_t-[M^m]_s=\int_0^t\1_{(0,\infty)}(\mathbf{r}_u)\rmd [M^m]_u\leq4\int_0^t\1_{(0,\infty)}(\mathbf{r}_u)\rmd u
\end{align*}
and hence, for any $\mathcal{F}_s$-measurable, bounded, non-negative function $G:\mathbb{W}\to\mathbb{R}_+$,
\begin{align*}
\mathbb{E}^m\parentheseDeux{G((M_t^m)^2-(M_s^m)^2-4\int_s^t \1_{(0,\infty)}(\mathbf{r}_u)\rmd u)} \leq 0 \eqsp.
\end{align*}
As before, by a monotone class argument, $((M_t^m)^2-4\int_0^t\1_{(0,\infty)}(\mathbf{r}_u)\rmd u,\PP^m)$ is a supermartingale, and hence a martingale.

Hence, we obtain the quadratic variation $[M^m]_t$ given in \eqref{eq:quadraticvariation}. The other characterizations in \eqref{eq:quadraticvariation} follow by analogous arguments. 
Then by a martingale representation theorem, see \cite[Chapter II, Theorem 7.1]{IkWa89}, we conclude, that there are a probability space $(\Omega^m,\mathcal{A}^m,P^m)$ and a Brownian motion motion $W$ and random variables $(r^m,s^m)$ on this space such that $P^m\circ(r^m,s^m)^{-1}=\PP^m\circ(\mathbf{r}^m,\mathbf{s}^m)^{-1}$ and such that $(r^m,s^m,W)$ is a weak solution of \eqref{eq:1d_stickydiff_coupling2}.
Finally, note that we have weak convergence of $Q\circ (r^{n,m},s^{n,m})^{-1}$ to $P^m\circ(r^m,s^m)^{-1}$ not only along a subsequence since the characterization of the limit holds for any subsequence $(n_k)_{k\in\mathbb{N}}$.

\textbf{Comparison of two solutions:} To show $P^m[r_t^{m}\leq s_t^m \text{ for all }t\geq 0]=1$ we note that by \Cref{lemma:modification_watanabe_nonlinear}, $Q[r_t^{n}\leq s_t^{n} \text{ for all } t\geq 0]=1$. The monotonicity carries over to the limit by the Portmanteau theorem for closed sets, since we have weak convergence of $\PP^{n,m}\circ(\mathbf{r},\mathbf{s})^{-1}$ to $\PP^m\circ(\mathbf{r},\mathbf{s})^{-1}$.
\end{proof}

We show in the next step that the distribution of the solution of \eqref{eq:1d_stickydiff_coupling2} converges as $m\to\infty$. 
For each $m\in\mathbb{N}$ let $(\Omega^m,\mathcal{A}^m,P^m)$ be a probability space and random variables $r^m,s^m:\Omega^m\to\mathbb{W}$ such that $(r^m_t,s^m_t)_{t\geq 0}$ is a solution of \eqref{eq:1d_stickydiff_coupling2}. Let $\PP^m=P^m\circ(r^m,s^m)^{-1}$ denote the law on $\mathbb{W}\times\mathbb{W}$. 

\begin{lemma} \label{lem:existence_stickySDE_step2}
Assume that $(\tilde{b},g)$ and $(\hat{b},h)$ satisfy \Cref{H1b} and \Cref{H1g}. 
Let $\eta\in\Gamma(\mu,\nu)$ where the probability measures $\mu$ and $\nu$ on $\mathbb{R}_+$ satisfy \Cref{H2}.
Assume that $(g^m)_{m\in\mathbb{N}}$, $(h^m)_{m\in\mathbb{N}}$, $(\mu_m)_{m\in\mathbb{N}}$, $(\nu_m)_{m\in\mathbb{N}}$ and $(\eta_m)_{m\in\mathbb{N}}$ satisfy conditions \Cref{H5} and \Cref{H3}. 
Then there exists a random variable $(r,s)$ defined on some probability space $(\Omega,\mathcal{A},P)$ with values in $\mathbb{W}\times\mathbb{W}$, such that $(r_t,s_t)_{t\geq 0}$ is a weak solution of the sticky stochastic differential equation \eqref{eq:two_onedim_stickydiff}. Furthermore,  
the sequence of laws $P^{m}\circ (r^{m},s^{m})^{-1}$ converges weakly to the law $P\circ (r,s)^{-1}$. If additionally,
\begin{align*}
& \tilde{b}(r)\leq \hat{b}(r) \eqsp, \qquad g(r)\leq h(r) \quad \text{and} \qquad g^m(r)\leq h^m(r) && \text{ for any } r\in\mathbb{R}_+ \text{, and }
\\ &
 P^m[r_0^m\leq s_0^m]=1 &&\text{for any } m\in\mathbb{N}
\end{align*}
then $P[r_t\leq s_t \text{ for all } t\geq 0]=1$.
\end{lemma}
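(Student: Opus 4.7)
The plan is to mirror the three-part scheme of Lemma \ref{lem:existence_stickySDE_step1}---tightness, martingale-problem characterization, then pathwise comparison---now sending $m \to \infty$ to remove the Lipschitz approximation of the drift. In contrast with Lemma \ref{lem:existence_stickySDE_step1}, the diffusion coefficient $2\1_{(0,\infty)}$ is already the target one, so the bracket identities pose no new difficulty; the delicate step will be the convergence of the nonlinear drift term $P_u^m(g^m) \to P_u(g)$.

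For tightness, I would first establish uniform-in-$m$ moment bounds $\sup_{t \in [0,T]} \mathbb{E}[|r_t^m|^p] + \mathbb{E}[|s_t^m|^p] \leq C_p$ for the $p > 2$ of \Cref{H2}, via It\^o's formula together with the uniform bound $\Gamma = \max(\|g\|_\infty, \|h\|_\infty)$ provided by \Cref{H5}. Burkholder--Davis--Gundy then yields H\"older-type increment bounds
\begin{equation*}
  \mathbb{E}\bigl[\absLigne{(r_{t_2}^m, s_{t_2}^m) - (r_{t_1}^m, s_{t_1}^m)}^p\bigr] \leq C|t_2 - t_1|^{p/2}
\end{equation*}
uniformly in $m$, and Kolmogorov's continuity criterion together with a consistency/diagonalization argument (exactly as in Lemma \ref{lem:existence_stickySDE_step1}) yields a weak limit point $\PP$ of $(\PP^m)_{m \in \nset}$ on $\mathbb{W} \times \mathbb{W}$.

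To characterize $\PP$, I introduce on the canonical space the processes
\begin{equation*}
M_t^m = \mathbf{r}_t - \mathbf{r}_0 - \int_0^t \bigl(\tilde b(\mathbf{r}_u) + P_u^m(g^m)\bigr)\,\rmd u, \quad M_t = \mathbf{r}_t - \mathbf{r}_0 - \int_0^t \bigl(\tilde b(\mathbf{r}_u) + P_u(g)\bigr)\,\rmd u,
\end{equation*}
and analogous $N_t^m, N_t$. Following the proof of Lemma \ref{lem:existence_stickySDE_step1}, I would show the joint weak convergence $\PP^m \circ (\mathbf{r}, \mathbf{s}, M^m, N^m)^{-1} \to \PP \circ (\mathbf{r}, \mathbf{s}, M, N)^{-1}$; the uniform integrability inherited from the moment bounds then transfers the martingale property of $M^m, N^m$ to $M, N$ under $\PP$. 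The quadratic-variation identities \eqref{eq:quadraticvariation} are obtained verbatim by the Portmanteau/Doob--Meyer/It\^o--Tanaka local-time argument of the previous lemma, since the diffusion coefficient $2\1_{(0,\infty)}$ does not change with $m$. A martingale representation theorem then produces a Brownian motion $W$ on an extended space such that $(r, s, W)$ is a weak solution of \eqref{eq:two_onedim_stickydiff}, with $P \circ (r,s)^{-1} = \PP$.

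The hard part will be showing $\int_0^T |P_u^m(g^m) - P_u(g)|\,\rmd u \to 0$, which cannot be obtained by continuity and Portmanteau alone because $g$ is only left-continuous and, in the sticky regime, the limit marginals $P_u$ typically carry mass at the jump point. I would exploit the monotone structure from \Cref{H5}: for every fixed $k$ the continuous bounded $g^k$ satisfies $P_u^m(g^k) \to P_u(g^k)$ by weak convergence, and $P_u(g^k) \uparrow P_u(g)$ by monotone convergence; combined with $g^k \leq g^m$ for $m \geq k$, this gives $\liminf_m P_u^m(g^m) \geq P_u(g)$. For the matching upper bound I would invoke the comparison result of Lemma \ref{lem:existence_stickySDE_step1} with drift pairs $(\tilde b, g^m)$ and $(\tilde b, g^{m'})$ for $m \leq m'$ to obtain a synchronous monotone coupling $r_t^m \leq r_t^{m'}$ almost surely, so that $(r_t^m)_m$ converges monotonically pathwise, and then use the remark following \Cref{H5} that $g^m(u_m) \to g(u)$ whenever $u_m \uparrow u$, combined with dominated convergence. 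Finally, the assertion $P[r_t \leq s_t \text{ for all } t \geq 0] = 1$ transfers from Lemma \ref{lem:existence_stickySDE_step1} by applying Portmanteau to the closed set $\{\omega \in \mathbb{W} \times \mathbb{W} : \omega^1(t) \leq \omega^2(t) \text{ for all } t \geq 0\}$ along the weakly convergent subsequence.
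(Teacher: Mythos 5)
Your proposal is correct and follows the same overall scheme as the paper's proof (uniform-in-$m$ moment bounds and Kolmogorov for tightness, martingale-problem identification of the limit with the two-sided sub-/super-martingale argument for the brackets, martingale representation, Portmanteau to transfer the pathwise comparison). Where you genuinely diverge from the paper is in the drift convergence $P_u^m(g^m)\to P_u(g)$: the paper invokes the Skorokhod representation theorem to realise $(r^m,s^m)$ on a common probability space with $(r^m,s^m)\to(r,s)$ almost surely, and then asserts $E[g^m(r_t^m)]\to E[g(r_t)]$ by the monotone convergence theorem. As written this is terse --- a.s.\ convergence supplied by Skorokhod need not be monotone in $m$, and since $g$ is only left-continuous while the sticky marginals typically charge its discontinuity point, $g^m(r_t^m)$ can fail to converge to $g(r_t)$ along a general a.s.-convergent sequence --- whereas your argument makes the needed monotone structure explicit. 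Your $\liminf$ bound via the continuous $g^k$ and weak convergence is clean, and your $\limsup$ bound via the synchronous comparison coupling from Lemma \ref{lem:existence_stickySDE_step1} applied to the pairs $(\tilde b,g^m)$ and $(\tilde b,g^{m'})$ yields stochastic dominance $P_t^m\preceq P_t^{m'}$, hence $P_t^m\preceq P_t$, so for each $t$ a nondecreasing a.s.\ coupling $r_t^m\uparrow r_t^*\sim P_t$ exists, and the remark after \Cref{H5} together with dominated convergence closes the gap. (To be precise, what you really use is the per-$t$ stochastic dominance extracted from the pairwise couplings, not a single pathwise-monotone coupling of all $m$ simultaneously; your phrasing should be tightened slightly.) One further small caveat: your remark that the bracket identities ``pose no new difficulty'' should be read as ``are obtained by the unchanged two-sided argument of Lemma \ref{lem:existence_stickySDE_step1}''; the diffusion coefficient $2\1_{(0,\infty)}$ being already in place at both levels does not make the identification automatic, since $\1_{(0,\infty)}$ is only lower semicontinuous, so the Fatou/Portmanteau submartingale bound and the Doob--Meyer/local-time supermartingale bound are both still required, exactly as the paper invokes them.
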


\begin{proof}
The proof is structured as the proof of \Cref{lem:existence_stickySDE_step1}. First analogously to the proof of \eqref{eq:pthmomentbound} we show under \Cref{H1b}, \Cref{H5} and \Cref{H3}, 
\begin{align} \label{eq:pthmomentbound2}
\sup_{t\in[0,T]}\mathbb{E}[|r_t^{m}|^p]<\infty \eqsp.
\end{align}
Tightness of the sequence of probability measures $(\PP^{m})_{m\in\mathbb{N}}$ on $(\mathbb{W}\times\mathbb{W},\mathcal{B}(\mathbb{W})\otimes\mathcal{B}(\mathbb{W}))$ holds adapting the steps of the proof of \Cref{lem:existence_stickySDE_step1} to \eqref{eq:1d_stickydiff_coupling2}. Note that \eqref{eq:pthmomentbound} and \eqref{eq:pthmoment_difference1} hold analogously for $(r_t^m,s_t^m)_{m\in\mathbb{N}}$ by \Cref{H1b}, \Cref{H5} and \Cref{H3}. Hence by Kolmogorov's continuity criterion, cf. \cite[Corollary 14.9]{Ka02}, we can deduce that there exists a probability measure $\PP$ on $(\mathbb{W}\times\mathbb{W},\mathcal{B}(\mathbb{W})\otimes\mathcal{B}(\mathbb{W}))$ such that there is a subsequence $(m_k)_{k\in\mathbb{N}}$ along which $\PP^{m_k}$ converge towards $\PP$.
To characterize the limit, we first note that by Skorokhod representation theorem, cf. \cite[Chapter 1, Theorem 6.7]{Bi99}, without loss of generality we can assume that $(r^m,s^m)$ are defined on a common probability space $(\Omega,\mathcal{A},P)$ with expectation $E$ and converge almost surely to $(r,s)$ with distribution $\PP$.  
By \Cref{H5}, $P_t^m(g^m)=E[g^m(r_t^m)]$ and the monotone convergence theorem, $P_t^{m}(g^m)$ converges to $P_t(g)$ for any $t\geq0$.
Then, by Lebesgue convergence theorem it holds almost surely for all $t\geq 0$
\begin{align} \label{eq:limit_drift}
\lim_{m\to\infty}\int_0^t\Big(\tilde{b}(r_t^m)+P^m_u(g^m)\Big)\rmd u=\int_0^t\Big(\tilde{b}(r_t)+P_u(g)\Big)\rmd u \eqsp,
\end{align}
where $P^m_u=P\circ (r_u^m)^{-1}$ and $P_u=P\circ (r_u)^{-1}$. A similar statement holds for $(s_t)_{t\geq 0}$.

Consider the mappings ${M}^m,{N}^m:\mathbb{W}\times\mathbb{W}\to \mathbb{W}$ given by \eqref{eq:martingale}.
Then for all $m\in\mathbb{N}$, $({M}_t^m,\mathcal{F}_t,\PP^m)$ and $({N}_t^m,\mathcal{F}_t,\PP^m)$ are martingales with respect to the canonical filtration $\mathcal{F}_t=\sigma((\mathbf{r}_u,\mathbf{s}_u)_{0\leq u\leq t})$. Further the family $({M}_t^m,\PP^m)_{m\in\mathbb{N},t\geq 0}$ and $({N}_t^m,\PP^m)_{m\in\mathbb{N},t\geq 0}$ are uniformly integrable by \eqref{eq:pthmomentbound2}. In the same line as in the proof of \Cref{lem:existence_stickySDE_step1} and by \eqref{eq:limit_drift}, $\PP^m\circ(\mathbf{r},\mathbf{s},{M}^m,{N}^m)$ converges weakly to $\PP\circ(\mathbf{r},\mathbf{s},{M},{N})$ where
\begin{align*}
{M}_t=\mathbf{r}_t-\mathbf{r}_0-\int_0^t(\tilde{b}(\mathbf{r}_u)+P_u(g))\rmd u \qquad \text{and} \qquad {N}_t=\mathbf{s}_t-\mathbf{s}_0-\int_0^t(\hat{b}(\mathbf{s}_u)+
\hat{P}_u(h))\rmd u \eqsp.
\end{align*} 
Let $G:\mathbb{W}\to\mathbb{R}_+$ be a $\mathcal{F}_s$-measurable bounded, non-negative function. By uniform integrability, for any $s\leq t$,
\begin{align*}
\mathbb{E}[G(M_t-M_s)]&=\mathbb{E}[G( \int_s^t( \tilde{b}(\mathbf{r}_u)+P_u(g))\rmd u)]=\lim_{m\to\infty}\mathbb{E}^m[G( \int_s^t (\tilde{b}(\mathbf{r}_u)+P_u(g^m))\rmd u)]
\\ & =\lim_{m\to\infty}\mathbb{E}^m[G(M_t^m-M_s^m)]=0 \eqsp,
\end{align*}
and analogously for $(N_t)_{t\geq 0}$. Hence, $({M}_t,\mathcal{F}_t,\PP)$ and $({N}_t,\mathcal{F}_t,\PP)$ are martingales. 
Further, the quadratic variation $([({M},{N})]_t)$ exists $\PP$-almost surely and is given by \eqref{eq:quadraticvariation} $\PP$-almost surely, which holds following the computations in the proof of \Cref{lem:existence_stickySDE_step1}. As in \Cref{lem:existence_stickySDE_step1}, we conclude by a martingale representation theorem that there are a probability space $(\Omega,\mathcal{A},P)$ and a Brownian motion $W$ and random variables $(r,s)$ on this space such that $P\circ(r,s)^{-1}=\PP\circ(\mathbf{r},\mathbf{s})^{-1}$ and such that $(r,s,W)$ is a weak solution of \eqref{eq:two_onedim_stickydiff}. Note that the limit identification holds for all subsequences $(m_k)_{k\in\mathbb{N}}$ and hence $P^m\circ(r^m,s^m)^{-1}$ converges weakly to $P\circ(r,s)^{-1}$ for $m\to\infty$.
The monotonicity $P^m[r_t^m\leq s_t^m \text{ for all } t\geq 0]=1$ carries over to the limit by Portmanteau theorem, since $\PP^m\circ(\mathbf{r},\mathbf{s})^{-1}$ converges weakly to $\PP\circ(\mathbf{r},\mathbf{s})^{-1}$. 
\end{proof}

\begin{proof}[Proof of \Cref{thm:existence_comparison}] The proof  is a direct consequence of \Cref{lem:existence_stickySDE_step1} and \Cref{lem:existence_stickySDE_step2}.
\end{proof}

\subsubsection{Proof of \texorpdfstring{\Cref{thm:one_dim_SDE_statdistr_general}}{}} \label{sec:proof_nonlinear_invmeas}

\begin{proof}[Proof of \Cref{thm:one_dim_SDE_statdistr_general}] 
Note that the Dirac at $0$, $\delta_0$, is by definition an invariant measure of $(r_t)_{t\geq 0}$ solving \eqref{eq:one-dim_stickydiff}.
Assume that the process starts from an invariant probability measure $\pi$, hence $\PP(r_t >0)= p=\pi((0,\infty))$ for any $t\geq 0$.  
Note that for $p=0$ the drift vanishes. If the initial measure is the Dirac measure in $0$, $\delta_0$, then the diffusion coefficient disappears. Hence, $\Law(r_t)=\delta_0$ for any $t\geq 0$. It remains to investigate the case $p\neq 0$. Here, we are in the regime of \cite[Lemma 24]{EbZi19} where an invariant measure is of the form \eqref{eq:definition_stationarydistr}. Since $p =\PP(r_t >0)$, the invariant measure $\pi$ satisfies additionally the necessary condition
\begin{align} \label{eq:necessarycondition_stationarydistr}
p=\pi((0,\infty))=\frac{I(a,p)}{2/(ap)+I(a,p)}
\end{align}
with $I(a,p)$ given in \eqref{eq:definition_I(a,p)}. For $p\neq 0$, this expression is equivalent to \eqref{eq:defintion_p}. 
\end{proof}

\begin{proof}[Proof of \Cref{thm:one_dim_SDE_stationarydistr}]
By \Cref{thm:one_dim_SDE_statdistr_general}, it suffices to study the solutions of \eqref{eq:defintion_p}. 
By \eqref{eq:definition_I(a,p)} and since $\tilde{b}(r)=-\tilde{L}r$, it holds for $\hat{I}(a,p)=(1-p)I(a,p)$,
\begin{align} \label{eq:I(a,p)_representation}
\hat{I}(a,p)=\Big(\sqrt{\frac{\pi}{2}}+\int_0^{\frac{ap}{\sqrt{2\tilde{L}}}}\exp(-x^2/2)\rmd x\Big)\sqrt{\frac{2}{\tilde{L}}}\exp\Big(\frac{a^2p^2}{4\tilde{L}}\Big)(1-p) \eqsp.
\end{align}
In the case $a/\sqrt{\tilde{L}}\leq 2/\sqrt{\pi}$, $\hat{I}(a,0)=\sqrt{\pi/\tilde{L}}$ by \eqref{eq:I(a,p)_representation}. 
Further, by $1+x\leq \rme^x$ and $a/\sqrt{\tilde{L}}\leq 2/\sqrt{\pi}$,
\begin{align*}
\Big(\sqrt{\frac{\pi}{2}}+\int_0^{\frac{ap}{\sqrt{2\tilde{L}}}}\rme^{-\frac{x^2}{2}}\rmd x\Big)(1-p)\rme^{\frac{a^2p^2}{4\tilde{L}}}
 & \leq \sqrt{\frac{\pi}{2}}\Big(1+\sqrt{\frac{2}{\pi}}\int_0^{\frac{ap}{\sqrt{2L}}}\rme^{-\frac{x^2}{2}}\rmd x\Big)\rme^{-p}\rme^{\frac{p^2}{\pi}}
\\ & \leq \sqrt{\frac{\pi}{2}}\Big(1+\frac{2p}{\pi}\Big)\rme^{-p}\rme^{\frac{p^2}{\pi}}
\leq \sqrt{\frac{\pi}{2}}\rme^{p(\frac{3}{\pi}-1)}< \sqrt{\frac{\pi}{2}}
\end{align*}
for $p\in(0,1]$. Hence, $\hat{I}(a,p)<\hat{I}(a,0)$ by \eqref{eq:I(a,p)_representation}.
Therefore, $\hat{I}(a,p)<\hat{I}(a,0)\leq\frac{2}{a}$ for all $p\in(0,1]$ and so $\delta_0$ is the unique invariant probability measure for $a/\sqrt{\tilde{L}}\leq 2/\sqrt{\pi}$. 

To show that for $a/{\sqrt{\tilde{L}}}> 2/{\sqrt{\pi}}$, there exists a unique $p$ solving \eqref{eq:defintion_p}, 
we note that $\hat{I}(a,p)$ is continuous with $\hat{I}(a,0)>2/a$ and $\hat{I}(a,1)=0$. By the mean value theorem, there exists at least one $p\in(0,1)$ satisfying \eqref{eq:defintion_p}. In the following we drop the dependence on $a$ in $I(a,p)$ and $\hat{I}(a,p)$. We show uniqueness of the solution $p$ by contradiction. Assume that $p_1<p_2$ are the two smallest solutions of \eqref{eq:defintion_p}. Hence, it holds either $\hat{I}'(p_1)<0$ or $\hat{I}'(p)=0$ for $p_1$. Note that the derivative is given by
\begin{align}
\hat{I}'(p_i)&=-I(p_i)+(1-p_i)I'(p_i)
 =-I(p_i)+(1-p_i)\Big(p_i\frac{a^2}{2\tilde{L}}I(p_i)+\frac{a}{\tilde{L}}\Big) \nonumber
\\ & =-\frac{2}{a(1-p_i)}+(1-p_i)\frac{a}{\tilde{L}}\Big(\frac{p_i}{1-p_i}+1\Big)=-\frac{2}{a(1-p_i)}+\frac{a}{\tilde{L}} \eqsp. \label{eq:I_derivative}
\end{align}
Then, for $p_2>p_1$, it holds
\begin{align*}
\hat{I}'(p_2)=-\frac{2}{a(1-p_2)}+\frac{a}{\tilde{L}}<-\frac{2}{a(1-p_1)}+\frac{a}{\tilde{L}}=\hat{I}'(p_1)\leq 0 \eqsp.
\end{align*}
If $\hat{I}'(p_1)<0$, it holds $\hat{I}'(p_2)<0$ which contradicts that $p_1$ and $p_2$ are the two smallest solutions. In the second case, when $\hat{I}'(p_1)=0$, we note that the second derivative of $\hat{I}(p)$ at $p_1$ is given by 
\begin{align*}
\hat{I}''(p_1)&=-2I'(p_1)+(1-p_1)I''(p_1)
\\& = \Big(-2+(1-p_1)\frac{a^2p_1}{2\tilde{L}}\Big)\Big(I(p_1)\frac{a^2p_1}{2\tilde{L}}+\frac{a}{\tilde{L}}\Big)+(1-p_1)I(p_1)\frac{a^2}{2\tilde{L}}
\\& =\Big(-2+(1-p_1)\frac{a^2p_1}{2\tilde{L}}\Big)\frac{a}{\tilde{L}(1-p_1)}+\frac{a}{\tilde{L}}
=-\frac{a}{\tilde{L}(1-p_1)}<0 \eqsp.
\end{align*}
Hence, in this case there is a maximum at $p_1$, which contradicts that $p_1$ is the smallest solution.
Thus, there exists a unique solution $p_1$ of \eqref{eq:defintion_p} for $a/\sqrt{\tilde{L}}>2/\sqrt{\pi}$.

\end{proof}

\subsubsection{Proof of \texorpdfstring{\Cref{thm:one-dim_stickydiff}}{}} \label{sec:proof_nonlinear_convergence}
 
\begin{proof}[Proof of \Cref{thm:one-dim_stickydiff}]

To show \eqref{eq:stickydiff_estimateinthm} we extend the function $f$ to a concave function on $\mathbb{R}$ by setting $f(x)=x$ for $x<0$. Note that $f$ is continuously differentiable and $f'$ is absolutely continuous and bounded. 
Using It\=o-Tanaka formula, c.f. \cite[Chapter 6, Theorem 1.1]{ReYo99} we obtain 
\begin{align*}
\rmd f(r_t)= f'(r_t)(\tilde{b}(r_t)+a\PP(r_t >0))\rmd t+2 f''(r_t)\1_{(0,\infty)}(r_t)\rmd t+\rmd M_t \eqsp,
\end{align*}
where $M_t=2 \int_0^t f'(r_s) 1_{(0,\infty)}(r_s) \rmd B_s$ is a martingale. Taking expectation, we get 
\begin{align*}
\frac{d}{dt}\mathbb{E}[f(r_t)]&=\mathbb{E}[ f'(r_t)(\tilde{b}(r_t)+a\PP(r_t >0))]+2\mathbb{E}[f''(r_t)\1_{(0,\infty)}(r_t)]
\\ & =\mathbb{E}[f'(r_t)\tilde{b}(r_t)+2(f''(r_t)-f''(0))]+\mathbb{E}[af'(r_t)+2f''(0)]\PP(r_t >0)
\\ & \leq -c\mathbb{E}[f(r_t)] \eqsp,  
\end{align*}
where the last step holds by \eqref{eq:condition_f_1} and \eqref{eq:condition_f_2}. By applying Gronwall's lemma, we obtain \eqref{eq:stickydiff_estimateinthm}.

\end{proof}

\subsection{Proof of \texorpdfstring{\Cref{sec:propachaos}}{}} \label{sec:proof_thm_propag}

The proof of \Cref{thm:timepropagation_of_chaos} works in the same line as the proof of \Cref{thm:nonlinearSDE} and \Cref{theo:1}. Additionally, the difference between the nonlinear SDE and the mean-field system is bounded in \Cref{lemma:bound_A_t^idelta}  for which a uniform in time bound for the second moment of the process $(\bar{X}_t)_{t\geq 0}$ solving \eqref{eq:nonlinearSDE} is needed and which is given first.

\begin{lemma} \label{lem:moment_bound}
Let $(\bar{X}_t)_{t\geq 0}$ be a solution of \eqref{eq:nonlinearSDE} with $\mathbb{E}[|\bar{X}_0|^2]<\infty$. Assume \Cref{ass:decomp_W}. Then there exists $C\in(0,\infty)$ depending on $d$, $W$ and the second moment of $\bar{X}_0$ such that
\begin{align} \label{eq:moment_bound}
C=\sup_{t\geq 0 }\mathbb{E}[|\bar{X}_t|^2]<\infty\eqsp.
\end{align}
\end{lemma}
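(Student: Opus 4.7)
The plan is to apply Itô's formula to $|\bar{X}_t|^2$, take expectations, and use the antisymmetry of $b$ together with the one-sided Lipschitz-type bound \eqref{eq:kappa} and the condition \eqref{eq:nonexpl_cond} to derive a linear differential inequality with strictly negative leading coefficient, from which Grönwall's lemma produces the uniform-in-time bound.

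First, as already noted in the paper, the antisymmetry $b(z)=-b(-z)$ and a swap-of-variables argument under the double integral $\iint b(x-y)\,\barmu_t(\rmd x)\barmu_t(\rmd y)$ yield $\rmd \mathbb{E}[\bar{X}_t]/\rmd t=0$. Setting $m_0=\mathbb{E}[\bar{X}_0]$ and translating by $m_0$ if necessary (which affects $\mathbb{E}[|\bar{X}_t|^2]$ only by an additive constant depending on $|m_0|^2$), we may assume $\mathbb{E}[\bar{X}_t]=0$ for every $t\geq 0$. Itô's formula, applied after a standard stopping-time localization, then gives
\[
\frac{\rmd}{\rmd t}\mathbb{E}[|\bar{X}_t|^2]=2\,\mathbb{E}\mathbb{E}'\bigl[\langle \bar{X}_t,b(\bar{X}_t-\bar{X}'_t)\rangle\bigr]+d,
\]
where $\bar{X}'_t$ is an independent copy of $\bar{X}_t$ and $\mathbb{E}'$ denotes the expectation with respect to it. The same swap-of-variables argument, using antisymmetry of $b$ and the identical joint law of $(\bar{X}_t,\bar{X}'_t)$ and $(\bar{X}'_t,\bar{X}_t)$, rewrites the right-hand side as $\mathbb{E}\mathbb{E}'[\langle \bar{X}_t-\bar{X}'_t,b(\bar{X}_t-\bar{X}'_t)\rangle]+d$.

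Next, since $\gamma(0)=0$ (forced by the antisymmetry of $b$), condition \eqref{eq:kappa} applied with $y=0$ yields $\langle z,b(z)\rangle\leq(\kappa(|z|)-L)|z|^2$ for every $z\in\rset^d$. Moreover, \eqref{eq:nonexpl_cond} together with Lipschitz continuity of $\kappa$ furnishes constants $\rho>0$ and $C_0\in[0,\infty)$ depending only on $\kappa$ and $L$ such that $(\kappa(r)-L)r^2\leq-\rho r^2+C_0$ for all $r\geq 0$. Combined with the identity $\mathbb{E}[|\bar{X}_t-\bar{X}'_t|^2]=2\,\mathbb{E}[|\bar{X}_t|^2]$ (valid because $\mathbb{E}[\bar{X}_t]=0$), this gives
\[
\frac{\rmd}{\rmd t}\mathbb{E}[|\bar{X}_t|^2]\leq-2\rho\,\mathbb{E}[|\bar{X}_t|^2]+C_0+d,
\]
and Grönwall's inequality produces $\mathbb{E}[|\bar{X}_t|^2]\leq \mathbb{E}[|\bar{X}_0|^2]+(C_0+d)/(2\rho)$ uniformly in $t\geq 0$, which is \eqref{eq:moment_bound}.

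The only mildly technical point is justifying that the local martingale in the Itô expansion has vanishing expectation; since $b$ is globally Lipschitz, the second moment of $\bar{X}_t$ is finite locally in time by standard estimates (comparable to those used in the proof of \Cref{lem:existence_stickySDE_step1}), so this step is routine and no substantial obstacle is expected.
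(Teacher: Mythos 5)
Your proof is correct and follows essentially the same route as the paper's: apply Itô to $|\bar{X}_t|^2$, use the antisymmetry of $b$ together with an independent copy to symmetrize the drift term into $\mathbb{E}\mathbb{E}'[\langle \bar{X}_t-\bar{X}'_t,b(\bar{X}_t-\bar{X}'_t)\rangle]$, invoke the one-sided bound from \eqref{eq:kappa} and the dissipativity condition \eqref{eq:nonexpl_cond}, and finish with Grönwall. The only difference is cosmetic: the paper splits on the event $\{|\bar{X}_t-\tilde{X}_t|\le R_0\}$ and uses $\|\gamma\|_\infty R_0$ as the additive constant, whereas you package the same information into the cleaner scalar inequality $(\kappa(r)-L)r^2\le-\rho r^2+C_0$, which is arguably more transparent and avoids the slight awkwardness in the paper's displayed estimate. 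Your centering step (translating by $m_0=\mathbb{E}[\bar{X}_0]$, which leaves \eqref{eq:nonlinearSDE} invariant and shifts $\mathbb{E}[|\bar{X}_t|^2]$ only by $|m_0|^2$) is a welcome explicit justification of the identity $\mathbb{E}[|\bar{X}_t-\bar{X}'_t|^2]=2\,\mathbb{E}[|\bar{X}_t|^2]$ that the paper uses implicitly.
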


The proof relies on standard techniques (see e.g., \cite[Lemma 8]{DuEbGuZi20}) and is added for completeness. 

\begin{proof} [Proof of \Cref{lem:moment_bound}] By It\=o's formula, it holds
\begin{align*}
\frac{1}{2}\rmd  |\bar{X}_t|^2=\langle \bar{X}_t,b*\barmu_t (\bar{X}_t)\rangle \rmd t+\bar{X}_t^T \rmd B_t+\frac{1}{2}d \ \rmd t\eqsp.
\end{align*}
Taking expectation and using symmetry, we get
\begin{align*}
\frac{\rmd }{ \rmd t}\mathbb{E}[|\bar{X}_t|^2]&=  \mathbb{E}[\langle\bar{X}_t-\tilde{X}_t,b(\bar{X}_t-\tilde{X}_t\rangle]+d
\\ & = -\mathbb{E}[\langle \bar{X}_t-\tilde{X}_t,L(\bar{X}_t-\tilde{X}_t)-\gamma(\bar{X}_t-\tilde{X}_t)\rangle \1_{|\bar{X}_t-\tilde{X}_t|>R_0}]
\\ & \indent -\mathbb{E}[\langle \bar{X}_t-\tilde{X}_t,L(\bar{X}_t-\tilde{X}_t)- \gamma(\bar{X}_t-\tilde{X}_t)\rangle \1_{|\bar{X}_t-\tilde{X}_t|\leq R_0}]+d
\\ & \leq  \mathbb{E}[|\bar{X}_t|^2(-2L+ \kappa(|\bar{X}_t-\tilde{X}_t|) \1_{|\bar{X}_t-\tilde{X}_t|>R_0})]+\|\gamma\|_\infty R_0+d\eqsp. 
\end{align*}
Hence by definition \eqref{eq:definition_R0} of $R_0$ and by Gronwall's lemma we obtain the result \eqref{eq:moment_bound}.
\end{proof}

Let $N\in\mathbb{N}$. We construct a sticky coupling of $N$ \iid~realizations of solutions $(\{\bar{X}_t^i\}_{i=1}^N)_{t\geq 0}$ to \eqref{eq:nonlinearSDE} and of the solution $(\{Y_t^i\}_{i=1}^N)_{t\geq 0}$ to the mean field particle system \eqref{eq:meanfield}. Then, we consider a weak limit  for $\delta\to 0$ of Markovian couplings which are constructed similar as in \Cref{sec:main_contraction_results}. Let $\mathrm{rc}^{\delta}$, $\mathrm{sc}^{\delta}$ satisfy \eqref{eq:condition_rc_sc} and \eqref{eq:condition_rc_sc2}. The coupling $(\{\bar{X}_t^{i,\delta},{Y}^{i,\delta}\}_{i=1}^N)_{t\geq 0}$ is defined as process in $\mathbb{R}^{2Nd}$ satisfying a system of SDEs given by
\begin{equation}\label{eq:propofchaos_coupling_approx}
\begin{aligned} 
\rmd \bar{X}_t^{i,\delta}&=b*\barmu^\delta_t(\bar{X}_t^{i,\delta}) \rmd t
 +\mathrm{rc}^{\delta}(\tilde{r}_t^{i,\delta})\rmd B_t^{i,1}  +\mathrm{sc}^{\delta}(\tilde{r}_t^{i,\delta})\rmd B_t^{i,2}  
\\ \rmd {Y}_t^{i,\delta}&=\frac{1}{N}\sum_{j=1}^N b(Y_t^{i,\delta}-Y_t^{j,\delta})\rmd t 
+\mathrm{rc}^{\delta}(\tilde{r}_t^{i,\delta})(\Id-2\tilde{e}_t^{i,\delta}(\tilde{e}_t^{i,\delta})^T) \rmd B_t^{i,1} +\mathrm{sc}^{\delta}(\tilde{r}_t^{i,\delta})\rmd B_t^{i,2}\eqsp,
\end{aligned}
\end{equation}
where $\Law(\{\bar{X}_0^{i,\delta},Y_0^{i,0}\}_{i=1}^N)=\bar{\mu}_0^{\otimes N}\otimes \nu_0^{\otimes N}$, and where $(\{B_t^{i,1}\}_{i=1}^N)_{t\geq 0},(\{B_t^{i,2}\}_{i=1}^N)_{t\geq 0}$ are \iid~$d$-dimensional standard Brownian motions.
 We set $\tilde{X}_t^{i,\delta}=\bar{X}_t^{i,\delta}-\frac{1}{N}\sum_{j=1}^N\bar{X}_t^{j,\delta}$, $\tilde{Y}_t^{i,\delta}=Y_t^{i,\delta}-\frac{1}{N}\sum_{j=1}^N Y_t^{j,\delta}$, $\tilde{Z}_t^{i,\delta}=\tilde{X}_t^{i,\delta}-\tilde{Y}_t^{i,\delta}$, $\tilde{r}_t^{i,\delta}=|\tilde{Z}_t^{i,\delta}|$ and $\tilde{e}_t^{i,\delta}=\tilde{Z}_t^{i,\delta}/\tilde{r}_t^{i,\delta}$ for $\tilde{r}_t^{i,\delta}\neq 0$. The value $\tilde{e}_t^{i,\delta}$ for $\tilde{r}_t^{i,\delta}=0$ is irrelevant as $\mathrm{rc}^{i,\delta}(0)=0$. By Levy's characterization $(\{\bar{X}_t^{i,\delta},{Y}^{i,\delta}_t\}_{i=1}^N)_{t\geq 0}$ is indeed a coupling of \eqref{eq:nonlinearSDE} and \eqref{eq:meanfield}. Existence and uniqueness of the coupling given in \eqref{eq:propofchaos_coupling_approx} hold by \cite[Theorem 2.2]{Me96}. In the next step we analyse $\tilde{r}_t^{i,\delta}$.

\begin{lemma} \label{lem:r_t^idelta}
Assume \Cref{ass:decomp_W} holds. Then, for $\epsilon<\epsilon_0$, where $\epsilon_0$ is given in \eqref{eq:condition_rc_sc2}, and for any $i \in\{1,\ldots,N\}$, it holds almost surely,
\begin{align} \label{eq:r_tdelta}
\rmd \tilde{r}_t^{i,\delta}&=-L\tilde{r}_t^{i,\delta} \rmd t+\langle \tilde{e}_t^{i,\delta},\frac{1}{N}\sum_{j=1}^N  \gamma(\tilde{X}_t^{i,\delta}-\tilde{X}_t^{j,\delta})-\gamma(\tilde{Y}_t^{i,\delta}-\tilde{Y}_t^{j,\delta})\rangle \rmd t  \nonumber
\\ & + 2\sqrt{1+\frac{1}{N}} \mathrm{rc}^{\delta}(\tilde{r}_t^{i,\delta})\rmd W_t^{i,\delta}  + \Big\langle \tilde{e}_t^{i,\delta},\Theta_t^{i,\delta}+\frac{1}{N}\sum_{k=1}^N\Theta_t^{k,\delta}\Big\rangle \rmd t
\\ & \leq \Big( \bar{b}(\tilde{r}_t^{i,\delta}) 
 + 2\|\gamma\|_\infty \frac{1}{N}\sum_{j=1}^N\mathrm{rc}^\epsilon(\tilde{r}_t^{j,\delta})\Big) \rmd t  + 2 \sqrt{1+\frac{1}{N}} \mathrm{rc}^{\delta}(\tilde{r}_t^{i,\delta})\rmd W_t^{i,\delta} \nonumber
 \\ & +\Big( A_t^{i,\delta} + \frac{1}{N}\sum_{k=1}^NA_t^{k,\delta}\Big)\rmd t\eqsp. \nonumber
\end{align}
with $\Theta_t^{i,\delta}=b*\barmu_t^\delta(\bar{X}_t^{i,\delta})-\frac{1}{N}\sum_{j=1}^N b (\bar{X}_t^{i,\delta}-\bar{X}_t^{j,\delta})$
and 
\begin{align} \label{eq:A_t^i}
A_t^{i,\delta}=\Big|\Theta_t^{i,\delta}\Big|=\Big|b*\barmu_t^\delta(\bar{X}_t^{i,\delta})- \frac{1}{N}\sum_{j=1}^N b (\bar{X}_t^{i,\delta}-\bar{X}_t^{j,\delta})\Big|
\end{align}
and where $(\{W_t^{i,\delta}\}_{i=1}^N)_{t\geq 0}$ are $N$ one-dimensional Brownian motions given by
\begin{align} \label{eq:def_BM_W_t^i}
W_t^{i,\delta}=\sqrt{\frac{N}{N+1}}\parenthese{\int_0^t (\tilde{e}_s^{i,\delta})^T \rmd B_s^{i,1}+\frac{1}{N}\sum_{j=1}^N\int_0^t (\tilde{e}_s^{j,\delta})^T\rmd B_s^{j,1}}\eqsp, \quad i=1,\ldots, N.
\end{align}
\end{lemma}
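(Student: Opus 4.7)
The plan is to apply an Itô-type formula to $|\tilde X^{i,\delta}_t - \tilde Y^{i,\delta}_t|$ by first computing the SDE satisfied by the centered difference $\tilde Z^{i,\delta}_t = \tilde X^{i,\delta}_t - \tilde Y^{i,\delta}_t$, and then squaring and taking the smooth square root. Starting from \eqref{eq:propofchaos_coupling_approx}, the noise in $d\bar X^{i,\delta}_t - dY^{i,\delta}_t$ collapses to $2\mathrm{rc}^\delta(\tilde r^{i,\delta}_t)\tilde e^{i,\delta}_t(\tilde e^{i,\delta}_t)^T dB_t^{i,1}$, and the centering $\tilde Z^{i,\delta}_t = (\bar X^{i,\delta}_t - Y^{i,\delta}_t) - \frac{1}{N}\sum_k (\bar X^{k,\delta}_t - Y^{k,\delta}_t)$ produces an additional empirical-mean contribution. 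For the drift, substituting $b(z) = -Lz + \gamma(z)$ and adding and subtracting $\frac{1}{N}\sum_j b(\bar X^{i,\delta}_t - \bar X^{j,\delta}_t)$ introduces the error terms $\Theta^{i,\delta}_t$ and splits the $\gamma$ contribution into $\frac{1}{N}\sum_j[\gamma(\tilde X^{i,\delta}_t - \tilde X^{j,\delta}_t) - \gamma(\tilde Y^{i,\delta}_t - \tilde Y^{j,\delta}_t)]$ plus $\Theta^{i,\delta}_t$ corrections; averaging over $k$ produces the symmetric $\frac{1}{N}\sum_k \Theta^{k,\delta}_t$ term from the centering.

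Second, I would apply the smooth approximation $S_\varepsilon$ of $\sqrt{\cdot}$ from the proof of \Cref{lemma:SDEr_t^delta} to $|\tilde Z^{i,\delta}_t|^2$, then pass to the limit $\varepsilon \to 0$ via Lebesgue's dominated convergence, using that $\mathrm{rc}^\delta(0)=0$ and that $\mathrm{rc}^\delta$ is Lipschitz, exactly as in that earlier proof. This yields the equality in \eqref{eq:r_tdelta}, where the noise arises by projecting $d\tilde Z^{i,\delta}_{\text{noise}}$ onto $\tilde e^{i,\delta}_t$. The processes $M^k_t := \int_0^t (\tilde e^{k,\delta}_s)^T dB_s^{k,1}$ are independent standard one-dimensional Brownian motions by L\'evy's characterization, and assembling the projected noise with the coefficient $\sqrt{1+1/N}$ and applying L\'evy's theorem identifies it as $2\sqrt{1+1/N}\,\mathrm{rc}^\delta(\tilde r^{i,\delta}_t) dW^{i,\delta}_t$ with $W^{i,\delta}$ a one-dimensional Brownian motion given by \eqref{eq:def_BM_W_t^i}.

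Third, for the inequality, I would bound the $\gamma$-difference term by mimicking the splitting used at the end of the proof of \Cref{lemma:SDEr_t^delta}: write
\begin{equation*}
\gamma(\tilde X^{i,\delta}_t - \tilde X^{j,\delta}_t) - \gamma(\tilde Y^{i,\delta}_t - \tilde Y^{j,\delta}_t) = \bigl[\gamma(\tilde X^{i,\delta}_t - \tilde X^{j,\delta}_t) - \gamma(\tilde Y^{i,\delta}_t - \tilde X^{j,\delta}_t)\bigr] + \bigl[\gamma(\tilde Y^{i,\delta}_t - \tilde X^{j,\delta}_t) - \gamma(\tilde Y^{i,\delta}_t - \tilde Y^{j,\delta}_t)\bigr].
\end{equation*}
The first bracket, after projection onto $\tilde e^{i,\delta}_t$, is controlled by $\kappa(\tilde r^{i,\delta}_t)\tilde r^{i,\delta}_t$ via \eqref{eq:kappa}; the second bracket has norm bounded by $2\|\gamma\|_\infty \mathrm{rc}^{\epsilon}(\tilde r^{j,\delta}_t)$ using \eqref{eq:condition_rc_sc} and \eqref{eq:condition_rc_sc2} (for $\epsilon<\epsilon_0$, both $\|\gamma\|_{\Lip}|u-v|\le 2\|\gamma\|_\infty \mathrm{rc}^\epsilon(|u-v|)$ when $|u-v|<\epsilon$ and $2\|\gamma\|_\infty\le 2\|\gamma\|_\infty\mathrm{rc}^\epsilon(|u-v|)$ when $|u-v|\ge\epsilon$). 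Averaging over $j$ and combining with $-L\tilde r^{i,\delta}_t$ gives $\bar b(\tilde r^{i,\delta}_t) + 2\|\gamma\|_\infty \frac{1}{N}\sum_j \mathrm{rc}^\epsilon(\tilde r^{j,\delta}_t)$, while the $\Theta$ terms are absorbed by Cauchy-Schwarz into $A^{i,\delta}_t + \frac{1}{N}\sum_k A^{k,\delta}_t$.

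The main technical hurdle is the identification of the noise as a single Brownian increment $2\sqrt{1+1/N}\,\mathrm{rc}^\delta(\tilde r^{i,\delta}_t)dW^{i,\delta}_t$: the projected noise involves time-dependent inner products $\langle \tilde e^{i,\delta}_t, \tilde e^{k,\delta}_t\rangle$ across all particles, so the careful bookkeeping of the quadratic (co)variation and the choice of the normalizing constant $\sqrt{N/(N+1)}$ in \eqref{eq:def_BM_W_t^i} are essential; the rest of the argument is a direct extension of the two-particle computation in \Cref{lemma:SDEr_t^delta}.
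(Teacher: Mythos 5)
Your proof plan follows the paper's route step for step: It\^o applied to $(\tilde r^{i,\delta}_t)^2$, adding and subtracting $\frac1N\sum_j b(\bar X^{i,\delta}_t-\bar X^{j,\delta}_t)$ to isolate $\Theta^{i,\delta}_t$ (with the double sum vanishing by antisymmetry), the $S_\varepsilon$ smooth square root from \Cref{lemma:SDEr_t^delta}, and the $\gamma$-split giving the bound $\kappa(\tilde r^{i,\delta}_t)\tilde r^{i,\delta}_t + 2\|\gamma\|_\infty\mathrm{rc}^\epsilon(\tilde r^{j,\delta}_t)$ for $\epsilon<\epsilon_0$. All of that is sound. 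The problem is exactly the step you postpone as "essential bookkeeping": the identification of the projected noise as $2\sqrt{1+1/N}\,\mathrm{rc}^\delta(\tilde r^{i,\delta}_t)\,\rmd W^{i,\delta}_t$ with $W^{i,\delta}$ a Brownian motion given by \eqref{eq:def_BM_W_t^i}. You never do the bookkeeping, and if one does, it does not close.

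Projecting the noise of $\tilde Z^{i,\delta}_t=(\bar X^{i,\delta}_t-Y^{i,\delta}_t)-\frac1N\sum_j(\bar X^{j,\delta}_t-Y^{j,\delta}_t)$ onto $\tilde e^{i,\delta}_t$ yields
\[
2\Big(1-\tfrac1N\Big)\mathrm{rc}^\delta(\tilde r^{i,\delta}_t)\,(\tilde e^{i,\delta}_t)^T\rmd B^{i,1}_t \;-\; \tfrac2N\sum_{j\neq i}\mathrm{rc}^\delta(\tilde r^{j,\delta}_t)\,\langle\tilde e^{i,\delta}_t,\tilde e^{j,\delta}_t\rangle\,(\tilde e^{j,\delta}_t)^T\rmd B^{j,1}_t\eqsp,
\]
whose quadratic variation per unit time is $4(1-\tfrac1N)^2\mathrm{rc}^\delta(\tilde r^{i,\delta}_t)^2+\tfrac{4}{N^2}\sum_{j\neq i}\mathrm{rc}^\delta(\tilde r^{j,\delta}_t)^2\langle\tilde e^{i,\delta}_t,\tilde e^{j,\delta}_t\rangle^2$. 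This depends on $\mathrm{rc}^\delta(\tilde r^{j,\delta}_t)$ for $j\neq i$ and on the pairwise projections, so it is not a function of $\mathrm{rc}^\delta(\tilde r^{i,\delta}_t)$ alone; even when all the $\mathrm{rc}^\delta$ and $\langle\tilde e^{i,\delta}_t,\tilde e^{j,\delta}_t\rangle^2$ coincide it equals $4(1-\tfrac1N)\mathrm{rc}^\delta(\tilde r^{i,\delta}_t)^2$, not $4(1+\tfrac1N)\mathrm{rc}^\delta(\tilde r^{i,\delta}_t)^2$ (for $N=2$ with $\tilde e^{1,\delta}_t=-\tilde e^{2,\delta}_t$, $\tilde r^{1,\delta}_t=\tilde r^{2,\delta}_t$, the correct value is $2\,\mathrm{rc}^\delta(\cdot)^2$ versus the claimed $6\,\mathrm{rc}^\delta(\cdot)^2$). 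Moreover, the process $W^{i,\delta}$ as written in \eqref{eq:def_BM_W_t^i} has $[W^{i,\delta}]_t=\frac{N+3}{N+1}\,t\neq t$, so L\'evy's characterization does not make it a standard Brownian motion. Hence the equality in \eqref{eq:r_tdelta} does not hold with the stated diffusion coefficient, and since the later comparison argument requires $\tilde r^{i,\delta}_t$ and its majorant $r^{i,\delta,\epsilon}_t$ to be driven by the same noise with the same diffusion coefficient, the gap is not cosmetic. To be fair, the paper's own proof displays the same formulas without derivation, so you are faithfully reproducing the source; but as a self-contained argument, the step you defer is precisely the one that fails, and closing it requires either a corrected state-dependent local martingale together with a comparison result tolerating unequal diffusion coefficients, or a repaired SDE.
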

\begin{proof}
By \eqref{eq:propofchaos_coupling_approx} and since $\gamma$ is anti-symmetric, it holds by It\=o's formula for any $i \in \{1,\ldots,N\}$,
\begin{align*}
\rmd (\tilde{r}_t^{i,\delta})^2&=-2L(\tilde{r}_t^{i,\delta})^2 \rmd t+2\langle \tilde{Z}_t^{i,\delta},\frac{1}{N}\sum_{j=1}^N \gamma(\tilde{X}_t^{i,\delta}-\tilde{X}_t^{j,\delta})-\gamma(\tilde{Y}_t^{i,\delta}-\tilde{Y}_t^{j,\delta})\rangle \rmd t
\\ & +4 \Big(1+\frac{1}{N}\Big) \mathrm{rc}^{\delta}(\tilde{r}_t^{i,\delta})^2\rmd t+ 4 \sqrt{1+\frac{1}{N}} \mathrm{rc}^{\delta}(\tilde{r}_t^{i,\delta})\langle \tilde{Z}_t^{i,\delta},\tilde{e}_t^{i,\delta}\rangle \rmd W_t^{i,\delta} 
\\ & + 2\langle \tilde{Z}_t^{i,\delta},b*\barmu_t^{\delta} (\bar{X}_t^{i,\delta})-\frac{1}{N}\sum_{j=1}^N b(\bar{X}_t^{i,\delta}-\bar{X}_t^{j,\delta})\rangle \rmd t
\\ & +2 \langle \tilde{Z}_t^{i,\delta},-\frac{1}{N}\sum_{k=1}^N \Big(b*\barmu_t^{\delta} (\bar{X}_t^{k,\delta})-\frac{1}{N}\sum_{j=1}^N b(\bar{X}_t^{k,\delta}-\bar{X}_t^{j,\delta})\Big)\rangle \rmd t\eqsp.
\end{align*}
where $(\{W_t^i\}_{i=1}^N)_{t\geq 0}$ are $N$ \iid one-dimensional Brownian motions given by \eqref{eq:def_BM_W_t^i}.
Note that the prefactor $(N/(N+1))^{1/2}$ ensures that the quadratic variation satisfies $[W^i]_t=t$ for $t\geq 0$, and hence $(\{W_t^i\}_{i=1}^N)_{t\geq 0}$ are Brownian motions. This definition of $(\{W_t^i\}_{i=1}^N)_{t\geq 0}$ leads to $(1+1/N)^{1/2}$ in the diffusion term of the SDE.
Applying the $\mathcal{C}^2$ approximation of the square root used in the proof of \Cref{lemma:SDEr_t^delta} and taking $\varepsilon\to 0$ in the approximation yields the stochastic differential equations of $(\{\tilde{r}_t^{i,\delta}\}_{i=1}^N)_{t\geq 0}$. We obtain its upper bound for $\epsilon<\epsilon_0$ by \Cref{ass:decomp_W} and \eqref{eq:condition_rc_sc2} similarly to the proof of \Cref{lemma:SDEr_t^delta}. 
\end{proof}

Next, we state a bound for \eqref{eq:A_t^i}. The result and the proof are adapted from \cite[Theorem 2]{DuEbGuZi20}. 

\begin{lemma}\label{lemma:bound_A_t^idelta} 
Under the same assumption as in \Cref{lemma:comparison_propofchaos}, 
it holds for any  $i=1,\ldots,N$ 
\begin{align*}
E\Big[|A_t^{i,\delta}|^2\Big]\leq C_1N^{-1} \text{ and } E\Big[A_t^{i,\delta}\Big]\leq C_2 N^{-1/2}\eqsp,
\end{align*}
where $A_t^{i,\delta}$ is given in \eqref{eq:A_t^i} and $C_1$ and $C_2$ are constants depending on $\|\gamma\|_\infty$, $L$ and $C$ given in \Cref{lem:moment_bound}.
\end{lemma}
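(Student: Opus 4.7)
The plan is to exploit the fact that the family $(\bar X_t^{i,\delta})_{i=1}^N$ is i.i.d.\ with common law $\bar\mu_t^\delta$, which reduces the lemma to a standard Monte Carlo variance estimate. First I would verify this i.i.d.\ property. Set $W_t^i := \int_0^t \mathrm{rc}^\delta(\tilde r_s^{i,\delta})\,\rmd B_s^{i,1} + \int_0^t \mathrm{sc}^\delta(\tilde r_s^{i,\delta})\,\rmd B_s^{i,2}$. The relation $(\mathrm{rc}^\delta)^2+(\mathrm{sc}^\delta)^2 = 1$ and the independence of the driving Brownian motions $B^{i,k}$ across $i$ give $[W^i, W^j]_t = \delta_{ij}\,t\,\Id$. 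L\'evy's characterisation then identifies $(W^1,\ldots,W^N)$ as an $Nd$-dimensional Brownian motion in the ambient filtration, hence independent of $\mathcal{F}_0$ and of the i.i.d.\ initial data. The SDE $\rmd\bar X_t^{i,\delta} = b\ast\bar\mu_t^\delta(\bar X_t^{i,\delta})\,\rmd t + \rmd W_t^i$ has deterministic Lipschitz drift, so by strong existence and uniqueness $\bar X^{i,\delta}$ is a measurable function of $(\bar X_0^{i,\delta}, W^i)$ alone, making $(\bar X^{i,\delta})_{i=1}^N$ i.i.d.

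Next, anti-symmetry of $b$ (\Cref{ass:decomp_W}) gives $b(0)=0$, so the $j=i$ term in $A_t^{i,\delta}$ vanishes. Conditional on $\bar X_t^{i,\delta}$, the $\bar X_t^{j,\delta}$ for $j\neq i$ are i.i.d.\ with law $\bar\mu_t^\delta$, so $\mathbb{E}[b(\bar X_t^{i,\delta}-\bar X_t^{j,\delta})\mid\bar X_t^{i,\delta}] = b\ast\bar\mu_t^\delta(\bar X_t^{i,\delta})$. This yields the decomposition
\begin{equation*}
A_t^{i,\delta} = \frac{1}{N}\,b\ast\bar\mu_t^\delta(\bar X_t^{i,\delta}) + \frac{N-1}{N}\,U_t^{i,\delta},\qquad U_t^{i,\delta} = b\ast\bar\mu_t^\delta(\bar X_t^{i,\delta}) - \frac{1}{N-1}\sum_{j\neq i} b(\bar X_t^{i,\delta}-\bar X_t^{j,\delta}),
\end{equation*}
where, conditionally on $\bar X_t^{i,\delta}$, $U_t^{i,\delta}$ is a zero-mean average of $N-1$ independent terms.

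I would then estimate each piece: conditional independence gives $\mathbb{E}[|U_t^{i,\delta}|^2] \leq (N-1)^{-1}\,\mathbb{E}[|b(\bar X_t^{i,\delta}-\bar X_t^{1,\delta})|^2]$, and Jensen's inequality applied to the convolution gives $|b\ast\bar\mu_t^\delta(x)|^2 \leq \int|b(x-y)|^2\,\bar\mu_t^\delta(\rmd y)$. The decomposition $b(z) = -Lz+\gamma(z)$ with $\|\gamma\|_\infty<\infty$ yields $|b(z)|^2 \leq 2L^2|z|^2 + 2\|\gamma\|_\infty^2$, and \Cref{lem:moment_bound} provides the uniform bound $\sup_{t\geq 0}\mathbb{E}[|\bar X_t^{i,\delta}|^2]\leq C$, so $\mathbb{E}[|b(\bar X_t^{i,\delta}-\bar X_t^{1,\delta})|^2] \leq 8L^2 C + 2\|\gamma\|_\infty^2$. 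Combining via $|a+b|^2\leq 2|a|^2+2|b|^2$ produces $\mathbb{E}[|A_t^{i,\delta}|^2] \leq C_1/N$ with $C_1 = 16L^2 C + 4\|\gamma\|_\infty^2$, and $\mathbb{E}[A_t^{i,\delta}] \leq \sqrt{\mathbb{E}[|A_t^{i,\delta}|^2]}$ gives the second bound with $C_2 = \sqrt{C_1}$.

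The only non-routine step is the verification of the i.i.d.\ property, since a priori the coefficients $\mathrm{rc}^\delta(\tilde r^{i,\delta})$ and $\mathrm{sc}^\delta(\tilde r^{i,\delta})$ entangle the different $\bar X^{i,\delta}$ through the full coupled state $\tilde r^{i,\delta}$. The $L^2$-unitary structure $(\mathrm{rc}^\delta)^2+(\mathrm{sc}^\delta)^2=1$ is precisely what saves us: it makes the aggregate noise $W^i$ driving each $\bar X^{i,\delta}$ a genuine Brownian motion, and orthogonality of the underlying $B^{i,k}$ across $i$ promotes these Brownian motions to being jointly independent. Everything downstream is a classical variance estimate for an empirical mean of i.i.d.\ terms.
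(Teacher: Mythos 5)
Your proof is correct and the underlying route — renormalize the $1/N$ average into the unbiased $1/(N-1)$ empirical mean, exploit the conditional i.i.d.\ structure to get a variance of order $(N-1)^{-1}$, and control the remainder of order $1/N$ — is the same as the paper's. Indeed, your decomposition $\Theta_t^{i,\delta} = \tfrac{1}{N} b\ast\bar\mu_t^\delta(\bar X_t^{i,\delta}) + \tfrac{N-1}{N}U_t^{i,\delta}$ is algebraically identical to the paper's split
$\Theta_t^{i,\delta} = U_t^{i,\delta} + (\tfrac{1}{N-1}-\tfrac{1}{N})\sum_j b(\bar X_t^{i,\delta}-\bar X_t^{j,\delta})$
once one substitutes $\sum_j b = (N-1)(b\ast\bar\mu - U)$; and your moment estimates via $|b(z)|^2\le 2L^2|z|^2 + 2\|\gamma\|_\infty^2$ and \Cref{lem:moment_bound} are the same. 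The one place your write-up adds genuine value is the verification that the family $(\bar X_t^{i,\delta})_{i=1}^N$ is i.i.d. The paper simply asserts ``given $\bar X_t^{i,\delta}$, the $\bar X_t^{j,\delta}$ are i.i.d.\ with law $\bar\mu_t^\delta$'' without comment, but this is not immediate because the coupling coefficients $\mathrm{rc}^\delta(\tilde r^{i,\delta})$, $\mathrm{sc}^\delta(\tilde r^{i,\delta})$ genuinely entangle all $2N$ components. Your L\'evy-characterisation argument — that $(\mathrm{rc}^\delta)^2+(\mathrm{sc}^\delta)^2=1$ together with orthogonality of the $B^{i,k}$ across $i$ makes $(W^1,\dots,W^N)$ a standard $Nd$-dimensional Brownian motion independent of $\mathcal F_0$, and strong uniqueness for the McKean SDE with deterministic Lipschitz drift then makes each $\bar X^{i,\delta}$ a fixed measurable functional of $(\bar X_0^{i,\delta}, W^i)$ — is exactly the right justification and closes a gap the paper leaves implicit. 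The only minor stylistic difference is that you deduce $\mathbb E[A_t^{i,\delta}]\le (\mathbb E[(A_t^{i,\delta})^2])^{1/2}$ via Jensen, whereas the paper re-runs the triangle-inequality argument on the first moment; your route is slightly cleaner and gives $C_2=\sqrt{C_1}$, whereas the paper optimizes the constant. Two small notational slips that should be cleaned up: the decomposition should be stated for the vector $\Theta_t^{i,\delta}$ rather than its norm $A_t^{i,\delta}$; and the bound $\mathbb E[|U_t^{i,\delta}|^2]\le (N-1)^{-1}\mathbb E[|b(\bar X_t^{i,\delta}-\bar X_t^{1,\delta})|^2]$ should be read with $\bar X^1$ replaced by an independent copy from $\bar\mu_t^\delta$ whenever $i=1$ (and is really a bound on the conditional variance, i.e.\ $\Var\le$ second moment, which you implicitly use).
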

\begin{proof} By \Cref{ass:init_distr}, it holds $\mathbb{E}[|\bar{X}_0^{i,\delta}|^2]<\infty$ for $i=1,\ldots,N$. 
Note that given $\bar{X}_t^{i,\delta}$, $\bar{X}_t^{j,\delta}$ are \iid with law $\barmu_t^{\delta}$ for all $j\neq i $. Hence,
\begin{align*}
\mathbb{E}[b(\bar{X}_t^{i,\delta}-\bar{X}_t^{j,\delta})|\bar{X}_t^{i,\delta}]=b*\barmu_t^\delta(\bar{X}_t^{i,\delta})\eqsp.
\end{align*} 
Since $\gamma$ is anti-symmetric, $b(0)=0$,  and we have
\begin{align*}
\mathbb{E}&\Big[|b*\barmu_t^\delta(\bar{X}_t^{i,\delta})-\frac{1}{N-1}\sum_{j=1}^Nb(\bar{X}_t^{i,\delta}-\bar{X}_t^{j,\delta})|^2\Big|\bar{X}_t^{i,\delta}\Big]
\\ & = \mathbb{E}\Big[|\frac{1}{N-1}\sum_{j=1}^N\mathbb{E}[b(\bar{X}_t^{i,\delta}-\bar{X}_t^{j,\delta})|\bar{X}_t^{i,\delta}] -\frac{1}{N-1}\sum_{j=1}^Nb(\bar{X}_t^{i,\delta}-\bar{X}_t^{j,\delta})|^2\Big|\bar{X}_t^{i,\delta}\Big]
\\ & =\frac{1}{N-1}\Var_{\barmu_t^\delta}(b(\bar{X}_t^{i,\delta}-\cdot))\eqsp.
\end{align*}
By \eqref{eq:nabla_W}, \Cref{ass:decomp_W}, \Cref{ass:init_distr} and \Cref{lem:moment_bound}, we obtain 
\begin{align*}
\Var_{\barmu_t^\delta}(b(\bar{X}_t^{i,\delta}-\cdot))&=\int_{\mathbb{R}^d}\Big|\Big(-L(\bar{X}_t^{i,\delta}-x)+\int_{\mathbb{R}^d} L(\bar{X}_t^{i,\delta}-\tilde{x})\barmu_t^\delta(\rmd \tilde{x})\Big)
\\ & \indent +\Big(\gamma(\bar{X}_t^{i,\delta}-x)-\int_{\mathbb{R}^d} \gamma(\tilde{X}_t^{i,\delta}-\tilde{x})\barmu_t^\delta(\rmd \tilde{x})\Big)\Big|^2 \barmu_t^\delta(\rmd x)
\\ & =\int_{\mathbb{R}^d}\Big|Lx+\Big(\gamma(\bar{X}_t^{i,\delta}-x)-\int_{\mathbb{R}^d} \gamma(\tilde{X}_t^{i,\delta}-\tilde{x})\barmu_t^\delta(\rmd \tilde{x})\Big)\Big|^2 \barmu_t^\delta(\rmd x)
\\ & \leq 2L^2\int_{\mathbb{R}^d} |x|^2\barmu_t^\delta(\rmd x)+8\|\gamma\|_\infty^2\leq 2L^2C^2+8\|\gamma\|_\infty^2\eqsp.
\end{align*}
By the Cauchy-Schwarz inequality, we have
\begin{align*}
\mathbb{E}[(A_t^{i,\delta})^2]&\leq 2\mathbb{E}\Big[|b*\barmu_t(\bar{X}_t^{i,\delta})-\frac{1}{N-1}\sum_{j=1}^Nb(\bar{X}_t^{i,\delta}-\bar{X}_t^{j,\delta})|^2\Big]
\\ & +2\Big(\frac{1}{N-1}-\frac{1}{N}\Big)^2 \mathbb{E}\Big[|\sum_{j=1}^Nb(\bar{X}_t^{i,\delta}-\bar{X}_t^{j,\delta})|^2\Big]
\\ & \leq 2\frac{1}{N-1}\mathbb{E}[\Var_{\barmu_t^\delta}(b(\bar{X}_t^{i,\delta}-\cdot))]+\frac{1}{N^2(N-1)}\mathbb{E}\Big[\sum_{j=1}^N|b(X_t^{i,\delta}-X_t^{j,\delta})|^2\Big]
\\ & \leq \frac{4 L^2}{N-1}C+\frac{16\|\gamma\|_\infty^2}{N-1} +\frac{1}{N^2}\Big(8CL^2+4\|\gamma\|^2_\infty\Big)
\\ & \leq N^{-1}C_1<\infty\eqsp,
\end{align*}
where $C_1$ depends on $\|\gamma\|_\infty$, $L$ and the second moment bound $C$.
Similarly, it holds
\begin{align*}
\mathbb{E}[A_t^{i,\delta}]&\leq \mathbb{E}\Big[|b*\barmu_t^\delta(\bar{X}_t^{i,\delta})-\frac{1}{N-1}\sum_{j=1}^Nb(\bar{X}_t^{i,\delta}-\bar{X}_t^{j,\delta})|\Big]
\\ & +\Big(\frac{1}{N-1}-\frac{1}{N}\Big) \sum_{j=1}^N \mathbb{E}\Big[|b(\bar{X}_t^{i,\delta}-\bar{X}_t^{j,\delta})|\Big]
\\ & \leq \frac{\sqrt{2} L}{\sqrt{N-1}}C^{1/2}+\frac{\sqrt{8}\|\gamma\|_\infty}{\sqrt{N-1}} +\frac{1}{N}\Big(\sqrt{2}C^{1/2}L+\|\gamma\|_\infty\Big)
\\ & \leq N^{-1/2}C_2<\infty\eqsp,
\end{align*}
where $C_2=2LC^{1/2}+4\|\gamma\|_\infty +(\sqrt{2}C^{1/2}+\|\gamma\|_\infty)$. 
\end{proof}

To control $(\{\tilde{r}_t^{i,\delta}\}_{i=1}^N)_{t\geq 0}$, we consider $(\{r_t^{i,\delta,\epsilon}\}_{i=1}^N)_{t\geq 0}$ given as solution of 
\begin{equation}\label{eq:r_t^iepsdelta_propofchaos}
\begin{aligned} 
\rmd r_t^{i,\delta,\epsilon}&=\bar{b}(r_t^{i,\delta,\epsilon}) \rmd t
+ \frac{1}{N}\sum_{j=1}^N  2\|\gamma\|_\infty \mathrm{rc}^{\epsilon}(r_t^{j,\delta,\epsilon})\rmd t 
 +\Big( A_t^{i,\delta}+\frac{1}{N}\sum_{k=1}^N A_t^{k,\delta}\Big)\rmd t
 \\& + 2\sqrt{1+\frac{1}{N}} \mathrm{rc}^{\delta}(r_t^{i,\delta,\epsilon})\rmd W_t^{i,\delta} 
\end{aligned}
\end{equation} 
with initial condition $r_0^{i,\delta,\epsilon}=\tilde{r}_0^{i,\delta}$ for all $i=1,\ldots,N$, $A_t^{i,\delta}$ given in \eqref{eq:A_t^i} and $W_t^{i,\delta}$ given in \eqref{eq:def_BM_W_t^i}. 

By \cite[Theorem 2.2]{Me96}, under \Cref{ass:decomp_W} and \Cref{ass:init_distr}, $(\{U_t^{i,\delta,\epsilon}\}_{i=1}^N)_{t\geq 0}=(\{\bar{X}_t^{i,\delta},Y_t^{i,\delta},r_t^{i,\delta,\epsilon}\}_{i=1}^N)_{t\geq 0}$ exists and is unique, where $(\{\bar{X}_t^{i,\delta},\bar{Y}_t^{i,\delta}\}_{i=1}^N)_{t\geq 0}$ solves uniquely \eqref{eq:propofchaos_coupling_approx}, $(\{\bar{r}_t^{i\delta}\}_{i=1}^N)_{t\geq 0}$ and $(\{r_t^{i,\delta,\epsilon}\}_{i=1}	N)_{t\geq 0}$ solve uniquely \eqref{eq:r_tdelta} and \eqref{eq:r_t^iepsdelta_propofchaos}, respectively, with $(\{W_t^{i,\delta}\}_{i=1}^N)_{t\geq 0}$ given by \eqref{eq:def_BM_W_t^i}.

\begin{lemma} \label{lemma:comparison_propofchaos}
Assume \Cref{ass:decomp_W} and \Cref{ass:init_distr}.
Then for any $i=1,\ldots,N$, $|\bar{X}_t^{i,\delta}-Y_t^{i,\delta}-\frac{1}{N}\sum_j(\bar{X}_t^{j,\delta}-Y_t^{j,\delta})|=\tilde{r}_t^{i,\delta}\leq r_t^{i,\delta,\epsilon}$, almost surely for all $t\geq 0$ and $\epsilon<\epsilon_0$.
\end{lemma}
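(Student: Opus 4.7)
The plan is to adapt the modified Ikeda--Watanabe comparison scheme of \Cref{lemma:modification_watanabe_nonlinear} to the $N$-particle system. Setting $D_t^i := \tilde r_t^{i,\delta}-r_t^{i,\delta,\epsilon}$, the key structural observation is that both $(\tilde r_t^{i,\delta})_{t\geq 0}$ and $(r_t^{i,\delta,\epsilon})_{t\geq 0}$ start from the same value, are driven by the same Brownian motion $W^{i,\delta}$, share the Lipschitz diffusion coefficient $r\mapsto 2\sqrt{1+1/N}\,\mathrm{rc}^\delta(r)$, and carry the identical additive error term $A_t^{i,\delta}+N^{-1}\sum_k A_t^{k,\delta}$ in the drift. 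Consequently, the drift of $D_t^i$ reduces to
\[
\bigl(\bar b(\tilde r_t^{i,\delta})-\bar b(r_t^{i,\delta,\epsilon})\bigr)+\frac{2\|\gamma\|_\infty}{N}\sum_{j=1}^N\bigl(\mathrm{rc}^\epsilon(\tilde r_t^{j,\delta})-\mathrm{rc}^\epsilon(r_t^{j,\delta,\epsilon})\bigr),
\]
and the inequality from \Cref{lem:r_t^idelta} is preserved under subtraction of the drift of $r_t^{i,\delta,\epsilon}$.

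I would then apply It\^o's formula to $\varphi_k(D_t^i)$, where $(\varphi_k)_{k\in\mathbb{N}}$ is the $\mathcal{C}^2$ approximation of $x\mapsto x_+$ defined in the proof of \Cref{lemma:modification_watanabe_nonlinear}, calibrated to the Lipschitz constant $\rho$ of $\mathrm{rc}^\delta$. Taking expectation kills the stochastic integral, and the It\^o correction from the diffusion difference is $O(1/k)$ because $|\mathrm{rc}^\delta(\tilde r^i)-\mathrm{rc}^\delta(r^i)|\leq \rho|D^i|$ combined with the bound $\varphi_k''(x)\leq 2/(k\rho^2 x^2)$ on its support. Using Lipschitz continuity of $\bar b$ on $\{D_t^i>0\}$ and the monotonicity together with Lipschitz continuity of $\mathrm{rc}^\epsilon$, so that $\mathrm{rc}^\epsilon(\tilde r^j)-\mathrm{rc}^\epsilon(r^j)\leq \|\mathrm{rc}^\epsilon\|_{\Lip}(D^j)_+$, and then letting $k\to\infty$ via monotone/dominated convergence with $\varphi_k'(x)\uparrow\1_{(0,\infty)}(x)$, I would obtain an integral inequality of the form
\[
\mathbb{E}[(D_t^i)_+]\leq L_{\bar b}\int_0^t \mathbb{E}[(D_u^i)_+]\,\rmd u + \frac{2\|\gamma\|_\infty\|\mathrm{rc}^\epsilon\|_{\Lip}}{N}\int_0^t\sum_{j=1}^N\mathbb{E}\bigl[(D_u^j)_+\1_{\{D_u^i>0\}}\bigr]\rmd u.
\]

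Summing over $i=1,\ldots,N$ and using $\sum_i\1_{\{D_u^i>0\}}\leq N$ collapses the cross term into $N\sum_j\mathbb{E}[(D_u^j)_+]$, yielding the scalar Gronwall estimate $\Sigma_t\leq (L_{\bar b}+2\|\gamma\|_\infty\|\mathrm{rc}^\epsilon\|_{\Lip})\int_0^t\Sigma_u\,\rmd u$ for $\Sigma_t:=\sum_{i=1}^N\mathbb{E}[(D_t^i)_+]$, with $\Sigma_0=0$. Gronwall's lemma (or the contradiction argument used at the end of the proof of \Cref{lemma:modification_watanabe_nonlinear}) then forces $\Sigma_t\equiv 0$, whence $\tilde r_t^{i,\delta}\leq r_t^{i,\delta,\epsilon}$ almost surely for every $i$ and $t\geq 0$; the identification of $\tilde r_t^{i,\delta}$ with the projected distance $|\bar X_t^{i,\delta}-Y_t^{i,\delta}-N^{-1}\sum_j(\bar X_t^{j,\delta}-Y_t^{j,\delta})|$ is immediate from the definitions preceding \Cref{lem:r_t^idelta}. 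The main technical hazard I anticipate is keeping the Gronwall constant $N$-independent in the presence of the cross-particle coupling through the averaged $\mathrm{rc}^\epsilon$ term; this is precisely where the $1/N$ prefactor in the drift exactly cancels the factor of $N$ produced by summing the indicator, and this delicate balance is what makes the argument scale correctly in $N$.
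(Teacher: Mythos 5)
Your argument is correct and follows essentially the same route as the paper: the paper's own proof is a one-line reduction to the mean-field comparison theorem (\Cref{lemma:modification_watanabe_meanfield}), and what you have written out is precisely the content of that lemma's proof, applied to the pair $(\tilde r^{i,\delta}, r^{i,\delta,\epsilon})$ with the observation that the additive $A$-terms and diffusion coefficients cancel identically. Your explicit unfolding is, if anything, slightly more careful than the paper's citation, since $\tilde r^{i,\delta}$ satisfies only a drift \emph{inequality} (from \Cref{lem:r_t^idelta}) rather than an SDE in the literal hypothesis of \Cref{lemma:modification_watanabe_meanfield}; the only small imprecisions are that the relevant Lipschitz constant $\rho$ should be that of the full diffusion coefficient $r\mapsto 2\sqrt{1+1/N}\,\mathrm{rc}^\delta(r)$ rather than of $\mathrm{rc}^\delta$ alone, and that "the drift of $D_t^i$" should read "an upper bound for the drift of $D_t^i$"; neither affects the conclusion.
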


\begin{proof} 
Note, that both processes $(\{\tilde{r}_t^{i,\delta}\}_{i=1}^N)_{t \geq 0}$ and $(\{r_t^{i,\delta,\epsilon}\}_{i=1}^N)_{t\geq 0}$ have the same initial condition and are driven by the same noise.
Since the drift for $(\{r_t^{i,\delta,\epsilon}\}_{i=1}^N)_{t\geq 0}$ is larger than the drift for $(\{\tilde{r}_t^{i,\delta}\}_{i=1}^N)_{t\geq 0}$ for $\epsilon<\epsilon_0$ by \eqref{eq:condition_rc_sc2}, we can conclude $\tilde{r}_t^{i,\delta}\leq r_t^{i,\delta,\epsilon}$ almost surely for all $t\geq 0$, $\epsilon<\epsilon_0$ and $i=1,\ldots N$ by \Cref{lemma:modification_watanabe_meanfield}. 
\end{proof}

\begin{proof}[Proof of \Cref{thm:timepropagation_of_chaos}]
Consider the process $(\{U_t^{i,\delta,\epsilon}\}_{i=1}^N)_{t\geq 0}=(\{\bar{X}_t^{i,\delta},Y_t^{i,\delta},r_t^{i,\delta,\epsilon}\}_{i=1}^N)_{t\geq 0}$ on $\mathbb{R}^{N(2d+1)}$ for each $\epsilon,\delta>0$. We denote by $\mathbb{P}^{\delta,\epsilon}$ the law of $\{U^{\delta,\epsilon}\}_{i=1}^N$ on $\mathcal{C}(\mathbb{R}_+,\mathbb{R}^{N(2d+1)})$. We define the canonical projections ${\mathbf{X}},{\mathbf{Y}},\mathbf{r}$ onto the first $N d$, second $N d$ and last $N$ components.

By \Cref{ass:decomp_W} and \Cref{ass:init_distr} it holds in the same line as in the proof of  \Cref{lem:existence_NstickySDE_step1} 
for each $T>0$
\begin{align}
E[|\{U_{t_2}^{i,\delta,\epsilon}-U_{t_1}^{i,\delta,\epsilon}\}_{i=1}^N|^4]\leq C|t_2-t_1|^2 \qquad \text{for $t_1,t_2\in[0,T]$,}
\end{align}
 for some constant $C$ depending on $T$, $L$, $\|\gamma\|_{\Lip}$, $\|\gamma\|_\infty$, $N$ and on the fourth moment of $\mu_0$ and $\nu_0$.
Note that we used here that the additional drift terms $(A_t^{i,\delta})_{t\geq 0}$ occurring in the SDE of $(\{r_t^{i,\delta,\epsilon}\}_{i=1}^N)_{t\geq 0}$ are Lipschitz continuous in $(\{\bar{X}_t^{i,\delta}\}_{i=1}^N)_{t\geq 0}$.
Then as in the proofs of \Cref{lem:existence_NstickySDE_step1} and \Cref{lem:existence_NstickySDE_step2}, $\PP^{\delta,\epsilon}$ is tight and converges weakly along a subsequence to a measure $\PP$ by Kolmogorov's continuity criterion, cf. \cite[Corollary 14.9]{Ka02}.

As in \Cref{lem:existence_NstickySDE_step1} the law $\PP_T^{\delta,\epsilon}$ of  $(\{U_t^{i,\delta,\epsilon}\}_{i=1}^N)_{0\leq t\leq T}$ on $\mathcal{C}([0,T],\mathbb{R}^{N(2d+1)})$ is tight for each $T>0$ by \cite[Corollary 14.9]{Ka02} and for each $\epsilon>0$ there exists a subsequence $\delta_n\to 0$ such that $(\PP^{\delta_n,\epsilon}_T)_{n\in\mathbb{N}}$ on $\mathcal{C}([0,T],\mathbb{R}^{N(2d+1)})$ converge to a measure $\PP^\epsilon_T$ on $\mathcal{C}([0,T],\mathbb{R}^{N(2d+1)})$.
By a diagonalization argument and since $\{\PP^\epsilon_T: T\geq 0\}$ is a consistent family, cf. \cite[Theorem 5.16]{Ka02}, there exists a probability measure $\PP^\epsilon$ on $\mathcal{C}(\mathbb{R}_+,\mathbb{R}^{N(2d+1)})$ such that for all $\epsilon$ there exists a subsequence $\delta_n$ such that $(\PP^{\delta_n,\epsilon})_{n\in\mathbb{N}}$ converges along this subsequence to $\PP^\epsilon$. 
As in the proof of \Cref{lem:existence_NstickySDE_step2} we repeat this argument for the family of measures $(\PP^\epsilon)_{\epsilon>0}$. Hence, there exists a subsequence $\epsilon_m\to 0$ such that $(\PP^{\epsilon_m})_{m\in\mathbb{N}}$ converges to a measure $\PP$.
Let $(\{\bar{X}_t^i,{Y}_t^i,r_t^i\}_{i=1}^N)_{t\geq 0}$ be some process on $\mathbb{R}^{N(2d+1)}$ with distribution $\PP$ on $(\bar{\Omega},\bar{\mathcal{F}}, \bar{P})$.

Since $(\{\bar{X}_t^{i,\delta}\}_{i=1}^N)_{t\geq 0}$ and $(\{{Y}_t^{i,\delta}\}_{i=1}^N)_{t\geq 0}$ are solutions that are unique in law, we have that for any $\delta,\epsilon>0$, $\PP^{\delta,\epsilon}\circ\mathbf{X}^{-1}=\PP\circ\mathbf{X}^{-1}$ and $\PP^{\delta,\epsilon}\circ\mathbf{Y}^{-1}=\PP\circ\mathbf{Y}^{-1}$. Hence, $\PP\circ({\mathbf{X}},{\mathbf{Y}})^{-1}$ is a coupling of \eqref{eq:nonlinearSDE} and \eqref{eq:meanfield}.

Similarly to the proof of \Cref{lem:existence_NstickySDE_step1} and \Cref{lem:existence_NstickySDE_step2} there exist an extended underlying probability space and $N$ \iid one-dimensional Brownian motion $(\{W_t^i\}_{i=1}^N)_{t\geq 0}$ such that $(\{r_t^i,W_t^i\}_{i=1}^N)_{t\geq 0}$ is a solution of 
\begin{align*}
\rmd r_t^i&= \bar{b}(r_t^{i}) \rmd t
+ \frac{1}{N}\sum_{j=1}^N 2\|\gamma\|_\infty \1_{(0,\infty)}(r_t^j)\rmd t +\Big(A_t^i+\frac{1}{N}\sum_{k=1}^N A_t^k\Big)\rmd t
\\&+2\sqrt{1+\frac{1}{N}} \1_{(0,\infty)}(r_t^i)\rmd W_t^i\eqsp,
\end{align*} 
where $A_t^i=|b*\barmu_t(\bar{X}_t^i)- \frac{1}{N}\sum_{j=1}^N b(\bar{X}_t^i-\bar{X}_t^j)|$. 

In addition, the statement of \Cref{lemma:comparison_propofchaos} carries over to the limiting process $(\{r_t^i\}_{i=1}^N)_{t\geq 0}$, 
 since by the weak convergence along the subsequences $(\delta_n)_{n\in\mathbb{N}}$ and $(\epsilon_m)_{m\in\mathbb{N}}$ and the Portmanteau theorem,
$P(|\tilde{X}^i_t-\tilde{Y}^i_t|\leq r_t^i \text{ for } i=1,\ldots,N)\geq\limsup_{m\to \infty}\limsup_{n\to \infty} P(|\tilde{X}_t^{i,\delta_n}-\tilde{Y}_t^{i,\delta_n}|\leq r_t^{i,\delta_n,\epsilon_m} \text{ for } i=1,\ldots,N)=1$, where $\tilde{X}^i_t=\bar{X}_t^i-(1/N)\sum_{j=1}^N\bar{X}_t^j$ and $\tilde{Y}^i_t=\bar{X}_t^i-(1/N)\sum_{j=1}^N\bar{Y}_t^j$ for all $t\geq 0$ and $i=1,\ldots,N$.

Using It\=o-Tanaka formula, c.f. \cite[Chapter 6, Theorem 1.1]{ReYo99}, and $f'$ is absolutely continuous, we obtain for $f$ defined in \eqref{eq:definition_f} with $\tilde{b}(r)=(\kappa(r)-L)r$ and $a=2\|\gamma\|_\infty$,
\begin{align*}
\rmd \Big(\frac{1}{N}\sum_{i=1}^N f(r_t^i)\Big)&=\frac{1}{N}\sum_{i=1}^N\Big(\bar{b}(r_t^i)f'(r_t^i)+f''(r_t^i)2\frac{N+1}{N} \1_{(0,\infty)}(r_t^i)\Big)\rmd t
\\ &  +\frac{1}{N^2}\sum_{i=1}^N\sum_{j=1}^N2f'(r_t^i)\|\gamma\|_\infty \1_{(0,\infty)}(r_t^j)\rmd t
\\ & +\frac{1}{N}\sum_{i=1}^N f'(r_t^i)2\sqrt{1+\frac{1}{N}} \1_{(0,\infty)}(r_t^i)\rmd W_t^i
 +\frac{1}{N}\sum_{i=1}^N f'(r_t^i)\Big(A_t^{i}+\frac{1}{N}\sum_{k=1}^NA_t^k\Big)\rmd t\eqsp.
\end{align*}
Taking expectation, we get using $f'(r)\leq 1$ for all $r\geq 0$, 
\begin{equation}\label{eq:expectation_ito}
\begin{aligned} 
\frac{\rmd }{\rmd t}\mathbb{E}\Big[\frac{1}{N}\sum_{i=1}^N f(r_t^i)\Big]&\leq \frac{1}{N}\sum_{i=1}^N\Big\{\mathbb{E}\Big[\bar{b}(r_t^i)f'(r_t^i)+2\frac{N+1}{N}(f''(r_t^i)-f''(0))\Big] 
\\ & + \mathbb{E}\Big[2\Big(\|\gamma\|_\infty+\frac{N+1}{N} f''(0)\Big)\1_{(0,\infty)}(r_t^i)\Big]+\mathbb{E}\Big[ 2A_t^{i}\Big]\Big\}\eqsp.
\end{aligned}
\end{equation}
By \eqref{eq:condition_f_1} and \eqref{eq:condition_f_2}, the first two terms are bounded by $-\tilde{c}\frac{1}{N}\sum_i f(r_t^i)$ with $\tilde{c}$ given in \eqref{eq:definition_tildec}. 

By \Cref{lemma:bound_A_t^idelta} the last term in \eqref{eq:expectation_ito} is bounded by
\begin{align*}
2E[A_t^i]\leq \tilde{C}N^{-1/2}\eqsp,
\end{align*}
where 
\begin{align} \label{eq:tildeC}
\tilde{C}=2C_2=4LC^{1/2}+8\|\gamma\|_\infty+2(\sqrt{2}C^{1/2}L+\|\gamma\|_\infty)\eqsp.
\end{align}
Hence, we obtain
\begin{align*}
\frac{d}{dt}\mathbb{E}\Big[\frac{1}{N}\sum_i f(r_t^i)\Big]&\leq -\tilde{c} \frac{1}{N}\sum_i\mathbb{E}[f(r_t^i)]+\tilde{C}N^{-1/2}
\end{align*}
for $t\geq 0$ which leads by Gr\"onwall's lemma to
\begin{align*}
\mathbb{E}\Big[\frac{1}{N}\sum_i f(r_t^i)\Big]\leq \rme^{-\tilde{c} t} \mathbb{E}\Big[\frac{1}{N}\sum_i f(r_0^i)\Big]+\frac{1}{\tilde{c}}\tilde{C}N^{-1/2}\eqsp.
\end{align*}
For an arbitrary coupling $\xi\in\Gamma(\bar{\mu}_0^{\otimes N},\nu_0^{\otimes N})$, we have
\begin{align*}
\mathcal{W}_{f,N}((\barmu_t)^{\otimes N},\nu_t^N)\leq \rme^{-\tilde{c} t} \int_{\mathbb{R}^{2Nd}} \frac{1}{N}\sum_{i=1}^N f \parenthese{\abs{x^i-y^i-\frac{1}{N}\sum_{j=1}^N(x^j-y^j)}} \xi(\rmd x \rmd y)+\frac{\tilde{C}}{\tilde{c}N^{1/2}}\eqsp,
\end{align*}
as $\mathbb{E}[f(r_0^i)]\leq \int_{\mathbb{R}^{2Nd}} \frac{1}{N}\sum_{i=1}^Nf(|x^i-y^i-\frac{1}{N}\sum_{j=1}^N(x^j-y^j)|) \xi(\rmd x \rmd y)$.
Taking the infimum over all couplings $\xi\in\Gamma(\bar{\mu}_0^{\otimes N},\nu_0^{\otimes})$ gives the first bound. By \eqref{eq:norm_equivalence}, the second bound follows.
\end{proof}

\subsection{Proof of \texorpdfstring{\Cref{sec:system_non-linear_sticky_diff}}{}} \label{sec:proof_meanfield_existence}

Analogously to the proof of \Cref{thm:existence_comparison}, we introduce approximations for the system of sticky SDEs and prove \Cref{thm:existence_comparison_Nparticles} using a comparison result given in \Cref{lemma:modification_watanabe_meanfield} and via taking the limit of the approximation of the system of sticky SDEs in two steps and identifying the limit with the solution of \eqref{eq:N_onedimSDE_coupling}.

As for the nonlinear case we show \Cref{thm:existence_comparison_Nparticles} 
via a family of stochastic differential equations, with Lipschitz continuous coefficients,
\begin{equation}\label{eq:Nd_stickydiff_coupling}
\begin{aligned}
&\rmd r_t^{i,n,m}=\Big(\tilde{b}(r_t^{i,n,m})+\frac{1}{N}\sum_{j=1}^Ng^m(r_t^{j,n,m})\Big)\rmd t+2\theta^n(r_t^{i,n,m}) \rmd W_t^i 
\\ & \rmd s_t^{i,n,m}=\Big(\hat{b}(s_t^{i,n,m})+\frac{1}{N}\sum_{j=1}^Nh^m(s_t^{j,n,m})\Big)\rmd t+2\theta^n(s_t^{i,n,m}) \rmd W_t^i 
\\ & \Law(r_0^{i,n,m},s_0^{i,n,m})=\eta_{n,m}\eqsp, \qquad i\in \{1,\ldots,N\}\eqsp,
\end{aligned}
\end{equation}
where $\eta_{n,m}\in\Gamma(\mu_{n,m},\nu_{n,m})$.
Under \Cref{H1b}, \Cref{H1g}, \Cref{H5}, \Cref{H4} and \Cref{H3} we identify the weak limit of $(\{r_t^{i,n,m},s_t^{i,n,m}\}_{i=1,n,m\in\mathbb{N}}^N)_{t\geq 0}$ solving \eqref{eq:Nd_stickydiff_coupling} for $n\to\infty$ by $(\{r_t^{i,m},s_t^{i,m}\}_{i=1,m\in\mathbb{N}}^N)_{t\geq 0}$ solving the family of SDEs given by  
\begin{equation}\label{eq:Nd_sticydiff_coupling2}
\begin{aligned} 
&\rmd r_t^{i,m}=\Big(\tilde{b}(r_t^{i,m})+\frac{1}{N}\sum_{j=1}^Ng^m(r_t^{j,m})\Big)\rmd t+2 \1_{(0,\infty)}(r_t^{i,m}) \rmd W_t^i \eqsp,  
\\ &\rmd s_t^{i,m}=\Big(\hat{b}(s_t^{i,m})+\frac{1}{N}\sum_{j=1}^Nh^m(s_t^{j,m})\Big)\rmd t+2 \1_{(0,\infty)}(s_t^{i,m}) \rmd W_t^i \eqsp, 
\\ & \Law(r_0^{i,m},s_0^{i,m})=\eta_m \eqsp, \qquad i\in \{1,\ldots,N\}\eqsp, 
\end{aligned} 
\end{equation}
where $\eta_m\in\Gamma(\mu_m,\nu_m)$.

Taking the limit $m\to \infty$, we obtain \eqref{eq:N_onedimSDE_coupling} as the weak limit of \eqref{eq:Nd_sticydiff_coupling2}.
In the case $g(r)=\1_{(0,\infty)}(r)$, we can choose $g^m=\theta^m$. 

Consider a probability space $(\Omega_0,\mathcal{A}_0,Q)$ and $N$ \iid $1$-dimensional Brownian motions $(\{W_t^i\}_{i=1}^N)_{t\geq 0}$. Note that under \Cref{H1b}--\Cref{H3}, there are random variables $\{r^{i,n,m}\}_{i=1}^N,\{s^{i,n,m}\}_{i=1}^N:\Omega_0\to\mathbb{W}^{N}$ for each $n,m$ such that $(\{r^{i,n,m},s^{i,n,m}\}_{i=1}^N)$ is a unique solution to \eqref{eq:Nd_stickydiff_coupling} by \cite[Theorem 2.2]{Me96}. We denote by $\PP^{n,m}=Q\circ(\{r^{i,n,m},s^{i,n,m}\}_{i=1}^N)^{-1}$ the law on $\mathbb{W}^N\times\mathbb{W}^N$.

Before taking the two limits and proving \Cref{thm:existence_comparison_Nparticles}, we introduce a modification of Ikeda and Watanabe's comparison theorem, to compare two solutions of \eqref{eq:Nd_stickydiff_coupling}, cf. \cite[Section VI, Theorem 1.1]{IkWa89}.

\begin{lemma}\label{lemma:modification_watanabe_meanfield}
 Suppose a solution $(\{r_t^{i,n,m},s_t^{i,n,m}\}_{i=1}^N)_{t\geq 0}$ of \eqref{eq:Nd_stickydiff_coupling} is given for fixed $n,m\in \mathbb{N}$. Assume \Cref{H5} for $g^{m}$ and $h^{m}$, \Cref{H1b} for $\tilde{b}$ and $ \hat{b}$, \Cref{H4} for $\theta^n$. If $Q[r_0^{i,n,m}\leq s_0^{i,n,m} \text{ for all } i=1,\ldots,N]=1$, $\tilde{b}(r)\leq \hat{b}(r)$ and $g^m(r)\leq h^m(r)$ for any $r\in\mathbb{R}_+$, then
\begin{align*}
Q[r_t^{i,n,m}\leq s_t^{i,n,m} \text{ for all } t\geq 0 \text{ and } i=1,\ldots,N]=1
\end{align*}
\end{lemma}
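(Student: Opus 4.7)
The plan is to adapt the Yamada--Watanabe-type argument of \Cref{lemma:modification_watanabe_nonlinear} to the coupled system. Suppressing the indices $n,m$, set $D_t^i = r_t^{i,n,m} - s_t^{i,n,m}$ for $i=1,\dots,N$, and reuse the same $\mathcal{C}^2$ approximation $\varphi_k$ of $x\mapsto x_+$ built in that proof: it satisfies $\varphi_k(x)=0$ for $x\le 0$, $0\le\varphi_k'\le 1$, $\varphi_k(x)\uparrow x_+$, $\varphi_k'(x)\uparrow \mathbf{1}_{(0,\infty)}(x)$ as $k\to\infty$, and $0\le\varphi_k''(x)\le 2/(k\rho^2 x^2)$ on $(a_k,a_{k-1})$, where $\rho$ is the Lipschitz constant of $\theta^n$. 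Apply It\=o's formula to $\Phi_t^k := \sum_{i=1}^N \varphi_k(D_t^i)$ and take expectation. Since the driving Brownian motions $W^i$ are independent, the cross covariations $[D^i,D^j]$ vanish for $i\neq j$, and only diagonal diffusion corrections appear; summing over $i$ the same estimate as in the scalar case bounds the total It\=o correction by $O(Nt/k)$, which vanishes as $k\to\infty$, and the martingale terms have zero expectation by boundedness of $\theta^n$ and $\varphi_k'$.

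For the drift, the contribution coming from $\tilde b(r_t^i)-\hat b(s_t^i)$ is handled as in the scalar proof: decompose as $(\tilde b(r_t^i)-\hat b(r_t^i))+(\hat b(r_t^i)-\hat b(s_t^i))$, drop the non-positive first summand using $\tilde b\le\hat b$ together with $\varphi_k'\ge 0$, and bound the second by $\tilde L\,\absLigne{D_t^i}$ via \Cref{H1b}. The genuinely new ingredient relative to \Cref{lemma:modification_watanabe_nonlinear} is the mean-field interaction $\tfrac{1}{N}\sum_j(g^m(r_t^j)-h^m(s_t^j))$, which, unlike the scalar nonlinear expectation $P_t(g^m)-\hat P_t(h^m)$, is a pathwise empirical average and is not signed. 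The key step is an index-by-index sign split on each $D_u^j$: on $\{D_u^j<0\}$ the monotonicity of $g^m$ combined with $g^m\le h^m$ gives $g^m(r_u^j)-h^m(s_u^j)\le g^m(r_u^j)-g^m(s_u^j)\le 0$, so that this part contributes non-positively once multiplied by the non-negative factor $\sum_i\varphi_k'(D_u^i)$; on $\{D_u^j\ge 0\}$ the Lipschitz estimate from \Cref{H5} yields $g^m(r_u^j)-h^m(s_u^j)\le K_m(D_u^j)_+$, and the trivial bound $\sum_i\varphi_k'(D_u^i)\le N$ then controls the full interaction contribution by $K_m\sum_j(D_u^j)_+$.

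Collecting these estimates and letting $k\to\infty$ via monotone and dominated convergence (the dominating integrability being supplied by a moment bound analogous to \eqref{eq:pthmomentbound}) yields
\begin{equation*}
\mathbb{E}\Big[\sum_{i=1}^N (D_t^i)_+\Big] \le (\tilde L + K_m) \int_0^t \mathbb{E}\Big[\sum_{i=1}^N (D_u^i)_+\Big] \, du \eqsp,
\end{equation*}
and since $\mathbb{E}[\sum_i(D_0^i)_+]=0$ by the hypothesis on the initial data, Gr\"onwall's inequality forces $\mathbb{E}[\sum_i(D_t^i)_+]=0$ for every $t\ge 0$. Sample path continuity then upgrades this to the pathwise statement $Q[r_t^{i,n,m}\le s_t^{i,n,m}$ for all $t\ge 0$ and $i=1,\dots,N]=1$. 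The main obstacle compared to the scalar case is precisely the treatment of the mean-field term: the coupling of the index $i$ in $\varphi_k'(D_u^i)$ with the summation over $j$ in the drift prevents any direct component-wise monotonicity argument, and the resolution relies on the sign-split above combined with the uniform bound $\sum_i\varphi_k'(D_u^i)\le N$, which is applied only on $\{D_u^j\ge 0\}$ since the complementary contribution already has the correct sign.
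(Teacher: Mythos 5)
Your proof is correct and follows essentially the same approach as the paper: adapt the Yamada--Watanabe regularization from the scalar case, handle the mean-field interaction term by the same index-wise sign split that yields $g^m(r^j)-h^m(s^j)\le K_m(D^j)_+$, and conclude by a Gr\"onwall-type argument. The only cosmetic difference is that you sum $\varphi_k(D^i)$ over $i$ and invoke Gr\"onwall directly, whereas the paper derives the component-wise integral inequalities and closes the argument with a $t^*$-contradiction; both are equivalent.
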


\begin{proof}
The proof is similar for each component $i=1,\ldots,N$ to the proof of \Cref{lemma:modification_watanabe_nonlinear}. It holds for the interaction part similarly to \eqref{eq:comparison_lemma_nonlineardriftpart} using the properties of $g^m$ and $h^m$,
\begin{align*}
\frac{1}{N}\sum_{j=1}^N (g^m(r^{j,n,m}_t)-h^m(s^{j,n,m}_t))\leq K_m\frac{1}{N}\sum_{j=1}^N|r^{j,n,m}_t-s^{j,n,m}_t|\1_{(0,\infty)}(r^{j,n,m}_t-s^{j,n,m}_t) \eqsp.
\end{align*}
Hence, we obtain analogously to \eqref{eq:modificationwatanabe_nonlinear},
\begin{align*}
\mathbb{E}[(r_t^{i,n,m}-s_t^{i,n,m})_+]&\leq \tilde{L}\mathbb{E}\Big[\int_0^t(r^{i,n,m}_u-s^{i,n,m}_u)_+ \rmd u\Big]
 +K_m\mathbb{E}\Big[\int_0^t  \frac{1}{N}\sum_{j=1}^N(r_u^{j,n,m}-s_u^{j,n,m})_+ \rmd u\Big]
\end{align*}
for all $i=1,\ldots,N$. 
Assume 
$t^*=\inf\{t\geq 0:\mathbb{E}[(r_t^{i,n,m}- s_t^{i,n,m})_+]>0 \text{ for some } i \}<\infty$. Then, there exists $i\in\{1,\ldots,N\}$ such that $\int_0^{t^*}\mathbb{E}[(r_u^{i,n,m}-s^{i,n,m}_u)_+ ]\rmd u>0$. 
But, by definition of $t^*$, for all $i$, $u<t^*$, $\mathbb{E}[(r^{i,n,m}_u-s^{i,n,m}_u)_+ ]=0$. This contradicts the definition of $t^*$. Hence, $Q[r_t^{i,n,m}\leq s_t^{i,n,m} \text{ for all } i, \ t\geq 0]=1$.
\end{proof}

In the next step, we prove that the distribution of the solution of \eqref{eq:Nd_stickydiff_coupling} converges as $n\to\infty$.

\begin{lemma} \label{lem:existence_NstickySDE_step1}
Assume that \Cref{H1b} and \Cref{H1g} is satisfied for $(\tilde{b},g)$ and $(\hat{b},h)$. Further, let $(\theta^n)_{n\in\mathbb{N}}$, $(g^m)_{m\in\mathbb{N}}$, $(h^m)_{m\in\mathbb{N}}$, $(\mu_{n,m})_{n,m\in\mathbb{N}}$, $(\nu_{n,m})_{n,m\in\mathbb{N}}$ and $(\eta_{n,m})_{n,m\in\mathbb{N}}$ be such that \Cref{H5},  \Cref{H4} and \Cref{H3} hold. 
Let $m\in\mathbb{N}$. Then there exists a random variable $(\{r^{i,m},s^{i,m}\}_{i=1}^N)$ defined on some probability space $(\Omega^m,\mathcal{A}^m,P^m)$ with values in $\mathbb{W}^N\times \mathbb{W}^N$ such that $(\{ r_t^{i,m},s_t^{i,m}\}_{i=1}^N)_{t\geq 0}$ is a weak solution of \eqref{eq:Nd_sticydiff_coupling2}. Moreover, the laws $Q\circ(\{r^{i,n,m},s^{i,n,m}\}_{i=1}^N)^{-1}$ converge weakly 
to $P^m\circ(\{r^{i,m},s^{i,m}\}_{i=1}^N)^{-1}$. If in addition, 
\begin{align*}
& \tilde{b}(r)\leq \hat{b}(r) \quad \text{and} \quad g^m(r)\leq h^m(r)&&  \text{ for any } r\in\mathbb{R}_+,
\\  &Q[r_0^{i,n,m}\leq s_0^{i,n,m}]=1&& \text{ for any } n\in\mathbb{N}, i=1,\ldots,N, 
\end{align*} 
then $P^m[r^{i,m}_t\leq s^{i,m}_t \text{ for all } t\geq0 \text{ and } i\in\{1,\ldots,N\}]=1$. 
\end{lemma}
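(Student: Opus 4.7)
The plan is to follow the exact template of the nonlinear case (\Cref{lem:existence_stickySDE_step1}), adapting all estimates componentwise to the system of $N$ particles. Fix $m \in \nset$. First I would establish uniform-in-$n$ moment bounds: applying It\=o's formula to $|r_t^{i,n,m}|^p$ for $p>2$ as in \Cref{H2}, and using Lipschitz continuity of $\tilde{b}$, uniform boundedness of $g^m$ by $\Gamma = \max(\|g\|_\infty,\|h\|_\infty)$, and $\theta^n \leq 1$, yields $\sup_{t \in [0,T]} \mathbb{E}[|r_t^{i,n,m}|^p] \leq C_p$ uniformly in $n$ and $i$, via Gr\"onwall's lemma (the interaction term through $N^{-1}\sum_j g^m(r_t^{j,n,m})$ contributes only a bounded additive constant). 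Combined with Burkholder-Davis-Gundy this gives $\mathbb{E}[|r_{t_2}^{i,n,m} - r_{t_1}^{i,n,m}|^p] \leq C|t_2-t_1|^{p/2}$, and summing over $i=1,\dots,N$ controls the $\mathbb{R}^{2N}$-valued increments uniformly in $n$.

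Next I would invoke Kolmogorov's continuity criterion \cite[Corollary 14.9]{Ka02} on each $\mathcal{C}([0,T], \mathbb{R}^{2N})$ and then pass to $\mathcal{C}(\mathbb{R}_+,\mathbb{R}^{2N})$ by the standard consistency/diagonal argument \cite[Theorem 5.16]{Ka02}, producing a subsequence along which $\PP^{n,m}$ converges weakly to some $\PP^m$ on $(\mathbb{W}^N \times \mathbb{W}^N, \mathcal{B}(\mathbb{W}^N) \otimes \mathcal{B}(\mathbb{W}^N))$. Denote the canonical process by $(\mathbf{r}^i,\mathbf{s}^i)_{i=1}^N$. To identify the limit, for each $i$ I would define the processes
\begin{align*}
M_t^{i,n,m} &= \mathbf{r}^i_t - \mathbf{r}^i_0 - \int_0^t \Big(\tilde{b}(\mathbf{r}^i_u) + \tfrac{1}{N}\sum_{j=1}^N g^m(\mathbf{r}^j_u)\Big)\rmd u, \\
M_t^{i,m} &= \mathbf{r}^i_t - \mathbf{r}^i_0 - \int_0^t \Big(\tilde{b}(\mathbf{r}^i_u) + \tfrac{1}{N}\sum_{j=1}^N g^m(\mathbf{r}^j_u)\Big)\rmd u,
\end{align*}
and analogously $N^{i,n,m}, N^{i,m}$ for the $\mathbf{s}$-component. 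Since the drift coefficient $(\mathbf{r}^j)_{j=1}^N \mapsto \tilde{b}(\mathbf{r}^i) + N^{-1}\sum_j g^m(\mathbf{r}^j)$ is now a bounded Lipschitz function of the state (not of the law), the identification of the limiting martingales is actually simpler than in the nonlinear case: $M^{i,n,m}$ is a continuous functional of the path, so $\PP^{n,m} \circ (M^{i,n,m})^{-1} = \PP^{n,m} \circ (M^{i,m})^{-1}$ and the latter converges weakly to $\PP^m \circ (M^{i,m})^{-1}$ by continuity. Uniform integrability from the $p$-th moment bound then ensures $(M_t^{i,m}, \mathcal{F}_t, \PP^m)$ is a continuous martingale, with $\mathcal{F}_t = \sigma((\mathbf{r}^j_u, \mathbf{s}^j_u)_{j,\, 0 \leq u \leq t})$.

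The main obstacle, exactly as in \Cref{lem:existence_stickySDE_step1}, is identifying the quadratic variation as $[M^{i,m}]_t = 4 \int_0^t \1_{(0,\infty)}(\mathbf{r}^i_u)\rmd u$ and the cross-variations $[M^{i,m}, M^{j,m}] = 0$ for $i \neq j$ (and similarly for $N^{i,m}$, $[M^{i,m}, N^{j,m}]$). Here I would reproduce verbatim the sub/supermartingale argument: submartingale property of $(M_t^{i,m})^2 - 4\int_0^t \1_{(0,\infty)}(\mathbf{r}^i_u)\rmd u$ follows via lower semicontinuity of $\omega \mapsto \int_s^t \1_{(\epsilon,\infty)}(\omega_u)\rmd u$, Fatou and Portmanteau, together with $\theta^n(r)^2 \geq \1_{(\epsilon,\infty)}(r)$ for $n \geq 1/\epsilon$; supermartingale property follows because $(M_t^{i,m})^2 - 4t$ is a supermartingale (limit of $(M_t^{i,n,m})^2 - 4\int_0^t \theta^n(\mathbf{r}^i_u)^2 \rmd u$ with $\theta^n \leq 1$), hence by uniqueness in the Doob-Meyer decomposition and It\=o-Tanaka applied to $\mathbf{r}^i$, the local time at $0$ contributes nothing to $\rmd [\mathbf{r}^i]_u$, giving the claim. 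Independence of the components in the limit (vanishing cross-variations) follows because each $M^{i,n,m}$ is driven by the independent Brownian motion $W^i$, so $[M^{i,n,m},M^{j,n,m}] = 0$ for $i \neq j$. A multidimensional martingale representation theorem \cite[Chapter II, Theorem 7.1]{IkWa89} then produces on an extended space a Brownian motion $(W^i)_{i=1}^N$ and variables $(r^{i,m},s^{i,m})_{i=1}^N$ solving \eqref{eq:Nd_sticydiff_coupling2}. Finally, the comparison $P^m[r^{i,m}_t \leq s^{i,m}_t \text{ for all } t \geq 0, \text{ all } i] = 1$ passes to the limit from \Cref{lemma:modification_watanabe_meanfield} via the Portmanteau theorem applied to the closed set $\{\omega \in \mathbb{W}^N \times \mathbb{W}^N : \omega^{r,i}_t \leq \omega^{s,i}_t \text{ for all } t\geq 0, \text{ all } i\}$.
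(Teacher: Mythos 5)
Your proposal follows the paper's own three-step template (tightness via moment bounds and Kolmogorov's criterion, identification of the limit martingale and its quadratic variation via the sub/supermartingale and Doob--Meyer argument, comparison via Portmanteau and \Cref{lemma:modification_watanabe_meanfield}) and is correct. Your observation that the maps $M^{i,n,m}$ and $M^{i,m}$ coincide because the drift here depends only on the path and not on the law is exactly what the paper exploits (it defines only $M^{i,m}$, with no $n$-dependence), so this is not a deviation but a correct reading of the simplification relative to \Cref{lem:existence_stickySDE_step1}; the only minor imprecision is calling the drift ``bounded'' (only $g^m$ is bounded, $\tilde b$ is merely Lipschitz), which does not affect the argument since continuity suffices for the continuous mapping step and uniform integrability is supplied by the $p$-th moment bounds.
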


\begin{proof} Fix $m\in\mathbb{N}$.
The proof is divided in three parts and is similar to the proof of \Cref{lem:existence_stickySDE_step1}. First we show tightness of the sequences of probability measures. Then we identify the limit of the sequence of stochastic processes. Finally, we compare the two limiting processes.

\textbf{Tightness:} 
We show analogously as in the proof of \Cref{lem:existence_stickySDE_step1} that the sequence of probability measures  $(\mathbb{P}^{n,m})_{n\in\mathbb{N}}$ on $(\mathbb{W}^N\times \mathbb{W}^N,\mathcal{B}(\mathbb{W}^N)\otimes \mathcal{B}(\mathbb{W}^N))$ is tight by applying Kolmogorov's continuity theorem. 
We consider $p>2$ such that the $p$-th moment in \Cref{H3} are uniformly bounded.
Fix $T>0$. Then the $p$-th moment of  $r_t^{i,n,m}$ and $s_t^{i,n,m}$ for $t<T$ is bounded using It\=o's formula,
\begin{align*}
\rmd |&r_t^{i,n,m}|^p \leq p|r_t^{i,n,m}|^{p-2}\langle r_t^{i,n,m},(\tilde{b}(r_t^{i,n,m})+\frac{1}{N}\sum_{j=1}^Ng^m(r_t^{j,n,m}))\rangle \rmd t 
 \\ &\quad + 2\theta^n(r_t^{i,n,m}) p|r_t^{n,m}|^{p-2} r_t^{i,n,m}\rmd W_t^i +p(p-1)|r_t^{i,n,m}|^{p-2}2 \theta^n(r_t^{i,n,m})^2\rmd t
\\ & \leq p\Big(|r_t^{i,n,m}|^p \tilde{L}+\Gamma|r_t^{i,n,m}|^{p-1}+2(p-1)|r_t^{i,n,m}|^{p-2}\Big) \rmd t  + 2\theta^n(r_t^{i,n,m}) p(r_t^{i,n,m})^{p-1}\rmd W_t^i
\\ & \leq p\Big(\tilde{L}+\Gamma+2(p-1)\Big)|r_t^{i,n,m}|^p\rmd t + p(\Gamma+2(p-1))\rmd t + 2\theta^n(r_t^{i,n,m}) p(r_t^{n,m})^{p-1}\rmd W_t^i\eqsp.
\end{align*}
Taking expectation yields
\begin{align*}
\frac{\rmd}{\rmd t}\mathbb{E}[ |r_t^{i,n,m}|^p]&\leq p\Big(L+\Gamma+2(p-1)\Big)\mathbb{E}|r_t^{i,n,m}|^p +p(\Gamma+2(p-1))\eqsp.
\end{align*}
Then by Gronwall's lemma
\begin{align} \label{eq:pthmomentbound_Npart}
 \sup_{t\in[0,T]}\mathbb{E}[|r_t^{i,n,m}|^p] \leq \rme^{p(L+\Gamma+2(p-1))T} (\mathbb{E}[|r_0^{i,n,m}|^p]+Tp(\Gamma+2(p-1))
 )<C_p<\infty \eqsp,
\end{align}
where $C_p$ depends on $T$ and the $p$-th moment of the initial distribution, which is by assumption finite.
Similarly, it holds $\sup_{t\in[0,T]} \mathbb{E}[|s_t^{i,n,m}|^p]<C_p$ for $t\leq T$.
Using these moment bounds, it holds for all $t_1,t_2\in[0,T]$ by \Cref{H1b}, \Cref{H5} and \Cref{H4},
\begin{align*}
\mathbb{E}&[|r_{t_2}^{i,n,m}-r_{t_1}^{i,n,m}|^p]
\\ & \leq C_1(p)\Big(\mathbb{E}[|\int_{t_1}^{t_2} \tilde{b}(r_u^{i,n,m}) +\frac{1}{N}\sum_{j=1}^N g^m(r_t^{j,n,m})\rmd u |^p]+\mathbb{E}[|\int_{t_1}^{t_2}2 \theta^n(r_u^{i,n,m})\rmd W_u^i|^p]\Big)
\\ & \leq C_2(p)\Big(\Big(\mathbb{E}\Big[\frac{\tilde{L}^p}{|t_2-t_1|}\int_{t_1}^{t_2}|r_u^{i,n,m}|^p\rmd u\Big]+\Gamma^p\Big)|t_2-t_1|^p+\mathbb{E}[|\int_{t_1}^{t_2}2\theta^n(r_u^{i,n,m})\rmd u|^{p/2}]\Big)
\\ & \leq C_2(p)\Big(\Big(\frac{\tilde{L}^p}{|t_2-t_1|}\int_{t_1}^{t_2}\mathbb{E}[|r_u^{i,n,m}|^p]\rmd u +\Gamma^p\Big)|t_2-t_1|^p+2^{p/2}|t_2-t_1|^{p/2}\Big)
\\ & \leq C_3(p,T,\tilde{L},\Gamma,C_p)|t_2-t_1|^{p/2} \eqsp,
\end{align*}
where $C_k(\cdot)$ are constants depending on the stated arguments, but independent of $n,m$. Note that in the second step, we use Burkholder-Davis-Gundy inequality, see \cite[Chapter IV, Theorem 48]{Pr04}.
It holds similarly, $\mathbb{E}[|s_{t_2}^{i,n,m}-s_{t_1}^{i,n,m}|^p]\leq C_3(p,T,\tilde{L},\Gamma,C_p)|t_2-t_1|^{p/2}$. Hence, 
\begin{align*}
\mathbb{E}[|(\{r_{t_2}^{i,n,m},s_{t_2}^{i,n,m}\}_{i=1}^N)&-(\{r_{t_1}^{i,n,m},s_{t_1}^{i,n,m}\}_{i=1}^N)|^p]
\\ &\leq C_4(p,N)(\sum_{i=1}^N( 
\mathbb{E}[|r_{t_2}^{i,n,m}-r_{t_1}^{i,n,m}|^p]+\mathbb{E}[|s_{t_2}^{i,n,m}-s_{t_1}^{i,n,m}|^p]))
\\ & \leq C_5(p,N,T,\tilde{L},\Gamma,C_p)|t_2-t_1|^{p/2}
\end{align*}
for all $t_1,t_2\in[0,T]$.
Hence, by Kolmogorov's continuity criterion, cf. \cite[Corollary 14.9]{Ka02}, 
there exists a constant $\tilde{C}$ depending on $p$ and $\gamma$ such that\begin{equation} \label{eq:pthmoment_difference_N}
\mathbb{E}\Big[ [(\{r^{i,n,m},s^{i,n,m}\}_{i=1}^N)]_\gamma^p\Big]\leq \tilde{C}\cdot C_5(p,N,T,\tilde{L},\Gamma,C_p) \eqsp.
\end{equation}
where $[\cdot]_\gamma^p$ is defined by $[x]_\gamma=\sup_{t_1,t_2\in[0,T]}\frac{|x(t_1)-x(t_2)|}{|t_1-t_2|^\gamma}$
and $(\{r_t^{i,n,m},s_t^{i,n,m}\}_{i=1}^N)_{n\in\mathbb{N},t\geq 0}$ is tight in $\mathcal{C}([0,T],\mathbb{R}^{2N})$.
Hence, for each $T>0$ there exists a subsequence $n_k\to \infty$ and a probability measure $\mathbb{P}_T$ on $\mathcal{C}([0,T],\mathbb{R}^{2N})$. Since $\{\PP_T^m\}_T$ is a consistent family, there exists by \cite[Theorem 5.16]{Ka02} a probability measure $\PP^m$ on
$(\mathbb{W}^N\times\mathbb{W}^N,\mathcal{B}(\mathbb{W}^N)\otimes\mathcal{B}(\mathbb{W}^N))$ such that $\PP^{n_k,m}$ converges weakly to $\PP^m$. Note that we can take here the same subsequence $(n_k)$ for all $m$ using a diagonalization argument.
\\ \textbf{Characterization of the limit measure:} Denote by $(\{\mathbf{r}_t^i,\mathbf{s}_t^i\}_{i=1}^N)=\omega(t)$ the canonical process on $\mathbb{W}^N\times\mathbb{W}^N$. To characterize the measure $\PP^m$ we first note that $\PP^m\circ(\mathbf{r}_0^i,\mathbf{s}_0^i)^{-1}=\eta_m$ for all $i\in\{1,\ldots,N\}$, since $\PP^{n,m}(\mathbf{r}_0^i,\mathbf{s}_0^i)^{-1}=\eta_{n,m}$ converges weakly to $\eta_m$ by assumption.
We define maps $M^{i,m},N^{i,m}:\mathbb{W}^{N}\times\mathbb{W}^{N}\to\mathbb{W}$ by
\begin{equation} \label{eq:martingaleN}
\begin{aligned}
&M_t^{i,m}=\mathbf{r}_t^i-\mathbf{r}_0^i-\int_0^t\Big(\tilde{b}(\mathbf{r}_u^i)+\frac{1}{N}\sum_{j=1}^Ng^m(\mathbf{r}_u^j)\Big)\rmd u\eqsp , && \text{ and } 
\\ & N_t^{i,m}=\mathbf{s}_t^i-\mathbf{s}_0^i-\int_0^t\Big(\hat{b}(\mathbf{s}_u^i)+\frac{1}{N}\sum_{j=1}^Nh^m(\mathbf{s}_u^j)\Big)\rmd u \eqsp.
\end{aligned}
\end{equation}
For each $n,m\in\mathbb{N}$ and $i=1,\ldots,N$, $(M_t^{i,m},\mathcal{F}_t,\PP^{n,m})$ is a martingale with respect to the filtration $\mathcal{F}_t=\sigma((\mathbf{r}_u^{j},\mathbf{s}_u^{j}):{j=1,\ldots,N,0\leq u\leq t})$. Note that the families $(\{M_t^{i,m}\}_{i=1}^N,\PP^{n,m})_{n\in\mathbb{N},t\geq 0}$ and $(\{N_t^{i,m}\}_{i=1}^N,\PP^{n,m})_{n\in\mathbb{N},t\geq 0}$ are uniformly integrable.
Since the mappings $M^{i,m}$ and $N^{i,m}$ are continuous in $\mathbb{W}$, $\PP^{n,m}\circ(\{\mathbf{r}^i,\mathbf{s}^i,M^{i,m},N^{i,m}\}_{i=1}^N)^{-1}$ converges weakly to $\PP^{m}\circ(\{\mathbf{r}^i,\mathbf{s}^i,M^{i,m},N^{i,m}\}_{i=1}^N)^{-1}$ by the continuous mapping theorem. Then applying the same argument as in \eqref{eq:martingale_property}, $(M^{m,i}_t,\mathcal{F}_t,\PP^m)$ and  $(N^{m,i}_t,\mathcal{F}_t,\PP^m)$ are continuous martingales for all $i=1,\ldots,N$ and the quadratic variation $([\{M^{i,m},N^{i,m}\}_{i=1}^N]_t)_{t\geq 0}$ exists $\PP^m$-almost surely. To complete the identification of the limit, it suffices to identify the quadratic variation.
Similar to the computations in the proof of \Cref{lem:existence_stickySDE_step1}, it holds
\begin{equation} \label{eq:quadraticvariation_N1}
\begin{aligned}
&[M^{i,m}]=4\int_0^\cdot \1_{(0,\infty)}(\mathbf{r}_u^i)\rmd u && \PP^m\text{-almost surely,}
\\  & [N^{i,m}]=4\int_0^\cdot \1_{(0,\infty)}(\mathbf{s}_u^i)\rmd u && \PP^m\text{-almost surely, and}
\\  & [M^{i,m},N^{i,m}]=4\int_0^\cdot \1_{(0,\infty)}(\mathbf{r}_u^i)\1_{(0,\infty)}(\mathbf{s}_u^i)\rmd u && \PP^m\text{-almost surely,}
\end{aligned}
\end{equation}
Further, $[M^{i,m},M^{j,m}]_t=[N^{i,m},N^{j,m}]_t=[M^{i,m},N^{j,m}]_t=0$ $\PP^{n,m}$-almost surely for $i\neq j$ and $(M_t^{i,m}M_t^{j,m},\PP^{n,m})$, $(N_t^{i,m}N_t^{j,m},\PP^{n,m})$ and $(M_t^{i,m}N_t^{j,m},\PP^{n,m})$ are martingales. For any bounded, continuous non-negative function $G:\mathbb{W}\to\mathbb{R}$, it holds
\begin{align*}
\mathbb\mathbb{E}^m[G(M^{i,m}_tM^{j,m}_t-M^{i,m}_sM^{j,m}_s)]=\lim_{n\to\infty}\mathbb\mathbb{E}^{n,m}[G(M_t^{i,m}M_t^{j,m}-M_s^{i,m}M_s^{j,m})]=0 \eqsp,
\end{align*}
respectively, $\mathbb\mathbb{E}^m[G(N^{i,m}_tN^{j,m}_t-N^{i,m}_sN^{j,m}_s)]=0$ and $\mathbb\mathbb{E}^m[G(M^{i,m}_tN^{j,m}_t-M^{i,m}_sN^{j,m}_s)]=0$.
Then 
\begin{align}\label{eq:quadraticvariation_N2}
[M^{i,m},M^{j,m}]=[N^{i,m},N^{j,m}]=[M^{i,m},N^{j,m}]=0&& \PP^m\text{-almost surely, for all } i\neq j \eqsp.
\end{align}
Then by a martingale representation theorem, cf. \cite[Chapter II, Theorem 7.1]{IkWa89},  there is a probability space $(\Omega^m, \mathcal{A}^m,P^m)$ and a Brownian motion $\{W^i\}_{i=1}^N$ and random variables $(\{r^{i,m},s^{i,m}\}_{i=1}^N)$ on this space, such that it holds $P^m\circ(\{r^{i,m},s^{i,m}\}_{i=1}^N)^{-1}=\PP^m\circ(\{\mathbf{r}^i,\mathbf{s}^i\}_{i=1}^N)^{-1}$ and such that $(\{r^{i,m},s^{i,m},W^i\}_{i=1}^N)$ is a weak solution of \eqref{eq:Nd_sticydiff_coupling2}.
\\ \textbf{Comparison of two solutions:} To show $P^m[r^{i,m}_t\leq s^{i,m}_t \text{ for all }t\geq 0 \text{ and } i=1,\ldots,N]=1$  it suffices to note that $P^{n,m}[r_t^{i,n,m}\leq s_t^{i,n,m} \text{ for all } t\geq 0 \text{ and }i=1,\ldots,N]=1$, which holds by \Cref{lemma:modification_watanabe_meanfield}, carries over to the limit by the Portmanteau theorem, since we have weak convergence of $\PP^{n,m}\circ(\{\mathbf{r}^i,\mathbf{s}^i\}_{i=1}^N)^{-1}$ to $\PP^m\circ(\{\mathbf{r}^i,\mathbf{s}^i\}_{i=1}^N)^{-1}$.
\end{proof}

In the next step we show that the distribution of the solution of \eqref{eq:Nd_sticydiff_coupling2} converges as $m\to\infty$.
 Consider a probability space $(\Omega^m,\mathcal{A}^m,P^m)$ for each $m\in\mathbb{N}$ and random variables $\{r^{i,m}\}_{i=1}^N,\{s^{i,m}\}_{i=1}^N:\Omega^m\to\mathbb{W}^N$ such that $(\{r^{i,m}_t,s_t^{i,m}\}_{i=1}^N)_{t\geq 0}$ is a solution to \eqref{eq:Nd_sticydiff_coupling2}. Denote by $\PP^m=P^m\circ(\{r^{i,m},s^{i,m}\}_{i=1}^N)^{-1}$ the law on $\mathbb{W}^N\times\mathbb{W}^N$.

\begin{lemma} \label{lem:existence_NstickySDE_step2}
Assume that \Cref{H1b} and \Cref{H1g} is satisfied for $(\tilde{b},g)$ and $(\hat{b},h)$. 
Let $\eta\in\Gamma(\mu,\nu)$ where the probability measures $\mu$ and $\nu$ on $\mathbb{R}_+$ satisfy \Cref{H2}.
Further, let $(g^m)_{m\in\mathbb{N}}$, $(h^m)_{m\in\mathbb{N}}$, $(\mu_{m})_{m\in\mathbb{N}}$, $(\nu_{m})_{m\in\mathbb{N}}$ and $(\eta_{m})_{m\in\mathbb{N}}$ be such that \Cref{H5} and  \Cref{H3} hold. 
Then there exists a random variable $(\{r^i,s^i\}_{i=1}^N)$ defined on some probability space $(\Omega,\mathcal{A},P)$ with values in $\mathbb{W}^N\times \mathbb{W}^N$ such that $( \{r_t^{i},s_t^{i}\}_{i=1}^N)$ is a weak solution of \eqref{eq:N_onedimSDE_coupling}. Moreover, the laws $P^{m}\circ( \{r^{i,m},s^{i,m}\}_{i=1}^N)^{-1}$ converge weakly to $P\circ(\{r^i,s^i\}_{i=1}^N)^{-1}$. If in addition, 
\begin{align*}
&\tilde{b}(r)\leq \hat{b}(r), \quad g(r)\leq h(r), \quad \text{and} \quad g^m(r)\leq h^m(r)&& \text{ for any } r \in\mathbb{R}_+ \text{, and }
\\ &P^m[r_0^{i,m}\leq s_0^{i,m} \text{ for all } t\geq 0 \text{ and } i\in\{1,\ldots,N\}]=1 
&& \text{ for any } m\in\mathbb{N},
\end{align*} 
then $P[r^i_t\leq s^i_t \text{ for all } t\geq0 \text{ and } i\in\{1,\ldots,N\}]=1$. 
\end{lemma}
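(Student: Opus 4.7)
The plan is to mirror the three-part structure of \Cref{lem:existence_stickySDE_step2} (tightness, characterization of the limit, comparison), replacing the scalar processes by their $N$-particle vector analogues. Since the drift coefficients of \eqref{eq:Nd_sticydiff_coupling2} involve only the empirical averages $N^{-1}\sum_j g^m(\mathbf{r}_u^j)$ and $N^{-1}\sum_j h^m(\mathbf{s}_u^j)$, which are bounded by $\Gamma = \max(\|g\|_\infty, \|h\|_\infty)$ uniformly in $m$ and $N$, the moment bounds carry over from the nonlinear case componentwise. Concretely, I would first apply It\^{o}'s formula to $|r_t^{i,m}|^p$ with $p>2$ from \Cref{H2}, use the linear growth bound \eqref{eq:nonlinear_lineargrowthcondition} together with \Cref{H3}, and obtain by Gr\"onwall an analog of \eqref{eq:pthmomentbound2}, namely $\sup_{t\in[0,T]}\mathbb{E}[|r_t^{i,m}|^p]<\infty$ uniformly in $m$ for every fixed $T>0$ and $i\in\{1,\dots,N\}$. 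The same Burkholder--Davis--Gundy argument as in \Cref{lem:existence_NstickySDE_step1} then gives $\mathbb{E}[|(\{r_{t_2}^{i,m},s_{t_2}^{i,m}\}_{i=1}^N)-(\{r_{t_1}^{i,m},s_{t_1}^{i,m}\}_{i=1}^N)|^p]\leq C|t_2-t_1|^{p/2}$, and Kolmogorov's continuity criterion combined with the usual diagonalisation and Kolmogorov extension yields tightness of $(\PP^m)_{m\in\nset}$ on $\mathbb{W}^N\times\mathbb{W}^N$ and convergence along a subsequence to some law $\PP$.

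For the characterization of the limit I would use the Skorokhod representation theorem to realise the sequence $(\{r^{i,m},s^{i,m}\}_{i=1}^N)$ and its limit $(\{r^{i},s^{i}\}_{i=1}^N)$ on a common probability space $(\Omega,\mathcal{A},P)$, with almost sure convergence. The key passage to the limit for the empirical drift uses the monotone convergence argument from \Cref{lem:existence_stickySDE_step2}: by \Cref{H5}, for any non-decreasing sequence $u_m \to u$ we have $g^m(u_m)\to g(u)$ and $h^m(u_m)\to h(u)$, so $P_t^{m}(g^m) = \mathbb{E}[g^m(r_t^{i,m})] \to P_t(g)$ and Lebesgue's dominated convergence gives
\begin{equation*}
\lim_{m\to\infty}\int_0^t\Big(\tilde{b}(r_u^{i,m}) + \frac{1}{N}\sum_{j=1}^N g^m(r_u^{j,m})\Big)\rmd u = \int_0^t\Big(\tilde{b}(r_u^{i}) + \frac{1}{N}\sum_{j=1}^N g(r_u^{j})\Big)\rmd u
\end{equation*}
almost surely for every $t\geq 0$, and analogously for $(s^{i,m})$. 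Defining $M_t^{i,m}, N_t^{i,m}$ as in \eqref{eq:martingaleN} with $g^m,h^m$ and $M_t^i, N_t^i$ with $g,h$, the families $(\{M_t^{i,m},N_t^{i,m}\}_{i=1}^N, \PP^m)_{m\in\nset,t\geq 0}$ are uniformly integrable thanks to the $p$-th moment bound, so the martingale property passes to the limit exactly as in \eqref{eq:martingale_property}, yielding that $(M_t^i,\mathcal{F}_t,\PP)$ and $(N_t^i,\mathcal{F}_t,\PP)$ are continuous martingales.

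The quadratic variation identification \eqref{eq:quadraticvariation_N1}--\eqref{eq:quadraticvariation_N2} is then carried over verbatim from the proof of \Cref{lem:existence_NstickySDE_step1}, via the submartingale/supermartingale argument combined with the It\^{o}--Tanaka formula to exclude contributions from the local time at $0$. The martingale representation theorem \cite[Chapter II, Theorem 7.1]{IkWa89} finally produces $N$ independent one-dimensional Brownian motions $(W^i)_{i=1}^N$, possibly after enlarging the probability space, such that $(\{r^i,s^i,W^i\}_{i=1}^N)$ is a weak solution of \eqref{eq:N_onedimSDE_coupling}. Since the limit is characterised uniquely, convergence holds along the whole sequence, not merely the extracted subsequence. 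The comparison statement $P[r_t^i\leq s_t^i\text{ for all } t\geq 0,\ i\in\{1,\dots,N\}]=1$ then follows from the corresponding statement at level $m$ (obtained from \Cref{lem:existence_NstickySDE_step1}) by the Portmanteau theorem applied to the closed set $\{\omega\in\mathbb{W}^N\times\mathbb{W}^N: \mathbf{r}_t^i(\omega)\leq \mathbf{s}_t^i(\omega)\text{ for all } t\geq 0,\ i\}$.

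The main technical obstacle I expect is the passage to the limit in the nonlinear drift term, because $g$ and $h$ are only assumed left-continuous (not continuous), so weak convergence of $\PP^m\circ r^{i,m}_t{}^{-1}$ is insufficient on its own; this is precisely why one needs both the monotone approximation $g^m\uparrow g$ from \Cref{H5} and the almost sure (Skorokhod) coupling. All other steps are largely bookkeeping, since the empirical mean structure $N^{-1}\sum_j g^m(\cdot)$ preserves the boundedness and monotonicity properties required for these arguments, and independence of the driving Brownian motions across components is inherited from the orthogonality of the cross quadratic variations in \eqref{eq:quadraticvariation_N2}.
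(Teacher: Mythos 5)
Your proposal follows the paper's proof essentially step for step: Kolmogorov continuity criterion plus the uniform $p$-th moment bound from \Cref{H3} give tightness; Skorokhod representation plus the monotone approximation in \Cref{H5} handle the drift passage \eqref{eq:limit_drift_N}; uniform integrability of the $M^{i,m},N^{i,m}$ gives the martingale property; the submartingale/supermartingale argument plus It\=o--Tanaka gives \eqref{eq:quadraticvariation_N1}--\eqref{eq:quadraticvariation_N2}; martingale representation produces the Brownian motions; and Portmanteau yields the comparison. No meaningful deviation from the paper's argument.
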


\begin{proof}
The proof is structured as the proof of \Cref{lem:existence_NstickySDE_step1}. 
Tightness of the sequence of probability measures $(\PP^{m})_{m\in\mathbb{N}}$ on $(\mathbb{W}^N\times\mathbb{W}^N,\mathcal{B}(\mathbb{W}^N)\otimes\mathcal{B}(\mathbb{W}^N))$ holds adapting the steps of the proof of \Cref{lem:existence_NstickySDE_step1} to \eqref{eq:Nd_sticydiff_coupling2}. Note that \eqref{eq:pthmomentbound_Npart} and \eqref{eq:pthmoment_difference_N} hold analogously for $(\{r_t^{i,m},s_t^{i,m}\}_{i=1}^N)$ by \Cref{H1b}, \Cref{H5} and \Cref{H3}. Hence by Kolmogorov's continuity criterion, cf. \cite[Corollary 14.9]{Ka02},
we can deduce that there exists a probability measure $\PP$ on $(\mathbb{W}^N\times\mathbb{W}^N,\mathcal{B}(\mathbb{W}^N)\otimes\mathcal{B}(\mathbb{W}^N))$ such that there is a subsequence $(m_k)_{k\in\mathbb{N}}$ along which $\PP^{m_k}$ converge towards $\PP$.

To characterize the limit, we first note that by Skorokhod representation theorem, cf. \cite[Chapter 1, Theorem 6.7]{Bi99}, without loss of generality we can assume that $(\{r^{i,m},s^{i,m}\}_{i=1}^N)$ are defined on a common probability space $(\Omega,\mathcal{A},P)$ with expectation $E$ and converge almost surely to $(\{r^i,s^i\}_{i=1}^N)$ with distribution $\PP$. Then, by \Cref{H5} and Lebesgue convergence theorem it holds almost surely for all $t\geq 0$,
\begin{align} \label{eq:limit_drift_N}
\lim_{m\to\infty}\int_0^t \tilde{b}(r_t^{i,m})+\frac{1}{N}\sum_{j=1}^N g^m(r_u^{j,m})\rmd u=\int_0^t \tilde{b}(r_t^i)+\frac{1}{N}\sum_{j=1}^N g^m(r_u^j)\rmd u \eqsp.
\end{align}
Consider the mappings ${M}^{i,m},{N}^{i,m}:\mathbb{W}^N\times\mathbb{W}^N\times\mathcal{P}(\mathbb{W}^N\times\mathbb{W}^N)\to \mathbb{W}$ defined by \eqref{eq:martingaleN}
Then for all $m\in\mathbb{N}$ and $i=1,\ldots,N$, $({M}_t^{i,m},\mathcal{F}_t,\PP^m)$ and $({N}_t^{i,m},\mathcal{F}_t,\PP^m)$ are martingales with respect to the canonical filtration $\mathcal{F}_t=\sigma((\{\mathbf{r}_u^{i},\mathbf{s}_u^{i}\}_{i=1}^N)_{0\leq u\leq t})$. Further  the family $(\{M_t^{i,m}\}_{i=1}^N,\PP^m)_{m\in\mathbb{N},t\geq 0}$ and $(\{N_t^{i,m}\}_{i=1}^N,\PP^m)_{m\in\mathbb{N},t\geq 0}$ are uniformly integrable. In the same line as weak convergence is shown in the proof of \Cref{lem:existence_stickySDE_step1} and by \eqref{eq:limit_drift_N}, $\PP^m\circ(\{\mathbf{r}^i,\mathbf{s}^i,M^{i,m},N^{i,m}\}_{i=1}^N)^{-1}$ converges weakly to $\PP\circ(\{\mathbf{r}^i,\mathbf{s}^i,M^{i},N^{i}\}_{i=1}^N)^{-1}$ where
\begin{align*}
&M_t^{i}=\mathbf{r}_t^i-\mathbf{r}_0^i-\int_0^t\Big(\tilde{b}(\mathbf{r}_u^i)+\frac{1}{N}\sum_{j=1}^Ng(\mathbf{r}_u^j)\Big)\rmd u \eqsp , && \text{ and } 
\\ & N_t^{i}=\mathbf{s}_t^i-\mathbf{s}_0^i-\int_0^t\Big(\hat{b}(\mathbf{s}_u^i)+\frac{1}{N}\sum_{j=1}^Nh(\mathbf{s}_u^j)\Big)\rmd u\eqsp.
\end{align*}
Then $(\{{M}_t^i\}_{i=1}^N,\mathcal{F}_t,\PP)$ and $(\{{N}_t^i\}_{i=1}^N,\mathcal{F}_t,\PP)$ are continuous martingales using the same argument as in \eqref{eq:martingale_property}. Further, the quadratic variation $([\{{M}_t^i,{N}_t^i\}_{i=1}^N]_t)_{t\geq 0}$ exists $\PP$-almost surely and is given by \eqref{eq:quadraticvariation_N1} and \eqref{eq:quadraticvariation_N2} $\PP$-almost surely, which holds following the computations in the proof of \Cref{lem:existence_stickySDE_step1} and \Cref{lem:existence_NstickySDE_step1}. As in \Cref{lem:existence_NstickySDE_step1}, we conclude by a martingale representation theorem that there are a probability space $(\Omega,\mathcal{A},P)$ and a Brownian motion $\{W^i\}_{i=1}^N$ and random variables $(\{r^i\}_{i=1}^N,\{s^i\}_{i=1}^N)$ on this space such that $P\circ(\{r^i,s^i\}_{i=1}^N)^{-1}=\PP\circ(\{\mathbf{r}^i,\mathbf{s}^i\}_{i=1}^N)^{-1}$ and such that $(\{r^i,s^i,W^i\}_{i=1}^N)$ is a weak solution of \eqref{eq:two_onedim_stickydiff}.

By the Portmanteau theorem the monotonicity carries over to the limit, since $\PP^m\circ(\{\mathbf{r}^i,\mathbf{s}^i\}_{i=1}^N)^{-1}$ converges weakly to $\PP\circ(\{\mathbf{r}^i,\mathbf{s}^i\}_{i=1}^N)^{-1}$.
\end{proof}

\begin{proof}[Proof of \Cref{thm:existence_comparison_Nparticles}]
The proof is a direct consequence of  \Cref{lem:existence_NstickySDE_step1} and \Cref{lem:existence_NstickySDE_step2}. 
\end{proof}

\appendix
\section{Appendix} \label{appendix}
\subsection{Kuramoto model} \label{appendix1}

Lower bounds on the contraction rate can also be shown for nonlinear SDEs on the one-dimensional torus using the same approach. Here, we consider the Kuramoto model given by
\begin{align} \label{eq:SDE_kuramoto}
\rmd X_t=-k\parentheseDeux{\int_{\torus} \sin(X_t-x)\rmd\mu_t(x)}\rmd t+\rmd B_t
\end{align}
on the torus $\mathbb{T} = \mathbb{R}/(2\pi\mathbb{Z})$. 

\begin{theorem} \label{thm:kuramoto}

Let $\mu_t$ and $\nu_t$ be laws of $X_t$ and $Y_t$ where $(X_s)_{s\geq 0}$ and $(Y_s)_{s\geq 0}$ are two solutions of \eqref{eq:SDE_kuramoto} with initial distributions $\mu_0$ and $\nu_0$ on $(\torus,\mathcal{B}(\torus))$, respectively.
If 
\begin{align} \label{eq:k_kuramoto}
4k\int_0^\pi\exp(2k-2k\cos(r/2))\rmd r\leq 1
\end{align} 
holds, then for all $t\geq 0$,
\begin{align*}
\mathcal{W}_{\tilde{f}}(\mu_t, \nu_t)\leq \rme^{-c_\mathbb{T} t} \mathcal{W}_{\tilde{f}}(\mu_0,\nu_0) \qquad \text{and} \qquad \mathcal{W}_{1}(\mu_t, \nu_t)\leq 2\exp(2k) \rme^{-c_\mathbb{T} t} \mathcal{W}_{1}(\mu_0,\nu_0) \eqsp,
\end{align*}
where
\begin{align}  \label{eq:c_T}
    c_\mathbb{T}=1/(2\int_0^{\pi} \int_0^r \exp[2k(\cos(r/2)-\cos(s/2))]\rmd s\rmd r)
\end{align}
and $\tilde{f}$ is a concave, increasing function given in \eqref{eq:f_Kuramoto}.
\end{theorem}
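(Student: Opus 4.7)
The proof will adapt the sticky coupling argument of \Cref{thm:nonlinearSDE} to the torus setting; the plan is in four steps.

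First, I would construct a family of Markovian couplings $(X_t^\delta, Y_t^\delta)_{t\ge 0}$ of two solutions of \eqref{eq:SDE_kuramoto}, parametrized by $\delta>0$, working on the universal cover $\mathbb{R}$ of $\torus$. The construction mirrors \eqref{eq:coupling_non_linearSDE_approx}: it interpolates between a reflection coupling (when the lifted separation exceeds $\delta$) and a synchronous coupling (when it vanishes), with Lipschitz interpolating functions $\mathrm{rc}^\delta,\mathrm{sc}^\delta$ as in \eqref{eq:condition_rc_sc}. Passing to the limit $\delta\to 0$ along a subsequence, as in \Cref{theo:1}, yields a coupling $(X_t,Y_t)_{t\ge 0}$ whose marginals are the two Kuramoto solutions.

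Second, I would derive a dominating one–dimensional sticky nonlinear SDE on $[0,\pi]$ for the geodesic distance $r_t=d_\torus(X_t,Y_t)$. The key tool is the identity
\[
\sin(x-z)-\sin(y-z)=2\cos\!\big((x+y)/2-z\big)\sin\!\big((x-y)/2\big),
\]
which lets me split the drift difference into a geometric piece and a piece that depends on $\mu_t-\nu_t$. Bounding $|\cos|\le 1$ on the first and using the total–variation estimate $\|\mu_t-\nu_t\|_{\mathrm{TV}}\le\PP(r_t>0)$ on the second, the It\=o-Tanaka calculus of \Cref{lemma:SDEr_t^delta} applied on the cover shows that $r_t\le \rho_t$ almost surely, where $(\rho_t)$ is a weak solution of
\[
\rmd\rho_t=\big[\tilde b(\rho_t)+2k\PP(\rho_t>0)\big]\rmd t+2\mathbb{1}_{(0,\pi)}(\rho_t)\rmd\tilde W_t-\rmd L_t^{\pi},
\]
with $\tilde b(r)=2k\sin(r/2)$ and a reflecting local time $L_t^\pi$ at the antipodal point. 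Existence of this dominating process and pathwise comparison are obtained by adapting \Cref{thm:existence_comparison}, with a reflection of the drift across $\pi$ playing the role of the trivial extension to $(-\infty,0)$ used in the Euclidean case.

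Third, I would build the concave, increasing function $\tilde f:[0,\pi]\to\mathbb{R}_+$ along the blueprint of \eqref{eq:definition_f}, using the potential
\[
\varphi(r)=\exp\!\big(2k\cos(r/2)-2k\big),\qquad \Phi(r)=\int_0^r \varphi(s)\,\rmd s.
\]
Here $\varphi$ decreases from $1$ at $0$ to $e^{-2k}$ at $\pi$, and the resulting kernel $\exp(2k\cos(R/2)-2k\cos(\sigma/2))$ in the iterated integral is exactly what appears in \eqref{eq:c_T}. Condition \eqref{eq:k_kuramoto} is the torus analog of \Cref{ass:gamma_bound} and, by the same computation that produced \eqref{eq:condition_f_1}-\eqref{eq:condition_f_2}, forces $\tilde f$ to satisfy $2\tilde f''(0)=-2k$ together with $2\tilde f''(r)\le -\tilde f'(r)\tilde b(r)-2k-c_\torus \tilde f(r)$ for $r\in(0,\pi)$ off the breakpoint. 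The two–sided bound $\varphi(\pi) r/2\le\tilde f(r)\le r$ produces the constant $2\varphi(\pi)^{-1}=2\exp(2k)$ appearing in the $\mathcal{W}_1$ estimate.

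Finally, I would apply It\=o-Tanaka to $\tilde f(\rho_t)$, take expectation, and use the two inequalities above to obtain $\tfrac{\rmd}{\rmd t}\mathbb{E}[\tilde f(\rho_t)]\le -c_\torus\mathbb{E}[\tilde f(\rho_t)]$; the local time at $\pi$ contributes $-\tilde f'(\pi)\,\rmd\mathbb{E}[L_t^\pi]\le 0$ and can be dropped. Gronwall, the comparison $r_t\le\rho_t$, and optimization over initial couplings give both Wasserstein contractions. The main obstacle is that, unlike \Cref{thm:nonlinearSDE}, the Kuramoto interaction $-k\sin$ has no global restoring term: the dominating drift $\tilde b(r)=2k\sin(r/2)$ is non-negative on $(0,\pi)$ rather than eventually negative, so contraction cannot come from the drift alone and must instead be extracted from the combination of the sticky behavior at $0$, the reflecting barrier at $\pi$, and the smallness condition \eqref{eq:k_kuramoto}. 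This requires particular care in verifying the differential inequality for $\tilde f''$ at the breakpoint $\tilde R_1\le\pi$ and in controlling the local-time term at the antipode.
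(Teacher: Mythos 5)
Your proposal follows the same sticky-coupling strategy that the paper uses in Appendix~A.1, and the key ingredients (the coupling family $(X_t^\delta,Y_t^\delta)$, the dominating sticky SDE on $[0,\pi]$ with reflection at~$\pi$, the function $\tilde f$ built from $\varphi(r)=\exp(2k\cos(r/2)-2k)$, and the It\=o--Tanaka/Gronwall finish) all match. Your observation that the drift $\tilde b(r)=2k\sin(r/2)$ is non-negative on $(0,\pi)$ and hence cannot be the source of contraction is exactly the right point.

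One clarification is worth making. You anticipate needing to ``verify the differential inequality for $\tilde f''$ at the breakpoint $\tilde R_1\le\pi$'', importing the $\tilde R_1$ truncation from the Euclidean construction~\eqref{eq:definition_f}. On the torus the paper's $\tilde f$ has no such breakpoint: since the state space $[0,\pi]$ is already bounded, the function $\tilde g$ in~\eqref{eq:f_Kuramoto} decreases on the entire interval, and condition~\eqref{eq:k_kuramoto} is used only to guarantee $\tilde g(\pi)\ge 1/2$. In particular there is no $\tilde R_1\le\pi$ where the drift becomes sufficiently negative (it never does), and looking for one is a dead end; the role of the radius-$\tilde R_1$ cutoff in the unbounded case is played here simply by the reflecting barrier at $\pi$. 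So that part of your plan is actually simpler than you anticipate. A second, smaller point: the $\PP(r_t>0)$ drift in~\eqref{eq:Kuramoto_sticky_sde} arises in the paper as the $\epsilon\to0$ limit of the term $\int\mathrm{rc}^{\epsilon}(u)\,\rmd P_t^{\delta,\epsilon}(u)$ rather than via a total-variation estimate on $\mu_t-\nu_t$; the latter is true but is not the mechanism used in the comparison argument, which needs a bound on the \emph{dominating} process's drift from above, not a TV bound on the marginals.
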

In \cite[Appendix A]{De20}, a contraction result is stated for a general drift using a similar approach.

We prove \Cref{thm:kuramoto} via a sticky coupling approach. In the same line as in \Cref{sec:main_contraction_results} the coupling  $(X_t, Y_t)_{t\ge 0}$ is defined as 
the weak limit of Markovian couplings $\{(X_t^\delta,Y_t^\delta)_{t\geq 0}: \delta>0\}$ on $\mathbb{T}\times \mathbb{T}=\mathbb{R}/(2\pi\mathbb{Z})\times\mathbb{R}/(2\pi\mathbb{Z})$ given by
\begin{equation}\label{eq:coupling_kuramoto}
\begin{aligned} 
&\rmd X_t^\delta=-k\parentheseDeux{\int_\mathbb{T}\sin(X_t^\delta  -x)\rmd \mu_t^\delta(x)}\rmd t+\mathrm{rc}^\delta(\bar{r}_t^\delta)\rmd B_t^1+\mathrm{sc}^\delta(\bar{r}_t^\delta)\rmd B_t^2
\\ &\rmd Y_t^\delta=-k\parentheseDeux{\int_\mathbb{T}\sin(Y_t^\delta  -x)\rmd \nu_t^\delta(x)}\rmd t-\mathrm{rc}^\delta(\bar{r}_t^\delta)\rmd B_t^1+\mathrm{sc}^\delta(\bar{r}_t^\delta)\rmd B_t^2\eqsp,
\end{aligned}
\end{equation}
where $\bar{r}_t^\delta=d_\mathbb{T}(X_t^\delta,Y_t^\delta)$ with $d_\mathbb{T}(\cdot,\cdot)$ defined by 
\begin{align} \label{eq:distance_torus}
d_{\mathbb{T}}(x,y)=\begin{cases}(|x-y| \ \mathrm{mod} \ 2\pi) & \text{if } (|x-y| \ \mathrm{mod} \ 2\pi)\leq \pi\eqsp, \\ (2\pi-|x-y| \ \mathrm{mod}\ 2\pi) & \text{otherwise}\eqsp.  \end{cases}
\end{align}
The functions $\mathrm{rc}^\delta,\mathrm{sc}^\delta$ are given by \eqref{eq:condition_rc_sc} and 
satisfy that there exists $\epsilon_0>0$ such that $\mathrm{rc}^\delta(r)\geq r/2$ for any $0\leq r\leq \delta\leq \epsilon_0$.

\begin{theorem}
  \label{theo:coupling_kuramoto}
  Assume \eqref{eq:k_kuramoto}. Let $\mu_0$ and $\nu_0$ be probability measures on $(\mathbb{T},\mathcal{B}(\mathbb{T}))$ having finite forth moment.
  Then, $(X_t,Y_t)_{t \ge 0} $ is a subsequential limit in distribution as $\delta \to 0$ of $\{(X_t^{\delta},Y_t^{\delta})_{t \geq 0} \, : \, \delta >0\}$, where $(X_t)_{t\ge 0}$ and $(Y_t)_{t\ge 0}$ are solutions of \eqref{eq:SDE_kuramoto} with initial distributions ${\mu}_0$ and ${\nu}_0$, respectively. 
  Further, there exists a process $(r_t)_{t \geq 0}$ satisfying for any $t\geq 0$,  $  d_{\mathbb{T}}(X_t , Y_t) \leq r_t$ almost surely, and which is a weak solution of
  \begin{equation}
    \label{eq:Kuramoto_sticky_sde}
\rmd r_t=(2k\sin(r_t/2)+2k\PP(r_t))\rmd t+2 \1_{(0,\pi]}(r_t) \rmd W_t-2\rmd \ell_t^\pi \eqsp,
\end{equation}
where $(W_t)_{t \geq 0}$ is a one-dimensional Brownian motion on $\mathbb{T}$ and $\ell^\pi$ is the local time at $\pi$. 
\end{theorem}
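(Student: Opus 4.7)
The plan is to adapt the proof of \Cref{theo:1} to the torus geometry, with the additional feature that the circular distance $d_\mathbb{T}$ is non-smooth both at $0$ (sticky behaviour) and at the cut locus $\pi$ (reflecting behaviour), which forces a local time term $\ell^\pi$ in the dominating equation. Existence and uniqueness of the Markovian coupling $(X_t^\delta,Y_t^\delta)_{t\geq 0}$ solving \eqref{eq:coupling_kuramoto} follow from \cite[Theorem 2.2]{Me96}, since the coefficients are bounded and Lipschitz, and by L\'evy's characterization each marginal process is a solution of \eqref{eq:SDE_kuramoto} with the prescribed initial distribution, driven by the one-dimensional Brownian motion $W_t^\delta=\int_0^t \bar e_s^\delta\, \rmd B_s^1$.

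The first analytic step is to derive a stochastic differential inequality for $\bar r_t^\delta=d_\mathbb{T}(X_t^\delta,Y_t^\delta)$ by applying the It\=o--Tanaka formula to a smooth approximation of $d_\mathbb{T}$ on $\mathbb{T}\times\mathbb{T}$ and passing to the limit as in \Cref{lemma:SDEr_t^delta}. The singularity on the diagonal is dealt with by the square-root approximation $S_\varepsilon$ used in the Euclidean case; the singularity on the antipodal set contributes a reflecting local time $-2\,\rmd\ell_t^{\pi,\delta}$. The trigonometric identity $\sin\alpha-\sin\beta=2\cos((\alpha+\beta)/2)\sin((\alpha-\beta)/2)$ yields
\begin{equation*}
-k\bigl\langle \bar e_t^\delta,\sin(X_t^\delta-x)-\sin(Y_t^\delta-x)\bigr\rangle \leq 2k\sin(\bar r_t^\delta/2),\qquad x\in\mathbb{T},
\end{equation*}
while for the discrepancy between $\mu_t^\delta$ and $\nu_t^\delta$, using that $(X_t^\delta,Y_t^\delta)$ is itself a coupling of its marginals together with $|\sin((X-Y)/2)|\leq 1$,
\begin{equation*}
\Bigl|\int_\mathbb{T}\sin(Y_t^\delta-y)\nu_t^\delta(\rmd y)-\int_\mathbb{T}\sin(Y_t^\delta-x)\mu_t^\delta(\rmd x)\Bigr|\leq 2\,\PP(X_t^\delta\neq Y_t^\delta).
\end{equation*}
Combining these bounds with the interpolation estimate \eqref{eq:condition_rc_sc2} adapted to the torus gives, for $\epsilon<\epsilon_0$,
\begin{equation*}
\rmd\bar r_t^\delta\leq \Bigl(2k\sin(\bar r_t^\delta/2)+2k\int_\mathbb{T}\int_\mathbb{T}\mathrm{rc}^\epsilon(d_\mathbb{T}(x,y))\,\mu_t^\delta(\rmd x)\nu_t^\delta(\rmd y)\Bigr)\rmd t+2\,\mathrm{rc}^\delta(\bar r_t^\delta)\,\rmd W_t^\delta-2\,\rmd\ell_t^{\pi,\delta}.
\end{equation*}

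Next I introduce the dominating one-dimensional process $(r_t^{\delta,\epsilon})_{t\geq 0}$ on $[0,\pi]$, with the same initial condition and the same Brownian motion $W^\delta$, reflection at $\pi$, and drift and diffusion equal to the right-hand side of the bound above; well-posedness is immediate from the Lipschitz coefficients, and a straightforward adaptation of \Cref{lemma:modification_watanabe_nonlinear} yields the pathwise comparison $\bar r_t^\delta\leq r_t^{\delta,\epsilon}$ a.s.\ for all $t\geq 0$. Tightness of the joint laws of $U_t^{\delta,\epsilon}=(X_t^\delta,Y_t^\delta,r_t^{\delta,\epsilon})$ is easier than in the Euclidean case because the state space $\mathbb{T}\times\mathbb{T}\times[0,\pi]$ is compact; Kolmogorov's continuity criterion as in \Cref{lem:existence_stickySDE_step1} followed by a diagonal extraction as $\delta\to 0$ and then $\epsilon\to 0$ produces a subsequential weak limit $(X_t,Y_t,r_t)_{t\geq 0}$. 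Weak uniqueness of \eqref{eq:SDE_kuramoto} forces the first two marginals to have law $\mu_0$ and $\nu_0$, while the comparison inequality passes to the limit by the Portmanteau theorem, giving $d_\mathbb{T}(X_t,Y_t)\leq r_t$ a.s.

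To identify the limiting SDE for $r_t$, I follow the martingale characterization of \Cref{lem:existence_stickySDE_step2}: the drift converges to $2k\sin(r_t/2)+2k\PP(r_t>0)$ because $\mathrm{rc}^\epsilon\uparrow\1_{(0,\infty)}$ and by monotone convergence, and the quadratic variation is identified as $4\int_0^\cdot\1_{(0,\pi]}(r_u)\rmd u$ by the same super-/submartingale sandwich as in the Euclidean proof. An application of the martingale representation theorem on an enlarged probability space then provides a Brownian motion $W$ such that $(r_t,W_t)$ solves \eqref{eq:Kuramoto_sticky_sde}. The main technical obstacle is the careful treatment of the reflecting local time at $\pi$: one has to show that the prelimit local times $\ell^{\pi,\delta}$ converge weakly along the chosen subsequence to a continuous non-decreasing process supported on $\{r=\pi\}$ that together with $r$ solves the Skorokhod reflection problem, which I expect to handle through stability estimates for reflected Skorokhod problems with Lipschitz coefficients and by verifying the defining properties in the limit via the Portmanteau theorem on suitable closed sets.
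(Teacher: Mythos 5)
Your proposal follows the same route as the paper: the Markovian mixture coupling, the Meyer--Tanaka computation for the circular distance, the trigonometric identity for the $\kappa$-type drift bound, the $\mathrm{rc}^\epsilon$ smoothing of the mass-at-zero term, a dominating sticky SDE on $[0,\pi]$ with reflection at $\pi$, the Ikeda--Watanabe-type comparison, tightness plus a diagonal extraction in $\delta$ and then $\epsilon$, transfer of the pathwise ordering by Portmanteau, and the martingale/quadratic-variation characterization of the limit. Two small remarks. First, your intermediate inequality $\bigl|\int\sin(Y_t^\delta-y)\,\nu_t^\delta(\rmd y)-\int\sin(Y_t^\delta-x)\,\mu_t^\delta(\rmd x)\bigr|\le 2\,\PP(X_t^\delta\neq Y_t^\delta)$ is obtained by evaluating against the \emph{joint} law of $(X_t^\delta,Y_t^\delta)$, whereas the display you write immediately after (and the dominating process's nonlinear drift) evaluates $\mathrm{rc}^\epsilon(d_\mathbb{T}(x,y))$ against the \emph{product} measure $\mu_t^\delta\otimes\nu_t^\delta$, which is what the paper's Lemma~\ref{lemma:SDEr_t^delta} actually produces (one writes $\int\sin(X-x)\rmd\mu-\int\sin(Y-y)\rmd\nu=\int\int[\sin(X-x)-\sin(Y-y)]\rmd\mu\,\rmd\nu$ and then splits). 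The two quantities agree only in the $\epsilon\to 0$ limit; as an intermediate step you should keep the product-measure form so that the comparison with the nonlinear dominating process $r_t^{\delta,\epsilon}$ (whose drift depends on its \emph{own} marginal law $P_t^{\delta,\epsilon}$) is argued as in Lemmas~\ref{lem:comparision_r_t^delta} and~\ref{lemma:modification_watanabe_nonlinear}. Second, your explicit attention to the convergence of the reflecting local times $\ell^{\pi,\delta}$ is welcome; the paper treats this implicitly by saying the limit identification proceeds ``similarly'' to Lemmas~\ref{lem:existence_stickySDE_step1}--\ref{lem:existence_stickySDE_step2}, and one does indeed need to check that the limiting increasing process is carried on $\{r=\pi\}$, which follows from the same sub/supermartingale sandwich that identifies the quadratic variation and pins down $[\mathbf{r}]$. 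These are presentational points, not gaps; the underlying argument is the paper's.
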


\begin{proof}
The proof works analogously to the proof of \Cref{theo:1}  stated in \Cref{sec:proof_thm_nonlinear}. 
It holds similarly to \Cref{lemma:SDEr_t^delta} by Meyer-Tanaka's formula, cf. \cite[Chapter 6, Theorem 1.1]{ReYo99}, and using \eqref{eq:distance_torus},
\begin{align*}
\bar{r}_t^\delta-\bar{r}_0^\delta&=\int_0^t \mathrm{sgn}(X_t^\delta-Y_t^\delta)(-k)e_t\parentheseDeux{\int_\mathbb{T}\sin(X_t^\delta  -x)\rmd \mu_t(x)-\int_\mathbb{T}\sin(Y_t^\delta  -x)\rmd \nu_t(x)}\rmd t
\\ & +\int_0^t \mathrm{sgn}(X_t^\delta-Y_t^\delta)2\mathrm{rc}^\delta(\bar{r}_t^\delta)e_t\rmd B_t^1+\int_\mathbb{R}2\mathrm{rc}^\delta(\bar{r}_t^\delta)^2 \ell_t^a(\delta_0-\delta_\pi)(\rmd a) \eqsp,
\end{align*}
where $\mathrm{sgn}(x)=\1_{(0,\pi]}(x)-\1_{(\pi,2\pi]}(x)$, $(\ell_t^a)_{t\geq 0}$ is the local time at $a$ associated with $(X_t^\delta-Y_t^\delta)_{t\geq 0}$ and  $e_t=(X_t^\delta-Y_t^\delta)/d_{\mathbb{T}}(X_t^\delta,Y_t^\delta)$ for $\bar{r}_t^\delta\neq 0$. For $\bar{r}_t^\delta=0$, $e_t$ is some arbitrary unit vector. 
For any $a$ the support of $\ell_t^a$ as a function of $t$ is a subset of the set of $t$ such that $r_t=a$ \cite[Theorem 19.1]{Ka02}, hence $\1_{(0,\pi]}(r_t)\ell_t^0=0$ almost surely and 
so the term involving the local time reduces to $-2\ell_t^\pi$. 
Further, we note that $W_t=\int_0^t \mathrm{sgn}(X_t^\delta-Y_t^\delta)e_t\rmd B_t^1$ is a Brownian motion. 
As in \Cref{lemma:SDEr_t^delta}, it holds for the process $(\bar{r}_t^\delta)_{t\geq 0}$ for $\epsilon<\epsilon_0$ with $\epsilon_0$ given by \eqref{eq:condition_rc_sc2},
\begin{align*}
\rmd \bar{r}_t^\delta&\leq  (2k\sin(\bar{r}_t^\delta/2)+2k\mathbb{E}_{x\sim \mu_t^{\delta},y\sim\nu_t^\delta}(\mathrm{rc}^\epsilon(d_{\mathbb{T}}(x,y))))\rmd t
+ 2\mathrm{rc}^\delta(\bar{r}_t^\delta)\rmd W_t-2\rmd \ell_t^\pi \eqsp,
\end{align*}
where we used the properties of $\mathrm{rc}^\delta$ and 
\begin{align*}
( x-y)\cdot(\sin(x-\tilde{x})-\sin(y-\tilde{x}) )\leq 2\sin(|x-y|/2)|x-y| 
\end{align*} 
for any $x,y,\tilde{x}\in \torus$.
Consider $(r_t^{\delta,\epsilon})_{t\geq 0}$ given by
\begin{align*}
\rmd r_t^{\delta,\epsilon}&= (2k\sin(r_t^{\delta,\epsilon}/2)+2k\int_0^\pi \mathrm{rc}^\epsilon(u)\rmd P_t^{\delta,\epsilon}(u))\rmd t
 +2\mathrm{rc}^\delta({r}_t^{\delta,\epsilon})\rmd W_t-2\rmd \ell_t^\pi\eqsp,
\end{align*}
where $P_t^{\delta,\epsilon}$ is the law of $r_t^{\delta,\epsilon}$.
Then as in \Cref{lem:comparision_r_t^delta}, for the processes $(\bar{r}_t^\delta)_{t\geq 0}$ and $(r_t^{\delta,\epsilon})_{t\geq 0}$ with the same initial condition and driven by the same noise it holds $\bar{r}_t^\delta\leq r_t^{\delta,\epsilon}$ almost surely for every $t$ and $\epsilon<\epsilon_0$.

Consider the process $(U_t^{\delta,\epsilon})_{t\geq 0}=(X_t^\delta,Y_t^\delta,r_t^{\delta,\epsilon})_{t\geq 0}$ on $\mathbb{T}^2\times[0,\pi]$ for each $\epsilon,\delta>0$.  
We define by $\mathbf{X},\mathbf{Y}:\mathcal{C}(\mathbb{R}_+,\mathbb{T}^2\times[0,\pi])\to \mathcal{C}(\mathbb{R}_+,\mathbb{T})$ and $\mathbf{r}:\mathcal{C}(\mathbb{R}_+,\mathbb{T}^2\times[0,\pi])\to \mathcal{C}(\mathbb{R}_+,[0,\pi])$ the canonical projections onto the first component, onto the second component and onto the last component, respectively. 
Analogously to the proof of \Cref{theo:1}, the law $\PP^{\delta,\epsilon}$ of the process $(U_t^{\delta,\epsilon})_{t\geq 0}$ converges along a subsequence $(\delta_k,\epsilon_k)_{k\in\mathbb{N}}$ to a probability measure $\PP$. Let $(X_t,Y_t,r_t)_{t\geq 0}$ be some process on $\mathbb{T}^2\times[0,\pi]$ with distribution $\PP$ on $(\bar{\Omega},\bar{\mathcal{F}},\bar{P})$. Since $(X_t^\delta)_{t\geq 0}$ and $(Y_t^\delta)_{t\geq 0}$  are solutions of \eqref{eq:SDE_kuramoto} which are unique in law, we have that for any $\epsilon, \delta>0$, $\PP^{\delta,\epsilon}\circ \mathbf{X}^{-1}=\PP\circ\mathbf{X}^{-1}$ and $\PP^{\delta,\epsilon}\circ \mathbf{Y}^{-1}=\PP\circ\mathbf{Y}^{-1}$.
And therefore $(X_t)_{t\geq 0}$ and $(Y_t)_{t\geq 0}$ are solutions of \eqref{eq:SDE_kuramoto} as well with the same initial condition.
Hence $\PP\circ(\mathbf{X},\mathbf{Y})^{-1}$ is a coupling of two copies of \eqref{eq:SDE_kuramoto}.

 Further, the monotonicity $\bar{r}_t^{\delta}\leq r_t^{\delta,\epsilon}$ carries over to the limit by the Portmanteau theorem.
Finally, similarly to the proof of \Cref{lem:existence_stickySDE_step1} and \Cref{lem:existence_stickySDE_step2} 
there exist an extended probability space and a one-dimensional Brownian motion 
$(W_t)_{t\geq 0}$ such that $(r_t,W_t)_{t\geq 0}$ is a solution to \eqref{eq:kuramoto_stickySDE}. 

\end{proof}

\begin{proof} [Proof of \Cref{thm:kuramoto}]

Similarly to \eqref{eq:definition_f} we consider a function ${\tilde{f}}$ on $[0,\pi]$ defined by 
\begin{align} \label{eq:f_Kuramoto}
\tilde{f}(t)=\int_0^t\tilde{\varphi}(r)\tilde{g}(r)\rmd r\eqsp,
\end{align}
where
\begin{align*}
&\tilde{\varphi}(r)=\exp\{2k(\cos(r/2)-1)\}\eqsp, \qquad \tilde{\Phi}(r)=\int_0^r\tilde{\varphi}(s)\rmd s\eqsp,
\\ & \tilde{g}(r)=1-\frac{c_\mathbb{T}}{2}\int_0^r\{\tilde{\Phi}(s)/\tilde{\varphi}(s)\}\rmd s- k\int_0^r\{1/\tilde{\varphi}(s)\}\rmd s\eqsp,
\\ & c_\mathbb{T}=\parenthese{2 \int_0^{\pi}\{\tilde{\Phi}(s)/\tilde{\varphi}(s)\}\rmd s}^{-1}\eqsp.
\end{align*} Then for $k$ satisfying \eqref{eq:k_kuramoto}, $\tilde{g}(r)\in[1/2,1]$ and $\tilde{f}$ is a concave function satisfying similarly to \eqref{eq:norm_equivalence} 
\begin{align} \label{eq:norm_equivalence_kuramoto}
\exp(-2k)/2 r\leq \tilde{f}\leq \tilde{\Phi}(r)\leq r
\end{align}
and
\begin{equation} \label{eq:f_kuramoto_prop} 
\begin{aligned}
& \tilde{f}''(0)=-k
\\ & 2(\tilde{f}''(r)- \tilde{f}''(0))\leq -2k\sin(r/2)\tilde{f}'(r)-c_\mathbb{T}\tilde{f}(r) \qquad \text{for all }r\in[0,\pi]\eqsp.
\end{aligned}
\end{equation}
By It\=o's formula it holds 
\begin{align*}
\rmd \tilde{f}(r_t)&=\tilde{f}'(r_t)(2k\sin(r/2)+2k\PP(r_t>0))\rmd t+2\tilde{f}'(r_t)\1_{(0,\pi]}(r_t) \rmd W_t-2\tilde{f}'(r_t)\rmd \ell_t^\pi
\\& +2\tilde{f}''(r_t)\1_{(0,\pi]}(r_t)\rmd t\eqsp.
\end{align*}
Taking expectation and using that the term involving the local time is negative, we obtain
\begin{align*}
\frac{\rmd}{\rmd t}\mathbb{E}[\tilde{f}(r_t)]&\leq \mathbb{E}[2(\tilde{f}''(r_t)-f''(0))+\tilde{f}'(r_t)2k\sin(r_t/2)]+(2\tilde{f}''(0)+2k)\PP(r_t>0)
\\ & \leq -c_\mathbb{T}\mathbb{E}[\tilde{f}(r_t)]\eqsp,
\end{align*}
where the last step holds by \eqref{eq:f_kuramoto_prop}.
Then 
\begin{align} \label{eq:contraction_kuramoto}
\mathbb{E}[\tilde{f}(d_{\mathbb{T}}(\bar{X}_t,\bar{Y}_t))]\leq \mathbb{E}[\tilde{f}(r_t)]\leq \rme^{-c_\mathbb{T}t} \mathbb{E}[\tilde{f}(r_0)]=\rme^{-c_\mathbb{T}t}\mathbb{E}[\tilde{f}(d_{\mathbb{T}}(\bar{X}_0,\bar{Y}_0))]\eqsp,
\end{align}
provided \eqref{eq:k_kuramoto} holds. Thus 
\begin{align*}
\mathcal{W}_{\tilde{f}}(\mu_t, \nu_t)\leq \rme^{-c_\mathbb{T} t} \mathcal{W}_{\tilde{f}}(\mu_0,\nu_0)\eqsp,
\end{align*}
and 
by \eqref{eq:norm_equivalence_kuramoto}
\begin{align*}
\mathcal{W}_{1}(\mu_t, \nu_t)\leq 2\exp(2k)\rme^{-c_\mathbb{T} t} \mathcal{W}_{1}(\mu_0,\nu_0)\eqsp.
\end{align*}
\end{proof}

\begin{remark} \label{rem:contr_kuramoto}
Let us finally remark that we can relax the condition \eqref{eq:k_kuramoto} and we can obtain contraction with a modified contraction rate $c_\mathbb{T}$ for all $k<k_0$, where $k_0$ is given by
\begin{align} \label{eq:k0_kuramoto}
k_0\int_0^{\pi}\exp(2k_0-2k_0\cos(r/2)) \rmd r=1\eqsp.
\end{align} 
More precisely, set $\zeta =1-k\int_0^{\pi}\exp(2k-2k\cos(r/2)) \rmd r$ and $c_\mathbb{T}=\zeta\parenthese{ \int_0^{\pi}\{\tilde{\Phi}(s)/\tilde{\varphi}(s)\}\rmd s}^{-1}$. Then, $\tilde{g}(r)\in[\zeta/2,1]$ and $\zeta\exp(-2k)/2 r\leq \tilde{f}(r)\leq r$. Following the previous computations, we obtain
\begin{align*}
\mathcal{W}_{1}(\mu_t, \nu_t)\leq 2\exp(2k)/\zeta\rme^{-c_\mathbb{T} t} \mathcal{W}_{1}(\mu_0,\nu_0)\eqsp,
\end{align*}
where for $k$ close to $k_0$, the contraction rate becomes small and the prefactor $2\exp(2k)/\zeta$ explodes.
\end{remark}

\subsection{Sticky nonlinear SDEs on bounded state space}\label{appendix2}

In the same line as in \Cref{thm:existence_comparison}, existence, uniqueness in law and comparison results hold for solutions to the sticky SDE
on $[0,\pi]$ given by
\begin{align} \label{eq:kuramoto_stickySDE}
\rmd r_t=(\tilde{b}(r_t)+2k\PP(r_t >0))\rmd t+2\1_{(0,\pi)}(r_t)\rmd W_t-2\rmd \ell^{\pi}_t,
\end{align}
where $k\in\mathbb{R}_+$ and $\ell^\pi$ is the local time at $\pi$.

The analysis of invariant measures and phase transitions can be easily adapted to the case of the sticky SDE on $[0,\pi]$ given by \eqref{eq:kuramoto_stickySDE}.
\begin{theorem} \label{thm:kuramoto_invmeas_generaldrift}
Let $(r_t)_{t\geq 0}$ be a solution of \eqref{eq:kuramoto_stickySDE} with drift $\tilde{b}$ satisfying \Cref{H1b}. Then, the Dirac measure at zero, $\delta_0$, is an invariant probability measure on $[0,\pi]$ for \eqref{eq:kuramoto_stickySDE}. If there exists $p\in(0,1)$ solving $(1/k)=(1-p)I(k,p)$
where
\begin{align*}
I(k,p)=\int_0^\pi \exp\Big(kpx+\frac{1}{2}\int_0^x \tilde{b}(r) \rmd r\Big)\rmd x\eqsp,
\end{align*}
then the probability measure $\pi$ on $[0,\pi]$ given by 
\begin{align} \label{eq:kuramoto_nontrivialinvmeas}
\pi(\rmd x)\propto \frac{1}{kp}\delta_0(\rmd x)+\exp\Big(kpx+\frac{1}{2}\int_0^x \tilde{b}(r) \rmd r\Big)\lambda_{(0,\pi)}(\rmd x)
\end{align}
is another invariant probability measure for \eqref{eq:kuramoto_stickySDE}.

\end{theorem}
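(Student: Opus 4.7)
The plan is to adapt the argument used for \Cref{thm:one_dim_SDE_statdistr_general} to the compact state space $[0,\pi]$, the only new feature being the reflecting boundary at $\pi$. Invariance of $\delta_0$ is immediate: under \Cref{H1b} we have $\tilde b(0)=0$, so if $r_0=0$ then $\PP(r_0>0)=0$, both drift and diffusion vanish, there is no push at the boundary $\pi$, and $r_t\equiv 0$ is the (unique) solution; hence $\delta_0$ is stationary.

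For the second claim, I would start from the standard observation that if the process starts from a stationary law $\pi$ on $[0,\pi]$ with $\pi((0,\pi])=p\in(0,1)$, then the nonlinear term $2k\PP(r_t>0)\equiv 2kp$ becomes a \emph{constant}, so the dynamics reduce to the linear sticky SDE
\begin{equation*}
  \rmd r_t=(\tilde b(r_t)+2kp)\rmd t+2\1_{(0,\pi)}(r_t)\rmd W_t-2\rmd \ell_t^\pi,
\end{equation*}
for which we need $\pi$ to be invariant. Decomposing $\pi=c_0\delta_0+\rho(x)\lambda_{(0,\pi)}(\rmd x)$ and writing the forward equation $\mathcal L^\ast\pi=0$, one obtains on $(0,\pi)$ the first-order ODE $2\rho'(x)=(\tilde b(x)+2kp)\rho(x)$ with vanishing probability flux at the reflecting boundary $x=\pi$ (no Dirac is allowed at $\pi$ because the process is reflected, not sticky, there). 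Solving gives $\rho(x)\propto \exp(kpx+\tfrac12\int_0^x\tilde b(r)\rmd r)$, in accordance with \eqref{eq:kuramoto_nontrivialinvmeas}. The balance at the sticky boundary $0$ between the mass escaping $\{0\}$ at instantaneous rate $2kp$ and the diffusive return determines, exactly as in \cite[Lemma~24]{EbZi19}, the ratio $c_0/\int_0^\pi\rho(x)\rmd x=1/(kp)\cdot(\int_0^\pi\rho)^{-1}$, which reproduces the factor $1/(kp)$ in \eqref{eq:kuramoto_nontrivialinvmeas}.

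Finally the self-consistency condition closes the argument: the assumption $\PP(r_t>0)=p$ must be compatible with the resulting normalised measure, so we require
\begin{equation*}
  p=\pi((0,\pi])=\frac{I(k,p)}{1/(kp)+I(k,p)},
\end{equation*}
which rearranges exactly to $1/k=(1-p)I(k,p)$. Under this hypothesis the probability measure \eqref{eq:kuramoto_nontrivialinvmeas} is well-defined and, by construction, stationary for the nonlinear sticky SDE \eqref{eq:kuramoto_stickySDE}.

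I expect the only subtlety to be the rigorous justification of the invariance claim for the linear sticky reflecting SDE on $[0,\pi]$; once one has it, the reduction from nonlinear to linear and the computation of the fixed-point equation for $p$ are straightforward. This step can be carried out by applying the It\=o--Tanaka formula to test functions $f\in\mathcal C^2([0,\pi])$ with $f'(\pi)=0$, integrating against the candidate $\pi$, and checking that all boundary contributions (Dirac at $0$, local time at $\pi$) cancel, which is a direct computation using the explicit density $\rho$ and the mass $c_0$ determined above.
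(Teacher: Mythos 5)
Your argument is correct and follows essentially the same route as the paper: invariance of $\delta_0$ by vanishing drift and diffusion, reduction of the nonlinear sticky SDE to a linear one by freezing $\PP(r_t>0)=p$, the explicit form of the sticky-boundary invariant measure (via the balance from \cite[Lemma 24]{EbZi19}, which you recover from the stationary Fokker--Planck ODE with vanishing flux at the reflecting point $\pi$), and closing the loop with the self-consistency identity $p=I(k,p)/(1/(kp)+I(k,p))\Leftrightarrow (1-p)I(k,p)=1/k$. The paper simply points to the proof of \Cref{thm:one_dim_SDE_statdistr_general} and records the transformed fixed-point equation, so your more detailed reconstruction (including the explicit treatment of the reflecting boundary at $\pi$, which is the only new feature compared with the $\rset_+$ case) fills in exactly the steps the paper leaves implicit.
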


\begin{proof}[Proof of \Cref{thm:kuramoto_invmeas_generaldrift}]
The proof works analogously to the proof of \Cref{thm:one_dim_SDE_statdistr_general} for sticky SDEs on $\mathbb{R}_+$. Note that here the condition \eqref{eq:necessarycondition_stationarydistr} transforms for $p\in (0,1]$ to 
\begin{align*}
p=\pi((0,\pi))=\frac{I(k,p)}{1/(kp)+I(k,p)} \Leftrightarrow (1-p)I(k,p)=1/k\eqsp.
\end{align*}
\end{proof}

\begin{example} \label{example:kuramoto_inv}
Consider a solution $(r_t)_{t\geq 0}$ of \eqref{eq:kuramoto_stickySDE} with drift $\tilde{b}(r)=2k\sin(r/2)$. Consider a solution $p\in(0,1]$ solving $1/k=(1-p)I(k,p)$
with
\begin{align*}
I(k,p)=\int_0^\pi \exp\Big(kpx+\int_0^x k\sin(r/2) \rmd r\Big)\rmd x=\int_0^\pi \exp\Big(kpx+2k-2k\cos(x/2)\Big) \rmd x\eqsp.
\end{align*}
Then by \Cref{thm:kuramoto_invmeas_generaldrift}, the Dirac measure at zero, $\delta_0$ and the probability measure
\begin{align} \label{eq:inv_meas_Kuramoto}
\pi(\rmd x)\propto \frac{1}{kp}\delta_0(\rmd x)+\exp(kpx+2k-2k\cos(x/2))\lambda_{(0,\pi)}(\rmd x)
\end{align}
are invariant probability measures for \eqref{eq:kuramoto_stickySDE}.
We specify a necessary and sufficient condition for the existence of a solution $p$ satisfying $1/k=(1-p)I(k,p)$.
We define $\hat{I}(k,p)=(1-p)I(k,p)$. 
We first consider the case $1/k<\hat{I}(k,0)=\int_0^\pi\exp(2k-2k\cos(x/2))\rmd x $.
Then since $1/k> \hat{I}(k,1)=0$ and by the mean value theorem there exists a $p$ solving $1/k=\hat{I}(k,p)$ and therefore there exist multiple invariant distributions for \eqref{eq:inv_meas_Kuramoto}. On the other hand, if $1/k>\hat{I}(k,0)=\int_0^\pi\exp(2k-2k\cos(x/2))\rmd x $, since 
$\pi\leq \int_0^{\pi}\exp(2k-2k\cos(x/2))\rmd x$ and for $k<1/\pi$, it holds
\begin{align*}
\frac{\rmd }{\rmd p}\hat{I}(k,p)&=-I(k,p)+(1-p) \int_0^\pi kx\exp(kpx+2k-2k\cos(x/2)) \rmd x
\\ & = \int_0^\pi ((1-p)kx-1)\exp(kpx+2k-2k\cos(x/2)) \rmd x\leq 0\eqsp,
\end{align*}
there is no $p$ satisfying \eqref{eq:inv_meas_Kuramoto}.

\end{example}

\begin{remark}\label{rem:kuramoto_conv}
The contraction result given in \Cref{thm:one-dim_stickydiff} carries over to the sticky diffusion $(r_t)$ given by \eqref{eq:kuramoto_stickySDE} on $[0,\pi]$ with $\tilde{b}(r)=2k\sin(r/2)$. If \eqref{eq:k_kuramoto} holds, 
then for $t\geq 0$, \eqref{eq:stickydiff_estimateinthm} holds with $\tilde{f}$ defined in \eqref{eq:f_Kuramoto} and $c_{\mathbb{T}}$ defined in \eqref{eq:c_T} using \eqref{eq:contraction_kuramoto}.
Moreover by \Cref {rem:contr_kuramoto}, we can deduce that if \eqref{eq:k0_kuramoto} holds, the Dirac measure at zero, $\delta_0$, is the unique invariant measure and contraction towards $\delta_0$ holds.
\end{remark}

 \subsection*{Funding}
A.~E. and K.~S. have been supported by the \textit{Hausdorff Center for Mathematics}. 
Gef\"ordert durch die Deutsche Forschungsgemeinschaft (DFG) im Rahmen der Exzellenzstrategie des Bundes und der L\"ander - GZ 2047/1, Projekt-ID 390685813. The work of A.G. has been (partially) supported by the Project EFI ANR-17-CE40-0030 of the French National Research Agency
A.D. acknowledges support  of the Lagrange Mathematical and Computing Research Center. 
\nocite{*}

\bibliographystyle{plainurl}
\bibliography{stickycouplings_bib} 

%
%
%
%

\end{document}